\newtheorem{proposition}{Proposition}[section]
\newtheorem{theorem}[proposition]{Theorem}
\newtheorem{lemma}[proposition]{Lemma}
\newtheorem{corollary}[proposition]{Corollary}
\newtheorem{remark}[proposition]{Remark}
\newtheorem{definition}[proposition]{Definition}
\begin{document}

\title{{The Integrability of Negative Powers of the Solution of 
the Saint Venant Problem}
\thanks{2000 {\it Math Subject Classification.} Primary: 31B05, 31B25, 
35B09, 35B33, 35B50; Secondary 31A05, 33A15, 33A50, 35B40, 35J25.
\newline
{\it Key words}: superharmonic functions, Saint Venant problem, 
integrability, Maximum Principle, barrier function, nonsmooth domains, 
sublevel set estimates}}

\author{Anthony Carbery, Vladimir Maz'ya, Marius Mitrea and David
  J. Rule \\
To appear,  Annali Della Scuola Normale Superiore, Part 2, Vol. XIII, 
series V, June 2014.}

\date{23rd June 2012}

\maketitle

\begin{abstract}
We initiate the study of the finiteness condition 
$\int_{\Omega}u(x)^{-\beta}\,dx\leq C(\Omega,\beta)<+\infty$ where 
$\Omega\subseteq{\mathbb{R}}^n$ is an open set and $u$ is the solution 
of the Saint Venant problem $\Delta u=-1$ in $\Omega$, $u=0$ on 
$\partial\Omega$. The central issue which we address is that of 
determining the range of values of the parameter $\beta>0$ for which 
the aforementioned condition holds under various hypotheses on the 
smoothness of $\Omega$ and demands on the nature of the constant 
$C(\Omega,\beta)$. Classes of domains for which our analysis applies 
include bounded piecewise $C^1$ domains 
in ${\mathbb{R}}^n$, $n\geq 2$, with conical singularities (in particular
polygonal domains in the plane), polyhedra in ${\mathbb{R}}^3$, and 
bounded domains which are locally of class $C^2$ and which 
have (finitely many) outwardly pointing cusps. For example, we show that
if $u_N$ is the solution of the Saint Venant problem in the regular polygon 
$\Omega_N$ with $N$ sides circumscribed by the unit disc in the plane,
then for each $\beta\in(0,1)$ the following asymptotic formula holds:
\begin{eqnarray*}
\int_{\Omega_N}u_N(x)^{-\beta}\,dx=\frac{4^\beta\pi}{1-\beta}
+{\mathcal{O}}(N^{\beta-1})\quad\mbox{as }\,\,N\to\infty.
\end{eqnarray*}
One of the original motivations for addressing the aforementioned issues 
was the study of sublevel set estimates for functions $v$ 
satisfying $v(0)=0$, $\nabla v(0)=0$ and $\Delta v\geq c>0$.  
\end{abstract}

\section{Introduction}
\label{sect:1}
\setcounter{equation}{0}

\subsection{Background}

Suppose that $u$ is a positive superharmonic function defined in an open, 
bounded subset $\Omega$ of ${\mathbb{R}}^n$, i.e. 
\begin{eqnarray}\label{In-T1}
\Delta u\leq 0\quad\mbox{ and }\quad u>0\quad\mbox{ in }\quad\Omega. 
\end{eqnarray}
Two issues which have received a considerable amount of attention in the 
literature are:
\begin{enumerate}
\item[(i)] proving lower pointwise bounds 
for $u$ in terms of powers of the distance function to the boundary, and 
\item[(ii)] establishing the membership of $u$ to the Lebesgue scale 
$L^p(\Omega)$, $0<p\leq\infty$. 
\end{enumerate}
See, for example, \cite{Arm1}, \cite{Arm2}, \cite{Ai2}, \cite{AE}, 
\cite{Ku}, \cite{KS}, \cite{MS}, \cite{Mas}, \cite{SU} and the 
references therein. We wish to highlight two aspects of 
the philosophy that has emerged from these studies. 
First, granted a certain degree of reasonableness of the 
underlying domain, for superharmonic functions, positivity always 
entails a quantitative version of itself, in the form of the estimate
\begin{eqnarray}\label{In-T2}
u(x)\geq C(\Omega,u)\,\delta_{\Omega}(x)^{\alpha},\qquad
\mbox{for all }\,\,x\in\Omega,
\end{eqnarray}
where $C(\Omega,u)>0$ is a constant depending on $u$ and $\Omega$,
for some exponent $\alpha=\alpha(\Omega)\geq 1$ independent of $u$. 
Here and elsewhere, for an arbitrary set $\Omega\subseteq{\mathbb{R}}^n$, 
we have denoted by $\delta_{\Omega}$ the (Euclidean) distance 
to its boundary, i.e., 
\begin{eqnarray}\label{PKc-1.C}
\delta_{\Omega}(x):={\rm dist}\,(x,\partial\Omega),
\qquad\forall\,x\in{\mathbb{R}}^n.
\end{eqnarray}

The second aspect alluded to above is that there is a common integrability 
threshold for the entire class of positive 
superharmonic functions in the sense that 
\begin{eqnarray}\label{In-T3}
\int_{\Omega}u(x)^{p}\,dx<+\infty,
\end{eqnarray}
for some integrability exponent $p=p(\Omega)>0$ independent of the 
positive superharmonic function $u$ in $\Omega$. 

The specific nature of the exponents $\alpha(\Omega)$ and $p(\Omega)$
is dictated by the degree of regularity exhibited by $\Omega$. 
For example, \eqref{In-T2} has been proved for $\alpha=1$ in a suitable 
subclass of the class of domains satisfying a uniform interior ball
condition which, in turn, contains the class of bounded $C^2$ domains, 
by Kuran in \cite{Ku}, and for bounded planar Jordan domains 
with a Dini-continuous boundary by Kuran and Schiff 
in \cite{KS}. On the other hand, the lower bounds for the Green function 
established in \cite{MS} for Lipschitz domains also lead to estimates of 
the type \eqref{In-T2}, typically for exponents larger than one. 

As far as \eqref{In-T3} is concerned, in the case when 
$\Omega\subseteq{\mathbb{R}}^n$ is a bounded $C^\infty$ domain, 
Armitage \cite{Arm1}, \cite{Arm2} has proved that \eqref{In-T3} 
holds for any positive superharmonic function $u$ in $\Omega$, 
granted that $0<p<n/(n-1)$. This result has been subsequently extended 
by Maeda and Suzuki in \cite{MS} to the class of bounded Lipschitz 
domains for a range of $p$'s which depends on the Lipschitz constant 
of the domain in question, in such a way that $p\nearrow n/(n-1)$ as 
the domain is progressively closer and closer to being of class $C^1$ 
(i.e., as the Lipschitz constant approaches zero). Further refinements 
of this result, in the class of John domains and H\"older domains 
(in which scenario $p$ is typically small), have been studied, 
respectively by Aikawa in \cite{Ai2} and by Stegenga and 
Ullrich in \cite{SU}.

\subsection{Overview and motivation}

In this paper we are concerned with the validity of \eqref{In-T3} 
for {\em{negative}} values of the integrability exponents, in the case when 
$u$ is a positive function with $\Delta u<0$ in $\Omega$. 
A case in point is the solution of the Saint Venant
problem\footnote{For definiteness, a unique solution will exist, say,
  in $W^{1,2}_0(\Omega)$ when $\Omega$ is a bounded open set (see
  \S\ref{sect:2}). Much of our analysis will apply in the more general
setting where we assume that $\Delta u \leq - C_n\,\,\mbox{ in }\,\,\Omega$, with $C_n$
being a positive dimensional constant.} 
(cf., e.g., \cite{Av}, \cite{CJP}, \cite{DE})
\begin{eqnarray}\label{PKxc-1}
\left\{
\begin{array}{l}
\Delta u=-1\,\,\mbox{ in }\,\,\Omega,
\\[4pt]
u=0\,\,\,\,\mbox{ on }\,\,\partial\Omega,
\end{array}
\right.
\end{eqnarray}
and the question which makes the object of our study is that 
of determining the range of values of the parameter $\beta>0$ 
for which an estimate of the form 
\begin{eqnarray}\label{PKc-4}
\int_{\Omega}u(x)^{-\beta}\,dx\leq C(\Omega,\beta)<+\infty
\end{eqnarray}
holds, under various conditions on $\Omega$ and demands on the nature of 
the constant $C(\Omega,\beta)$. Cases of special interest include the
class of nontangentially accessible domains satisfying an inner cone 
condition (which includes the class of Lipschitz domain) in ${\mathbb{R}}^n$, 
polygonal domains in ${\mathbb{R}}^2$, polyhedral domains in ${\mathbb{R}}^3$, 
as well as piecewise smooth domains with conical and cuspidal singularities.
Since $\int_{\Omega}u(x)^{-\beta}\,dx$ is entirely determined by the domain 
$\Omega$ and the parameter $\beta$, we shall occasionally refer to this 
number as the ``$\beta$-integral of $\Omega$.''

Aside from its relevance in potential theory, the problem \eqref{PKxc-1} plays a significant role in elasticity theory.
For example, the torsional rigidity coefficient of $\Omega$, originally 
defined as 
\begin{eqnarray}\label{Hgva}
P(\Omega):=\sup_{0\not=w\in C^\infty_0(\Omega)}
\Bigl(\int_{\Omega}|w|\,dx\Bigr)^2
\Bigl(\int_{\Omega}|\nabla w|^2\,dx\Bigr)^{-1}
\end{eqnarray}
turns out to be 
\begin{eqnarray}\label{Hgvb}
P(\Omega)=\int_{\Omega}u\,dx=\int_{\Omega}|\nabla u|^2\,dx,
\end{eqnarray}
where $u$ is the solution of \eqref{PKxc-1} (cf.~the discussion in 
\cite{Av}, \cite{CF}, \cite{KM}). 

Our interest in the estimate \eqref{PKc-4} was originally motivated 
by problems in harmonic analysis concerning sublevel set estimates 
for a real-valued, strictly convex function of class $C^2$ defined in an 
open, convex set $\Omega\subseteq{\mathbb{R}}^n$. (It is thus also related 
to the behaviour of oscillatory integrals; cf.~\cite{Ca}, \cite{CCW}, \cite{S}.)
It this vein, we recall that it has been 
shown in \cite{Ca} that there exists a finite dimensional constant $C=C_n>0$ 
with the property that, with $|E|$ denoting the Lebesgue 
measure\footnote{Later on, we shall also occasionally
use the notation ${\mathcal{L}}^n(E)$ in place of $|E|$.} of a 
Lebesgue measurable set $E$,  
\begin{eqnarray}\label{sub-1.m}
|\Omega|\leq C\|v\|^{n/2}_{L^\infty(\Omega)},
\end{eqnarray}
provided that, in addition to the already mentioned properties, 
the Hessian of the function $v$ satisfies 
\begin{eqnarray}\label{vab-3}
\det\Bigl[\Bigl(\frac{\partial^2v}{\partial x_{i}\partial x_{j}}\Bigr)
_{1\leq i,j\leq n}\Bigr]\geq 1\quad\mbox{ on }\,\,\Omega.
\end{eqnarray}
As noted in \cite{Ca}, if $v$ is also nonnegative, then by applying 
\eqref{sub-1.m} with $\{x\in\Omega:\,v(x)<t\}$, $t>0$, in place of $\Omega$ 
we obtain the sublevel set estimate 
\begin{eqnarray}\label{vab-4}
\bigl|\{x\in\Omega:\,v(x)<t\}\bigr|\leq C\,t^{n/2},\qquad t>0.
\end{eqnarray}
On the other hand, granted \eqref{vab-3}, the arithmetic-geometric mean 
inequality gives  
\begin{eqnarray}\label{PKc-1a4}
\Delta v/n\geq\bigl(\det((\partial_{ij}v)_{1\leq i,j\leq n})\bigr)^{1/n}\geq 1.
\end{eqnarray}
Hence, it is natural to ask, what happens with \eqref{sub-1.m}
if we only knew $\Delta v\geq n$? Is it reasonable to expect to still have 
such an estimate which, by the same procedure as above, would then lead to a  
sub-level set estimate similar to \eqref{vab-4}? If so, what is the nature 
of the constant $C$ in \eqref{vab-4} in this more general situation?

We wish to elaborate on this point and, in particular, make it more 
transparent how condition \eqref{PKc-4} for the solution of \eqref{PKxc-1} 
comes into play. To set the stage, assume that $v$ is a real-valued, 
strictly convex function $v$ of class $C^2$ in a neighbourhood of 
the origin in ${\mathbb{R}}^n$ and which is normalised so that 
\begin{eqnarray}\label{sub-1}
v(0)=0,\qquad \nabla v(0)=0.
\end{eqnarray}
Next, fix a (small) threshold $t>0$, define 
\begin{eqnarray}\label{MMM.s}
\Omega:=\{x:\,v(x)<t\}\subseteq{\mathbb{R}}^n, 
\end{eqnarray}
and, from now on, restrict $v$ to the open convex set $\Omega$. 
To continue, denote by ${\mathcal{G}}$ the region of space in 
${\mathbb{R}}^{n+1}$ lying directly above the graph of the function 
$v$ and below the $n$-dimensional horizontal plane $x_{n+1}=t$, i.e., 
\begin{eqnarray}\label{sub-2}
{\mathcal{G}}:=\{(x,x_{n+1})\in{\mathbb{R}}^n\times{\mathbb{R}}:\,
x\in\Omega\mbox{ and } v(x) <x_{n+1}< t\}.
\end{eqnarray}
In order to estimate $|{\mathcal{G}}|$, 
the $(n+1)$-dimensional Lebesgue measure of ${\mathcal{G}}$, let $u$ solve 
the auxiliary problem \eqref{PKxc-1}. We then have 
\begin{eqnarray}\label{sub-4}
|{\mathcal{G}}|=\int_{\Omega}(t-v(x))\,dx
=\int_{\Omega}(v(x)-t)(\Delta u)(x)\,dx
=\int_{\Omega}(\Delta v)(x)u(x)\,dx, 
\end{eqnarray}
after integrating by parts and using the fact that both $u$ and 
$v-t$ vanish on $\partial\Omega$. Using this formula, for given 
$\gamma\in(0,1/2)$ we may then compute (making use of the 
obvious inequality $|{\mathcal{G}}|\leq t\,|\Omega|$)
\begin{eqnarray}\label{sub-5}
\int_{\Omega}(\Delta v)^\gamma\,dx &=& 
\int_{\Omega}\bigl((\Delta v)u\bigr)^\gamma u^{-\gamma}\,dx 
\leq\Bigl(\int_{\Omega}(\Delta v)u\,dx\Bigr)^\gamma
\Bigl(\int_{\Omega}u^{-\gamma/(1-\gamma)}\,dx\Bigr)^{1-\gamma}
\nonumber\\[4pt]
&=& |{\mathcal{G}}|^\gamma\Bigl(\int_{\Omega}
u^{-\gamma/(1-\gamma)}\,dx\Bigr)^{1-\gamma}
\leq t^\gamma|\Omega|^{\gamma}
\Bigl(\int_{\Omega}u^{-\gamma/(1-\gamma)}\,dx\Bigr)^{1-\gamma},
\end{eqnarray}
hence
\begin{eqnarray}\label{sub-5CX}
\int_{\Omega}(\Delta v)^\gamma\,dx 
\leq \|v\|_{L^\infty(\Omega)}^{\gamma}|\Omega|^{\gamma}
\Bigl(\int_{\Omega}u^{-\beta}\,dx\Bigr)^{1-\gamma},
\end{eqnarray}
where have set $\beta:=\frac{\gamma}{1-\gamma}\in(0,1)$. 
Note that in the case in which 
\begin{eqnarray}\label{sub-8}
\int_{\Omega}u(x)^{-\beta}\,dx\leq C_{\beta}\,|\Omega|^{1-2\beta/n},
\end{eqnarray}
this analysis gives 
\begin{eqnarray}\label{sub-10}
\int_{\Omega}(\Delta v)^{\gamma}\leq C_{\gamma}
\|v\|_{L^\infty(\Omega)}^{\gamma}\,|\Omega|^{1-\frac{2\gamma}{n}}.
\end{eqnarray}
The upshot of this analysis is that by using the weaker condition 
$\Delta v\geq n$ (in place of the quantitative non-degeneracy of the Hessian matrix 
for $v$, as in \eqref{vab-3}), one deduces from \eqref{sub-10} that
\begin{eqnarray}\label{Paaa.2}
n^{\gamma}|\Omega|\leq C_{\gamma}\,t^{\gamma}|\Omega|^{1-2\gamma/n},
\end{eqnarray}
which leads to 
\begin{eqnarray}\label{Paaa.3}
|\{x:\,v(x) < t\}|\leq D \, t^{n/2},\qquad t>0,
\end{eqnarray}
with $D$ depending only on the dimension $n$. This is of course contingent upon \eqref{sub-10}
holding for some $\gamma \in (0, 1/2)$ (possibly depending on $n$) with the constant 
$C_{\gamma}$ (which is related to $C_\beta$ from \eqref{sub-8} via 
$C_{\gamma}=(C_\beta)^{1-\gamma}$) being independent of the parameter $t$. 

However, we cannot expect an inequality such as \eqref{Paaa.3} to hold
for an arbitrary strictly convex $v$, defined on a convex domain
containing $0$, which satisfies \eqref{sub-1}. For example consider,
for small $\epsilon$, the function $v_\epsilon(x) = x_1^2 + \epsilon x_2^2$ 
defined on $\mathbb{R}^2$, for which  \eqref{Paaa.3} is easily seen to fail. 
Upon reflection, this is related to the fact that inequality \eqref{sub-10}, 
considered for arbitrary convex domains $\Omega$ and strictly convex $v$ 
defined on $\Omega$, is dilation invariant, but, unlike its counterpart for 
the Hessian problem, is not affine invariant. Thus we cannot expect inequality 
\eqref{sub-8} to hold uniformly over all convex domains $\Omega$, and 
indeed at the end of Section~\ref{sect:3} we demonstrate this explicitly. 
On the other hand, Proposition \ref{PKc-1.F} below shows that for convex 
sets $\Omega$ containing the unit ball and contained in some dimensional
multiple of the unit ball, \eqref{sub-8} does hold for $\beta<1/2$.
While it is not clear whether \eqref{sub-8} holds for such sets $\Omega$ 
for all $\beta<1$ with a constant depending only on $\beta$ and $n$, 
related results (in the two-dimensional setting) are obtained in 
Theorem~\ref{Te-3A.u} and Proposition~\ref{taTH} below.

\subsection{Description of results and layout of the paper}

The discussion in \S\,1.1-\S\,1.2 highlights the significance of the problem 
\eqref{PKxc-1} as well as the relevance of the finiteness 
condition \eqref{PKc-4}. Note that the solution $u$ of 
\eqref{PKxc-1} satisfies  $1/u\in L^\infty_{loc}(\Omega)$, so the finiteness
condition in \eqref{PKc-4} is related to the rate at which $u$ vanishes 
on the boundary. While, from this point of view, a pointwise lower bound 
such as \eqref{In-T2} provides, in principle, a venue for deducing an estimate 
of the form \eqref{PKc-4}, the range of negative integrability exponents 
obtained by such a method is typically far from optimal, so a number of new 
ideas are required. A succinct summary of our main results is as follows: 

\begin{theorem}\label{KamT}
The $\beta$-integral associated with a bounded domain 
$\Omega\subseteq{\mathbb{R}}^n$ is finite in any of the following situations:
\begin{enumerate}
\item[(i)] $\beta\in(0,1)$ and $\Omega$ is a bounded piecewise $C^1$ domain 
in ${\mathbb{R}}^n$, $n\geq 2$, with conical singularities; 
\item[(ii)] $\beta\in(0,1)$ and $\Omega$ is a polyhedron in ${\mathbb{R}}^3$;
\item[(iii)] $\beta\in(0,1)$ and $\Omega\subseteq{\mathbb{R}}^n$, $n\geq 2$, 
is a bounded domain, locally of class $C^2$ and which has an outwardly pointing 
cusp at $0\in\partial\Omega$. Specifically, it is assumed that there exists 
a small number $\varepsilon>0$ and a function ${\mathcal{F}}\in C^2([0,1])$ 
with ${\mathcal{F}}(0) = 0$, ${\mathcal{F}}>0$ on $(0,1]$ and ${\mathcal{F}}'(0)=0$, 
for which $\{x\in\Omega:\,x_n\leq 1\}$ coincides with the cuspidal set
$\{x=(x',x_n):\,0<x_n\leq 1,\,\,|x'|<\varepsilon{\mathcal{F}}(x_n)\}$. 
In the case when $n=2$ and $\beta\in(1/2,1)$, the following (necessary) 
finiteness condition is also assumed: 
\begin{eqnarray}\label{ddXCT.i}
\int_0^1{\mathcal{F}}(\tau)^{1-2\beta}\,d\tau<+\infty.
\end{eqnarray}
\end{enumerate}
\end{theorem}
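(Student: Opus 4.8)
The plan is to reduce each of the three cases to a local analysis near the singular set, since away from the singularities $\Omega$ is locally $C^2$ (hence, by the interior ball condition and a Hopf-type barrier argument, $u(x)\gtrsim\delta_\Omega(x)$ there, which gives $\int u^{-\beta}<+\infty$ for all $\beta<1$ over any region bounded away from the bad set). Thus the whole matter is the contribution of neighbourhoods of the conical points (cases (i), (ii)) and of the cusp tip (case (iii)). In each case the strategy is the same: construct an explicit subsolution (barrier) $w$ of $\Delta w\le-C_n$ on a model neighbourhood, vanishing on the boundary, so that the maximum principle yields $u\ge w$ there, and then verify $\int w^{-\beta}<+\infty$ by direct integration in suitable (polar/cuspidal) coordinates.

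For (i), near a conical point I would model the domain by a cone $\mathcal{C}=\{x=r\omega:\,r>0,\ \omega\in\Sigma\}$ with $\Sigma\subsetneq S^{n-1}$ a spherical domain which, since the boundary is piecewise $C^1$, is itself a Lipschitz subdomain of the sphere. Seek a barrier of the separated form $w(r\omega)=r^2\psi(\omega)$ with $\psi>0$ on $\Sigma$ and $\psi=0$ on $\partial\Sigma$; then $\Delta w=r^{2}\big(\Delta_{S^{n-1}}\psi+ (2n)\psi\big)$ up to the correct constants, so it suffices to pick $\psi$ with $\Delta_{S^{n-1}}\psi+2n\psi\le -c<0$, e.g.\ $\psi=$ (a bounded-below multiple of) the torsion function of $\Sigma$ on the sphere, or more simply $\psi=A-B\,\mathrm{dist}_{S^{n-1}}(\cdot,\partial\Sigma)^{?}$ adjusted near $\partial\Sigma$; near $\partial\Sigma$ we again have $\psi(\omega)\gtrsim\mathrm{dist}_{S^{n-1}}(\omega,\partial\Sigma)$ by the spherical Hopf lemma. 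With $w\simeq r^{2}\,\mathrm{dist}_{S^{n-1}}(\omega,\partial\Sigma)$ one computes
\begin{eqnarray*}
\int_{\mathcal{C}\cap\{r<1\}}w^{-\beta}\,dx
\simeq\int_0^1 r^{n-1-2\beta}\,dr\int_\Sigma \mathrm{dist}_{S^{n-1}}(\omega,\partial\Sigma)^{-\beta}\,d\sigma(\omega),
\end{eqnarray*}
and the $r$-integral converges since $n-2\beta\ge 2-2\beta>0$, while the spherical integral converges because $\partial\Sigma$ is Lipschitz and $\beta<1$ (distance-to-a-Lipschitz-set raised to a power $>-1$ is integrable). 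Summing over the finitely many conical points and adding the $C^2$ part gives (i). Case (ii) is the special instance $n=3$ with polyhedral cones; the only extra point is that a polyhedron also has edge singularities, but each edge neighbourhood is (locally) a product wedge $W\times\mathbb{R}$ with $W\subseteq\mathbb{R}^2$ a planar sector, for which the two-dimensional barrier $r^2\sin(\pi\theta/\theta_0)$ (suitably truncated) reduces the edge contribution to the already-handled planar conical case with the extra integrable transverse variable; the vertices are then genuine $3$-dimensional cones, handled as in (i).

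For (iii) I would work in the cuspidal coordinates $x=(x',x_n)$ with $0<x_n\le1$ and $|x'|<\varepsilon\mathcal{F}(x_n)$. Here the natural barrier, reflecting that the cusp is thin, is of the form $w(x)=\big(\varepsilon^2\mathcal{F}(x_n)^2-|x'|^2\big)\,g(x_n)$ for a positive weight $g$ to be chosen; computing $\Delta w$ one finds the dominant term is the transverse Laplacian $\partial_{x'}^2$ acting on $\varepsilon^2\mathcal{F}^2-|x'|^2$, which contributes $-2(n-1)g(x_n)$, with lower-order (in the thinness parameter) contributions from $x_n$-derivatives that involve $\mathcal{F}',\mathcal{F}''$ and are controlled because $\mathcal{F}\in C^2$ with $\mathcal{F}'(0)=0$; choosing $g$ bounded below by a positive constant makes $\Delta w\le -C_n$ for $\varepsilon$ small, and $w=0$ on the lateral boundary of the cusp. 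Then with $\rho(x):=\varepsilon^2\mathcal{F}(x_n)^2-|x'|^2$,
\begin{eqnarray*}
\int_{\{x\in\Omega:\,x_n\le1\}}w^{-\beta}\,dx
\simeq\int_0^1\!\!\int_{|x'|<\varepsilon\mathcal{F}(x_n)}\big(\varepsilon^2\mathcal{F}(x_n)^2-|x'|^2\big)^{-\beta}\,dx'\,dx_n,
\end{eqnarray*}
and the inner integral, after scaling $x'=\varepsilon\mathcal{F}(x_n)y$, equals $\big(\varepsilon\mathcal{F}(x_n)\big)^{n-1}\mathcal{F}(x_n)^{-2\beta}\cdot c(n,\beta)$ with $c(n,\beta)=\int_{|y|<1}(1-|y|^2)^{-\beta}\,dy$ finite precisely for $\beta<1$ (this is where $\beta<1$ is used). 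Hence the whole integral is $\simeq\int_0^1\mathcal{F}(x_n)^{\,n-1-2\beta}\,dx_n$; for $n\ge3$, or $n=2$ with $\beta\le1/2$, the exponent $n-1-2\beta\ge0$ and convergence is automatic since $\mathcal{F}$ is bounded, while for $n=2$ and $\beta\in(1/2,1)$ this is exactly the hypothesis $\int_0^1\mathcal{F}(\tau)^{1-2\beta}\,d\tau<+\infty$ from \eqref{ddXCT.i}. Combined with the $C^2$-part away from the tip, this proves (iii).

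The main obstacle I anticipate is not the integration bookkeeping but the barrier construction in the conical case: one needs $\psi$ on the spherical domain $\Sigma$ that is simultaneously (a) a strict subsolution of $\Delta_{S^{n-1}}\psi+2n\psi\le-c$, and (b) comparable to $\mathrm{dist}_{S^{n-1}}(\cdot,\partial\Sigma)$ up to the boundary, and one must be careful that $\Sigma$ — being the spherical trace of a piecewise $C^1$ cone — is only Lipschitz, so (b) requires a Hopf/boundary-barrier argument valid on Lipschitz domains of the sphere rather than smooth ones, and one must also check that $r^2\psi$ actually lies below $u$ on the (approximately conical, not exactly conical) piece of $\Omega$, which forces a perturbation argument comparing the true domain to its model cone via the $C^1$ smoothness of the pieces away from the vertex. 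Everything else — the maximum principle comparison, the $C^2$ interior estimate, and the explicit convergence of the model integrals — is routine once these barriers are in hand.
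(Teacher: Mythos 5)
Your overall architecture (local barriers compared with $u$ via the Maximum Principle, explicit integration of the barrier near each singular point, Hopf-type bounds on the smooth part) is the paper's, and your case (iii) is essentially the proof of Theorem~\ref{Te-3SZ}; the one step you gloss over there is the comparison on the cross-section $\{x_n=1\}$, where the trivial bound $u\geq(2n)^{-1}\delta_{\Omega}^2$ does not dominate a barrier that is linear in the distance — the paper supplies this via the Giraud--Hopf Lemma~\ref{quanthopf}, and you would need the same. The genuine gap is in cases (i)--(ii): the separated barrier you propose does not exist for wide cones. If $w(r\omega)=r^2\psi(\omega)$ with $\psi>0$ in $\Sigma$, $\psi=0$ on $\partial\Sigma$ and $\Delta_{S^{n-1}}\psi+2n\psi\leq-c<0$ (note also that $\Delta w=\Delta_{S^{n-1}}\psi+2n\psi$, with no factor $r^2$), then testing against the principal Dirichlet eigenfunction $\phi_{\Sigma}>0$ gives
\[
(\Lambda_{\Sigma}-2n)\int_{\Sigma}\psi\,\phi_{\Sigma}\,d\omega
=\int_{\Sigma}\bigl(-\Delta_{S^{n-1}}\psi-2n\psi\bigr)\phi_{\Sigma}\,d\omega
\geq c\int_{\Sigma}\phi_{\Sigma}\,d\omega>0,
\]
so necessarily $\Lambda_{\Sigma}>2n$, i.e. $\alpha_{\Sigma}>2$: in the plane this means corner opening $<\pi/2$ (your construction already fails at every corner of a square), and in ${\mathbb{R}}^3$ it fails for every circular cone of half-aperture exceeding $\arccos(1/\sqrt{3})$. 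Neither the spherical torsion function nor an ansatz of the form $A-B\,{\rm dist}^{\gamma}$ can evade this, since the obstruction is spectral. This is exactly the subtlety the paper addresses: the barrier \eqref{PKc-7H.G}, $v=\frac{r^2}{\Lambda_{\mathfrak{G}}-2n}\bigl[(\rho/r)^2-(\rho/r)^{\alpha_{\mathfrak{G}}}\bigr]\phi_{\mathfrak{G}}(\omega)$ (log-modified when $\alpha_{\mathfrak{G}}=2$), satisfies $\Delta v=-\phi_{\mathfrak{G}}\in[-1,0]$ for every opening and, conveniently, vanishes on the spherical cap $\rho=r$ as well, so no quantitative comparison with $u$ on the cap is needed; Lemma~\ref{TL-1H} then yields the $r^{n-2\beta}$ bound.

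Case (ii) has a second, independent gap. For a polyhedral vertex the profile ${\mathfrak{G}}$ is a spherical polygon, and your appeal to a ``spherical Hopf lemma on Lipschitz domains'' to get $\phi_{\mathfrak{G}}\gtrsim{\rm dist}_{S^2}(\cdot,\partial_{S^2}{\mathfrak{G}})$ is false: near a corner of ${\mathfrak{G}}$ of half-aperture $\theta_j<\pi/2$ the eigenfunction vanishes like $(\sin\varphi_j)^{\pi/(2\theta_j)}$, i.e. to higher order than the distance — this is precisely why \eqref{GFsA} is stated only for $C^{1,\alpha}$ profiles. As a consequence neither the finiteness of $\int_{\mathfrak{G}}\phi_{\mathfrak{G}}^{-\beta}\,d\omega$ nor the edge estimates can be read off from a distance bound: the direct computation along an edge of half-opening $\theta_j$ only gives integrability for $\beta<\min\{1,4\theta_j/\pi\}$ (cf.~\eqref{ddYFS}), which is insufficient for narrow edges $\theta_j<\pi/4$. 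The paper's proof of Theorem~\ref{Te-3Aj} therefore runs a case analysis in $\alpha_{\mathfrak{G}}$ and uses corner asymptotics of Kondrat'ev/Maz'ya--Plamenevskii type (Lemma~\ref{LM-2.a}, \eqref{ccdd-2}, and the auxiliary problems \eqref{TYh.4}--\eqref{TYh.10}) to treat narrow edges and wide vertices; ``handled as in (i)'' does not cover this. Your case (iii) and the reduction to local estimates are fine once the cross-section comparison is made explicit.
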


Of course, in part (iii) of Theorem~\ref{KamT}, the same type of result holds 
for any bounded piecewise $C^2$ domain with (finitely many) exterior cusps.  
We also wish to emphasise that part (i) of Theorem~\ref{KamT} covers, 
in particular, the case of polygons in the plane. One special case 
is treated in Proposition~\ref{taTH} where it is shown that, for each fixed 
$\beta\in(0,1)$, the $\beta$-integral of a regular polygon with $N$ sides which
is circumscribed by the unit disc has following asymptotic 
\begin{eqnarray}\label{HGba-1Xi}
\frac{4^\beta\pi}{1-\beta}
+{\mathcal{O}}(N^{\beta-1})\qquad\mbox{as }\,\,N\to\infty.
\end{eqnarray}

In the case of a bounded piecewise $C^1$ domain
$\Omega$ with conical singularities, our approach is to estimate 
the contribution from individual conical points by carefully devising 
appropriate barrier functions which compare favourably with 
the solution of \eqref{PKxc-1}. The contribution from the region 
$\Omega$ away from the boundary singularities is then estimated 
separately, by relying on results valid on smooth domains. 
See Theorem~\ref{Te-3A} and Theorem~\ref{Te-3A.u} which are the main 
results phrased in the two-dimensional setting, as well as 
Theorem~\ref{Te-3AH} which contains an extension to the higher 
dimensional case. 

Let us now review the content of the various sections of this paper. 
In Section~\ref{sect:2} we derive estimates for the solution of 
the Saint Venant problem 
in rather general domains, satisfying weak regularity properties,  
described in terms of basic geometric measure theoretic conditions. 
This portion of our analysis points to the value $\beta=1/2$ as the
natural critical exponent for the condition \eqref{PKc-4} in this
degree of generality for the underlying domain $\Omega$. 
Improvements of this result in the case when $\Omega$ satisfies an inner
cone condition (cf.~Definition~\ref{Hga.4}) are subsequently discussed in 
Section~\ref{sect:3}. As a preamble, here we briefly review the 
construction and properties of classical barrier functions in cones.
We then derive a lower pointwise bound for the Green function 
(akin to work in \cite{MS}) which is then used to prove Theorem~\ref{TL-AS},
the main result in this section. A consequence of this theorem 
is that \eqref{PKc-4} holds for any $\beta\in(0,1)$ 
in the case when $\Omega$ is a bounded $C^1$ domain. 

Sections~\ref{sect:4}-\ref{sect:5} are devoted to studying the 
class of bounded piecewise $C^1$ domains with conical singularities, 
and the main results here are Theorem~\ref{Te-3A} and Theorem~\ref{Te-3AH}. 
The new phenomenon which we discover is that, 
much as for bounded $C^1$ domains, \eqref{PKc-4} continues to hold 
for every $\beta\in(0,1)$ in the aforementioned class of piecewise 
$C^1$ domains with conical singularities in ${\mathbb{R}}^n$.
Our approach is based on the realisation that, for the type 
of domains considered in these sections, the size (smallness, in fact)
of the solution of the Saint Venant problem \eqref{PKxc-1} is, 
in the process of taking an integral average, better controlled 
than pointwise estimates from below in terms of powers of the 
distance function to the boundary might originally seem to indicate.

Finally, in Section~\ref{sect:6}, we study the veracity of \eqref{PKc-4}
for other classes of domains with isolated singularities, such as 
polyhedra and piecewise $C^1$ domains with outwardly pointing cuspidal 
singularities.

\vskip 0.08in
Throughout, we employ the customary convention of using the same letter
for denoting constants whose values may change from line to line. 
Whenever the dependence of the constants in question on certain parameters 
is important, we indicate this as such. Also, $F\approx G$ means that there 
exist $C_1,C_2>0$ which are independent of the relevant parameters entering 
the expressions $F,G$ with the property that $C_1F\leq G\leq C_2F$.

\vskip 0.08in
\noindent{\it Acknowledgments.} Portions of this work have been undertaken 
while the third-named author was visiting the Centre for Analysis and 
Nonlinear PDE (CANPDE) at the University of Edinburgh, while on research 
leave from University of Missouri. He gratefully acknowledges the support 
received from these institutions and the US National Science Foundation 
grant DMS-0653180. The second author was partially supported by the UK
Engineering and Physical Sciences Research Council grant EP/F005563/1.
The fourth author would also like to acknowledge support from the CANPDE.

Last, though not least, the authors are grateful to the anonymous referees for 
their diligent reading of the manuscript and for making a number of insightful 
comments which have led to the current version. 

\section{Estimates for the Saint Venant problem in rough domains}
\label{sect:2}
\setcounter{equation}{0}

Let $\Omega$ be a bounded, open subset in ${\mathbb{R}}^n$, and denote by 
$W^{1,p}(\Omega)$ the classical $L^p$-based Sobolev space of order one in 
$\Omega$, where $1\leq p\leq\infty$. Furthermore, we shall use 
$W^{1,p}_0(\Omega)$ to denote the closure of $C^\infty_0(\Omega)$ in 
$W^{1,p}(\Omega)$. A standard application of the Lax-Milgram lemma shows that 
the Saint Venant problem \eqref{PKxc-1} has a unique solution in the energy 
space $W^{1,2}_0(\Omega)$, i.e., 
\begin{eqnarray}\label{PKc-1}
u\in W^{1,2}_0(\Omega),\qquad \Delta u=-1\,\,\mbox{ in }\,\,\,\Omega,
\end{eqnarray}
is always well-posed. In fact, the solution $u$ of \eqref{PKc-1} can be 
expressed as 
\begin{eqnarray}\label{PKc-2}
u(x)=\int_{\Omega}G(x,y)\,dy,\qquad x\in\Omega, 
\end{eqnarray}
where $G(\cdot,\cdot)$ is the Green function for the Dirichlet Laplacian 
in $\Omega$. The latter is the unique function 
$G:\Omega\times\Omega\to [0,+\infty]$ satisfying  
\begin{eqnarray}\label{Do-1}
G(\cdot,y)\in W^{1,2}(\Omega\setminus B(y,r))
\cap W^{1,1}_0(\Omega),\qquad\forall\,y\in\Omega,\,\,\,
\forall\,r>0, 
\end{eqnarray}
\noindent and 
\begin{eqnarray}\label{con1}
\int_\Omega\langle \nabla_x G(x,y),\nabla\varphi(x)\rangle\,dx=\varphi(y),
\quad\forall\,\varphi\in C^\infty_0(\Omega). 
\end{eqnarray}
See, e.g., \cite{GW} and \cite{Ke} for the proof of the existence 
and uniqueness of the Green function; a number of other useful 
properties of the Green function can be found in these works, such as
the fact that the Green function is symmetric (i.e., $G(x,y)=G(y,x)$ 
for all $x,y\in\Omega$) and satisfies the estimates (valid for $n\geq 3$) 
\begin{eqnarray}\label{TF-1X}
&& G(x,y)\leq C_n|x-y|^{2-n}\quad\mbox{ for all $x,y\in\Omega$},
\\[4pt]
&& G(x,y)\geq C_n|x-y|^{2-n}\quad\mbox{ for $x,y\in\Omega$ with  
$|x-y|\leq {\textstyle\frac12}\,\delta_{\Omega}(x)$},
\label{TF-1}
\end{eqnarray}
where the constants depend only on the dimension. The replacement for
$|x-y|^{2-n}$ in the case when $n=2$ is $\log({\rm diam}\,(\Omega)/|x-y|)$.
Hence, as a consequence of \eqref{PKc-2} and \eqref{TF-1X}, 
\begin{eqnarray}\label{PKc-3}
0<u(x)\leq C_n\,[{\rm diam}\,(\Omega)]^2,\,\,\,\,\mbox{ for each }\,\,
x\in\Omega.  
\end{eqnarray}

\begin{remark}\label{gaba}
Let $\Omega\subseteq{\mathbb{R}}^n$ be an arbitrary open set. 
Then for every $\beta>0$ the solution of \eqref{PKc-1} satisfies 
the bound from below
\begin{eqnarray}\label{hanan}
C(n,\beta)|\Omega|\,[{\rm diam}\,(\Omega)]^{-2\beta}
\leq\int_{\Omega}u(x)^{-\beta}\,dx. 
\end{eqnarray} 
Indeed, $|\Omega|=\int_{\Omega}u(x)^{\beta}u(x)^{-\beta}\,dx\leq C_n^\beta
[{\rm diam}\,(\Omega)]^{2\beta}\int_{\Omega}u(x)^{-\beta}\,dx$, by \eqref{PKc-3}.
In particular, if $\Omega$ has the property that 
$B(0,1)\subseteq\Omega\subseteq B(0,C_n)$, then for every $\beta>0$ there holds
\begin{eqnarray}\label{haTGn}
C(n,\beta) \leq\int_{\Omega}u(x)^{-\beta}\,dx.
\end{eqnarray}
\end{remark}

We wish to point out that in the case when $\Omega$ is regular for the 
Dirichlet problem (i.e., the classical Dirichlet problem is well-posed in the 
class of continuous functions), one actually has $u\in C^0(\overline{\Omega})$. 
Necessary and sufficient criteria for regularity are well-known. 
For example, any bounded open set $\Omega\subseteq{\mathbb{R}}^n$ 
is regular for the Dirichlet problem if it satisfies an exterior 
corkscrew condition \cite[Lemma 1.2.4]{Ke}. The latter piece of terminology is clarified in the 
definition below.  
\begin{definition}\label{TFG-54}
We say that $\Omega\subset{\mathbb{R}}^n$ satisfies an interior 
corkscrew condition if there are constants $M>1$ and $R>0$ such 
that for each $x\in\partial\Omega$ and $r\in (0,R)$ there exists 
\begin{eqnarray}\label{Cr-Pt}
\begin{array}{l}
\mbox{$A_r(x)\in\Omega$, called corkscrew point relative to $x$}, 
\\[4pt]
\mbox{so that $|x-A_r(x)|<r$ and 
${\rm dist}(A_r(x),\partial\Omega)>M^{-1}r$}.
\end{array}
\end{eqnarray}
Also, $\Omega\subset{\mathbb{R}}^n$ satisfies the an exterior 
corkscrew condition if $\Omega^c:={\mathbb{R}}^n\setminus\Omega$ 
satisfies an interior corkscrew condition. 
\end{definition}

As explained in \S\,\ref{sect:1}, the central issue in this paper is that 
of determining the ``largest" value of the parameter $\beta>0$ for which an 
estimate of the form \eqref{PKc-4} holds, under various geometrical conditions 
on $\Omega$. Elucidating the nature of the constant $C(\Omega,\beta)$ 
appearing in \eqref{PKc-4} is also of interest. A basic tool systematically
employed throughout the paper is the Maximum Principle. In order to state
a version of this result valid for functions in the Sobolev space 
$W^{1,2}(\Omega)$ we first recall the following definitions.

\begin{definition}\label{subharmonic}
Let $\Omega\subseteq{\mathbb{R}}^n$ be a bounded open set. 
Given $u\in W^{1,2}(\Omega)$, we say that 
$u$ is subharmonic if
\[
\int_\Omega\langle \nabla u(x),\nabla\varphi(x)\rangle\,dx \leq 0
\quad \mbox{for all nonnegative } \varphi\in C^\infty_0(\Omega). 
\]
We say that $u$ is superharmonic if $-u$ is subharmonic. 
\end{definition} 

\begin{definition}\label{Thab}
Let $\Omega\subseteq{\mathbb{R}}^n$ be a bounded open set and assume that
$E\subseteq\overline{\Omega}$. Given $u\in W^{1,2}(\Omega)$, we say that 
$u\geq 0$ on $E$ in the sense of $W^{1,2}(\Omega)$ if there exists a 
sequence $u_j\in C^\infty(\Omega)\cap W^{1,2}(\Omega)$, $j\in{\mathbb{N}}$, 
which converges to $u$ in $W^{1,2}(\Omega)$ and such that, for each 
$j\in{\mathbb{N}}$, there exists an open neighbourhood $U_j$ of $E$ in 
${\mathbb{R}}^n$ with the property that $u_j>0$ in $U_j\cap\Omega$. 
\end{definition}

As is well-known, if $E\subseteq\Omega$ and $u\geq 0$ on $E$ in the 
sense of $W^{1,2}(\Omega)$ then $u\geq 0$ a.e. on $E$. Furthermore, if 
$u\in W^{1,2}(\Omega)$ satisfies $u\geq 0$ a.e. in $\Omega$ then 
$u\geq 0$ in $\Omega$ in the sense of $W^{1,2}(\Omega)$. Let us also point 
out here that if $u\in W^{1,2}(\Omega)\cap C^0(\overline{\Omega})$ 
satisfies $u|_{\partial\Omega}\geq 0$ then $u\geq 0$ on $\partial\Omega$ 
in the sense of $W^{1,2}(\Omega)$ (cf.~\cite{Ke}). 

Analogously to Definition~\ref{Thab}, one can define $u\leq 0$ and $u=0$ 
on $E\subseteq\overline{\Omega}$ in the sense of $W^{1,2}(\Omega)$. 
In particular, this allows one to compare any two functions 
$u,v\in W^{1,2}(\Omega)$ on $E\subseteq\overline{\Omega}$ 
in the sense of $W^{1,2}(\Omega)$, and also to define the supremum and 
infimum of a function $u\in W^{1,2}(\Omega)$ on $E\subseteq\overline{\Omega}$ 
in the sense of $W^{1,2}(\Omega)$. In this context, the following 
version of the Maximum Principle then holds 
(cf.~\cite[Lemma~1.1.17]{Ke}):

\begin{proposition}\label{Max-PP}
Let $\Omega\subseteq{\mathbb{R}}^n$ be a bounded open set and assume 
that $u\in W^{1,2}(\Omega)$ is a subharmonic function in $\Omega$. 
Then 
\begin{eqnarray}\label{Pgav}
\sup_{\Omega}u\leq \sup_{\partial\Omega}u\quad\mbox{ in the sense of }\,\,
W^{1,2}(\Omega).
\end{eqnarray}
\end{proposition}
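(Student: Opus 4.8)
The plan is to derive the $W^{1,2}$ Maximum Principle from the definition of subharmonicity by testing the weak inequality $\int_\Omega \langle\nabla u,\nabla\varphi\rangle\,dx \le 0$ against a carefully chosen nonnegative test function that concentrates on the set where $u$ exceeds its boundary supremum. Concretely, set $M := \sup_{\partial\Omega} u$ in the sense of $W^{1,2}(\Omega)$ (if $M = +\infty$ there is nothing to prove), and consider the function $w := (u - M)^+ = \max\{u - M, 0\}$. The first step is to verify that $w \in W^{1,2}_0(\Omega)$: since $u - M \le 0$ on $\partial\Omega$ in the sense of $W^{1,2}(\Omega)$, the positive part $w$ vanishes on $\partial\Omega$ in that sense, and one knows (this is the standard fact, e.g.\ from \cite{Ke}) that a $W^{1,2}$ function vanishing on $\partial\Omega$ in this sense actually lies in $W^{1,2}_0(\Omega)$. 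Here one uses that $(\,\cdot\,)^+$ is Lipschitz, hence preserves $W^{1,2}$ and acts on the approximating sequence from Definition~\ref{Thab} in a controlled way.

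Next I would use $w$ as the test function. Since $w \ge 0$ and $w \in W^{1,2}_0(\Omega)$, a density argument lets me insert $\varphi = w$ into the subharmonicity inequality (the bilinear form $\int_\Omega \langle \nabla u, \nabla \cdot\rangle$ extends continuously to $W^{1,2}_0(\Omega)$, and nonnegative $C^\infty_0$ functions are dense among nonnegative $W^{1,2}_0$ functions in a way compatible with the inequality — this is a routine but necessary point). This yields
\[
\int_\Omega \langle \nabla u, \nabla w\rangle\,dx \le 0.
\]
Now the key pointwise identity for Sobolev functions is that $\nabla w = \nabla u \cdot \mathbf{1}_{\{u > M\}}$ a.e., so $\langle \nabla u, \nabla w\rangle = |\nabla u|^2 \mathbf{1}_{\{u>M\}} = |\nabla w|^2$ a.e. Hence $\int_\Omega |\nabla w|^2\,dx \le 0$, forcing $\nabla w = 0$ a.e. in $\Omega$. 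Since $w \in W^{1,2}_0(\Omega)$ and $\Omega$ is bounded, the Poincar\'e inequality gives $w = 0$ a.e., i.e.\ $u \le M$ a.e.\ in $\Omega$; translating back, this says precisely $\sup_\Omega u \le \sup_{\partial\Omega} u$ in the sense of $W^{1,2}(\Omega)$.

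The main obstacle is the bookkeeping around the phrase ``in the sense of $W^{1,2}(\Omega)$'': one must be careful that $(u-M)^+$ genuinely inherits the zero-boundary-value property from $u - M \le 0$ on $\partial\Omega$, and that the supremum/infimum notions introduced after Definition~\ref{Thab} are consistent with the a.e.\ statement one extracts at the end. This is where invoking the cited properties from \cite{Ke} — that vanishing on $\partial\Omega$ in the Sobolev sense implies membership in $W^{1,2}_0(\Omega)$, and that a.e.\ inequalities and Sobolev-sense inequalities on $\Omega$ coincide — does the real work, and since Proposition~\ref{Max-PP} is itself quoted from \cite[Lemma~1.1.17]{Ke}, the expected proof is exactly this short truncation-and-test argument with the verification of these compatibility facts deferred to the reference.
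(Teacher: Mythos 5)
Your proof is correct and is exactly the expected argument: the paper itself does not prove Proposition~\ref{Max-PP} but quotes it from \cite[Lemma~1.1.17]{Ke}, and the proof there is precisely this truncation-and-test scheme (show $(u-M)^{+}\in W^{1,2}_0(\Omega)$ via the approximating sequences of Definition~\ref{Thab}, insert it as a test function by density, deduce $\nabla (u-M)^{+}=0$ a.e., and conclude with Poincar\'e plus the compatibility of a.e.\ inequalities with the $W^{1,2}$-sense ones). One small bookkeeping point: since $M=\sup_{\partial\Omega}u$ is defined as an infimum of admissible constants and need not itself satisfy $u\le M$ on $\partial\Omega$ in the sense of $W^{1,2}(\Omega)$, it is cleaner to run your argument with $M+\varepsilon$ and let $\varepsilon\downarrow 0$ at the end; with that adjustment, and the density of nonnegative $C^\infty_0(\Omega)$ functions among nonnegative elements of $W^{1,2}_0(\Omega)$ that you rightly flag as needing verification, the argument is complete.
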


Returning to the main topic of interest for us here, we continue by making a 
series of simple yet significant remarks. 

\begin{remark}\label{Rf-1.H}
{\rm The case of a ball in ${\mathbb{R}}^n$, i.e., when $\Omega=B(0,R)$, $R>0$, 
in which scenario \eqref{PKc-1} has the explicit solution} 
\begin{eqnarray}\label{PKc-1.A}
u(x)=\frac{1}{2n}(R^2-|x|^2),\qquad x\in B(0,R),
\end{eqnarray}
{\rm shows that we must necessarily have $\beta<1$ and that the critical 
value $\beta=1$ is unattainable. Indeed, the function in \eqref{PKc-1.A}
satisfies} 
\begin{eqnarray}\label{PKc-1.B}
\frac{R}{2n}\,\delta_{B(0,R)}(x)\leq 
u(x)\leq\frac{R}{n}\,\delta_{B(0,R)}(x),\qquad\forall\,x\in B(0,R),
\end{eqnarray}
{\rm Hence, in this case,} 
\begin{eqnarray}\label{PKc-1.E}
\int_{B(0,R)}u(x)^{-1}\,dx=+\infty.
\end{eqnarray}
{\rm In fact, it can be seen that this is typical of any sufficiently 
smooth domain (in fact, Theorem~\ref{TL-AS}, stated later, shows 
that any domain of class $C^1$ will do), namely any $\beta<1$ will 
work in \eqref{PKc-4}.}
\end{remark}

\begin{remark}\label{RG-2.H}
{\rm Regarding the issue whether the $\beta$-integral 
diverges when $\beta=1$, we shall show that this is always the case when 
the underlying domain satisfies the following condition:}
{\it Given $\Omega\subseteq{\mathbb{R}}^n$ and $x_*\in\partial\Omega$, 
we say that $\Omega$ satisfies an enveloping ball condition of radius $R>0$ 
near $x_*$ if there exists $\rho>0$ with the property that 
for every $x\in B(x_*,\rho)\cap\partial\Omega$ there exists a ball of 
radius $R$ which contains $\Omega$ and whose boundary contains $x$.}

{\rm The relevance of this piece of terminology is apparent from the 
following result:} 
{\it Let $\Omega\subseteq{\mathbb{R}}^n$ be a bounded open set 
which satisfies an enveloping ball condition of radius $R>0$ near a point 
$x_*\in\partial\Omega$. Then, if $u$ denotes the solution of the Saint Venant 
boundary value problem \eqref{PKc-1},}
\begin{eqnarray}\label{Tgaab}
u(x)\leq n^{-1}R\,\delta_{\Omega}(x)
\quad\mbox{ for every $x\in\Omega$ near $x_*$}, 
\end{eqnarray}
{\it so that, in particular,} 
\begin{eqnarray}\label{Tgaab.2}
\int_{\Omega} u(x)^{-1}\,dx=+\infty. 
\end{eqnarray}
{\rm To prove the above bound on $u$ in terms of the distance to the 
boundary, consider an arbitrary point $x_0\in B(x_*,\rho/2)\cap\Omega$ and 
denote by $x_1\in\partial\Omega$ a point for which 
$r:=\delta_{\Omega}(x_0)=|x_1-x_0|$. Then, necessarily, 
$x_1\in B(x_*,\rho)\cap\partial\Omega$.
Consider now a ball $B=B(x_2,R)$ which contains $\Omega$ and such that 
$x_1\in\partial B$. Note that $B(x_0,r)\subseteq\Omega\subseteq B$, so that 
the balls $B(x_0,r)$ and $B(x_2,R)$ are tangent at $x_1$. This implies that 
the points $x_1,x_0,x_2$ are collinear hence, further, $R-r=|x_0-x_2|$.
Next, use the Maximum Principle to deduce that}
\begin{eqnarray}\label{Tgaab.3}
u(x)\leq (2n)^{-1}(R^2-|x-x_2|^2)\quad\mbox{\rm for every }\,\,x\in\Omega,
\end{eqnarray}
{\rm which, when specialised to $x=x_0$, gives}
\begin{eqnarray}\label{Tgaab.4}
u(x_0)\leq n^{-1}R(R-|x_0-x_2|)=n^{-1}Rr=n^{-1}R\,\delta_{\Omega}(x_0).
\end{eqnarray}
{\rm Since $x_0\in B(x_*,\rho/2)\cap\Omega$ was arbitrary, \eqref{Tgaab} 
follows. As far as \eqref{Tgaab.2} is concerned, we first note that 
$B(x_*,\rho/2)\cap\Omega$ is convex, hence Lipschitz (a formal definition 
is given later, in \eqref{lipdom}). In turn, this and \eqref{Tgaab} give that} 
\begin{eqnarray}\label{Tgaab.5}
\int_{\Omega} u(x)^{-1}\,dx\geq\int_{B(x_*,\rho/2)\cap\Omega} u(x)^{-1}\,dx
\geq nR^{-1}\int_{B(x_*,\rho/2)\cap\Omega}\delta_{\Omega}(x)^{-1}\,dx=+\infty,
\end{eqnarray}
{\rm where the last step is a simple consequence of the fact that 
$B(x_*,\rho/2)\cap\Omega$ is Lipschitz (a more general result of this 
nature is discussed later, in Remark~\ref{RR.AA}).} 
\end{remark}

\begin{remark}\label{RG-2.Haaa}
{\rm It is useful to note that, given any bounded open set 
$\Omega\subseteq{\mathbb{R}}^n$, the solution of the problem \eqref{PKc-1}
is bounded from below by a multiple (depending only on the dimension $n$) 
of the square of the distance function to the boundary. This property can be 
established in a variety of ways. 
One such approach involves $\delta_{\Omega,{\rm reg}}$, the regularised
distance function to $\partial\Omega$ (in the sense of Theorem~2, 
p.\,171 in \cite{St}). Recall that this is a $C^\infty$ function 
in ${\mathbb{R}}^n$ satisfying  
$\delta_{\Omega,{\rm reg}}\approx\delta_{\Omega}$ and 
which has the property that for each multi-index $\alpha$ there exists
$C_\alpha>0$ such that
\begin{eqnarray}\label{rho-reg23} 
|\partial^{\alpha}\delta_{\Omega,{\rm reg}}(x)|\leq C_{\alpha}
\,\delta_{\Omega}(x)^{1-|\alpha|},\quad\forall\,x\in\Omega. 
\end{eqnarray}
In particular, there exists a finite dimensional constant $C>0$ 
with the property that $|\Delta(\delta_{\Omega,{\rm reg}}^2)(x)|\leq C$ 
for all $x\in\Omega$. This implies that $u-C^{-1}\delta_{\Omega,{\rm reg}}^2$
is superharmonic in $\Omega$, continuous on $\overline{\Omega}$, and 
vanishes on $\partial\Omega$. Hence, by the Maximum Principle, 
$u(x)\geq C_n\,\delta_{\Omega}(x)^2$ for all $x\in\Omega$.
However, the sharp version of this estimate is 
\begin{eqnarray}\label{TF-2.D}
u(x)\geq (2n)^{-1}\delta_{\Omega}(x)^2,\qquad
\mbox{for every $x\in\Omega$}, 
\end{eqnarray}
and this is established as follows. 
Fix an arbitrary point $x_0\in\Omega$ and abbreviate 
$r:=\delta_{\Omega}(x_0)$. Then for every $\varepsilon\in(0,r)$
we have that $\overline{B(x_0,r-\varepsilon)}\subseteq\Omega$ and  
$u\in C^\infty(\overline{B(x_0,r-\varepsilon)})$. Next, consider
the standard  barrier
\begin{eqnarray}\label{T.Dsa} 
v(x):=(2n)^{-1}\bigl((r-\varepsilon)^2 -|x-x_0|^2\bigr),\qquad 
x\in \overline{B(x_0,r-\varepsilon)},
\end{eqnarray}
and note that, by the Maximum Principle and the properties of $u$, 
we have $u\geq v$ in $\overline{B(x_0,r-\varepsilon)}$. In particular,
$u(x_0)\geq v(x_0)$ which gives $u(x_0)\geq (2n)^{-1}(r-\varepsilon)^2$.
Hence, after sending $\varepsilon$ to zero we obtain
$u(x_0)\geq (2n)^{-1}r^2=(2n)^{-1}\delta_{\Omega}(x_0)^2$. 
Given that $x_0\in\Omega$ has been chosen arbitrarily, \eqref{TF-2.D}
is proved.}
\end{remark}

A more refined analysis proves Proposition~\ref{PKc-1.F}, stated below. 
As a preamble, we first recall some definitions, as well as several 
results of independent interest. 

\begin{definition}\label{fUG}
Given an open set $\Omega\subseteq{\mathbb{R}}^n$, the upper and lower 
$\gamma$-dimensional Minkowski contents of $\partial\Omega$ with respect to 
$\Omega$ are defined as 
\begin{eqnarray}\label{MiN-1}
M^{\ast}_\gamma(\partial\Omega)
:=\limsup_{r\to 0^{+}}\omega_{n-\gamma}(r),\qquad
M_{\ast,\gamma}(\partial\Omega):=\liminf_{r\to 0^{+}}\omega_{n-\gamma}(r),
\end{eqnarray}
where, for every $\alpha\in{\mathbb{R}}$, we have set  
\begin{eqnarray}\label{MiN-2}
\omega_{\alpha}(r):=\frac{|\{x\in\Omega:\,\delta_{\Omega}(x)<r\}|}{r^{\alpha}}.
\end{eqnarray}
The upper and lower Minkowski dimensions of $\partial\Omega$ with respect to 
$\Omega$ are then given by 
\begin{eqnarray}\label{MiN-3}
&& \hskip -0.50in
{\rm dim}^{\ast}_{Minkowski}(\partial\Omega)
:=\inf\{\gamma\geq 0:\, M^{\ast}_\gamma(\partial\Omega)<+\infty\}
=\sup\{\gamma\geq 0:\, M^{\ast}_\gamma(\partial\Omega)=+\infty\}
\nonumber\\[4pt]
&& \hskip 0.30in
=\inf\{\gamma\geq 0:\, M^{\ast}_\gamma(\partial\Omega)=0\}
=\sup\{\gamma\geq 0:\, M^{\ast}_\gamma(\partial\Omega)>0\},
\\[4pt]
&& \hskip -0.50in
{\rm dim}_{\ast,Minkowski}(\partial\Omega)
:=\sup\{\gamma\geq 0:\, M_{\ast,\gamma}(\partial\Omega)>0\}
=\inf\{\gamma\geq 0:\, M_{\ast,\gamma}(\partial\Omega)=0\}
\nonumber\\[4pt]
&& \hskip 0.30in
=\inf\{\gamma\geq 0:\, M_{\ast,\gamma}(\partial\Omega)<+\infty\}
=\sup\{\gamma\geq 0:\, M_{\ast,\gamma}(\partial\Omega)=+\infty\},
\label{MiN-4}
\end{eqnarray}
convening that $\inf\emptyset:=+\infty$ and $\sup\emptyset:=-\infty$.
When ${\rm dim}^{\ast}_{Minkowski}(\partial\Omega)=
{\rm dim}_{\ast,Minkowski}(\partial\Omega)$, the common value is referred to 
as the Minkowski dimension of $\partial\Omega$ with respect to $\Omega$, 
and is denoted by ${\rm dim}_{Minkowski}(\partial\Omega)$.
\end{definition}

Next, we recall the Coarea Formula (see, e.g., \cite{Fe}, 
\cite[Theorem~2, p.\,117]{EG}).
Given a fixed number $n\in{\mathbb{N}}$, denote by ${\mathcal{L}}^n$ 
the $n$-dimensional Lebesgue measure in ${\mathbb{R}}^n$ (occasionally we 
shall use the notation ${\mathcal{L}}^n(E)$ in place of $|E|$) and, 
for each $k\in{\mathbb{N}}$, $k\leq n$, let ${\mathcal{H}}^{k}$ stand 
for the $k$-dimensional Hausdorff measure in ${\mathbb{R}}^n$. 

\begin{proposition}\label{co-area}
Assume that $n\geq m$ and that $f:{\mathbb{R}}^n\to{\mathbb{R}}^m$ 
is a given Lipschitz function. Then, for any $A\subseteq{\mathbb{R}}^n$ 
which is ${\mathcal{L}}^n$-measurable and $g\in L^1(A)$, 
\begin{eqnarray}\label{ufg}
g\Bigl|_{A\cap f^{-1}(\{y\})}\,\,\mbox{ is ${\mathcal{H}}^{n-m}$-summable 
for ${\mathcal{L}}^m$-a.e. $y\in{\mathbb{R}}^m$}
\end{eqnarray}
and it holds that
\begin{eqnarray}\label{ufb}
\int_A g(x)|(Jf)(x)|\,d{\mathcal{L}}^{n}(x)=\int_{{\mathbb{R}}^m}
\Bigl(\int_{A\cap f^{-1}(\{y\})}g\,d{\mathcal{H}}^{n-m}\Bigr)
\,d{\mathcal{L}}^{m}(y),
\end{eqnarray}
\noindent where $Jf=\sqrt{{\rm det}\,[(Df)(Df)^{\top}]}$ 
is the Jacobian of $f$. 
\end{proposition}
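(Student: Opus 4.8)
\medskip
\noindent\emph{Proof proposal.} Since this is the classical coarea formula, the plan is to follow the standard route (cf.~\cite{Fe}, \cite{EG}), which proceeds in four stages. First I would dispose of the reductions. The measurability claim \eqref{ufg} is handled separately: writing $A$ as an increasing union of compact sets together with an $\mathcal{L}^n$-null set, and using that for compact $K$ the map $y\mapsto\mathcal{H}^{n-m}_\delta(K\cap f^{-1}(\{y\}))$ is upper semicontinuous (the fibres $f^{-1}(\{y\})$ depend upper-semicontinuously on $y$ in the local Hausdorff metric), one obtains that $y\mapsto\mathcal{H}^{n-m}(A\cap f^{-1}(\{y\}))$ is $\mathcal{L}^m$-measurable. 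For the identity \eqref{ufb}, splitting $g$ into positive and negative parts and approximating a nonnegative $g$ monotonically by simple functions reduces matters, by linearity and monotone convergence, to the case $g=\chi_A$; the goal then becomes
\[
\int_A|(Jf)(x)|\,d\mathcal{L}^n(x)=\int_{\mathbb{R}^m}\mathcal{H}^{n-m}\bigl(A\cap f^{-1}(\{y\})\bigr)\,d\mathcal{L}^m(y).
\]

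Second, I would record the linear model together with a crude estimate. For a surjective linear map $L\colon\mathbb{R}^n\to\mathbb{R}^m$, composing with orthogonal transformations in source and target turns $L$ into a scaled coordinate projection, and Fubini's theorem gives the identity directly, with $|JL|=\sqrt{\det(LL^{\top})}$. Independently, covering $A$ efficiently by balls $B(x_i,r_i)$ and noting that $f(B(x_i,r_i))$ lies in a ball of radius $({\rm Lip}\,f)r_i$ while each fibre meets $B(x_i,r_i)$ in a set of $\mathcal{H}^{n-m}$-measure at most $C_{n,m}\,r_i^{n-m}$, a Vitali argument yields the a priori bound $\int^{*}_{\mathbb{R}^m}\mathcal{H}^{n-m}(A\cap f^{-1}(\{y\}))\,d\mathcal{L}^m(y)\le C_{n,m}({\rm Lip}\,f)^m|A|$.

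Third comes the regular part. By Rademacher's theorem $f$ is $\mathcal{L}^n$-a.e.\ differentiable; put $R:=\{x:\ Df(x)\ \text{exists and}\ {\rm rank}\,Df(x)=m\}$. A Lusin-type/Whitney approximation lets me decompose $R$, up to a null set, into countably many bounded Borel pieces $E_k$ on each of which $f$ agrees with a $C^1$ map and $Df$ stays within $\eta$ of a fixed full-rank linear map $L_k$, with $\eta>0$ as small as desired. On $E_k$, the implicit function theorem shows the fibres $E_k\cap f^{-1}(\{y\})$ sit in Lipschitz graphs over an $(n-m)$-plane, and comparing $f$ with $L_k$ — applying the area formula along the fibres and the linear identity of the previous step transversally — gives
\[
\Bigl|\int_{E_k}|Jf|\,d\mathcal{L}^n-\int_{\mathbb{R}^m}\mathcal{H}^{n-m}\bigl(E_k\cap f^{-1}(\{y\})\bigr)\,d\mathcal{L}^m(y)\Bigr|\le\varepsilon(\eta)\,|E_k|,
\]
where $\varepsilon(\eta)\to0$ as $\eta\to0$. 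Summing over $k$ (and refining the partition) establishes the identity with $A$ replaced by $A\cap R$.

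Finally, the degenerate part, which I expect to be the main obstacle. On $Z:=\{x:\ Jf(x)=0\}$ the integrand on the left vanishes, so it remains to prove $\mathcal{H}^{n-m}(Z\cap f^{-1}(\{y\}))=0$ for $\mathcal{L}^m$-a.e.\ $y$; this is the Sard-type ingredient. The clean way to get it is a perturbation argument: for $\epsilon>0$ replace $f$ by a Lipschitz map $g_\epsilon\colon\mathbb{R}^n\to\mathbb{R}^m$ obtained from $f$ plus an $\epsilon$-small linear term, engineered so that $Jg_\epsilon$ is nondegenerate off a small controlled set, apply the regular-part identity and the crude bound already proven to $g_\epsilon$, and use $|g_\epsilon-f|\le C\epsilon$ on bounded sets to deduce $\int^{*}\mathcal{H}^{n-m}(Z\cap f^{-1}(\{y\}))\,d\mathcal{L}^m(y)\le C\epsilon$ on each ball; letting $\epsilon\to0$ gives the claim. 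Keeping $Jg_\epsilon$ nondegenerate while retaining uniform control is the delicate bookkeeping. Putting the pieces together,
\[
\int_A|Jf|\,d\mathcal{L}^n=\int_{A\cap R}|Jf|\,d\mathcal{L}^n=\int_{\mathbb{R}^m}\mathcal{H}^{n-m}\bigl(A\cap R\cap f^{-1}(\{y\})\bigr)\,d\mathcal{L}^m(y)=\int_{\mathbb{R}^m}\mathcal{H}^{n-m}\bigl(A\cap f^{-1}(\{y\})\bigr)\,d\mathcal{L}^m(y),
\]
and undoing the reduction to $g=\chi_A$ finishes the argument.
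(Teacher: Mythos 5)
The paper does not actually prove Proposition~\ref{co-area}: it is recalled as the classical Coarea Formula, with pointers to Federer \cite{Fe} and to \cite[Theorem~2, p.\,117]{EG}, and is then used as a black box (e.g.\ in the proof of Lemma~\ref{bLL.A}). So there is no in-paper argument to compare yours against; what you have written is, in outline, the standard textbook proof from \cite{EG}: reduction to $g=\chi_A$, the linear/Fubini model, the a priori bound $\int^{*}_{{\mathbb{R}}^m}{\mathcal{H}}^{n-m}(A\cap f^{-1}(\{y\}))\,d{\mathcal{L}}^m(y)\leq C_{n,m}({\rm Lip}\,f)^m\,{\mathcal{L}}^n(A)$, a Lusin--Whitney decomposition of the full-rank set with comparison against affine models, and a perturbation argument for the degenerate set. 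Citing \cite{Fe} or \cite{EG}, as the paper does, would also have been a legitimate answer.

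The one step of your sketch that would not go through as written is exactly the one you flag. On $Z=\{Jf=0\}$ you propose to perturb $f$ itself by an $\epsilon$-small linear map of ${\mathbb{R}}^n$ into ${\mathbb{R}}^m$, ``engineered so that $Jg_\epsilon$ is nondegenerate off a small controlled set.'' For a general Lipschitz $f$ no such engineering is available: adding a small linear term need not remove the rank degeneracy of $Df$ on a set of positive measure, and controlling where it fails is precisely the Sard-type information you do not yet have. The standard device (this is how \cite{EG} handles it) is instead to enlarge the domain: set $g_\epsilon(x,z):=f(x)+\epsilon z$ for $(x,z)\in{\mathbb{R}}^n\times{\mathbb{R}}^m$, so that $Dg_\epsilon=[\,Df\ \ \epsilon I\,]$ has full rank everywhere by construction, while on $Z\times Q$ (with $Q$ a unit cube) one has $Jg_\epsilon\leq C\epsilon$ because $\det(Df\,Df^{\top}+\epsilon^2 I)\leq\epsilon^2({\rm Lip}(f)^2+\epsilon^2)^{m-1}$ when $\det(Df\,Df^{\top})=0$. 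Applying the already-established nondegenerate case to $g_\epsilon$ on $Z\times Q$, slicing in $z$, and letting $\epsilon\to 0$ gives ${\mathcal{H}}^{n-m}(Z\cap f^{-1}(\{y\}))=0$ for ${\mathcal{L}}^m$-a.e.\ $y$. With that replacement your outline is the classical proof; the remaining steps (measurability via $\mathcal{H}^{n-m}_\delta$ and upper semicontinuity on compact sets, the Vitali covering bound, and the full-rank comparison) are sound as sketched.
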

 
Recall the definition of $\omega_{\alpha}$ from \eqref{MiN-2}. 
\begin{lemma}\label{bLL.A}
Let $\Omega\subseteq{\mathbb{R}}^n$ be a bounded open set which is 
Jordan measurable (i.e., a bounded open set whose boundary has Lebesgue
measure zero) and set $\Omega_{t}:=\{x\in\Omega:\,\delta_{\Omega}(x)\geq t\}$. Suppose that $\alpha>0$ is such that 
$\omega_{\alpha}$ vanishes at the origin and satisfies a Dini 
integrability condition, i.e., 
\begin{eqnarray}\label{bS-X}
\lim_{r\to 0^{+}}\omega_{\alpha}(r)=0\quad\mbox{ and }\quad 
\int_0\frac{\omega_{\alpha}(r)}{r}\,dr<+\infty.  
\end{eqnarray}
Then for every $t>0$ one has 
\begin{eqnarray}\label{bS-XX}
\int_{\Omega\setminus\Omega_t}\delta_{\Omega}(x)^{-\alpha}\,dx
=\omega_{\alpha}(t)+\alpha\int_0^t\frac{\omega_{\alpha}(r)}{r}\,dr. 
\end{eqnarray}
In particular, for every $t>0$ there holds
\begin{eqnarray}\label{bS-XXX}
\int_{\Omega}\delta_{\Omega}(x)^{-\alpha}\,dx
\leq t^{-\alpha}{\mathcal{L}}^n(\Omega)
+\alpha\int_0^t\frac{\omega_{\alpha}(r)}{r}\,dr<+\infty. 
\end{eqnarray}
\end{lemma}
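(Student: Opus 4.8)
\textbf{Proof plan for Lemma~\ref{bLL.A}.}
The plan is to prove the identity \eqref{bS-XX} first and then obtain \eqref{bS-XXX} as an immediate consequence by choosing $t$ large enough (or just as stated, since $t^{-\alpha}\mathcal{L}^n(\Omega)$ is an upper bound for the full integral over $\Omega_t$ trivially bounded, and the remaining piece is \eqref{bS-XX}). The heart of the matter is to compute $\int_{\Omega\setminus\Omega_t}\delta_\Omega(x)^{-\alpha}\,dx$ by a layer-cake / coarea argument applied to the Lipschitz function $f(x):=\delta_\Omega(x)$. Recall that $\delta_\Omega$ is $1$-Lipschitz on $\mathbb{R}^n$, and the Jordan measurability hypothesis ensures $|\partial\Omega|=0$ so that the sublevel sets $\{x\in\Omega:\delta_\Omega(x)<r\}$ and the super-level sets behave well under limits; it also guarantees that the boundary contributes nothing to the relevant integrals.

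First I would introduce the distribution function $\omega_\alpha(r)=r^{-\alpha}|\{x\in\Omega:\delta_\Omega(x)<r\}|$ and note that $g(r):=|\{x\in\Omega:\delta_\Omega(x)<r\}|=r^\alpha\omega_\alpha(r)$ is nondecreasing, hence of bounded variation on any $[0,t]$, with $g(0^+)=0$ by the first hypothesis in \eqref{bS-X}. Then I would write
\begin{eqnarray*}
\int_{\Omega\setminus\Omega_t}\delta_\Omega(x)^{-\alpha}\,dx
=\int_{\{x\in\Omega:\,\delta_\Omega(x)<t\}}\delta_\Omega(x)^{-\alpha}\,dx
=\int_0^t s^{-\alpha}\,dg(s),
\end{eqnarray*}
interpreting the right-hand side as a Lebesgue--Stieltjes integral; this step is justified either directly by the layer-cake formula for $\int h(\delta_\Omega)\,dx$ with $h(s)=s^{-\alpha}\mathbf{1}_{(0,t)}(s)$, or by the Coarea Formula (Proposition~\ref{co-area}) with $m=1$, $f=\delta_\Omega$, $Jf=|\nabla\delta_\Omega|=1$ a.e., which gives $\int_0^t s^{-\alpha}\mathcal{H}^{n-1}(\{\delta_\Omega=s\}\cap\Omega)\,ds$, and then recognising $\mathcal{H}^{n-1}(\{\delta_\Omega=s\}\cap\Omega)=g'(s)$ for a.e.\ $s$. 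Next I would integrate by parts in the Stieltjes integral: since $g(0^+)=0$,
\begin{eqnarray*}
\int_0^t s^{-\alpha}\,dg(s)=t^{-\alpha}g(t)+\alpha\int_0^t s^{-\alpha-1}g(s)\,ds
=\omega_\alpha(t)+\alpha\int_0^t\frac{\omega_\alpha(s)}{s}\,ds,
\end{eqnarray*}
using $g(s)=s^\alpha\omega_\alpha(s)$ so that $s^{-\alpha-1}g(s)=\omega_\alpha(s)/s$. The Dini condition in \eqref{bS-X} guarantees the last integral is finite, and the boundary term $t^{-\alpha}g(t)=\omega_\alpha(t)$ is finite since $g(t)\leq|\Omega|<\infty$; this proves \eqref{bS-XX}.

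For \eqref{bS-XXX} I would simply split $\int_\Omega\delta_\Omega^{-\alpha}\,dx=\int_{\Omega_t}\delta_\Omega^{-\alpha}\,dx+\int_{\Omega\setminus\Omega_t}\delta_\Omega^{-\alpha}\,dx$, bound the first term by $t^{-\alpha}|\Omega|$ (since $\delta_\Omega\geq t$ there), and apply \eqref{bS-XX} to the second, dropping the nonnegative term $\omega_\alpha(t)$ or — more carefully — noting $\omega_\alpha(t)\leq t^{-\alpha}|\Omega|$ so it can be absorbed; in fact the cleanest route is to observe $\int_{\Omega_t}\delta_\Omega^{-\alpha}dx+\omega_\alpha(t)\leq t^{-\alpha}|\Omega_t|+t^{-\alpha}|\{\delta_\Omega<t\}|=t^{-\alpha}|\Omega|$, giving exactly the stated bound. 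The main obstacle, such as it is, is purely technical: justifying the passage to the Stieltjes integral and the integration by parts rigorously when $g$ need not be absolutely continuous — this is where the Jordan measurability of $\Omega$ (hence $|\partial\Omega|=0$, ensuring $g$ has no jump contribution from the boundary and that $\{\delta_\Omega<r\}$ exhausts a full-measure subset of $\Omega$ as $r\to\infty$) and the standard monotone-function integration-by-parts formula do all the work. Everything else is routine.
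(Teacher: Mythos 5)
Your argument is correct, and at bottom it rests on the same identity as the paper's proof — integrating the volume distribution function of $\delta_\Omega$ by parts against $s^{-\alpha}$ — but the technical implementation is genuinely different. The paper goes through the Coarea Formula (Proposition~\ref{co-area}) applied to the truncations $\min\{\delta_\Omega^{-\alpha},M\}$, lets $M\nearrow\infty$ by monotone convergence to get $\int_{\Omega\setminus\Omega_t}\delta_\Omega^{-\alpha}\,dx=\int_0^t r^{-\alpha}{\mathcal{H}}^{n-1}(\partial\Omega_r)\,dr$, identifies ${\mathcal{H}}^{n-1}(\partial\Omega_r)$ with $\frac{d}{dr}{\mathcal{L}}^n(\Omega\setminus\Omega_r)$ for a.e.\ $r$, and only then integrates by parts in $r$; this yields the level-set formula as a by-product but needs the a.e.-differentiation step and the Jordan measurability bookkeeping. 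You instead push $\,{\mathcal{L}}^n|_{\Omega\setminus\Omega_t}$ forward under $\delta_\Omega$ and compute $\int_{(0,t)}s^{-\alpha}\,d g(s)$ directly, which avoids the coarea formula, the truncation, and the differentiation of $r\mapsto{\mathcal{L}}^n(\Omega\setminus\Omega_r)$ altogether; indeed, written via the layer-cake/Fubini identity $s^{-\alpha}=t^{-\alpha}+\alpha\int_s^t u^{-\alpha-1}\,du$, your route gives \eqref{bS-XX} with no regularity of $g$ needed at all, and your closing argument for \eqref{bS-XXX} (namely $t^{-\alpha}|\Omega_t|+\omega_\alpha(t)=t^{-\alpha}|\Omega|$) is exactly the paper's.

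One small imprecision to fix: if you integrate by parts in the Stieltjes sense on $[\varepsilon,t)$ and let $\varepsilon\to0^+$, the lower boundary term is $\varepsilon^{-\alpha}g(\varepsilon)=\omega_\alpha(\varepsilon)$, so what you must invoke is the first hypothesis in \eqref{bS-X}, $\lim_{r\to0^+}\omega_\alpha(r)=0$ — merely knowing $g(0^+)=0$ does not kill this term, since $s^{-\alpha}$ blows up. (Alternatively, the Fubini/Tonelli version of the computation sidesteps the boundary term entirely, just as the paper's \eqref{bS-10}--\eqref{bS-11} uses the same hypothesis to discard $\lim_{r\to0^+}\omega_\alpha(r)$.) Your remark that Jordan measurability is what makes the Stieltjes passage work is also slightly off target — in your formulation it is essentially not needed, since you only integrate over subsets of the open set $\Omega$; in the paper it enters through the coarea bookkeeping instead. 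Neither point affects the validity of your plan.
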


\begin{proof}
Given that we are assuming that the bounded open set 
$\Omega\subseteq{\mathbb{R}}^n$ is Jordan measurable it follows that 
\begin{eqnarray}\label{bS-1}
{\mathcal{L}}^n(\partial\Omega)=0 
\end{eqnarray}
For each $t>0$, apply the coarea formula \eqref{ufb} with $A:=\Omega\setminus\Omega_t$, 
$g\in L^1(\Omega\setminus\Omega_t)$ arbitrary, and 
$f:{\mathbb{R}}^n\to{\mathbb{R}}$ given by $f(x):=\delta_{\Omega}(x)$ 
for each $x\in{\mathbb{R}}^n$. 
Then $\partial\Omega_t=\Omega\cap\delta_{\Omega}^{-1}(\{t\})$ for every 
$t>0$ and 
\begin{eqnarray}\label{bS-2}
Jf(x)=\left\{
\begin{array}{l}
1\,\,\mbox{ for ${\mathcal{L}}^n$-a.e. $x\in\Omega$},
\\[4pt]
0\,\,\mbox{ for ${\mathcal{L}}^n$-a.e. $x\in{\mathbb{R}}^n\setminus
\overline{\Omega}$},
\end{array}
\right.
\end{eqnarray}
hence for every $t>0$ we have (making use of \eqref{bS-1})
\begin{eqnarray}\label{bS-3}
\int_{\Omega\setminus\Omega_t}g(x)\,dx
=\int_0^t\Bigl(\int_{\partial\Omega_r}g\,d{\mathcal{H}}^{n-1}\Bigr)\,dr.
\end{eqnarray}
In particular, 
\begin{eqnarray}\label{bS-4}
\frac{d}{dt}\Bigl(\int_{\Omega\setminus\Omega_t}g(x)\,dx
\Bigr)=\int_{\partial\Omega_t}g\,d{\mathcal{H}}^{n-1}\quad
\mbox{ for ${\mathcal{H}}^1$-a.e. $t>0$}, 
\end{eqnarray}
which, in the case when $g=1$ yields 
\begin{eqnarray}\label{bS-5}
{\mathcal{H}}^{n-1}(\partial\Omega_t)=
\frac{d}{dt}\Bigl({\mathcal{L}}^{n}(\Omega\setminus\Omega_t)\Bigr)
\quad\mbox{ for ${\mathcal{H}}^1$-a.e. $t>0$}.
\end{eqnarray}
Specialise now \eqref{bS-3} to the case when, for some fixed 
$\alpha>0$ and $M>0$, we take 
\begin{eqnarray}\label{bS-6}
g(x):=\min\Bigl\{\delta_{\Omega}(x)^{-\alpha}\,,\,M\Bigr\},\qquad 
\forall\,x\in\Omega\setminus\Omega_t.
\end{eqnarray}
Then $g\in L^1(\Omega\setminus\Omega_t)$ so this choice yields 
\begin{eqnarray}\label{bS-7}
\int_{\Omega\setminus\Omega_t}
\min\Bigl\{\delta_{\Omega}(x)^{-\alpha}\,,\,M\Bigr\}\,dx
=\int_0^t\min\Bigl\{r^{-\alpha}\,,\,M\Bigr\}
{\mathcal{H}}^{n-1}(\partial\Omega_r)\,dr, 
\end{eqnarray}
hence, ultimately, 
\begin{eqnarray}\label{bS-8}
\int_{\Omega\setminus\Omega_t}\delta_{\Omega}(x)^{-\alpha}\,dx
=\int_0^t r^{-\alpha}{\mathcal{H}}^{n-1}(\partial\Omega_r)\,dr, 
\end{eqnarray}
after letting $M\nearrow +\infty$ and invoking Lebesgue's Monotone Convergence
Theorem. Thus, from \eqref{bS-5} and \eqref{bS-8} we obtain 
\begin{eqnarray}\label{bS-9}
\int_{\Omega\setminus\Omega_t}\delta_{\Omega}(x)^{-\alpha}\,dx
=\int_0^t r^{-\alpha}
\frac{d}{dr}\Bigl({\mathcal{L}}^{n}(\Omega\setminus\Omega_r)\Bigr)\,dr.
\end{eqnarray}
Integrating by parts in the right-hand side of \eqref{bS-9} then gives
\begin{eqnarray}\label{bS-10}
\int_{\Omega\setminus\Omega_t}\delta_{\Omega}(x)^{-\alpha}\,dx
&=& t^{-\alpha}\,{\mathcal{L}}^{n}(\Omega\setminus\Omega_t)
-\lim_{r\to 0^{+}}\Bigl(
r^{-\alpha}\,{\mathcal{L}}^{n}(\Omega\setminus\Omega_r)\Bigr)
\nonumber\\[4pt]
&& +\alpha\int_0^t 
r^{-\alpha-1}{\mathcal{L}}^{n}(\Omega\setminus\Omega_r)\,dr, 
\end{eqnarray}
so that 
\begin{eqnarray}\label{bS-11}
\int_{\Omega\setminus\Omega_t}\delta_{\Omega}(x)^{-\alpha}\,dx
=\omega_{\alpha}(t)-\lim_{r\to 0^{+}}\omega_{\alpha}(r)
+\alpha\int_0^t\frac{\omega_{\alpha}(r)}{r}\,dr. 
\end{eqnarray}
Now \eqref{bS-XX} readily follows from this, granted \eqref{bS-X}.
Finally, \eqref{bS-XXX} is an immediate consequence of \eqref{bS-XX}, 
the crude estimate $\int_{\Omega_t}\delta_{\Omega}(x)^{-\alpha}\,dx\leq
t^{-\alpha}{\mathcal{L}}^{n}(\Omega_t)$, and the fact that 
$t^{-\alpha}{\mathcal{L}}^{n}(\Omega_t)+\omega_{\alpha}(t)
=t^{-\alpha}{\mathcal{L}}^n(\Omega)$. 
\end{proof}

\begin{remark}\label{RR.AA}
Assume that $\Omega\subseteq{\mathbb{R}}^n$ is a Jordan measurable, 
bounded open set. Then an inspection of the proof of Lemma~\ref{bLL.A} reveals
that 
\begin{eqnarray}\label{bcAAS}
\int_{\Omega}\delta_{\Omega}(x)^{-\alpha}\,dx<+\infty\qquad
\mbox{whenever }\,\,{\rm dim}^{\ast}_{Minkowski}(\partial\Omega)<n-\alpha, 
\end{eqnarray}
and 
\begin{eqnarray}\label{bcAAS.2}
\int_{\Omega}\delta_{\Omega}(x)^{-\alpha}\,dx=+\infty\quad
\mbox{if }\,
\sup\{\gamma\geq 0:\, M^{\ast}_\gamma(\partial\Omega)<+\infty,\,\,
M_{\ast,\gamma}(\partial\Omega)>0\}>n-\alpha.
\end{eqnarray}
\end{remark}

\begin{definition}\label{fdr9n}
The set $\Sigma\subseteq{\mathbb{R}}^n$ is said to be 
Ahlfors regular if there exist finite constants 
$C_0,C_1>0$ as well as a number $R>0$ such that
\begin{eqnarray}\label{gvry}
C_0\,r^{n-1}\leq 
{\mathcal{H}}^{n-1}(B(x,r)\cap\Sigma)\leq C_1\,r^{n-1},\qquad
\forall x\in\Sigma,\,\,\,\forall\,r\in(0,R), 
\end{eqnarray}
The triplet $C_0,C_1,R$ makes up what will henceforth be referred to 
as the Ahlfors character of $\Sigma$.
\end{definition}

\begin{lemma}\label{bvv55}
Let $\Omega\subseteq{\mathbb{R}}^n$ be a bounded open set whose 
boundary is Ahlfors regular. Then, for any exponent $\alpha\in[0,1)$ 
there exists a finite constant $C>0$, which depends only $n$, $\alpha$ and 
the Ahlfors character of $\partial\Omega$, such that 
\begin{eqnarray}\label{bc-W1}
\int_{\Omega}\delta_{\Omega}(x)^{-\alpha}\,dx
\leq C\,\bigl[{\mathcal{L}}^{n}(\Omega)\bigr]^{1-\alpha}
\bigl[{\mathcal{H}}^{n-1}(\partial\Omega)\bigr]^{\alpha}.
\end{eqnarray}
This implies that the following generalised isoperimetric inequality holds: 
\begin{eqnarray}\label{bc-Wss}
\int_{\Omega}\delta_{\Omega}(x)^{-\alpha}\,dx\leq C\,
\bigl[{\mathcal{H}}^{n-1}(\partial\Omega)\bigr]^{\frac{n-\alpha}{n-1}}.
\end{eqnarray}
In particular, under the same hypotheses,  
\begin{eqnarray}\label{bca84r}
\int_{\Omega}\delta_{\Omega}(x)^{-\alpha}\,dx
\leq C\,{\rm diam}\,(\Omega)^{n-\alpha}.
\end{eqnarray}
\end{lemma}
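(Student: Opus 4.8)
The plan is to derive all three estimates from a single geometric fact — Ahlfors regularity controls the volume of the thin ``inner collars'' of $\Omega$ at small scales — fed into Lemma~\ref{bLL.A} and the classical isoperimetric inequality. (That the integral is merely \emph{finite} for $\alpha\in[0,1)$ is in fact already contained in Remark~\ref{RR.AA}, since Ahlfors regularity forces ${\rm dim}^{\ast}_{Minkowski}(\partial\Omega)=n-1$; the content of the lemma is the quantitative form.)

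First I would record the elementary consequences of Ahlfors regularity of $\partial\Omega$. Since $\partial\Omega$ is bounded it is covered by finitely many balls of radius $R$ centred on it, so the upper bound in \eqref{gvry} forces ${\mathcal{H}}^{n-1}(\partial\Omega)<+\infty$; in particular ${\mathcal{L}}^n(\partial\Omega)=0$, so $\Omega$ is Jordan measurable and Lemma~\ref{bLL.A} is available once its Dini hypothesis is checked. The geometric heart is the collar estimate
\[
\bigl|\{x\in\Omega:\,\delta_\Omega(x)<r\}\bigr|\le K\,{\mathcal{H}}^{n-1}(\partial\Omega)\,r,\qquad 0<r<R,
\]
with $K=K(n,C_0)$, which I would prove by a Vitali-type packing argument: take a maximal $r$-separated subset $\{x_i\}$ of $\partial\Omega$; the balls $B(x_i,r/2)$ are pairwise disjoint, and the lower bound in \eqref{gvry} gives ${\mathcal{H}}^{n-1}(B(x_i,r/2)\cap\partial\Omega)\ge C_0(r/2)^{n-1}$, so there are at most a dimensional multiple of $C_0^{-1}{\mathcal{H}}^{n-1}(\partial\Omega)\,r^{1-n}$ of them; maximality forces $\{x\in\Omega:\delta_\Omega(x)<r\}\subseteq\bigcup_i B(x_i,2r)$, and bounding the measure of this union gives the claim. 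In the notation of \eqref{MiN-2} this says $\omega_\alpha(r)\le K\,{\mathcal{H}}^{n-1}(\partial\Omega)\,r^{1-\alpha}$ for $0<r<R$, so — because $\alpha<1$ — one has $\omega_\alpha(r)\to 0$ as $r\to 0^{+}$ and $\int_0\frac{\omega_\alpha(r)}{r}\,dr<+\infty$, and Lemma~\ref{bLL.A} applies.

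Feeding this into \eqref{bS-XXX}, for every $t\in(0,R)$,
\[
\int_\Omega\delta_\Omega(x)^{-\alpha}\,dx\le t^{-\alpha}{\mathcal{L}}^n(\Omega)+\alpha\int_0^t\frac{\omega_\alpha(r)}{r}\,dr\le t^{-\alpha}{\mathcal{L}}^n(\Omega)+\frac{\alpha K}{1-\alpha}\,{\mathcal{H}}^{n-1}(\partial\Omega)\,t^{1-\alpha}.
\]
I would then optimise the right-hand side in $t$: the two terms balance at $t_{\ast}\approx{\mathcal{L}}^n(\Omega)/{\mathcal{H}}^{n-1}(\partial\Omega)$, at which value the bound becomes a constant depending only on $n,\alpha,C_0$ times ${\mathcal{L}}^n(\Omega)^{1-\alpha}{\mathcal{H}}^{n-1}(\partial\Omega)^{\alpha}$ — this is \eqref{bc-W1}; in the remaining regime $t_{\ast}>R$ (where $\Omega$ is thick at the scale $R$, so that ${\mathcal{L}}^n(\Omega)\gtrsim R\,{\mathcal{H}}^{n-1}(\partial\Omega)$) one takes $t=R$ instead. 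Then \eqref{bc-Wss} follows from \eqref{bc-W1} and the isoperimetric inequality ${\mathcal{L}}^n(\Omega)^{(n-1)/n}\le c_n\,{\mathcal{H}}^{n-1}(\partial\Omega)$, since $\frac{n(1-\alpha)}{n-1}+\alpha=\frac{n-\alpha}{n-1}$; and \eqref{bca84r} follows from \eqref{bc-Wss} upon bounding ${\mathcal{H}}^{n-1}(\partial\Omega)\le C\,{\rm diam}\,(\Omega)^{n-1}$ (a covering count based on the upper bound in \eqref{gvry}).

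The step I expect to be the main obstacle is this final optimisation: the packing estimate on $|\{\delta_\Omega<r\}|$ is cheap only for $r<R$, so one must make sure that cutting off at the ideal value $t_{\ast}$ rather than at $R$ — equivalently, that the ``deep'' part $\{x\in\Omega:\delta_\Omega(x)\ge R\}$ contributes acceptably — does not destroy the homogeneity of \eqref{bc-W1}; this is precisely where the interplay between ${\mathcal{L}}^n(\Omega)$, ${\mathcal{H}}^{n-1}(\partial\Omega)$ and $R$ enters, with the isoperimetric inequality handling the bookkeeping for \eqref{bc-Wss} and \eqref{bca84r}.
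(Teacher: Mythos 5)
Your overall architecture is the same as the paper's: a linear collar estimate $|\{x\in\Omega:\,\delta_{\Omega}(x)<r\}|\lesssim r\,{\mathcal{H}}^{n-1}(\partial\Omega)$ fed into \eqref{bS-XXX}, optimisation in $t$ at $t_{\ast}\approx{\mathcal{L}}^n(\Omega)/{\mathcal{H}}^{n-1}(\partial\Omega)$ to get \eqref{bc-W1}, then Federer's isoperimetric inequality for \eqref{bc-Wss} and a covering count using the upper bound in \eqref{gvry} for \eqref{bca84r}. Your packing proof of the collar bound for $0<r<R$ is correct and is a nice self-contained substitute for the citation of \cite{HMT}. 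However, there is a genuine gap exactly at the point you flag at the end: the paper's estimate \eqref{baD-ii} is invoked for \emph{all} $r>0$, so the choice $t=t_{\ast}$ is always legitimate, whereas your packing argument only controls $\omega_\alpha(r)$ for $r<R$ (the lower Ahlfors bound is unavailable at larger radii), and your proposed fallback ``take $t=R$ when $t_{\ast}>R$'' does not close the argument.

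Concretely, with $t=R$ the bound from \eqref{bS-XXX} reads $R^{-\alpha}{\mathcal{L}}^n(\Omega)+\frac{\alpha K}{1-\alpha}{\mathcal{H}}^{n-1}(\partial\Omega)R^{1-\alpha}$, and dominating the first term by $C\,{\mathcal{L}}^n(\Omega)^{1-\alpha}{\mathcal{H}}^{n-1}(\partial\Omega)^{\alpha}$ requires ${\mathcal{L}}^n(\Omega)\leq C\,R\,{\mathcal{H}}^{n-1}(\partial\Omega)$, which is precisely the \emph{opposite} of the regime $t_{\ast}>R$ you are in. This is not a bookkeeping issue: for a ball $B(0,\rho)$ with $\rho\gg R$ (whose boundary is Ahlfors regular with character independent of $\rho$), your bound at $t=R$ is of size $R^{-\alpha}\rho^{n}$, while the right-hand side of \eqref{bc-W1} is of size $\rho^{\,n-\alpha}$, so the constant would have to blow up like $(\rho/R)^{\alpha}$; the same loss propagates to \eqref{bc-Wss} and even to \eqref{bca84r}. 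To repair the proof you must control the deep region $\{\delta_{\Omega}\geq R\}$ by something better than the trivial bound $R^{-\alpha}|\Omega|$ — e.g.\ establish the collar estimate at all scales $r\lesssim t_{\ast}$ (which is what the paper imports from \cite{HMT} as \eqref{baD-ii}), or give a separate dyadic argument for $\int_{\{\delta_{\Omega}\geq R\}}\delta_{\Omega}^{-\alpha}\,dx$; your maximal-separated-set count does not yield this at radii $r\geq R$, since packing with balls of radius $R/2$ only gives $|\{\delta_{\Omega}<r\}|\lesssim {\mathcal{H}}^{n-1}(\partial\Omega)\,r^{n}/R^{n-1}$, which is not linear in $r$.
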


\begin{proof}
The version of the isoperimetric inequality proved by H.\,Federer 
(cf.~3.2.43-3.2.44 on p.\,278 of \cite{Fe}) reads
\begin{eqnarray}\label{Fed-H1}
E\subset{\mathbb{R}}^n\,\,\mbox{ with }\,\,{\mathcal{L}}^n(\overline{E})
<+\infty\,\Longrightarrow\,{\mathcal{L}}^n(\overline{E})
\leq\frac{1}{n(\omega_{n-1})^{1/(n-1)}}
\bigl[{\mathcal{H}}^{n-1}(\partial E)\bigr]^{\frac{n}{n-1}},
\end{eqnarray}
where $\omega_{n-1}$ denotes the surface area of $S^{n-1}$. 
Of course, \eqref{Fed-H1} covers the case $\alpha=0$ of \eqref{bc-W1}, 
so we will assume in what follows that $0<\alpha<1$. 

To proceed, we note two consequences of the assumption that that 
$\Omega\subseteq{\mathbb{R}}^n$ is a bounded open set whose boundary is 
Ahlfors regular. First, it is clear that \eqref{bS-1} holds and, 
hence, $\Omega$ is Jordan measurable. Second, it has been proved in 
\cite{HMT} that 
\begin{eqnarray}\label{baD-ii}
{\mathcal{L}}^n(\Omega\setminus\Omega_r)
\leq Cr\,{\mathcal{H}}^{n-1}(\partial\Omega),\qquad\forall r>0,
\end{eqnarray}
where $C>0$ depends only on the Ahlfors character of $\partial\Omega$ 
and, as before, for each $r>0$ we have set 
$\Omega_{r}=\{x\in\Omega:\,\delta_{\Omega}(x)\geq r\}$. 
(Parenthetically, we wish to point out that this estimate implies that 
${\rm dim}^{\ast}_{Minkowski}(\partial\Omega)\leq n-1$ whenever 
$\Omega\subseteq{\mathbb{R}}^n$ is a bounded open set whose 
boundary is Ahlfors regular.) In particular, \eqref{baD-ii} entails 
\begin{eqnarray}\label{baD-ii2}
\omega_{\alpha}(r)\leq Cr^{1-\alpha}
\,{\mathcal{H}}^{n-1}(\partial\Omega),\qquad\forall r>0,
\end{eqnarray}
and, given that $\alpha\in(0,1)$, it follows that the conditions in 
\eqref{bS-X} are satisfied. On the basis of this discussion, \eqref{bS-XXX}
then gives
\begin{eqnarray}\label{bS-XXX.i}
\int_{\Omega}\delta_{\Omega}(x)^{-\alpha}\,dx
&\leq & t^{-\alpha}{\mathcal{L}}^n(\Omega)
+C\,\alpha\Bigl(\int_0^t r^{-\alpha}\,dr\Bigr)
\,{\mathcal{H}}^{n-1}(\partial\Omega)
\nonumber\\[4pt]
&=& t^{-\alpha}{\mathcal{L}}^n(\Omega)
+\frac{C\,\alpha}{1-\alpha}t^{1-\alpha}
\,{\mathcal{H}}^{n-1}(\partial\Omega),
\end{eqnarray}
for every $t>0$. Choosing $t:={\mathcal{L}}^n(\Omega)\bigl/
{\mathcal{H}}^{n-1}(\partial\Omega)$ then readily yields \eqref{bc-W1}. 
Having justified \eqref{bc-W1}, then \eqref{bc-Wss} follows 
from this after observing that \eqref{Fed-H1} implies 
\begin{eqnarray}\label{TFC.a3}
\bigl[{\mathcal{L}}^n(\Omega)\bigr]^{1-\alpha}\leq C_{n,\alpha}
\bigl[{\mathcal{H}}^{n-1}(\partial\Omega)\bigr]^{\frac{n(1-\alpha)}{n-1}}.
\end{eqnarray}

As regards \eqref{bca84r}, this is going to be a consequence of 
\eqref{bc-Wss} and the fact that for any set $E\subseteq{\mathbb{R}}^n$ 
whose boundary is Ahlfors regular there holds 
\begin{eqnarray}\label{AVatar}
{\mathcal{H}}^{n-1}(\partial E)\leq C\,\bigl[{\rm diam}\,(E)\bigr]^{n-1}, 
\end{eqnarray}
where $C>0$ depends only on the Ahlfors character of $\partial E$
(in fact only the upper estimate in the Ahlfors regularity condition 
is really needed for this purpose). At this stage there remains to 
prove \eqref{AVatar} and, given the dilation and translation invariant 
nature of this estimate, there is no loss of generality in assuming that 
${\rm diam}\,(E)=1$ and that, in fact, $\overline{E}\subseteq (-1,1)^n$. 
Partition the cube $(-1,1)^n$ into a grid of congruent subcubes, call them 
$\{Q\}_{Q\in{\mathcal{J}}}$, of side-length $R/(2\sqrt{n})$, where $R\in(0,1)$ 
is such that there exists $C>0$ for which 
\begin{eqnarray}\label{gvDA}
{\mathcal{H}}^{n-1}(B(x,r)\cap\partial E)\leq C\,r^{n-1},\qquad
\forall x\in\partial E,\,\,\,\forall\,r\in(0,R). 
\end{eqnarray}
Consider ${\mathcal{J}}_{\ast}:=\{Q\in{\mathcal{J}}:\,Q\cap\partial E
\not=\emptyset\}$ and, for each $Q\in {\mathcal{J}}_{\ast}$, select 
$x_Q\in Q\cap\partial E$. Then, clearly, 
\begin{eqnarray}\label{gvDB}
\partial E\subseteq\bigcup_{Q\in{\mathcal{J}}_{\ast}}B(x_Q,R/2) 
\end{eqnarray}
which, when used in conjunction with \eqref{gvDA} and the fact that 
${\mathcal{H}}^{n-1}$ is an outer measure, gives 
\begin{eqnarray}\label{gvDC}
{\mathcal{H}}^{n-1}(\partial E) &\leq & 
\sum_{Q\in{\mathcal{J}}_{\ast}}{\mathcal{H}}^{n-1}(\partial E\cap B(x_Q,R/2))
\nonumber\\[4pt]
&\leq & C(R/2)^{n-1}\cdot\#\,{\mathcal{J}}_{\ast}
\leq 2\,C\,n^{n/2}R^{-1}. 
\end{eqnarray}
This, of course, suits our purposes, so the proof of \eqref{AVatar} is 
complete. 
\end{proof}

Here is the proposition alluded to a while ago. 

\begin{proposition}\label{PKc-1.F} 
(i) If $\Omega\subseteq{\mathbb{R}}^n$ is a bounded domain whose boundary has 
a finite upper $\gamma$-dimensional Minkowski content, where $\gamma<n$, 
then \eqref{PKc-4} holds for any $\beta\in(0,(n-\gamma)/2)$.

(ii) If $\Omega\subseteq{\mathbb{R}}^n$ is a bounded domain whose boundary 
is Ahlfors regular, then \eqref{PKc-4} holds for any $\beta\in(0,1/2)$ 
and moreover
\begin{eqnarray}\label{either}
\int_{\Omega}u(x)^{-\beta}\,dx
\leq C\,\bigl[{\mathcal{L}}^{n}(\Omega)\bigr]^{1-2\beta}
\bigl[{\mathcal{H}}^{n-1}(\partial\Omega)\bigr]^{2\beta}
\leq C\,{\rm diam}\,(\Omega)^{n-2\beta}
\quad\mbox{if }\,\,\beta<1/2,
\end{eqnarray}
where $C$ depends only on the Ahlfors character of $\partial\Omega$, 
$n$ and $\beta$.

(iii) As far as \eqref{either} is concerned, the critical value $\beta=1/2$ is 
in the nature of best possible in the sense that for every $\beta\in(1/2,1)$ 
there exists a bounded domain $\Omega_{\beta}\subseteq{\mathbb{R}}^2$ which 
is regular for the Dirichlet problem and has an Ahlfors regular
boundary and with the property that if $u$ solves \eqref{PKc-1} then 
\begin{eqnarray}\label{KbvR}
\int_{\Omega_{\beta}}u(x)^{-\beta}\,dx=+\infty.
\end{eqnarray}
\end{proposition}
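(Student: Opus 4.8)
For parts (i) and (ii) the plan is to combine the pointwise lower bound $u(x)\geq (2n)^{-1}\delta_{\Omega}(x)^2$ from \eqref{TF-2.D} with the distance-function integrability results already established. Indeed, from \eqref{TF-2.D} one gets immediately
\begin{eqnarray*}
\int_{\Omega}u(x)^{-\beta}\,dx\leq (2n)^{\beta}\int_{\Omega}\delta_{\Omega}(x)^{-2\beta}\,dx,
\end{eqnarray*}
so everything reduces to controlling $\int_{\Omega}\delta_{\Omega}^{-\alpha}$ with $\alpha=2\beta$. For part (i), if $\partial\Omega$ has finite upper $\gamma$-dimensional Minkowski content with $\gamma<n$ and $2\beta<n-\gamma$, then $\omega_{\gamma}(r)=|\{\delta_{\Omega}<r\}|/r^{\gamma}$ is bounded near $0$, hence $\omega_{2\beta}(r)=\omega_{\gamma}(r)\,r^{\gamma-2\beta}\to 0$ and $\int_0 \omega_{2\beta}(r)/r\,dr<\infty$ since $\gamma-2\beta-1>-1$; the Dini condition \eqref{bS-X} holds with $\alpha=2\beta$, so Lemma~\ref{bLL.A} (specifically \eqref{bS-XXX}) gives finiteness. (This is also exactly the statement \eqref{bcAAS} of Remark~\ref{RR.AA}.) For part (ii), when $\partial\Omega$ is Ahlfors regular, Lemma~\ref{bvv55} applied with $\alpha=2\beta\in[0,1)$ gives directly
\begin{eqnarray*}
\int_{\Omega}\delta_{\Omega}(x)^{-2\beta}\,dx\leq C\,\bigl[{\mathcal{L}}^{n}(\Omega)\bigr]^{1-2\beta}\bigl[{\mathcal{H}}^{n-1}(\partial\Omega)\bigr]^{2\beta}\leq C\,{\rm diam}\,(\Omega)^{n-2\beta},
\end{eqnarray*}
and chaining with the bound above yields \eqref{either}. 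Note $2\beta<1$ is precisely $\beta<1/2$, which is why the Ahlfors-regular conclusion stops at $1/2$.

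For part (iii) the plan is to construct, for each fixed $\beta\in(1/2,1)$, a planar domain that is ``too spiky'' for $u$ to be bounded below by a decent power of the distance function, forcing divergence of the $\beta$-integral. The natural candidate is a domain with an outward cusp: take $\Omega_\beta\subseteq\mathbb{R}^2$ locally of the form $\{(x_1,x_2):0<x_2<1,\ |x_1|<\mathcal{F}(x_2)\}$ near the origin, for a suitable $C^2$ profile $\mathcal{F}$ with $\mathcal{F}(0)=\mathcal{F}'(0)=0$, glued to a smooth bounded piece away from the cusp, and one arranges $\mathcal{F}$ so that $\partial\Omega_\beta$ is Ahlfors regular (a power cusp $\mathcal{F}(t)=t^k$ with $k\geq 2$ integer works — the boundary is a $C^1$, in fact $C^2$, arc near $0$ so Ahlfors regularity is automatic) and the domain is convex near $0$, hence regular for the Dirichlet problem. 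The key estimate to establish is an \emph{upper} bound on $u$ inside the cusp of the form $u(x)\leq C\,\mathcal{F}(x_2)^2$ for $x$ in the cuspidal region near $0$: this follows by a barrier/comparison argument, placing around each such point a thin ``slab'' $\{|x_1|<c\,\mathcal{F}(x_2)\}$-type region (or simply a disc of radius comparable to $\mathcal{F}(x_2)$ is too crude — one wants the one-dimensional barrier $c(\mathcal{F}(x_2)^2-x_1^2)$ adjusted to dominate $u$ on the relevant boundary piece), then invoking the Maximum Principle (Proposition~\ref{Max-PP}). Granted $u(x)\leq C\,\mathcal{F}(x_2)^2$ on the cusp,
\begin{eqnarray*}
\int_{\Omega_\beta}u(x)^{-\beta}\,dx\geq C^{-\beta}\int_0^{1}\int_{|x_1|<\mathcal{F}(x_2)}\mathcal{F}(x_2)^{-2\beta}\,dx_1\,dx_2=2C^{-\beta}\int_0^{1}\mathcal{F}(x_2)^{1-2\beta}\,dx_2,
\end{eqnarray*}
and since $1-2\beta<0$ one chooses $\mathcal{F}$ decaying fast enough at $0$ — e.g. $\mathcal{F}(t)=t^k$ with $k$ large enough that $k(1-2\beta)\leq -1$, i.e. $k\geq 1/(2\beta-1)$ — to make $\int_0^1\mathcal{F}(x_2)^{1-2\beta}\,dx_2=+\infty$, giving \eqref{KbvR}.

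The main obstacle is the barrier construction for the upper bound $u\leq C\,\mathcal{F}(x_2)^2$ inside the cusp: a crude ball barrier only gives $u(x)\leq C\,\delta_{\Omega}(x)\cdot{\rm diam}(\Omega)$, which near a cusp behaves like $\mathcal{F}(x_2)$ (to the first power, roughly $\delta_\Omega\approx\mathcal{F}(x_2)$ up to the slope factor), not $\mathcal{F}(x_2)^2$, and that weaker bound produces only $\int \mathcal{F}(x_2)^{-\beta}\,dx\approx\int\mathcal{F}(x_2)^{1-\beta}$, which converges for the profiles above. So one genuinely needs the quadratic-in-width bound, which requires a barrier built on the thin slab $\{|x_1|<\mathcal{F}(x_2)\}$ rather than on a ball; the function $v(x)=\tfrac12(\varepsilon^2\mathcal{F}(x_2)^2-x_1^2)$ has $\Delta v=-1+\varepsilon^2(\mathcal{F}\mathcal{F}'' +(\mathcal{F}')^2)(x_2)=-1+o(1)\leq -1+C_n^{-1}$ near the cusp tip because $\mathcal{F}'(0)=0$ and $\mathcal{F}''$ is bounded, so $\Delta(u-\lambda v)\leq 0$ for suitable $\lambda$, and $u-\lambda v\leq 0$ on the lateral boundary of a truncated slab while on the top cap $x_2={\rm const}$ one absorbs the $O(1)$ contribution of $u$ into the profile by choosing the comparison region slightly larger; the Maximum Principle then closes the argument. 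Verifying that this barrier dominates $u$ on the full boundary of the comparison slab — in particular handling the ``cap'' where the slab meets the rest of $\Omega$ — and checking Ahlfors regularity and Dirichlet-regularity of $\Omega_\beta$, are the technical points to nail down, but they are routine given the machinery already in place (and indeed parallel the cusp analysis promised for Section~\ref{sect:6} and Theorem~\ref{KamT}(iii)).
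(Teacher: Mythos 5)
Your plan for (i)--(ii) is exactly the paper's proof: combine the lower bound \eqref{TF-2.D} with Lemma~\ref{bLL.A} (equivalently \eqref{bcAAS}) for (i) and with Lemma~\ref{bvv55}, taking $\alpha=2\beta\in[0,1)$, for (ii). One bookkeeping correction in (i): with the paper's convention $M^{\ast}_{\gamma}(\partial\Omega)=\limsup_{r\to 0^{+}}\omega_{n-\gamma}(r)$, the identity you need is $\omega_{2\beta}(r)=\omega_{n-\gamma}(r)\,r^{(n-\gamma)-2\beta}$, and the Dini integral converges because $(n-\gamma)-2\beta>0$; as written, $\omega_{2\beta}=\omega_{\gamma}\,r^{\gamma-2\beta}$ with the requirement $\gamma>2\beta$ does not match the hypothesis $\beta<(n-\gamma)/2$ (your parenthetical appeal to \eqref{bcAAS} is the correct statement, so this is a fixable index slip, not a new idea).

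For (iii) your strategy -- an outward power cusp on which $u$ is dominated from above by a barrier of size $(\mbox{width})^2$, so that the $\beta$-integral inherits the divergence of $\int_0^1\mathcal{F}^{1-2\beta}$ -- is also the paper's, but the paper implements it so that your ``main obstacle'' never arises: it takes $\Omega_{\beta}$ to be the curvilinear triangle \eqref{JHca-1} itself (the borderline profile $\mathcal{F}(x)=\varepsilon x^{1/(2\beta-1)}$) and compares $u$ with $v(x,y)=y\bigl(\varepsilon x^{1/(2\beta-1)}-y\bigr)$ on all of $\Omega_{\beta}$; since $-\Delta v\geq 1$ for $\varepsilon$ small (this is where $\beta>1/2$ enters) and $u=0\leq v$ on the whole of $\partial\Omega_{\beta}$, the Maximum Principle gives $u\leq v$ globally, with no truncation and hence no cap to match. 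Your glued construction can be completed (fix the truncation height $\eta$, then multiply the slab barrier by a fixed large constant so that it dominates $u\lesssim\delta_{\Omega}$ on the cap $\{x_2=\eta\}$, using that the lateral boundary is smooth there), but two details need repair: the comparison sign should be $\Delta(u-\lambda v)\geq 0$, i.e. $-\Delta(\lambda v)\geq 1=-\Delta u$, in order to conclude $u\leq\lambda v$ from the boundary inequality; and the Ahlfors regularity/Dirichlet regularity of the glued domain, which you assert, deserve the one-line verification (two tangent $C^1$ graphs at the tip, exterior corkscrew). Net: same approach throughout, with the paper's choice of comparison domain in (iii) being the cleaner route.
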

\begin{proof}
The claims in (i)-(ii) follow from Lemma~\ref{bLL.A} and Lemma~\ref{bvv55},
respectively, with the help of \eqref{TF-2.D} (satisfied by the 
solution $u$ of problem \eqref{PKc-1}).
Concerning (iii), the task is to construct a counterexample to the statement 
\eqref{PKc-4} in the case when $\beta\in(1/2,1)$ in the class of bounded 
domains which are regular for the Dirichlet problem and have Ahlfors regular 
boundaries. To this end, fix $\beta\in(1/2,1)$ and consider the curvilinear 
triangle $\Omega_{\beta}$ in ${\mathbb{R}}^2$ given by 
\begin{eqnarray}\label{JHca-1}
\Omega_{\beta}:=\bigl\{(x,y)\in{\mathbb{R}}^2:\,0<x<1\,\,\mbox{ and }\,\,
0<y<\varepsilon\,x^{1/(2\beta-1)}\bigr\},  
\end{eqnarray}
where $\varepsilon=\varepsilon(\beta)$ is a sufficiently small 
positive constant, to be specified momentarily. Clearly, the function
\begin{eqnarray}\label{JHca-2}
v(x,y):=y\bigl(\varepsilon x^{1/(2\beta-1)}-y\bigr),\qquad 
\forall\,(x,y)\in\Omega_{\beta},
\end{eqnarray}
is positive in the domain $\Omega_{\beta}$ and is nonnegative 
on its boundary. In addition, for all $(x,y)\in\Omega_{\beta}$ we have
\begin{eqnarray}\label{JHca-3}
-(\Delta v)(x,y) &=& 2-\frac{2\varepsilon(1-\beta)}{(2\beta-1)^2}\,
x^{(3-4\beta)/(2\beta-1)}\,y
\nonumber\\[4pt]
&\geq & 2-\frac{2\varepsilon^2(1-\beta)}{(2\beta-1)^2}\,
x^{(4-4\beta)/(2\beta-1)}
\nonumber\\[4pt]
&\geq & 2-2\varepsilon^2(1-\beta)(2\beta-1)^{-2},
\end{eqnarray}
where the last step makes essential use of the fact that 
$\beta\in(1/2,1)$. At this stage, pick $\varepsilon>0$ sufficiently 
small so that the last expression in \eqref{JHca-3} is $\geq 1$. 
Such a choice forces $u-v$ to be subharmonic in $\Omega_{\beta}$, 
if $u$ is the solution of the Saint Venant problem in $\Omega_{\beta}$ 
(cf.~\eqref{PKc-1}). In addition, $u-v\leq 0$ on $\partial\Omega_{\beta}$ 
by design. The Maximum Principle then gives that $u\leq v$ 
in $\Omega_{\beta}$. Consequently, we may estimate
\begin{eqnarray}\label{JHca-4}
\int_{\Omega_{\beta}}u(x,y)^{-\beta}\,dxdy & \geq & 
\int_{\Omega_{\beta}}v(x,y)^{-\beta}\,dxdy
\nonumber\\[4pt]
&=& \int^1_0\Bigl(\int_0^{\varepsilon x^{1/(2\beta-1)}}y^{-\beta}
\bigl(\varepsilon x^{1/(2\beta-1)}- y\bigr)^{-\beta}\,dy\Bigr)dx
\nonumber\\[4pt]
&=& \varepsilon^{1-2\beta}\Bigl(\int_0^1 x^{-1}\,dx\Bigr) 
\Bigl(\int_0^1 t^{-\beta}(1-t)^{-\beta}\,dt\Bigr)=+\infty,
\end{eqnarray}
after making the change of variables $y=\varepsilon x^{1/(2\beta-1)}\,t$
in the inner integral in the second line. This completes the proof 
of the proposition.
\end{proof}

\section{Barrier functions and domains satisfying a cone condition}
\label{sect:3}
\setcounter{equation}{0}

Here and elsewhere $S^{n-1}$ stands for the unit sphere in ${\mathbb{R}}^n$.
We denote by $\Gamma^R_\theta(x_0,\eta)$ the open, one-component
circular cone in ${\mathbb{R}}^n$ with vertex at $x_0\in{\mathbb{R}}^n$, 
half-aperture $\theta\in(0,\pi)$, axis along $\eta\in S^{n-1}$, 
and (roundly) truncated at $R>0$, i.e., 
\begin{eqnarray}\label{Cg-77}
\Gamma^R_\theta(x_0,\eta)
:=\bigl\{x\in{\mathbb{R}}^n:\,(x-x_0)\cdot\eta>|x-x_0|\,\cos\theta
\,\,\mbox{ and }\,\,|x-x_0|<R\bigr\},
\end{eqnarray}
When $R=+\infty$ (that is, the cone is infinite) we agree to simply write
$\Gamma_\theta(x_0,\eta)$. Furthermore, we use the abbreviation 
$\Gamma^R_\theta$ (respectively, $\Gamma_\theta$, when $R=+\infty$)
whenever $x_0=0\in{\mathbb{R}}^n$ and $\eta=e_n:=(0,...,0,1)\in S^{n-1}$. 

Of course, $\Gamma_{\theta}\cap S^{n-1}$ is the spherical cap with 
centre at the north pole and (spherical) radius $\theta$. More generally, 
given an open, connected subset ${\mathfrak{G}}$ of $S^{n-1}$, we denote by 
$\Gamma_{\mathfrak{G}}$ the open cone in ${\mathbb{R}}^n$ with vertex at the 
origin and shape ${\mathfrak{G}}$, i.e., 
\begin{eqnarray}\label{HBV-rt}
\Gamma_{\mathfrak{G}}:=\{\rho\,\omega:\,\rho>0\,\,
\mbox{ and }\,\,\omega\in {\mathfrak{G}}\}.
\end{eqnarray}

Going further, we let $\Delta_{S^{n-1}}$ stand for the Laplace-Beltrami 
operator on $S^{n-1}$ and fix an open, connected subset ${\mathfrak{G}}$ 
of $S^{n-1}$ with the property that $\partial_{S^{n-1}}{\mathfrak{G}}$, 
the boundary of ${\mathfrak{G}}$ relative to $S^{n-1}$, is sufficiently 
regular. In this setting, we let $\Lambda_{\mathfrak{G}}>0$ be the first 
positive eigenvalue of the nonnegative operator $-\Delta_{S^{n-1}}$ 
equipped with (homogeneous) Dirichlet boundary condition on ${\mathfrak{G}}$ 
and denote by $\phi_{{\mathfrak{G}}}$ an eigenfunction corresponding to 
the eigenvalue $\Lambda_{\mathfrak{G}}$. Hence, 
\begin{eqnarray}\label{vTR1.G}
-\Delta_{S^{n-1}}\phi_{{\mathfrak{G}}}
=\Lambda_{{\mathfrak{G}}}\,\phi_{{\mathfrak{G}}}
\,\,\mbox{ in }\,\,\,{\mathfrak{G}},\,\,\,\mbox{ and }\,\,
\phi_{{\mathfrak{G}}}=0\,\,\mbox{ on }\,\,\partial_{S^{n-1}}{\mathfrak{G}}.
\end{eqnarray}
Recall that any eigenfunction corresponding to $\Lambda_{\mathfrak{G}}$ 
does not change sign in ${\mathfrak{G}}$ (see, e.g., the discussion on 
p.\,42-43 in \cite{Ch} in the case of a spherical cap),
Since $\phi_{\mathfrak{G}}$ is uniquely determined only up to a 
re-normalisation, it follows that there is no loss of generality 
in assuming that 
\begin{eqnarray}\label{Mc-T2A.G}
\phi_{{\mathfrak{G}}}>0\,\,\mbox{ in }\,\,{\mathfrak{G}},
\quad\mbox{and}\,\,\sup_{{\mathfrak{G}}}\phi_{{\mathfrak{G}}}=1.
\end{eqnarray}
For further reference let us also record here that, granted sufficient 
regularity for $\partial_{S^{n-1}}{\mathfrak{G}}$, the function 
$\phi_{{\mathfrak{G}}}$ behaves essentially like the distance to the 
boundary of ${\mathfrak{G}}$. More precisely, if 
$\partial_{S^{n-1}}{\mathfrak{G}}$ is of class 
$C^{1,\alpha}$, for some $\alpha\in(0,1)$, then the following 
estimate (which is going to be useful in \S\,\ref{sect:3} and \S\,\ref{sect:5}) 
holds
\begin{eqnarray}\label{GFsA}
\phi_{{\mathfrak{G}}}(\omega)\approx
{\rm dist}_{S^{n-1}}\bigl(\omega,\partial_{S^{n-1}}{\mathfrak{G}}\bigr),\quad
\mbox{ uniformly for }\,\,\omega\in {\mathfrak{G}},
\end{eqnarray}
where ${\rm dist}_{S^{n-1}}(\omega,\omega'):={\rm arccos}\,(\omega\cdot\omega')$,
for $\omega,\omega'\in S^{n-1}$, denotes the geodesic distance on
$S^{n-1}$. This property is proved later (in the Appendix), as to avoid disrupting 
the flow of the presentation\footnote{We are grateful to a referee for 
questioning an inaccurate claim we made in an earlier version.}.

Corresponding to the case when ${\mathfrak{G}}$ is a 
spherical cap, say ${\mathfrak{G}}=S^{n-1}\cap\Gamma_{\theta}$ 
for some $\theta\in(0,\pi)$, we agree to write $\phi_\theta$ 
and $\Lambda_{\theta}$ in place of $\phi_{S^{n-1}\cap\Gamma_{\theta}}$
and $\Lambda_{S^{n-1}\cap\Gamma_{\theta}}$, respectively. Hence, in 
particular, 
\begin{eqnarray}\label{vTR1}
\begin{array}{l}
-\Delta_{S^{n-1}}\phi_{\theta}=\Lambda_{\theta}\,\phi_{\theta}
\quad\mbox{ in }\,\,\,S^{n-1}\cap\Gamma_\theta
\,\,\mbox{ and }\,\,\phi_{\theta}=0
\,\,\mbox{ on }\,\,S^{n-1}\cap\partial\Gamma_\theta.
\\[4pt]
\phi_{\theta}>0\,\,\mbox{ in }\,\,S^{n-1}\cap\Gamma_\theta
\quad\mbox{and}\quad\sup_{S^{n-1}\cap\Gamma_\theta}\phi_{\theta}=1
\quad\mbox{for each }\,\,\theta\in(0,\pi).
\end{array}
\end{eqnarray}

\begin{definition}\label{defalpha} 
Given an open, connected subset ${\mathfrak{G}}$ of $S^{n-1}$, 
with a sufficiently regular boundary (relative to $S^{n-1}$), 
we associate the index $\alpha_{\mathfrak{G}}$ defined by 
\begin{equation}\label{formula.G}
\alpha_{\mathfrak{G}}:=-{\textstyle{\frac{n-2}{2}}}
+\sqrt{\textstyle{\frac{(n-2)^2}{4}}+\Lambda_{\mathfrak{G}}}, 
\end{equation}
\noindent that is, the unique positive root of the equation 
\begin{eqnarray}\label{critic.G}
\alpha_{\mathfrak{G}}(\alpha_{\mathfrak{G}}+n-2)=\Lambda_{\mathfrak{G}}.
\end{eqnarray}
(Note that since $\Lambda_{\mathfrak{G}}>0$, these considerations 
are meaningful.) Finally, for each $\theta\in(0,\pi)$, we abbreviate 
$\alpha_{S^{n-1}\cap\Gamma_{\theta}}$ by $\alpha_\theta$. 
Hence, in this notation, 
\begin{eqnarray}\label{formula}
\alpha_\theta=-{\textstyle{\frac{n-2}{2}}}
+\sqrt{\textstyle{\frac{(n-2)^2}{4}}+\Lambda_\theta}\quad\mbox{and}\quad
\Lambda_\theta=\alpha_\theta(\alpha_\theta+n-2).
\end{eqnarray}
for any $\theta\in(0,\pi)$. 
\end{definition}
\noindent The index $\alpha_{\mathfrak{G}}$ has been studied by many
authors; see in particular \cite{FH}. The format of \eqref{critic.G} is suggested by the formula 
for the Euclidean Laplacian in spherical polar coordinates
$x=\rho\,\omega\in{\mathbb{R}}^n\setminus\{0\}$, 
with $\rho:=|x|>0$ and $\omega:=x/|x|\in S^{n-1}$, i.e., 
\begin{eqnarray}\label{La-LB}
\Delta f=\rho^{1-n}\partial_\rho\bigl(\rho^{n-1}\partial_\rho f\bigr) 
+\rho^{-2}\Delta_{S^{n-1}}f=\partial^2_\rho f
+(n-1)\rho^{-1}\partial_\rho f+\rho^{-2}\Delta_{S^{n-1}}f.
\end{eqnarray}
Indeed, introducing the barrier function 
$v_{{\mathfrak{G}}}:\Gamma_{\mathfrak{G}}\to{\mathbb{R}}$ by setting 
\begin{eqnarray}\label{vTR}
v_{{\mathfrak{G}}}(x):=\rho^{\alpha_{\mathfrak{G}}}\phi(\omega)
=|x|^{\alpha_{\mathfrak{G}}}\phi_{{\mathfrak{G}}}
\left({\textstyle{\frac{x}{|x|}}}\right)
\quad\mbox{for }\,\,\omega=\frac{x}{|x|}\in {\mathfrak{G}}\subseteq S^{n-1}
\,\,\mbox{ and }\,\rho=|x|>0, 
\end{eqnarray}
it follows that, for each $\omega\in {\mathfrak{G}}$ and $\rho>0$, 
\begin{eqnarray}\label{vTR.GG}
(\Delta v_{{\mathfrak{G}}})(\rho\,\omega)
=[\alpha_{\mathfrak{G}}(\alpha_{\mathfrak{G}}+n-2)
-\Lambda_{\mathfrak{G}}]\rho^{\alpha_{\mathfrak{G}}-2}\phi_{{\mathfrak{G}}}(\omega)=0
\end{eqnarray}
precisely for the choice \eqref{critic.G}. This ensures that the function 
$v_{{\mathfrak{G}}}$ is harmonic in the cone $\Gamma_{{\mathfrak{G}}}$. 
In summary, taking $\alpha_{{\mathfrak{G}}}$ as in \eqref{formula.G} 
ensures that 
\begin{eqnarray}\label{JGD}
\Delta v_{{\mathfrak{G}}}=0\,\mbox{ in }\,\Gamma_{{\mathfrak{G}}},\quad
v_{{\mathfrak{G}}}=0\,\mbox{ on }\,\partial\Gamma_{{\mathfrak{G}}},\quad
v_{\theta}>0\,\mbox{ in }\,\Gamma_{{\mathfrak{G}}}. 
\end{eqnarray}

In the axially symmetric case, i.e., when 
${\mathfrak{G}}=S^{n-1}\cap\Gamma_{\theta}$ for some $\theta\in(0,\pi)$, 
a good deal is known about the properties enjoyed by the exponent 
$\alpha_\theta$ introduced in \eqref{formula} (see, e.g., Theorem~3 on p.\,44, 
Theorem~6 on p.\,50 in \cite{Ch} and the discussion on p.\,112 of \cite{Ai1}). 
Specifically, for each $n\geq 2$ one has 
\begin{eqnarray}\label{alphamap}
&& (0,\pi)\ni\theta\mapsto\alpha_\theta\in(0,+\infty)
\quad\mbox{ is strictly decreasing and continuous},
\\[4pt]
&& \alpha_{\pi/2}=1\quad\mbox{ and }\quad
 \lim_{\theta\searrow 0}\alpha_\theta=+\infty,
\label{alphamap2}
\\[4pt]
&& \lim_{\theta\nearrow\pi}\alpha_\theta=0\quad\mbox{if }\,\,n\geq 3,
\label{lim}
\\[4pt]
&& \alpha_\theta=\frac{\pi}{2\theta}\quad\mbox{if }\,\,n=2,
\quad\mbox{ and }\,\,
\alpha_\theta=\frac{\pi}{\theta}-1\quad\mbox{if }\,\,n=4,
\label{alphan2}
\\[4pt]
&& \alpha_\theta\in({\textstyle\frac12},+\infty)\quad\mbox{and}\quad 
\lim_{\theta\nearrow\pi}\alpha_\theta=\textstyle\frac12
\quad\mbox{if }\,\,n=2,
\label{limn2}
\\[4pt]
&& \alpha_\theta=2\Longleftrightarrow\theta={\rm arccos}\,(1/\sqrt{n}).
\label{limn2F}
\end{eqnarray}
\noindent The computations in the case $n=2$ are particularly simple.  
Indeed, the eigenvalue problem for the Dirichlet-Laplacian
on the one-dimensional arc $\{e^{i\omega}:\,-\theta<\omega<\theta\}$ 
in the unit circle becomes (with `prime'
denoting the angular derivative $d/d\omega$) 
$\phi''(\omega)+\Lambda\phi(\omega)=0$ for $-\theta<\omega<\theta$,  
$\phi(-\theta)=\phi(\theta)=0$. The smallest positive eigenvalue is then
$\Lambda=\Lambda_\theta=\frac{\pi^2}{(2\theta)^2}$ which, in light
of \eqref{formula}, gives the first formula in \eqref{alphan2}.
In the higher dimensional setting, the eigenvalue problem on a 
spherical cap leads to a less transparent equation. 
To describe this, recall that the so-called Gegenbauer 
functions, ${\mathcal{C}}_\alpha^\nu(z)$, are the solutions of 
Gegenbauer's differential equation
\begin{equation}\label{diffeqg}
(z^2-1)\frac{d^2g}{dz^2}+(2\nu+1)z\frac{dg}{dz}
-\alpha(\alpha+2\nu)g=0,\qquad z,\nu,\alpha\in{\mathbb{C}}.
\end{equation}
\noindent When considered with the variable $z$ restricted to the 
interval $(-1,1)$ on the real axis, the above second-order ODE is 
endowed with the initial conditions 
\begin{eqnarray}\label{TGdf}
g(-1)=1\quad\mbox{and}\quad
\frac{dg}{dz}(-1)=-\frac{\alpha(\alpha+2\nu)}{2\nu+1}.
\end{eqnarray}
For more details on this subject see, e.g., \cite{MOS}.
In the present context, the key feature of the Gegenbauer 
functions is that the exponent $\alpha_\theta$ from \eqref{formula}
coincides with the first positive zero of the mapping 
$\alpha\mapsto{\mathcal{C}}^{\frac{n-2}{2}}_\alpha(-\cos\theta)$;
compare with Lemma~6.6.3 in \cite{KMR}. For example, 
the continuity of \eqref{alphamap} follows from this representation 
and classical results on the dependence of the solution of ODE's 
on parameters. For related material see also \cite{Miller}
(especially Theorem~2, p.\,308), and \cite{MS} (where, in lieu of 
\eqref{diffeqg}, the authors work with an ODE for 
$f_{n,\alpha}(\theta):={\mathcal{C}}^{\frac{n-2}{2}}_\alpha(-\cos\theta)$).

We continue by recording the definition of the 
class of nontangentially accessible domains (introduced by 
Jerison and Kenig in \cite{JK}), and by making a couple of remarks. 

\begin{definition}\label{Def-NTA}
A nonempty, proper open subset $\Omega$ of ${\mathbb{R}}^{n}$ is called 
an NTA domain provided $\Omega$ satisfies 
both an interior and an exterior corkscrew condition (with constants 
$M$, $r_\ast$ as in Definition~\ref{TFG-54}) and $\Omega$ satisfies 
a Harnack chain condition, defined as follows (with reference to 
$M$ as above). 

If $x_1,x_2\in\Omega$ and $k\in{\mathbb{N}}$ are such 
that $\delta_{\Omega}(x_i)\geq\varepsilon$ 
for $i=1,2$, and $|x_1-x_2|\leq 2^k\varepsilon$, for some $\varepsilon>0$, 
then there exist $Mk$ balls $B_j\subseteq\Omega$, $1\leq j\leq Mk$, such that 
\begin{enumerate}
\item[(i)] $x_1\in B_1$, $x_2\in B_{Mk}$ and 
$B_j\cap B_{j+1}\neq\emptyset$ for $1\leq j\leq Mk-1$; 
\item[(ii)] each ball $B_j$ has a radius $r_j$ satisfying 
\begin{eqnarray}\label{gabab}
M^{-1}r_j\leq {\rm dist}(B_j,\partial\Omega)\leq Mr_j\,\,\mbox{ and }\,\,
r_j\geq M^{-1}\,{\rm min}\bigl\{\delta_{\Omega}(x_1),
\delta_{\Omega}(x_2)\bigr\}.
\end{eqnarray}
\end{enumerate}
\end{definition}

Two comments are going to be of importance for us later on. 
First, the relevance of the Harnack chain condition is that, thanks to 
Harnack's inequality, if $w$ is a positive harmonic function in $\Omega$
then, in the context of the second part of Definition~\ref{Def-NTA}, 
\begin{eqnarray}\label{RfD.6}
 M^{-k}w(x_1)\leq w(x_2)\leq M^k w(x_1).
\end{eqnarray}
Second, any bounded NTA domain is regular for the Dirichlet problem 
(it suffices to recall that any such domain satisfies an exterior corkscrew 
condition). 

Moving on, a bounded domain $\Omega$ in ${\mathbb{R}}^n$
is called a Lipschitz domain provided $\Omega$ and its boundary 
$\partial\Omega$ locally coincide with, respectively, the upper-graph 
and the graph of a Lipschitz function. In this vein, recall that 
a function $f:D\to{\mathbb{R}}$ where, say, $D\subseteq{\mathbb{R}}^{n-1}$, 
is called Lipschitz provided there exists $M>0$ so that 
$|f(x)-f(y)|\leq M|x-y|$ for any $x,y\in D$. 
A formal definition is given below.

\begin{definition}\label{lipdom} 
A bounded domain $\Omega\subset{\mathbb{R}}^n$ is called Lipschitz if for any 
$x_0\in\partial\Omega$ there exist $r,\,h>0$
and a coordinate system $\{x_1,\dots,x_n\}$ in ${\mathbb{R}}^n$
(isometric to the canonical one) with origin at $x_0$ along with a 
function $\varphi:{\mathbb{R}}^{n-1}\to{\mathbb{R}}$ which is Lipschitz 
and for which the following property holds. If $C(r,h)$ denotes the open cylinder 
$\{x=(x',x_n):\,|x'|<r\mbox{ and }-h<x_n<h\}\subset{\mathbb{R}}^n$, then
\begin{equation}\label{cylinders}
\begin{array}{ll}\vspace{2mm}
\partial\Omega\cap C(r,h)=\{x=(x',x_n):\,\,|x'|<r
\,\,\,\mbox{and}\,\,\,x_n=\varphi(x_1,\dots,x_{n-1})\},
\\[4pt]
\Omega\cap C(r,h)=\{x=(x',x_n):\,\,|x'|<r\,\,\,\mbox{and}\,\,\,
\varphi(x_1,\dots,x_{n-1})<x_n<h\}.
\end{array}
\end{equation}
Fix an atlas for $\partial\Omega$, i.e. a finite collection of cylinders 
$\{C_k(r_k,h_k)\}_{1\leq k\leq N}$ (with associated Lipschitz maps 
$\{\varphi_k\}_{1\leq k\leq N}$) covering $\partial\Omega$. 
The Lipschitz constant of $\Omega$, denoted in what follows
by $\kappa_{\Omega}$, is defined as the infimum of 
${\rm max}\,\{\|\nabla\varphi_k\|_{L^\infty}:\,1\leq k\leq N\}$
taken over all possible atlases of $\partial\Omega$. 

Finally, domains of class $C^k$ for some $k\in{\mathbb{N}}\cup\{0\}$
(or $C^{k,\alpha}$ domains for $k\in{\mathbb{N}}\cup\{0\}$ and $\alpha\in(0,1]$, 
respectively) are defined analogously, by requiring that all functions $\varphi:{\mathbb{R}}^{n-1}\to{\mathbb{R}}$ considered above 
are of class $C^k$ (or class $C^{k,\alpha}$, respectively).
\end{definition}

Clearly, any bounded Lipschitz domain is NTA (hence regular for 
the Dirichlet problem), and has an Ahlfors regular boundary.
For further reference, let us also remark here that 
\begin{eqnarray}\label{RFDS}
\mbox{$\Omega\subseteq{\mathbb{R}}^n$ bounded $C^1$ domain}
\,\Longrightarrow\,\kappa_{\Omega}=0.
\end{eqnarray}

For an open set $\Omega\subseteq{\mathbb{R}}^n$ and a number $R>0$, 
define $\Omega_R$ as the collection of points in $\Omega$ at distance 
at least $R$ from the boundary, i.e., 
\begin{eqnarray}\label{RF65}
\Omega_R:=\bigl\{x\in\Omega:\,\delta_{\Omega}(x)>R\bigr\}.
\end{eqnarray}

\begin{definition}\label{Hga.4}
We say that an open set $\Omega\subseteq{\mathbb{R}}^n$ satisfies 
an (axially symmetric) inner cone condition with half-aperture 
$\theta\in(0,\pi/2)$ provided there exists $R\in(0,{\rm diam}\,(\Omega))$ 
with the property that
\begin{eqnarray}\label{RF66}
\forall\,x\in\overline\Omega\setminus\Omega_R\quad
\exists\,\eta\in S^{n-1}\,\,\mbox{ such that }\,\,
\Gamma_\theta^R(x,\eta)\subseteq\Omega.
\end{eqnarray}
More generally, given an open connected $C^{1,\alpha}$ subdomain ${\mathfrak{G}}$ 
of $S^{n-1}$, with $\alpha\in(0,1)$, we say that $\Omega\subseteq{\mathbb{R}}^n$ 
satisfies an inner cone condition with smooth profile ${\mathfrak{G}}$ 
provided there exists $R\in(0,{\rm diam}\,(\Omega))$ so that 
\begin{eqnarray}\label{RF66.X}
\begin{array}{c}
\forall\,x\in\overline\Omega\setminus\Omega_R\,\,\,
\exists\,U\mbox{ isometry of ${\mathbb{R}}^n$ for which} 
\\[4pt]
\mbox{$U(0)=x$ and }\,U(\Gamma_{\mathfrak{G}}\cap B(0,R))\subseteq\Omega.
\end{array}
\end{eqnarray}
\end{definition}

\begin{definition}\label{Vbab}
Given a bounded Lipschitz domain $\Omega\subseteq{\mathbb{R}}^n$, 
with Lipschitz constant $\kappa_{\Omega}\in[0,+\infty)$, 
define $\alpha_{\Omega}$ to be the index associated as 
in Definition~\ref{defalpha} for the angle 
\begin{eqnarray}\label{MS-G3}
\theta=\theta_{\Omega}:=\arctan\Bigl(\frac{1}{\kappa_{\Omega}}\Bigr)
\in (0,{\textstyle\frac{\pi}{2}}),
\end{eqnarray}
that is, $\alpha_{\Omega}=\alpha_{\theta_{\Omega}}$.
\end{definition}

In the context of the above definitions, it is illuminating 
to point out that, in the class of bounded Lipschitz domains,  
\begin{eqnarray}\label{RFDE}
\mbox{$\alpha_{\Omega}\geq 1$ and, in fact, 
$\alpha_{\Omega}\searrow 1$ as $\kappa_{\Omega}\searrow 0$}.
\end{eqnarray}
Indeed, this follows readily from \eqref{alphamap} and 
(the first formula in) \eqref{alphamap2}. On the other hand, 
by the second formula in \eqref{alphamap2}, 
\begin{eqnarray}\label{RFDE.2}
\alpha_{\Omega}\nearrow +\infty\,\,\mbox{ as }\,\,
\kappa_{\Omega}\nearrow +\infty.
\end{eqnarray}
It is also straightforward to check that 
\begin{eqnarray}\label{MS-G2f}
\begin{array}{c}
\mbox{every bounded Lipschitz domain 
$\Omega\subseteq{\mathbb{R}}^n$ satisfies an}
\\[4pt] 
\mbox{inner cone condition with half-aperture $\theta$, 
for any $\theta\in(0,\theta_{\Omega})$.}
\end{array}
\end{eqnarray}
In the opposite direction we note that there exist bounded NTA domains 
which satisfy an inner cone condition but which are not necessarily 
Lipschitz (take, for example, the set-theoretic difference of an open truncated circular cone and a closed truncated circular subcone with smaller aperture which have a common vertex).

We now proceed to discuss a useful bound from below for the Green 
function associated with the Dirichlet Laplacian in bounded NTA domains
satisfying an inner cone condition. It should be noted that in the class 
of bounded Lipschitz domains and for a more restrictive concept of cone 
condition, \cite[Proposition~2, p.\,272]{MS} contains such an
estimate. Similar estimates have also been proved in \cite{Ai3} in the
setting of a uniform domain, which is a more general notion than that
of an NTA domain. However, the estimates in \cite{Ai3} and \cite{MS} 
are given in a form which is not sufficiently explicit for 
our purposes. Here, we largely follow the approach in \cite{Su} with the 
goal of monitoring how the geometrical characteristics of $\Omega$ enter 
the final estimate. 

\begin{proposition}\label{MS-G}
Assume that $\Omega\subseteq{\mathbb{R}}^n$ is a bounded NTA 
domain which satisfies an inner cone condition with smooth profile 
${\mathfrak{G}}\subseteq S^{n-1}$. Let $\alpha_{{\mathfrak{G}}}$ be 
the index associated with the subdomain ${\mathfrak{G}}$ of $S^{n-1}$ 
as in Definition~\ref{defalpha}, and fix $R\in(0,{\rm diam}\,(\Omega)/4)$ 
such that \eqref{RF66.X} holds.  

Then, if $n\geq 3$, there exists a finite constant $c=c(n,{\mathfrak{G}})>0$ 
with the property that the Green function $G(\cdot,\cdot)$ for the 
Dirichlet Laplacian in $\Omega$ satisfies the dilation invariant estimate
\begin{eqnarray}\label{MS-G2}
\hskip -0.20in
G(x,y)\geq c(n,{\mathfrak{G}})
\Bigl(\frac{\delta_{\Omega}(x)}{{\rm diam}\,(\Omega)}
\Bigr)^{\alpha_{{\mathfrak{G}}}}\Bigl(\frac{R}{{\rm diam}\,(\Omega)}
\Bigr)^{m} R^{2-n},\,\,\mbox{ for every $x\in\Omega$ and $y\in\Omega_R$},
\end{eqnarray}
where $m>0$ depends only on the NTA constants of $\Omega$. 
Furthermore, a similar conclusion holds in the case when $n=2$ 
provided the factor $R^{2-n}$ in the right-hand side of 
\eqref{MS-G2} is replaced by $\log\bigl({\rm diam}\,(\Omega)/R\bigr)$. 
\end{proposition}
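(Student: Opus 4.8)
The plan is to establish the lower bound on $G(x,y)$ by interpolating between two regimes: points $x$ that are "close" to $y$ (where the classical lower bound \eqref{TF-1} applies directly) and points $x$ near $\partial\Omega$ (where the barrier function $v_{\mathfrak{G}}$ from \eqref{vTR} controls the decay). First I would fix $y\in\Omega_R$ and observe that by \eqref{TF-1}, $G(x,y)\geq c_n|x-y|^{2-n}\geq c_n(2R)^{2-n}$ uniformly for $x$ in the ball $B(y,\tfrac12\delta_\Omega(y))\supseteq B(y, R/2)$. Next, using the Harnack chain condition together with \eqref{RfD.6} applied to the positive harmonic function $G(\cdot,y)$ on $\Omega\setminus\{y\}$, one propagates this lower bound to any fixed interior corkscrew-type point: for $x_0$ with $\delta_\Omega(x_0)\geq R/M$ and $|x_0-y|\leq\tfrac12\mathrm{diam}(\Omega)$, a chain of at most $Mk$ balls with $2^k\approx\mathrm{diam}(\Omega)/R$ gives $G(x_0,y)\geq M^{-k}c_n R^{2-n}\geq c(n,\mathfrak{G})(R/\mathrm{diam}(\Omega))^m R^{2-n}$ with $m=\log_2 M$ (or a fixed multiple thereof). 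This handles the case $\delta_\Omega(x)\gtrsim R$, which is the "$(R/\mathrm{diam}(\Omega))^{\alpha_{\mathfrak{G}}}$-trivial" part of \eqref{MS-G2} since there $\delta_\Omega(x)/\mathrm{diam}(\Omega)\approx R/\mathrm{diam}(\Omega)$.

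The remaining, substantive case is $x\in\Omega$ with $\delta_\Omega(x)<R$. Here I would invoke the inner cone condition: pick $x^*\in\partial\Omega$ realizing $\delta_\Omega(x)$, then a nearby boundary point to which \eqref{RF66.X} applies, yielding an isometry $U$ with $U(0)$ a boundary point near $x$ and $U(\Gamma_{\mathfrak{G}}\cap B(0,R))\subseteq\Omega$. Inside this congruent cone I would use the scaled barrier
\begin{eqnarray*}
w(z):=\bigl(\tfrac{\rho}{R}\bigr)^{\alpha_{\mathfrak{G}}}\phi_{\mathfrak{G}}(\omega),\qquad z=U(\rho\,\omega),\ 0<\rho<R,\ \omega\in\mathfrak{G},
\end{eqnarray*}
which by \eqref{vTR.GG}--\eqref{JGD} is harmonic in the cone, vanishes on the lateral boundary, and is bounded by $1$. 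The function $R^{n-2}\cdot(\text{const})\cdot G(\cdot,y)$ is superharmonic (in fact harmonic away from $y$) on the cone, positive, and — using the previous step's bound on the spherical cap $\{\rho=R/2\}\cap U(\Gamma_{\mathfrak{G}})$, reached from $y$ by a Harnack chain — is bounded below by $c(n,\mathfrak{G})(R/\mathrm{diam}(\Omega))^m$ there. Comparing via the Maximum Principle on the truncated cone $U(\Gamma_{\mathfrak{G}}\cap B(0,R/2))$ gives
\begin{eqnarray*}
G(z,y)\geq c(n,\mathfrak{G})\bigl(\tfrac{R}{\mathrm{diam}(\Omega)}\bigr)^m R^{2-n}\,w(z)\geq c(n,\mathfrak{G})\bigl(\tfrac{R}{\mathrm{diam}(\Omega)}\bigr)^m R^{2-n}\bigl(\tfrac{\delta_\Omega(z)}{R}\bigr)^{\alpha_{\mathfrak{G}}}
\end{eqnarray*}
for $z$ in the cone, where the last step uses $\phi_{\mathfrak{G}}(\omega)\gtrsim \mathrm{dist}_{S^{n-1}}(\omega,\partial_{S^{n-1}}\mathfrak{G})\gtrsim \delta_\Omega(z)/\rho$ from \eqref{GFsA} and $\rho\leq R$. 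Finally one checks that the point $x$ itself lies (after possibly shrinking $R$ by a dimensional factor, which is harmless) in such a cone with $\delta_\Omega$ along the axis comparable to $\delta_\Omega(x)$, and rewriting $R^{2-n}(\delta_\Omega(x)/R)^{\alpha_{\mathfrak{G}}} = (\delta_\Omega(x)/\mathrm{diam}(\Omega))^{\alpha_{\mathfrak{G}}}(R/\mathrm{diam}(\Omega))^{-\alpha_{\mathfrak{G}}}R^{2-n}$ and absorbing $(R/\mathrm{diam}(\Omega))^{-\alpha_{\mathfrak{G}}}$ into a power of $R/\mathrm{diam}(\Omega)$ (adjusting $m$) yields exactly \eqref{MS-G2}.

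I expect the main obstacle to be the careful bookkeeping in the Harnack chain step: one must verify that the spherical cap $\{\rho = R/2\}$ inside the inner cone can indeed be joined to the pole $y$ (which satisfies $\delta_\Omega(y)\geq R$) by a Harnack chain of controlled length — this requires that all such points have $\delta_\Omega\gtrsim R$ (true on the cap, since $\phi_{\mathfrak{G}}$ is bounded below on the compactly-contained "half-radius" slice of $\mathfrak{G}$) and mutual distance $\lesssim\mathrm{diam}(\Omega)$, so that $k\approx\log_2(\mathrm{diam}(\Omega)/R)$ and the resulting exponent $m$ depends only on the NTA constant $M$. A secondary technical point is the $n=2$ case, where $|x-y|^{2-n}$ is replaced by $\log(\mathrm{diam}(\Omega)/|x-y|)$ throughout; the argument is structurally identical, the only change being that the base estimate on $G$ near $y$ reads $G(x,y)\gtrsim\log 2$ on $B(y,\tfrac12\delta_\Omega(y))$ and hence the overall prefactor becomes $\log(\mathrm{diam}(\Omega)/R)$ after the chain, exactly as stated. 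The uniqueness and positivity properties of $G$ recorded in \eqref{Do-1}--\eqref{con1} and the $W^{1,2}$-Maximum Principle (Proposition~\ref{Max-PP}) justify all the comparisons, applied on the Lipschitz-type subdomains $U(\Gamma_{\mathfrak{G}}\cap B(0,R/2))$.
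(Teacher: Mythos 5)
Your overall architecture (an interior Harnack-chain bound for $G(\cdot,y)$ at points with $\delta_\Omega\gtrsim R$, followed by a barrier comparison in the inner cone to capture the decay $\delta_\Omega(x)^{\alpha_{\mathfrak{G}}}$ near $\partial\Omega$) is the same as the paper's, and your first regime matches the paper's Step~2. But the second regime contains a genuine gap, at exactly the point you flag as "bookkeeping". You claim that every point of the half-radius cap $\{\rho=R/2\}\cap U(\Gamma_{\mathfrak{G}})$ satisfies $\delta_\Omega\gtrsim R$, "since $\phi_{\mathfrak{G}}$ is bounded below on the compactly-contained half-radius slice of $\mathfrak{G}$". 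This is false: $\phi_{\mathfrak{G}}$ vanishes on $\partial_{S^{n-1}}\mathfrak{G}$, the cap is not compactly contained in the cone relative to its lateral boundary, and the lateral boundary of the inner cone may lie on $\partial\Omega$ (take $\Omega$ itself equal to such a cone near a vertex). Hence cap points near $\partial_{S^{n-1}}\mathfrak{G}$ can have $\delta_\Omega$ arbitrarily small, no Harnack chain of length $\lesssim\log(\mathrm{diam}(\Omega)/R)$ reaches them, and $G(\cdot,y)$ admits no uniform constant lower bound there (it tends to $0$ at regular boundary points). Consequently the Maximum Principle comparison on the truncated cone cannot be run with a constant on the cap: what is actually needed on the cap is the linear-rate comparison $G(\cdot,y)\gtrsim\phi_{\mathfrak{G}}$, i.e.\ a boundary-Harnack/Hopf-type ingredient. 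This is precisely the paper's Step~1 (estimate \eqref{Gba-1}), proved by combining a lower bound $w\geq C\,{\rm dist}_{S^{n-1}}(\cdot,\partial_{S^{n-1}}\mathfrak{G})$ for positive harmonic functions normalised at an interior point (Harnack plus the $C^{1,\alpha}$ smoothness of $\mathfrak{G}$, cf.\ \eqref{Gba-4}) with the two-sided bound \eqref{GFsA} for $\phi_{\mathfrak{G}}$. Your proposal omits this ingredient and replaces it with an incorrect geometric claim, so the central step does not go through as written.

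A second, smaller unjustified step is the assertion that $x$ itself lies in the cone supplied by \eqref{RF66.X} with height along the axis comparable to $\delta_\Omega(x)$: the isometry $U$ (hence the cone's direction) is dictated by the domain, not by the position of $x$, and shrinking $R$ does not fix a directional mismatch. The paper circumvents this by never asking $x$ to be in the cone: it compares at an auxiliary point $x_1=U(\delta_\Omega(x)z/4)$ inside the cone, where the Step~1 estimate applied to $w=G(\cdot,y)$ gives $G(x_1,y)\gtrsim R^{-\alpha_{\mathfrak{G}}}\delta_\Omega(x)^{\alpha_{\mathfrak{G}}}G(RU(z),y)$, and then links $x$ to $x_1$ by a bounded Harnack chain at scale $\delta_\Omega(x)$; the residual case where $x$ is within $2R$ of $y$ (where such a chain could hit $y$) is handled separately and trivially by \eqref{TF-1}, with a relabeling $4R\to R$ at the end. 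If you import the paper's Step~1 comparison on the cap and replace "$x$ lies in the cone" by this auxiliary-point-plus-short-chain argument (with the near-$y$ case split off), your outline becomes essentially the paper's proof; the remaining steps of your write-up, including the $n=2$ modification and the final absorption of the factor $(R/\mathrm{diam}(\Omega))^{-\alpha_{\mathfrak{G}}}\geq 1$, are fine.
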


\begin{proof}
We shall only consider the case $n\geq 3$, since the two-dimensional 
case is treated analogously. The proof is divided into several steps, 
starting with 

\vskip 0.08in
\noindent{\tt Step~1.} Assume that ${\mathfrak{G}}$ is a connected, 
subdomain of class $C^{1,\alpha}$, for some $\alpha\in(0,1)$, of $S^{n-1}$ 
and recall the barrier function 
$v_{\mathfrak{G}}$ from \eqref{vTR}. Also, fix $z\in {\mathfrak{G}}$. 
Then there exists a finite constant $C({\mathfrak{G}},z)>0$ with the 
property that for every $r>0$ one has 
\begin{eqnarray}\label{Gba-1}
w(rz)\,v_{\mathfrak{G}}(x)
\leq C({\mathfrak{G}},z)\,r^{\alpha_{\mathfrak{G}}}w(x),
\qquad\forall\,x\in\Gamma_{\mathfrak{G}}\cap B(0,r),
\end{eqnarray}
for every function 
\begin{eqnarray}\label{Gba-2}
w\in C^0(\overline{\Gamma_{\mathfrak{G}}\cap B(0,2r)})
\,\,\mbox{ satisfying }\,\,w>0\,\,\mbox{ and }\,\,\Delta w=0
\,\,\mbox{ in }\,\,\Gamma_{\mathfrak{G}}\cap B(0,2r).
\end{eqnarray}

It suffices to establish the above claim in the case when $r=1$, 
since then \eqref{Gba-1} follows by rescaling. If this is the case, 
by considering $x\mapsto w(x)/w(z)$ in place of $w(x)$, there
is also no loss of generality in assuming that $w(z)=1$. 
In this scenario, the desired conclusion follows from the Maximum 
Principle as soon as we show that there exists some finite constant 
$C({\mathfrak{G}},z)>0$ such that 
\begin{eqnarray}\label{Gba-3}
v_{\mathfrak{G}}(x)\leq C({\mathfrak{G}},z)\,w(x),\qquad
\forall\,x\in\Gamma_{\mathfrak{G}}\cap\partial B(0,1),
\end{eqnarray}
for every positive function 
$w\in C^0(\overline{\Gamma_{\mathfrak{G}}\cap B(0,2)})$ which is 
harmonic in $\Gamma_{\mathfrak{G}}\cap B(0,2)$ and satisfies $w(z)=1$.
With this goal in mind, we then observe that, by Harnack's inequality 
and the smoothness of ${\mathfrak{G}}$, there exists 
$C=C({\mathfrak{G}},z)>0$ with the property that 
\begin{eqnarray}\label{Gba-4}
w(\omega)\geq C\,{\rm dist}_{S^{n-1}}
\bigl(\omega,\partial_{S^{n-1}}{\mathfrak{G}}\bigr),
\qquad\forall\,\omega\in {\mathfrak{G}},
\end{eqnarray}
whereas, by virtue of \eqref{GFsA},  
\begin{eqnarray}\label{Gba-5}
v_{{\mathfrak{G}}}(\omega)\approx
{\rm dist}_{S^{n-1}}\bigl(\omega,\partial_{S^{n-1}}{\mathfrak{G}}\bigr),\quad
\mbox{ uniformly for }\,\,\omega\in {\mathfrak{G}}. 
\end{eqnarray}
In concert, \eqref{Gba-4} and \eqref{Gba-5} establish estimate \eqref{Gba-3}, 
thus concluding the proof of the claim in Step~1. 

\vskip 0.08in
\noindent{\tt Step~2.} 
Suppose that $\Omega\subseteq{\mathbb{R}}^n$ is a bounded NTA domain. 
Then there exists a dimensional constant $C_n>0$ and some $m>0$ which 
depends only on the NTA constants of $\Omega$ with the property that 
for each $R\in(0,{\rm diam}\,(\Omega))$ the Green function associated 
with the Dirichlet Laplacian in $\Omega$ satisfies 
\begin{eqnarray}\label{Gba-6}
G(x,y)\geq C_n\Bigl(\frac{R}{{\rm diam}\,(\Omega)}\Bigr)^m R^{2-n},
\quad\mbox{ for every }\,\,x,y\in\Omega_R.
\end{eqnarray}

To justify this claim, recall the constant $M$ from Definition~\ref{Def-NTA} 
and pick $m>0$ such that $M=2^m$. Going further, fix $x,y\in\Omega_R$ 
and select a point $y_o\in B(y,R/2)\setminus B(y,R/4)$. Consider now 
a Harnack chain of balls joining $x$ and $y_o$ in $\Omega$. 
More specifically, pick a natural number $k\sim\log_2(|x-y|/R)$ and suppose 
$B_j\subseteq\Omega$, $1\leq j\leq Mk$, is a family of balls such that 
$x$ is the centre of $B_1$, $y_o$ is the centre of $B_{Mk}$, 
$B_j\cap B_{j+1}\neq\emptyset$ for $1\leq j\leq Mk-1$, 
each ball $B_j$ has a radius $r_j$ satisfying 
$M^{-1}r_j\leq {\rm dist}(B_j,\partial\Omega)\leq Mr_j$, as well as
$r_j\geq M^{-1}\,{\rm min}\bigl\{\delta_{\Omega}(x),
\delta_{\Omega}(y_o)\bigr\}$. Then, by repeated applications 
of Harnack's inequality (compare with \eqref{RfD.6}), we obtain 
\begin{eqnarray}\label{Gba-7}
G(x,y)\geq C_nM^{-k}G(y_o,y)\geq 
C_n\Bigl(\frac{R}{|x-y_o|}\Bigr)^m R^{2-n},
\end{eqnarray}
by the choice of $m$, $k$, and $y_o$, and thanks to \eqref{TF-1X}. 
Since $|x-y_o|\leq{\rm diam}\,(\Omega)$, \eqref{Gba-6} follows. 

A moment's reflection shows that \eqref{Gba-6} implies 
\eqref{MS-G2} in the case when $x,y\in\Omega_R$. We continue with:

\vskip 0.08in
\noindent{\tt Step~3.} 
Here we prove the inequality stated in \eqref{MS-G2} in the case when 
$0<R<{\rm diam}\,(\Omega)/4$ and when $y\in\Omega_{4R}$ and 
$x\in\Omega\setminus(\Omega_R\cup B(y,2R))$. Assuming that two such 
points have been fixed, pick $x_\ast\in\partial\Omega$ such that 
$\delta_{\Omega}(x)=|x-x_{\ast}|$, and introduce $x_0:=\frac12(x+x_{\ast})$. 
Also, choose an isometry $U$ of ${\mathbb{R}}^n$ with $U(0)=x_0$ and 
$U(\Gamma_{{\mathfrak{G}}}\cap B(0,2R))\subseteq\Omega$. 
It follows that if $z\in {\mathfrak{G}}$ is fixed, then there exists 
a finite constant $C=C({\mathfrak{G}},z)\geq 1$ such that 
$R U(z)\in\Omega_{R/C}$. For the reader's convenience, the special 
case when ${\mathfrak{G}}$ is a spherical cap on $S^{n-1}$ with 
half-angle $\theta\in(0,\pi/2)$ and when 
$U(\Gamma_{{\mathfrak{G}}}\cap B(0,2R))=\Gamma^{2R}_{\theta}(x_0,\eta)$
for some $\eta\in S^{n-1}$ is sketched in the picture below:  

\begin{center}
\includegraphics[scale=0.65]{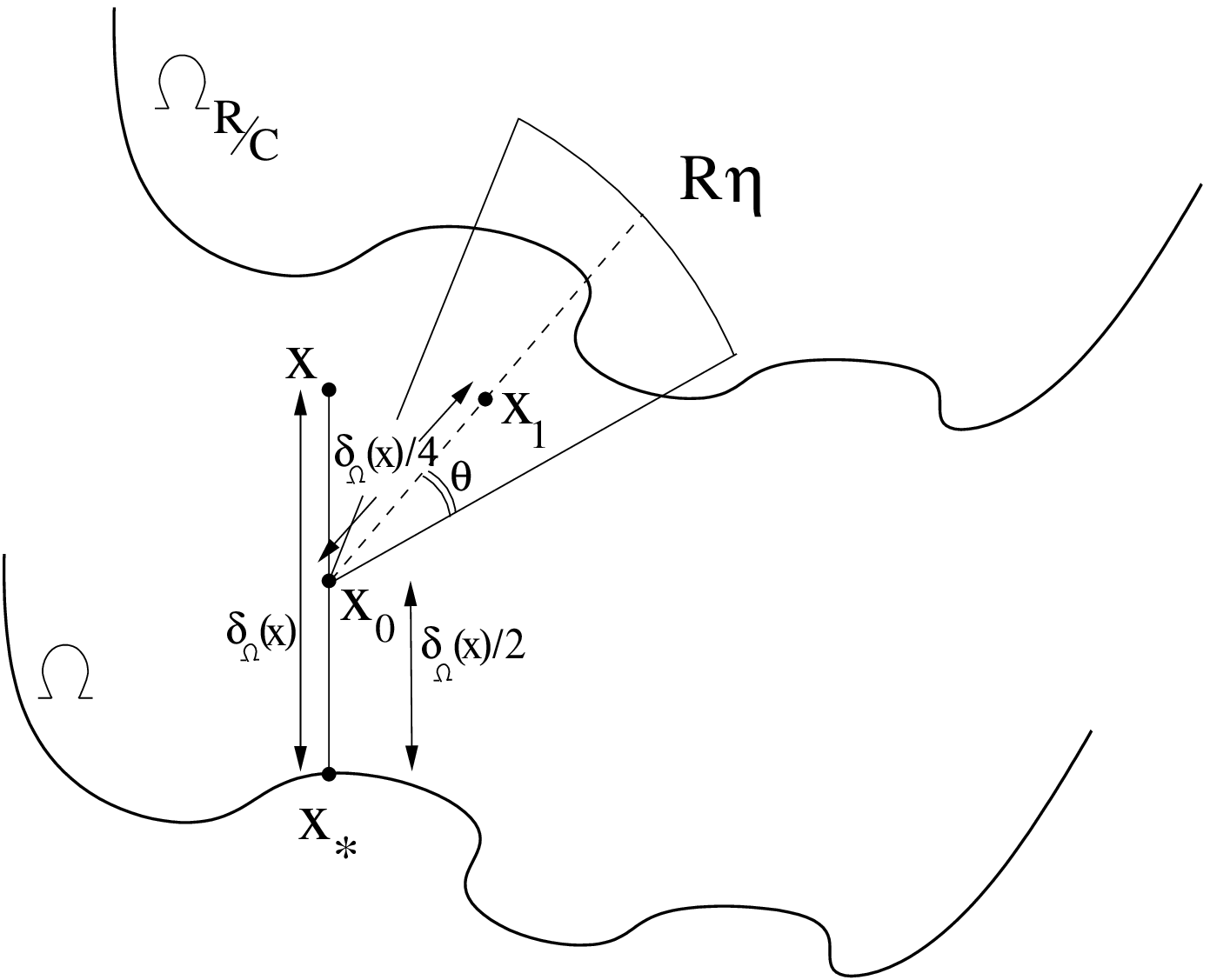}
\end{center}

To continue, introduce $x_1:=U(\delta_{\Omega}(x)z/4)$ which, 
given that $\delta_{\Omega}(x)\leq R$, belongs to the cone 
$U(\Gamma_{{\mathfrak{G}}}\cap B(0,2R))$. Then, on the one hand, Harnack's 
inequality gives 
\begin{eqnarray}\label{Gba-8}
G(x,y)\approx G(x_0,y)\approx G(x_1,y),
\end{eqnarray}
with universal comparability constants, while on the other hand, 
\eqref{Gba-1} applied to the function $w:=G(\cdot,y)$ yields 
\begin{eqnarray}\label{Gba-9}
G(x_1,y) &\geq & C(n,{\mathfrak{G}})R^{-\alpha_{{\mathfrak{G}}}}
|x_1-x_0|^{\alpha_{{\mathfrak{G}}}}G(RU(z),y)
\nonumber\\[4pt]
&\geq & C(n,{\mathfrak{G}})R^{-\alpha_{{\mathfrak{G}}}}
\delta_{\Omega}(x)^{\alpha_{{\mathfrak{G}}}}
\Bigl(\frac{R}{{\rm diam}\,(\Omega)}\Bigr)^m R^{2-n},
\end{eqnarray}
where the last inequality utilises \eqref{Gba-6} and the fact that 
$RU(z)\in\Omega_{R/C}$. Now \eqref{MS-G2} follows in the case we 
are currently considering from \eqref{Gba-8} and \eqref{Gba-9} 
(here we also use the fact that $0<R/{\rm diam}\,(\Omega)<1$ and that 
$\alpha_{{\mathfrak{G}}}>0$). 

The final arguments in the proof of \eqref{MS-G2} are contained in:

\vskip 0.08in
\noindent{\tt Step~4.} 
When $y\in\Omega_{4R}$ and $x\in(\Omega\setminus\Omega_{R})\cap B(y,2R)$ 
we have $\delta_{\Omega}(y)/2\geq 2R\geq |x-y|$, so \eqref{TF-1X} gives
$G(x,y)\geq C_n|x-y|^{2-n}\geq C_nR^{2-n}$. This is good enough to 
justify \eqref{MS-G2} in this case. Granted this and the cases treated
in Steps~2-3, it follows that \eqref{MS-G2} has been proved whenever
$y\in\Omega_{4R}$ and $x\in\Omega$. After relabeling, we may therefore 
conclude that \eqref{MS-G2} holds as stated. 
\end{proof}

The estimate in Proposition~\ref{MS-G} plays a basic role in our next 
theorem, which is the main result in this section. 
 
\begin{theorem}\label{TL-AS}
Assume that $\Omega\subseteq{\mathbb{R}}^n$ is a bounded NTA domain,  
with an Ahlfors regular boundary, and which satisfies an inner 
cone condition with smooth profile ${\mathfrak{G}}\subseteq S^{n-1}$. 
As usual, we denote by $\alpha_{{\mathfrak{G}}}$ the index 
associated with the subdomain ${\mathfrak{G}}$ of $S^{n-1}$ 
as in Definition~\ref{defalpha}. 

Next, let $R\in(0,{\rm diam}\,(\Omega)/4)$ be such that \eqref{RF66.X} 
holds and suppose that $0<\beta<1/\alpha_{{\mathfrak{G}}}$. 
Also, recall that $\Omega_R$ has been introduced in \eqref{RF65}. 
Then, if $n\geq 3$, the solution $u$ of \eqref{PKc-1} satisfies the 
dilation invariant estimate
\begin{eqnarray}\label{ASc-25}
\int_{\Omega}u(x)^{-\beta}\,dx &\leq & C_{\Omega}(n,{\mathfrak{G}},\beta)
\Bigl(\frac{{\rm diam}\,(\Omega)}{R}\Bigr)^{(m+\alpha_{\mathfrak{G}})\beta}
\Bigl(\frac{{\mathcal{H}}^{n-1}(\partial\Omega)}{R^{n-1}}\Bigr)
^{\alpha_{\mathfrak{G}}\beta}\times
\nonumber\\[4pt]
&&\times
\Bigl(\frac{|\Omega|}{R^n}\Bigr)^{2\beta/n-\alpha_{\mathfrak{G}}\beta}
\Bigl(\frac{R^n}{|\Omega_R|}\Bigr)^{\beta}|\Omega|^{1-2\beta/n}
\nonumber\\[4pt]
&\leq & C_{\Omega}(n,{\mathfrak{G}},\beta)
\Bigl(\frac{{\rm diam}\,(\Omega)}{R}\Bigr)^{n+m\beta}
\Bigl(\frac{R^n}{|\Omega_R|}\Bigr)^{1+(n-2)\beta/n}|\Omega|^{1-2\beta/n}
\end{eqnarray}
where $m>0$ depends only on the NTA constants of $\Omega$, and 
$C_{\Omega}(n,{\mathfrak{G}},\beta)>0$ is a finite constant which depends 
only on the Ahlfors character of $\partial\Omega$, the dimension $n$, 
the profile ${\mathfrak{G}}$ and the parameter $\beta$. 

Corresponding to $n=2$, assume that $\Omega\subseteq{\mathbb{R}}^2$ 
is a bounded NTA domain with an Ahlfors regular boundary,
satisfies the inner cone condition \eqref{RF66} with half-aperture 
$\theta\in(0,\pi/2)$ and height $R\in(0,{\rm diam}\,(\Omega)/4)$.
Then if $0<\beta<2\theta/\pi$, the solution $u$ of \eqref{PKc-1} 
satisfies the version of \eqref{ASc-25} written for $n=2$, i.e., 
the dilation invariant estimate 
\begin{eqnarray}\label{ASc-25.X}
\int_{\Omega}u(x)^{-\beta}\,dx &\leq & C_{\Omega}(\theta,\beta)
\Bigl(\frac{{\rm diam}\,(\Omega)}{R}\Bigr)^{(m+\alpha_{\mathfrak{G}})\beta}
\Bigl(\frac{{\mathcal{H}}^1(\partial\Omega)}{R}\Bigr)^{\alpha_{\mathfrak{G}}\beta}
\times
\nonumber\\[4pt]
&&\times\Bigl(\frac{|\Omega|}{R^2}\Bigr)^{\beta-\alpha_{\mathfrak{G}}\beta}
\Bigl(\frac{R^2}{|\Omega_R|}\Bigr)^{\beta}
|\Omega|^{1-\beta}
\end{eqnarray}
where, as before, $m>0$ depends only on the NTA constants of $\Omega$, 
and $C_{\Omega}(\theta,\beta)>0$ is a finite constant which depends 
only on the Ahlfors character of $\partial\Omega$, the angle $\theta$, 
and the parameter $\beta$. 
\end{theorem}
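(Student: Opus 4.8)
The plan is to combine the pointwise lower bound for the solution $u$ of \eqref{PKc-1} that follows from Proposition~\ref{MS-G} via the representation \eqref{PKc-2} with the integrability estimate for negative powers of the distance function provided by Lemma~\ref{bvv55}. First I would observe that, for $x\in\Omega$ and $y\in\Omega_R$, the Green function estimate \eqref{MS-G2} gives
\begin{eqnarray*}
u(x)=\int_{\Omega}G(x,y)\,dy\geq\int_{\Omega_R}G(x,y)\,dy
\geq c(n,{\mathfrak{G}})\,|\Omega_R|
\Bigl(\frac{\delta_{\Omega}(x)}{{\rm diam}\,(\Omega)}\Bigr)^{\alpha_{{\mathfrak{G}}}}
\Bigl(\frac{R}{{\rm diam}\,(\Omega)}\Bigr)^{m}R^{2-n}.
\end{eqnarray*}
Raising this to the power $-\beta$ (legitimate since $u>0$ and $\alpha_{{\mathfrak{G}}}\beta<1$) yields a pointwise bound of the form $u(x)^{-\beta}\leq K\,\delta_{\Omega}(x)^{-\alpha_{{\mathfrak{G}}}\beta}$, where $K$ is an explicit product of powers of $|\Omega_R|$, ${\rm diam}\,(\Omega)$, and $R$; here the hypothesis $\beta<1/\alpha_{{\mathfrak{G}}}$ guarantees that the exponent $\alpha:=\alpha_{{\mathfrak{G}}}\beta$ lies in $[0,1)$, which is exactly the range required to apply Lemma~\ref{bvv55}.

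Next I would integrate this pointwise inequality over $\Omega$ and invoke \eqref{bc-W1} from Lemma~\ref{bvv55} (the Ahlfors regularity of $\partial\Omega$ is available by hypothesis), obtaining
\begin{eqnarray*}
\int_{\Omega}u(x)^{-\beta}\,dx\leq K\int_{\Omega}\delta_{\Omega}(x)^{-\alpha_{{\mathfrak{G}}}\beta}\,dx
\leq K\,C\,\bigl[|\Omega|\bigr]^{1-\alpha_{{\mathfrak{G}}}\beta}
\bigl[{\mathcal{H}}^{n-1}(\partial\Omega)\bigr]^{\alpha_{{\mathfrak{G}}}\beta}.
\end{eqnarray*}
At this point the entire estimate is explicit, and what remains is purely algebraic bookkeeping: I would substitute the value of $K$, collect the powers of ${\rm diam}\,(\Omega)$, $R$, $|\Omega|$, $|\Omega_R|$, and ${\mathcal{H}}^{n-1}(\partial\Omega)$, and rearrange them into the dilation-invariant dimensionless groups displayed in \eqref{ASc-25}. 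The first inequality in \eqref{ASc-25} should emerge directly from this regrouping; the second (cruder) inequality then follows by using the generalised isoperimetric bound \eqref{bc-Wss} (equivalently \eqref{TFC.a3}) to absorb ${\mathcal{H}}^{n-1}(\partial\Omega)$ against powers of $|\Omega|$ and ${\rm diam}\,(\Omega)$, together with the trivial inequality $|\Omega_R|\leq|\Omega|$ to simplify the remaining factors.

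For the two-dimensional case, the argument is structurally identical, with two modifications dictated by the statement of Proposition~\ref{MS-G} in dimension two: the factor $R^{2-n}$ is replaced by $\log\bigl({\rm diam}\,(\Omega)/R\bigr)$, and the inner cone condition is taken in the axially symmetric form \eqref{RF66} with half-aperture $\theta$, so that $\alpha_{{\mathfrak{G}}}=\alpha_\theta=\pi/(2\theta)$ by the first formula in \eqref{alphan2}; consequently $\beta<1/\alpha_\theta$ becomes $\beta<2\theta/\pi$. I would run the same chain of inequalities, noting that the logarithmic factor is harmless (it is $\geq\log 4>0$ since $R<{\rm diam}\,(\Omega)/4$, and for the upper bound one absorbs it into the constant or keeps it as a benign factor), and then regroup into \eqref{ASc-25.X}. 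The main obstacle I anticipate is not any single analytic step — those are all furnished by the earlier results — but rather the careful tracking of exponents through the substitution of $K$ so that every factor reassembles into a genuinely scale-invariant quantity; in particular one must verify that the powers of $R$ cancel correctly against the powers of ${\rm diam}\,(\Omega)$ and that the $|\Omega_R|$ dependence lands with the stated exponents. A secondary point requiring care is confirming that the constant $C$ from Lemma~\ref{bvv55} and the constant $c(n,{\mathfrak{G}})$ from Proposition~\ref{MS-G} depend only on the advertised data (dimension, profile ${\mathfrak{G}}$, Ahlfors character, and $\beta$) and not on the domain in any other way, so that the dilation invariance of the final estimate is genuine.
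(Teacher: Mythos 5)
Your proposal is correct and follows essentially the same route as the paper's proof: the pointwise lower bound $u(x)\geq c(n,{\mathfrak{G}})\,|\Omega_R|\,\bigl(\delta_{\Omega}(x)/{\rm diam}\,(\Omega)\bigr)^{\alpha_{\mathfrak{G}}}\bigl(R/{\rm diam}\,(\Omega)\bigr)^{m}R^{2-n}$ obtained from \eqref{PKc-2} and \eqref{MS-G2}, followed by Lemma~\ref{bvv55} with exponent $\alpha_{\mathfrak{G}}\beta<1$ and algebraic regrouping (using $|\Omega_R|\leq|\Omega|$ for the cruder bound). Your handling of the two-dimensional case, bounding the logarithmic factor via ${\rm diam}\,(\Omega)/R>4$, is also exactly what the paper does.
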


\begin{proof}
Suppose that $n\geq 3$. The representation in \eqref{PKc-2}, together 
with the nonnegativity of the Green function and estimate \eqref{MS-G2} give 
\begin{eqnarray}\label{TF-2SX}
u(x) &=& \int_\Omega G(x,y)\,dy\geq\int_{\Omega_R}G(x,y)\,dy
\nonumber\\[4pt]
&\geq & c(n,{\mathfrak{G}})
\Bigl(\frac{\delta_{\Omega}(x)}{{\rm diam}\,(\Omega)}\Bigr)
^{\alpha_{{\mathfrak{G}}}}\Bigl(\frac{R}{{\rm diam}\,(\Omega)}
\Bigr)^{m} R^{2-n}|\Omega_R|,\qquad\forall\,x\in\Omega.
\end{eqnarray}
With \eqref{TF-2SX} in hand, \eqref{ASc-25} follows from Lemma~\ref{bvv55}
(recall that $0<\beta<1/\alpha_{\mathfrak{G}}$), after some simple algebra 
(and using the fact that $|\Omega_R|\leq |\Omega|$). The case $n=2$ is similar.
More specifically, the same type of argument as above yields the bound 
\begin{eqnarray}\label{ASjj}
\int_{\Omega}u(x)^{-\beta}\,dx &\leq & C(\theta,\beta)
\Bigl(\frac{{\rm diam}\,(\Omega)}{R}\Bigr)^{(m+\alpha_{\mathfrak{G}})\beta}
\Bigl(\frac{{\mathcal{H}}^1(\partial\Omega)}{R}\Bigr)^{\alpha_{\mathfrak{G}}\beta}
\times
\nonumber\\[4pt]
&&\times\Bigl(\frac{|\Omega|}{R^2}\Bigr)^{\beta-\alpha_{\mathfrak{G}}\beta}
\Bigl(\frac{R^2}{|\Omega_R|}\Bigr)^{\beta}
|\Omega|^{1-\beta}
\Bigl(\log\Bigl(\frac{{\rm diam}\,(\Omega)}{R}\Bigr)\Bigr)^{-\beta}
\end{eqnarray}
and, given that ${\rm diam}\,(\Omega)/R>4$, the logarithmic factor 
can be bounded by $(\log\,4)^{-\beta}$. This gives \eqref{ASc-25.X}. 
\end{proof}

We continue by recording the following corollary. 

\begin{corollary}\label{anLL}
If $\Omega\subseteq{\mathbb{R}}^n$, $n\geq 2$, is a 
bounded Lipschitz domain and if $\alpha_{\Omega}$ is the critical exponent 
associated with $\Omega$ as in Definition~\ref{Vbab}, then the finiteness
condition \eqref{PKc-4} holds granted that 
\begin{eqnarray}\label{ASc-10}
0<\beta<\frac{1}{\alpha_{\Omega}}.
\end{eqnarray}
In particular, \eqref{PKc-4} holds for any $\beta\in(0,1)$ 
in the case when $\Omega$ is a bounded $C^1$ domain. 
\end{corollary}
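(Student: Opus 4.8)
The plan is to obtain Corollary~\ref{anLL} as a consequence of Theorem~\ref{TL-AS}, combined with the structural facts about Lipschitz domains recorded earlier in the section and a limiting argument in the aperture of the inner cone.

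First I would assemble the preliminary observations. A bounded Lipschitz domain $\Omega$ is NTA and has an Ahlfors regular boundary (as noted right after Definition~\ref{lipdom}), and by \eqref{MS-G2f} it satisfies the axially symmetric inner cone condition \eqref{RF66} with half-aperture $\theta$ for every $\theta\in(0,\theta_\Omega)$, where $\theta_\Omega=\arctan(1/\kappa_\Omega)$. Shrinking the height $R$ if necessary (a truncated circular cone contains any shorter truncated cone with the same vertex and axis), we may assume $R\in(0,{\rm diam}\,(\Omega)/4)$. For each admissible $\theta$, the corresponding profile is the spherical cap $S^{n-1}\cap\Gamma_\theta$, which is a smooth, in particular $C^{1,\alpha}$, subdomain of $S^{n-1}$, with associated index $\alpha_\theta$. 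Hence Theorem~\ref{TL-AS} applies with ${\mathfrak{G}}=S^{n-1}\cap\Gamma_\theta$ and yields, for $n\geq 3$ via \eqref{ASc-25} and for $n=2$ via \eqref{ASc-25.X}, that the finiteness condition \eqref{PKc-4} holds with a finite constant whenever $0<\beta<1/\alpha_\theta$ (for $n=2$ the restriction $\beta<2\theta/\pi$ imposed in Theorem~\ref{TL-AS} is exactly $\beta<1/\alpha_\theta$, by \eqref{alphan2}).

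Next I would let $\theta\nearrow\theta_\Omega$. By \eqref{alphamap} the map $\theta\mapsto\alpha_\theta$ is continuous and strictly decreasing on $(0,\pi)$, so $\alpha_\theta\searrow\alpha_{\theta_\Omega}=\alpha_\Omega$ and therefore $\sup_{0<\theta<\theta_\Omega}\,1/\alpha_\theta=1/\alpha_\Omega$. Thus, given any $\beta\in(0,1/\alpha_\Omega)$, one can pick $\theta\in(0,\theta_\Omega)$ with $\beta<1/\alpha_\theta$, and the previous step produces the desired bound; this establishes the main assertion. For the $C^1$ case I would simply invoke \eqref{RFDS} to get $\kappa_\Omega=0$ and then \eqref{RFDE} to get $\alpha_\Omega=1$, so that the admissible range $0<\beta<1/\alpha_\Omega$ becomes the full interval $(0,1)$.

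I do not expect a genuine obstacle here: the argument merely repackages Theorem~\ref{TL-AS} through the monotonicity of $\alpha_\theta$. The only points deserving care are the routine verification that every hypothesis of Theorem~\ref{TL-AS} (NTA, Ahlfors regular boundary, smooth inner-cone profile, height $R<{\rm diam}\,(\Omega)/4$) holds for each $\theta<\theta_\Omega$, and the remark that although the constant $C_\Omega(n,{\mathfrak{G}},\beta)$ in \eqref{ASc-25} degenerates as $\theta\to\theta_\Omega$, equivalently as $\beta\to 1/\alpha_\Omega$, it is finite for each individual admissible $\beta$, which is all that \eqref{PKc-4} demands.
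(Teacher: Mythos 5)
Your proposal is correct and follows essentially the same route as the paper, which deduces the corollary directly from Theorem~\ref{TL-AS} together with \eqref{MS-G2f}, \eqref{RFDS} and \eqref{RFDE}; your explicit limiting argument in $\theta\nearrow\theta_{\Omega}$ (using the continuity and monotonicity in \eqref{alphamap}, and $\beta<2\theta/\pi=1/\alpha_\theta$ when $n=2$) simply spells out the step the paper labels ``immediate.''
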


\begin{proof}
The claim in the first part of the statement is an immediate consequence 
of our previous theorem, whereas \eqref{ASc-25} and \eqref{RFDE} readily 
yield the claim in the second part of the statement. 
\end{proof}

The principle emerging from Theorem~\ref{TL-AS} is that, for a  
bounded NTA domain $\Omega\subseteq{\mathbb{R}}^n$ with an Ahlfors 
regular boundary, the ratio 
\begin{eqnarray}\label{thg.5}
\Bigl(\int_{\Omega}u(x)^{-\beta}\,dx\Bigr)
\Big/|\Omega|^{1-2\beta/n}
\end{eqnarray}
can be controlled in terms of the proportion of the size of the cone 
(involved in cone condition \eqref{RF66.X}) relative to the size of 
the domain $\Omega$ itself (assuming that $\beta\in(0,1)$ relates favourably 
to the spherical profile of the cone). 

An example of this principle at work in a concrete case of interest 
is as follows. For each $\kappa\in(0,1)$ and $N\geq 3$, denote by 
${\mathcal{P}}(\kappa,N)$ the set of polygons with $N$ sides inscribed 
in the unit circle $S^1$ and having the property that the ratio of any 
two sides belongs to the interval $(\kappa,\kappa^{-1})$. Theorem~\ref{TL-AS} 
then gives that, once $\kappa\in(0,1)$ and $\beta\in(0,1)$ have been fixed, 
there exist $N_{\beta,\kappa}\in{\mathbb{N}}$ and $C_{\beta,\kappa}>0$ 
with the property that 
\begin{eqnarray}\label{thg.6}
\int_{\Omega}u(x)^{-\beta}\,dx\leq C_{\beta,\kappa},\qquad
\mbox{whenever $\Omega\in{\mathcal{P}}(\kappa,N)$ with 
$N\geq N_{\beta,\kappa}$}.
\end{eqnarray}
In particular, if $\Omega_N$ denotes the regular polygon with $N$ sides
($N\in{\mathbb{N}}$, $N\geq 3$) inscribed in $S^1$ and $u_N$ is the 
solution of \eqref{PKc-1} for $\Omega=\Omega_N$, then \eqref{thg.6} 
gives that for every fixed $\beta\in(0,1)$ we have 
\begin{eqnarray}\label{thg.7}
\int_{\Omega_N}u_N(x)^{-\beta}\,dx={\mathcal{O}}(1),\qquad
\mbox{as $N\to\infty$}.
\end{eqnarray}

Improvements of \eqref{thg.6}-\eqref{thg.7} 
(vis-\`{a}-vis the range of $\beta$'s, the shape of the polygon and 
the nature of the estimate for the $\beta$-integral) are presented 
in \S\,\ref{sect:4}. For the time being, we wish to point out that (for
any $0 < \beta < 1$)
\begin{eqnarray}\label{thg}
\sup_{\Omega}\,\Bigl(\int_{\Omega}u(x)^{-\beta}\,dx\Bigr)
\Big/|\Omega|^{1-2\beta/n}=+\infty,
\end{eqnarray}
when the supremum is taken over all bounded convex sets in ${\mathbb{R}}^n$. 
With this goal in mind, for a fixed, small $\varepsilon>0$, consider 
the thin rectangular domain 
\begin{eqnarray}\label{jhab.1}
\Omega:=\bigl\{x=(x',x_n)\in{\mathbb{R}}^{n-1}\times{\mathbb{R}}:\,
x'\in(0,1)^{n-1},\,\,|x_n|<\varepsilon\bigr\}\subseteq{\mathbb{R}}^n,
\end{eqnarray}
and set
\begin{eqnarray}\label{jhab.2}
v(x',x_n):={\textstyle\frac12}(\varepsilon^2-x_n^2),\qquad
\forall\,x=(x',x_n)\in\Omega.  
\end{eqnarray}
Then $v\in C^0(\overline{\Omega})$, 
$-\Delta v=1$ on $\Omega$, and $v\geq 0$ on $\partial\Omega$. 
Therefore, if $u$ solves \eqref{PKxc-1} for $\Omega$ as in \eqref{jhab.1},
we have $u(x)\leq v(x)$ for every $x\in\Omega$, 
on account of the Maximum Principle. As a result, for every $\beta\in(0,1)$ 
we may estimate 
\begin{eqnarray}\label{jhab}
\int_{\Omega} u(x)^{-\beta}\,dx &\geq &\int_{\Omega}v(x)^{-\beta}\,dx
=2^{\beta}\int_{-\varepsilon}^{\varepsilon}(\varepsilon^2-x_n^2)^{-\beta}\,dx_n
\nonumber\\[4pt] 
&=& 
2^{1+\beta}\Bigl(\int_{0}^{1}(1-t^2)^{-\beta}\,dt\Bigr)
\,\varepsilon^{1-2\beta}=C_\beta\,\varepsilon^{1-2\beta},
\nonumber\\[4pt] 
&=& C_\beta\,\varepsilon^{-2\beta(n-1)/n}\,|\Omega|^{1-2\beta/n},
\end{eqnarray}
from which \eqref{thg} readily follows. 

A natural end-point version of the estimate \eqref{PKc-4} is the `weak-type' inequality
\begin{equation} \label{weaktype}
|\{ x \in \Omega \, | \, u(x) < \lambda \}| \leq C(\Omega,\tilde{\beta})\lambda^{\tilde{\beta}} < + \infty \quad \mbox{for all $\lambda > 0$.}
\end{equation}
The two conditions \eqref{PKc-4} and \eqref{weaktype} are closely related, in that if \eqref{PKc-4} holds for some $\beta > 0$ then \eqref{weaktype} holds for $0 < \tilde{\beta} \leq \beta$, and if \eqref{weaktype} holds for some $\tilde{\beta} > 0$ then \eqref{PKc-4} holds for $0 < \beta < \tilde{\beta}$. These two statements follow easily from Chebyshev's inequality and the equality
\begin{equation} \label{Done}
\int_{\Omega} u(x)^{-\beta} dx = \beta \int_0^\infty \lambda^{-(\beta + 1)}|\{ x \in \Omega \, | \, u(x) < \lambda \}| d\lambda,
\end{equation}
respectively.

In the case of a ball, the calculation in Remark \ref{Rf-1.H} shows that \eqref{weaktype} holds if and only if $\tilde{\beta} \leq 1$ and so, while \eqref{PKc-4} fails when $\beta = 1$ (see \eqref{PKc-1.E}), \eqref{weaktype} holds when $\tilde{\beta} = 1$.

More general examples may be established via the same methods we have employed above. For example, an immediate consequence of \eqref{baD-ii} is
\begin{eqnarray}\label{Wtwo}
|\{ x \in \Omega \, | \, \delta_\Omega(x)^{\tilde{\beta}} < \lambda \}| \leq C\lambda^{1/\tilde{\beta}}\,{\mathcal{H}}^{n-1}(\partial\Omega),
\end{eqnarray}
for each $\tilde{\beta} \geq 0$ and all $\lambda > 0$. This can be used to prove the following theorem.
\begin{theorem} \label{Wthree}
(i) If $\Omega\subseteq{\mathbb{R}}^n$ is a bounded domain whose boundary 
is Ahlfors regular, then \eqref{weaktype} holds provided that $0 < \tilde{\beta} \leq 1/2$.

(ii) If $\Omega\subseteq{\mathbb{R}}^n$, $n\geq 2$, is a 
bounded Lipschitz domain and if $\alpha_{\Omega}$ is the critical exponent 
associated with $\Omega$ as in Definition~\ref{Vbab}, then \eqref{weaktype} holds provided that 
\begin{eqnarray}\label{Wfour}
0<\beta\leq\frac{1}{\alpha_{\Omega}}.
\end{eqnarray}
\end{theorem}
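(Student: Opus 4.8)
The plan is to derive \eqref{weaktype}, in both parts, by combining a pointwise lower bound for the solution $u$ of the form $u(x)\geq c_\Omega\,\delta_\Omega(x)^{a}$ on $\Omega$ — with $a=2$ in part (i) and $a=\alpha_\Omega$ in part (ii) — with the Minkowski-content estimate \eqref{baD-ii} (equivalently \eqref{Wtwo}), which is available because $\partial\Omega$ is Ahlfors regular. Indeed, such a lower bound yields the inclusion $\{x\in\Omega:\,u(x)<\lambda\}\subseteq\{x\in\Omega:\,\delta_\Omega(x)<(\lambda/c_\Omega)^{1/a}\}$ for every $\lambda>0$, and \eqref{baD-ii} then bounds the measure of the right-hand side by a constant multiple of $\lambda^{1/a}\,{\mathcal{H}}^{n-1}(\partial\Omega)$, which is precisely \eqref{weaktype} with $\tilde{\beta}=1/a$. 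Moreover, once \eqref{weaktype} is known at one exponent $\tilde{\beta}_0>0$ it automatically holds for every $\tilde{\beta}\in(0,\tilde{\beta}_0]$: for $\lambda\leq 1$ one has $\lambda^{\tilde{\beta}_0}\leq\lambda^{\tilde{\beta}}$, while for $\lambda>1$ the trivial bound $|\{x\in\Omega:\,u(x)<\lambda\}|\leq|\Omega|\leq|\Omega|\lambda^{\tilde{\beta}}$ suffices. Thus in each part it is enough to treat the endpoint value of $\tilde{\beta}$.

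Part (i) is then immediate: the required lower bound is the sharp estimate \eqref{TF-2.D}, namely $u(x)\geq(2n)^{-1}\delta_\Omega(x)^2$, valid on any bounded open set, so the scheme above with $a=2$ gives \eqref{weaktype} at $\tilde{\beta}=1/2$ and hence for all $\tilde{\beta}\in(0,1/2]$.

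For part (ii) the scheme applies with $a=\alpha_\Omega$ provided one has the interior lower bound $u(x)\geq c_\Omega\,\delta_\Omega(x)^{\alpha_\Omega}$ for all $x\in\Omega$, with $c_\Omega>0$ depending only on $\Omega$. Securing this bound at the \emph{critical} exponent $\alpha_\Omega=\alpha_{\theta_\Omega}$ is the step I expect to be the main obstacle, since Corollary~\ref{anLL}, Theorem~\ref{TL-AS} and \eqref{MS-G2f} only deliver, off the shelf, the inner cone condition and hence a bound $u(x)\geq c_\theta\,\delta_\Omega(x)^{\alpha_\theta}$ on $\Omega$ for half-apertures $\theta<\theta_\Omega$ (with $c_\theta$ degenerating as $\theta\uparrow\theta_\Omega$); as $\alpha_\theta>\alpha_\Omega$, this would yield \eqref{weaktype} only in the range $\tilde{\beta}<1/\alpha_\Omega$, just short of the endpoint. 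There are two natural ways around this. One is to observe that a bounded Lipschitz domain in fact satisfies the inner cone condition \eqref{RF66} at the endpoint half-aperture $\theta_\Omega=\arctan(1/\kappa_\Omega)$ whenever the infimum defining $\kappa_\Omega$ is attained by an atlas $\{C_k,\varphi_k\}$ (as for polygons, polyhedra, and more generally piecewise $C^1$ domains): in such a chart, a point $x_0+(y',y_n)$ with $y_n>\kappa_\Omega|y'|$ has $x_n-\varphi_k(x')=y_n-(\varphi_k(x_0'+y')-\varphi_k(x_0'))\geq y_n-\|\nabla\varphi_k\|_{L^\infty}|y'|>0$ — the \emph{strict} inequality defining the open cone against the (non-strict) Lipschitz bound — so that a \emph{fixed} open cone of half-aperture $\theta_\Omega$ and positive height lies inside $\Omega$ at every boundary point; feeding this into Proposition~\ref{MS-G} with ${\mathfrak{G}}$ the associated spherical cap and then into the computation \eqref{TF-2SX} produces $u(x)\geq c_\Omega\,\delta_\Omega(x)^{\alpha_\Omega}$ on all of $\Omega$. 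The other is simply to quote the pointwise bound from the boundary Harnack / Green function estimates for Lipschitz domains (e.g. \cite[Proposition~2, p.\,272]{MS}), which here is convenient since no control on the dependence of $c_\Omega$ on the geometry is required. Either way, once the bound is in hand the first paragraph closes the argument; and note that in the regime $\alpha_\Omega\geq 2$ one can bypass all of this, since then $1/\alpha_\Omega\leq 1/2$ and part (i) already applies.
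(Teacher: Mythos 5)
Your proposal follows the same route as the paper: the paper's proof of this theorem consists precisely of combining \eqref{Wtwo} with the pointwise bound \eqref{TF-2.D} for part (i) and with \eqref{TF-2SX} for part (ii), which is exactly the scheme of your opening paragraphs (the elementary reduction from the endpoint exponent to all smaller $\tilde{\beta}$ is left implicit in the paper, but is as you say). The one place where you go beyond the paper is the discussion of the critical aperture in part (ii). The paper's one-sentence proof implicitly applies \eqref{TF-2SX} with $\alpha_{\mathfrak{G}}=\alpha_{\Omega}$, i.e.\ it takes for granted an inner cone condition at the critical half-aperture $\theta_{\Omega}$, whereas \eqref{MS-G2f} only asserts apertures $\theta<\theta_{\Omega}$; as you observe, the sub-critical bounds $u\geq c_{\theta}\,\delta_{\Omega}^{\alpha_{\theta}}$ yield \eqref{weaktype} only for $\tilde{\beta}<1/\alpha_{\Omega}$, and a limiting argument in $\theta$ cannot recover the endpoint since the constants degenerate. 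Your strict-versus-nonstrict inequality remark does legitimize the critical aperture whenever some atlas attains $\kappa_{\Omega}$, and the observation that $\alpha_{\Omega}\geq 2$ reduces matters to part (i) is a genuine economy. For Lipschitz domains whose Lipschitz constant is not attained (for instance genuinely curved $C^{1}$ boundaries, where no truncated cone of half-aperture $\theta_{\Omega}=\pi/2$ fits, and where Hopf-type bounds $u\geq c\,\delta_{\Omega}$ may fail absent a Dini condition) the critical-exponent pointwise bound is not supplied by Proposition~\ref{MS-G}, and one must indeed fall back on external Green-function estimates as in \cite{MS} (or, for $C^{1,\alpha}$ domains, on Lemma~\ref{quanthopf}); since \eqref{weaktype} only asks for a constant depending on $\Omega$, the lack of explicit constants in those references is harmless here. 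The paper's own proof does not address this point at all, so your caution is a refinement of, rather than a deviation from, its argument.
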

Part~(i) of Theorem~\ref{Wthree} contains the appropriate end-point version of Proposition~\ref{PKc-1.F}~(ii) corresponding to $\beta = 1/2$ and is proved using \eqref{Wtwo} and \eqref{TF-2.D}. Proposition~\ref{PKc-1.F}~(iii) and the above discussion show this is sharp. Part~(ii) corresponds to the end-point $\beta = 1/{\alpha_{\Omega}}$ of Corollary~\ref{anLL} and is proved again using \eqref{Wtwo} and, this time, \eqref{TF-2SX}.

We end the current section by recording a special case of 
Theorem~\ref{TL-AS} of independent interest. This requires that 
we first make the following definition.

\begin{definition}\label{nab.67}
We say that an open set $\Omega\subseteq{\mathbb{R}}^n$ satisfies 
an inner ball condition with radius $r_0\in(0,+\infty)$ provided
\begin{eqnarray}\label{hana}
\forall\,x\in\Omega\,\,\,\exists\,y\in\Omega\,\,\mbox{ such that }\,\,
x\in B(y,r_0)\subseteq\Omega.
\end{eqnarray}
\end{definition}
In other words, an open set $\Omega\subseteq{\mathbb{R}}^n$ satisfies 
an inner ball condition with radius $r_0$ provided $\Omega$ can be 
written as the union of all balls of radius $r_0$ contained in $\Omega$. 

Obviously, an open set $\Omega\subseteq{\mathbb{R}}^n$ satisfying 
an inner ball condition with radius $r_0$ also satisfies an axially 
symmetric inner cone condition with any half-aperture $\theta\in(0,\pi/2)$
and any height $\leq 2r_0\cos\theta$. This observation and 
Theorem~\ref{TL-AS} then readily yield the following corollary.

\begin{corollary}\label{Tgab}
Assume that $\Omega\subseteq{\mathbb{R}}^2$ is a bounded NTA domain 
with an Ahlfors regular boundary, which satisfies an inner 
ball condition with radius $r_0$. Fix $\beta\in(0,1)$ and select 
$\theta\in(\beta\pi/2,\pi/2)$. Then the solution $u$ of \eqref{PKc-1} 
satisfies 
\begin{eqnarray}\label{Ah-nb}
\int_{\Omega}u(x)^{-\beta}\,dx &\leq & C_{\Omega}(\theta,\beta)
\Bigl(\frac{{\rm diam}\,(\Omega)}{r_0}\Bigr)^{m\beta}
\Bigl(\frac{{\rm diam}\,(\Omega)^2}{|\Omega_{r_0}|}\Bigr)|\Omega|^{1-\beta}
\nonumber\\[4pt]
&\leq & C_{\Omega}(\theta,\beta)
\Bigl(\frac{{\rm diam}\,(\Omega)}{r_0}\Bigr)^{m\beta+2}|\Omega|^{1-\beta}, 
\end{eqnarray}
where $m>0$ depends only on the NTA constants of $\Omega$, 
and $C_{\Omega}(\theta,\beta)>0$ is a finite constant which depends 
only on the Ahlfors character of $\partial\Omega$, the angle $\theta$, 
and the parameter $\beta$. 

In fact, a result similar in spirit holds in 
the case when $\Omega\subseteq{\mathbb{R}}^n$ with $n\geq 3$ as well. 
\end{corollary}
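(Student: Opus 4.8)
The plan is to deduce the corollary from Theorem~\ref{TL-AS} (and the pointwise lower bound on $u$ built in its proof) by realising the inner ball condition as an inner cone condition of comparable scale. First I would fix $\beta\in(0,1)$ and $\theta\in(\beta\pi/2,\pi/2)$ as in the statement and set $R:=\min\{2r_0\cos\theta,\,r_0,\,{\rm diam}\,(\Omega)/8\}$. By the observation recorded just before the corollary, $\Omega$ then satisfies an axially symmetric inner cone condition \eqref{RF66} with half-aperture $\theta$ and height $R$ (any height $\le 2r_0\cos\theta$ is admissible). By construction $R<{\rm diam}\,(\Omega)/4$, one has $R\approx r_0$ with comparability constants depending only on $\theta$, and $R\le r_0$, so $\Omega_R\supseteq\Omega_{r_0}$ and hence $|\Omega_R|\ge|\Omega_{r_0}|$. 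For $n=2$ the spherical profile is an arc, so by \eqref{alphan2} the relevant index is $\alpha_\theta=\pi/(2\theta)$, and the hypothesis $\theta>\beta\pi/2$ is exactly the requirement $0<\beta<2\theta/\pi=1/\alpha_\theta$ (equivalently $\alpha_\theta\beta<1$) needed to invoke the two-dimensional part of Theorem~\ref{TL-AS}.

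Next I would feed these data into the argument in the proof of Theorem~\ref{TL-AS}. Combining \eqref{PKc-2}, the positivity of $G$, and the $n=2$ version of \eqref{MS-G2} from Proposition~\ref{MS-G}, one obtains
\[
u(x)\ \ge\ c(\theta)\,\Bigl(\frac{\delta_{\Omega}(x)}{{\rm diam}\,(\Omega)}\Bigr)^{\alpha_\theta}\Bigl(\frac{R}{{\rm diam}\,(\Omega)}\Bigr)^{m}\,\log\!\Bigl(\frac{{\rm diam}\,(\Omega)}{R}\Bigr)\,|\Omega_R|,\qquad\forall\,x\in\Omega,
\]
where the logarithm is $\ge\log 4$ since ${\rm diam}\,(\Omega)/R>4$. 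Raising to the power $-\beta$, integrating over $\Omega$, and then bounding $\int_{\Omega}\delta_{\Omega}(x)^{-\alpha_\theta\beta}\,dx\le C\,{\rm diam}\,(\Omega)^{2-\alpha_\theta\beta}$ by Lemma~\ref{bvv55} (in the form \eqref{bca84r}; this uses $\partial\Omega$ Ahlfors regular and $\alpha_\theta\beta<1$), the powers of ${\rm diam}\,(\Omega)$ telescope and one is left with
\[
\int_{\Omega}u(x)^{-\beta}\,dx\ \le\ C_\Omega(\theta,\beta)\,\Bigl(\frac{{\rm diam}\,(\Omega)}{R}\Bigr)^{m\beta}\,{\rm diam}\,(\Omega)^2\,|\Omega_R|^{-\beta}.
\]
Replacing $R$ by $r_0$ (legitimate as $R\approx r_0$), using $|\Omega_R|\ge|\Omega_{r_0}|$, and then $|\Omega_{r_0}|^{-\beta}=|\Omega_{r_0}|^{-1}|\Omega_{r_0}|^{1-\beta}\le|\Omega_{r_0}|^{-1}|\Omega|^{1-\beta}$, yields the first estimate in \eqref{Ah-nb}. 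The second estimate then follows from ${\rm diam}\,(\Omega)^2/|\Omega_{r_0}|\le C\,({\rm diam}\,(\Omega)/r_0)^2$, i.e. from the lower bound $|\Omega_{r_0}|\ge c_n r_0^n$.

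For the higher-dimensional assertion the scheme applies verbatim: the inner ball condition yields an axially symmetric inner cone condition of half-aperture $\theta$ and height $R\approx r_0$, with spherical profile the cap $S^{n-1}\cap\Gamma_\theta$ and index $\alpha_\theta$; since $\alpha_{\pi/2}=1$ and $\theta\mapsto\alpha_\theta$ is continuous and strictly decreasing (see \eqref{alphamap}--\eqref{alphamap2}), for any prescribed $\beta\in(0,1)$ one may choose $\theta<\pi/2$ with $\alpha_\theta<1/\beta$, and then the $n\ge3$ form of Theorem~\ref{TL-AS} (the second line of \eqref{ASc-25}) applies; carrying out the substitutions $R\approx r_0$, $|\Omega_R|\ge|\Omega_{r_0}|$ produces the analogue of \eqref{Ah-nb} with the exponent $2$ replaced by $n$. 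There is no serious obstacle here: the two points that need attention are the verification that $\alpha_\theta\beta<1$ — this is precisely what the constraint $\theta>\beta\pi/2$ encodes, and it is where the explicit value $\alpha_\theta=\pi/(2\theta)$ is used essentially — and the passage to the second, cruder estimate in \eqref{Ah-nb}, which is only meaningful (and requires $|\Omega_{r_0}|\gtrsim r_0^n$) when the core $\Omega_{r_0}$ is non-degenerate.
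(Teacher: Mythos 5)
Your argument is correct and is essentially the paper's own proof: the paper likewise observes that the inner ball condition yields an axially symmetric inner cone condition with half-aperture $\theta$ and height comparable to $r_0$ and then invokes Theorem~\ref{TL-AS}, the only cosmetic difference being that you re-run that theorem's proof (the Green function lower bound of Proposition~\ref{MS-G} integrated over $\Omega_R$, followed by Lemma~\ref{bvv55}) rather than simplifying the stated estimate \eqref{ASc-25.X} via ${\mathcal{H}}^1(\partial\Omega)\leq C\,{\rm diam}\,(\Omega)$ and $|\Omega|\leq{\rm diam}\,(\Omega)^2$, which leads to the same intermediate bound. The nondegeneracy $|\Omega_{r_0}|\geq c\,r_0^2$ that you flag for passing to the second line of \eqref{Ah-nb} is equally implicit in the paper's statement, so noting it is appropriate rather than a defect of your proof.
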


\section{The case of a polygon in the plane}
\label{sect:4}
\setcounter{equation}{0}

In this section we focus on the finiteness of the $\beta$-integral 
(cf.~\eqref{PKc-4}) in the situation when $\Omega$ is a polygonal 
domain in ${\mathbb{R}}^2$. Some preparations are necessary. 
Given $\theta\in(0,\pi)$, consider the infinite sector 
\begin{eqnarray}\label{PKc-5}
S_{\theta}:=\{z\in{\mathbb{C}}={\mathbb{R}}^2:\,
|{\rm arg}\,(z)|<\theta\}
\end{eqnarray}
and, for each $r>0$, consider its truncated version 
\begin{eqnarray}\label{PKc-6}
S_{\theta,r}=\{z\in S_{\theta}:\,|z|<r\}.
\end{eqnarray}
Hence, in polar coordinates $x=(\rho\cos\omega,\rho\sin\omega)
\in{\mathbb{R}}^2$ with $(\rho,\omega)\in(0,\infty)\times (-\pi,\pi)$,
\begin{eqnarray}\label{PKc-8}
S_{\theta,r}=\{(\rho,\omega):\,0<\rho<r\,\,\mbox{ and }\,\,
-\theta<\omega<\theta\}.
\end{eqnarray}

One basic technical result in this section is contained in the 
next proposition below. To be able to formulate it, we will need 
the Gamma and Beta functions which, for the convenience of the reader, 
we now briefly recall. As is well-known, they are respectively given by 
\begin{eqnarray}\label{TFS-22}
\Gamma(z):=\int_0^\infty t^{z-1}e^{-t}\,dt,\qquad z\in{\mathbb{C}},\,\,\,
{\rm Re}\,z>0,
\end{eqnarray}
and 
\begin{eqnarray}\label{TFS-23}
{\mathfrak{B}}(z_1;z_2):=\int_0^1 t^{z_1-1}(1-t)^{z_2-1}\,dt, 
\qquad z_j\in{\mathbb{C}},\,\,\,{\rm Re}\,z_j>0,\,\,\,j=1,2.
\end{eqnarray}
with both integrals convergent under the specified conditions.
It will also be useful to recall that an alternative formula for the
Beta function is 
\begin{eqnarray}\label{TFS-24}
{\mathfrak{B}}(z_1;z_2)=2\int_0^{\pi/2}(\sin\alpha)^{2z_1-1}
(\cos\alpha)^{2z_2-1}\,d\alpha, 
\qquad z_j\in{\mathbb{C}},\,\,\,{\rm Re}\,z_j>0,\,\,\,j=1,2.
\end{eqnarray}
Here is the proposition alluded to above.

\begin{proposition}\label{TL-3}
Suppose that $\Omega$ is a bounded domain in ${\mathbb{R}}^2$ for which there 
exist $r>0$ and $\theta\in(0,\pi)$ such that 
\begin{eqnarray}\label{PKc-18}
\Omega\cap B(0,r)=S_{\theta,r}.
\end{eqnarray}
Also, let $u$ be the function defined by \eqref{PKc-1}. 
Then for every $\beta\in(0,1)$ there holds 
\begin{eqnarray}\label{PKc-25}
\int_{S_{\theta,r}}u(x)^{-\beta}\,dx\leq C(\theta,\beta)\,r^{2(1-\beta)},
\end{eqnarray}
where $C(\theta,\beta)>0$ is the finite constant described as 
\begin{eqnarray}\label{PKA-5T}
C(\theta,\beta):=
\left\{
\begin{array}{l}
\Big(\Bigl(\frac{\pi}{2\theta}\Bigr)^2-4\Bigr)^{\beta}
\frac{(2\theta)^2}{\pi(\pi-4\theta)}
\mathfrak{B}\Bigl(\frac{\theta(1-\beta)}{\pi/4-\theta};1-\beta\Bigr)
{\mathfrak{B}}\Bigl(\frac{1}{2};\frac{1-\beta}{2}\Bigr)
\,\,\mbox{ if }\,\,\theta\in\bigl(0,{\textstyle\frac{\pi}{4}}\bigr),
\\[12pt]
2^{3\beta-2}\Bigl(\frac{1}{1-\beta}\Bigr)^{1-\beta}\Gamma(1-\beta)
{\mathfrak{B}}\Bigl(\frac{1}{2};\frac{1-\beta}{2}\Bigr)
\hskip 1.02in
\mbox{ if }\,\,\theta={\textstyle\frac{\pi}{4}},
\\[12pt]
\Big(4-\Bigl(\frac{\pi}{2\theta}\Bigr)^2\Bigr)^{\beta}
\frac{(2\theta)^2}{\pi(4\theta-\pi)}
\mathfrak{B}\Bigl(\frac{\theta-\beta\pi/2}{\theta-\pi/4};1-\beta\Bigr)
{\mathfrak{B}}\Bigl(\frac{1}{2};\frac{1-\beta}{2}\Bigr)
\,\,\mbox{ if }\,\,\theta\in\bigl({\textstyle\frac{\pi}{4}},\pi\bigr].
\end{array}
\right.
\end{eqnarray}
As a consequence, for every $\beta\in(0,1)$, 
\begin{eqnarray}\label{PKc-25.D}
\int_{S_{\theta,r}}u(x)^{-\beta}\,dx\leq C_{\beta}\,
\theta^{1-2\beta}\,r^{2(1-\beta)}.
\end{eqnarray}
The bound in \eqref{PKc-25.D} is in the nature of best possible, 
in the sense that if $\Omega:=S_{\theta,r}$ for some $\theta\in(0,\pi)$ 
and $r>0$, and if $u$ is the solution of \eqref{PKc-1} for this domain 
then, in fact, for every $\beta\in(0,1)$, 
\begin{eqnarray}\label{PKc-25.D2}
\int_{S_{\theta,r}}u(x)^{-\beta}\,dx\approx 
\theta^{1-2\beta}\,r^{2(1-\beta)},\quad\mbox{ uniformly for }\,\,
\theta\in(0,\pi)\,\,\mbox{ and }\,\,r>0,
\end{eqnarray}
with comparability constants which depend exclusively on $\beta$.
\end{proposition}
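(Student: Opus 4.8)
The plan is to reduce the assertion to the model domain $S_{\theta,1}$ and then play an explicit \emph{sub}solution of \eqref{PKc-1} against an explicit \emph{super}solution. First, since $\Omega\cap B(0,r)=S_{\theta,r}$, the two rays bounding $S_{\theta,r}$ lie in $\partial\Omega$; hence, writing $u_0$ for the solution of \eqref{PKc-1} in the domain $S_{\theta,r}$ itself, the function $u-u_0$ is harmonic in $S_{\theta,r}$ and nonnegative on $\partial S_{\theta,r}$ (it vanishes on the two rays and equals $u\geq 0$ on the arc $|z|=r$), so the Maximum Principle (Proposition~\ref{Max-PP}) gives $u\geq u_0$ in $S_{\theta,r}$. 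It therefore suffices to bound $\int_{S_{\theta,r}}u_0^{-\beta}\,dx$, and the dilation identity $u_0^{(r)}(x)=r^{2}u_0^{(1)}(x/r)$ reduces this to $r=1$. So the core task is the estimate $\int_{S_{\theta,1}}u_0^{-\beta}\,dx\leq C(\theta,\beta)$ with $C(\theta,\beta)$ as in \eqref{PKA-5T}.

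Next I would build, for each $\theta\in(0,\pi)$, an explicit \emph{positive} function $w$ on $S_{\theta,1}$ with $\Delta w\geq -1$ in $S_{\theta,1}$ and $w=0$ on all of $\partial S_{\theta,1}$. Since $\Delta\bigl(\rho^{s}\cos(\alpha_\theta\omega)\bigr)=(s^{2}-\alpha_\theta^{2})\rho^{s-2}\cos(\alpha_\theta\omega)$ in planar polar coordinates, with $\alpha_\theta=\pi/(2\theta)$ as in \eqref{alphan2}, the natural candidates are
\begin{eqnarray*}
w=\frac{(\rho^{2}-\rho^{\alpha_\theta})\cos(\alpha_\theta\omega)}{\alpha_\theta^{2}-4}\quad\Bigl(0<\theta<\tfrac{\pi}{4}\Bigr),\qquad
w=-\tfrac14\,\rho^{2}(\log\rho)\cos(2\omega)\quad\Bigl(\theta=\tfrac{\pi}{4}\Bigr),
\end{eqnarray*}
\begin{eqnarray*}
w=\frac{(\rho^{\alpha_\theta}-\rho^{2})\cos(\alpha_\theta\omega)}{4-\alpha_\theta^{2}}\quad\Bigl(\tfrac{\pi}{4}<\theta<\pi\Bigr).
\end{eqnarray*}
A one-line computation shows that in all three cases $\Delta w=-\cos(\alpha_\theta\omega)\geq -1$ (with $\alpha_\theta=2$ in the middle case, which is exactly the resonance forcing the logarithmic factor there), that $\cos(\alpha_\theta\omega)$ vanishes precisely at $\omega=\pm\theta$, and that $\rho^{2}-\rho^{\alpha_\theta}$, $-\log\rho$, respectively $\rho^{\alpha_\theta}-\rho^{2}$ are positive on $(0,1)$ and vanish at $\rho=1$; hence $w>0$ in $S_{\theta,1}$ and $w=0$ on $\partial S_{\theta,1}$. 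Then $w-u_0$ is subharmonic in $S_{\theta,1}$ and vanishes on the boundary, so Proposition~\ref{Max-PP} yields $u_0\geq w>0$ throughout $S_{\theta,1}$.

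The bound \eqref{PKc-25} then follows from $\int_{S_{\theta,1}}u_0^{-\beta}\,dx\leq\int_{S_{\theta,1}}w^{-\beta}\,dx$ by evaluating the latter in polar coordinates: it factors as the angular integral $\int_{-\theta}^{\theta}(\cos\alpha_\theta\omega)^{-\beta}\,d\omega=\frac{1}{\alpha_\theta}\mathfrak{B}\bigl(\tfrac12;\tfrac{1-\beta}{2}\bigr)$ (substitute $\psi=\alpha_\theta\omega$ and use \eqref{TFS-24}) times the radial integral $\int_{0}^{1}(\rho^{2}-\rho^{\alpha_\theta})^{-\beta}\rho\,d\rho$ (or the analogue with $\rho^{\alpha_\theta}-\rho^{2}$, or $-\rho^{2}\log\rho$), which reduces to a Beta function via $t=\rho^{|\alpha_\theta-2|}$ — and, when $\theta=\pi/4$, to a Gamma function via $\rho=e^{-s}$. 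Collecting the elementary prefactors $(\alpha_\theta^{2}-4)^{\beta}$, $\alpha_\theta^{-1}$, $(\alpha_\theta-2)^{-1}$, etc., and rewriting them through $\alpha_\theta=\pi/(2\theta)$ reproduces exactly the three-case constant in \eqref{PKA-5T}; undoing the reductions above gives \eqref{PKc-25} for general $r$. For \eqref{PKc-25.D} I would then study $C(\theta,\beta)$ as a function of $\theta$: it is continuous and strictly positive on $(0,\pi]$ — the apparent singularity at $\theta=\pi/4$ is removable because $\mathfrak{B}(z;1-\beta)\sim z^{-1}$ as $z\to 0^{+}$ cancels the factor $(\alpha_\theta^{2}-4)^{\beta}\to 0$ — while as $\theta\to 0^{+}$ one has $\alpha_\theta\to\infty$ and, again using $\Gamma(z)\sim z^{-1}$, $C(\theta,\beta)\sim c_\beta\,\theta^{1-2\beta}$. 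Since $\theta^{1-2\beta}$ is comparable to any fixed continuous positive function on a compact subinterval of $(0,\pi]$, this yields $C(\theta,\beta)\leq C_\beta\,\theta^{1-2\beta}$ and hence \eqref{PKc-25.D}.

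Finally, for the sharpness statement \eqref{PKc-25.D2} (where now $\Omega=S_{\theta,r}$, so $u=u_0$) I need matching \emph{lower} bounds for $\int_{S_{\theta,r}}u^{-\beta}\,dx$, i.e. upper barriers for $u$. Comparison with the enclosing disc gives $u(x)\leq\tfrac14(r^{2}-|x|^{2})$ on $S_{\theta,r}$, whence $\int_{S_{\theta,r}}u^{-\beta}\,dx\geq\frac{4^{\beta}}{1-\beta}\,\theta\,r^{2(1-\beta)}$; this is already of the correct order $\theta^{1-2\beta}r^{2(1-\beta)}$ whenever $\theta$ stays bounded away from $0$. For small $\theta$ (say $\theta<\pi/4$) I would sharpen it using the Cartesian supersolution $W(x_{1},x_{2})=\frac{x_{1}^{2}\tan^{2}\theta-x_{2}^{2}}{2(1-\tan^{2}\theta)}$ (take $r=1$, then scale): $\Delta W=-1$, and $W\geq 0$ on $\partial S_{\theta,1}$ since $|x_{2}|\leq x_{1}\tan\theta$ on $\overline{S_{\theta,1}}$, so $u\leq W$; because $W=\frac{\rho^{2}\cos^{2}\omega\,(\tan^{2}\theta-\tan^{2}\omega)}{2(1-\tan^{2}\theta)}$ and $\tan^{2}\theta-\tan^{2}\omega\approx\theta^{2}-\omega^{2}$ uniformly for $|\omega|\leq\theta\leq\pi/4$, one gets $\int_{S_{\theta,1}}W^{-\beta}\,dx\approx\theta^{1-2\beta}$ — the same scaling as the subsolution $w$. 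Combining the two regimes with the upper bound of \eqref{PKc-25.D} gives $\int_{S_{\theta,r}}u^{-\beta}\,dx\approx\theta^{1-2\beta}r^{2(1-\beta)}$ uniformly. I expect the main obstacle to be twofold: the bookkeeping that turns the explicitly constructed barriers into the exact closed form \eqref{PKA-5T} (the three-case split keyed to $\theta\lessgtr\pi/4$ and the gluing at $\theta=\pi/4$), and, more delicately, the \emph{uniformity in $\theta$} needed for \eqref{PKc-25.D} and \eqref{PKc-25.D2}, which requires tracking the constants in the Beta/Gamma asymptotics rather than discarding them, so that the sub- and supersolutions are seen to share the $\theta^{1-2\beta}$ behaviour as $\theta\to 0$.
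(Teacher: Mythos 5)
Your proposal is correct and essentially reproduces the paper's proof: your sub-barriers are exactly the $v_{\theta,r}$ of \eqref{PKc-7} (the paper compares $u$ with them directly, without the intermediate reduction to the sector solution $u_0$, which is immaterial), the Beta/Gamma evaluation is Lemma~\ref{TL-1}, and the uniform bound $C(\theta,\beta)\approx\theta^{1-2\beta}$ is the discussion in \eqref{TGda-DS}--\eqref{GBva}. For the sharpness \eqref{PKc-25.D2} the paper uses the same two-regime scheme, with the quadratic upper barrier $\tfrac12\,y\,(x\tan\theta-y)$ on a circumscribing triangle in place of your $\frac{x_1^2\tan^2\theta-x_2^2}{2(1-\tan^2\theta)}$; just take your small-$\theta$ threshold strictly below $\pi/4$ (e.g.\ $\theta\le\pi/8$), since the prefactor $(1-\tan^2\theta)^{-1}$ degenerates as $\theta\to\pi/4^{-}$, while your disc comparison already covers all larger $\theta$ with constants depending only on $\beta$.
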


To get a better feel for the constant defined in \eqref{PKA-5T}, 
a few comments are in order. Since, as is well-known, we have 
the following asymptotic formula 
\begin{eqnarray}\label{TGda-DS}
{\mathfrak{B}}(x,y)\sim\Gamma(y)\,x^{-y}\quad
\mbox{when $x>0$ is large, for each fixed $y>0$},
\end{eqnarray}
we deduce from this and \eqref{PKA-5T} that, 
for each $\beta\in(0,1)$ fixed, 
\begin{eqnarray}\label{HCSa}
C(\theta,\beta)\sim
\pi^{-1}(1-\beta)^{\beta-1}\theta^{1-\beta}(\pi+4\theta)^{\beta}\,
\Gamma(1-\beta){\mathfrak{B}}\Bigl(\frac{1}{2};\frac{1-\beta}{2}\Bigr)
\,\,\mbox{ for $\theta$ close to ${\textstyle\frac{\pi}{4}}$}.
\end{eqnarray}
In particular, this shows that the functions 
$\theta\mapsto C(\theta,\beta)$ from \eqref{PKA-5T} are continuous 
at $\pi/4$. In this vein, let us also remark here that since 
\begin{eqnarray}\label{T.i-BN}
\lim_{x\to 0^{+}}\Bigl(x{\mathfrak{B}}(x,y)\Bigr)=1\qquad
\mbox{for each fixed $y>0$},
\end{eqnarray}
it follows from \eqref{PKA-5T} that, for each $\beta\in(0,1)$ fixed, 
\begin{eqnarray}\label{Kan}
C(\theta,\beta)\sim\theta^{1-2\beta},
\qquad\mbox{for $\theta$ close to zero}.
\end{eqnarray} 
Altogether, the above analysis shows that for each fixed $\beta\in(0,1)$, 
the quantity $C(\theta,\beta)$ depends continuously on 
$\theta\in(0,\pi]$ and satisfies 
\begin{eqnarray}\label{GBva}
C(\theta,\beta)\approx\theta^{1-2\beta},
\qquad\mbox{uniformly for $\theta\in(0,\pi]$}.
\end{eqnarray}

The proof of the above proposition requires further preparations. 
For each $\theta\in(0,\pi)$ and $r>0$ consider the barrier function 
$v_{\theta,r}:S_{\theta,r}\to{\mathbb{R}}$ which, in polar 
coordinates $(\rho,\omega)$, is given by 
\begin{eqnarray}\label{PKc-7}
v_{\theta,r}(\rho,\omega):=
\left\{
\begin{array}{l}
\Big(\Bigl(\frac{\pi}{2\theta}\Bigr)^2-4\Bigr)^{-1}
r^2\Bigl[\Bigl(\frac{\rho}{r}\Bigr)^2
-\Bigl(\frac{\rho}{r}\Bigr)^{\pi/(2\theta)}\Bigr]
\cos\,\Bigl(\frac{\pi\omega}{2\theta}\Bigr)
\,\,\mbox{ if }\,\,\theta\in\bigl(0,\pi\bigr)
\setminus\bigl\{{\textstyle{\frac{\pi}{4}}}\bigr\},
\\[16pt]
\frac{\rho^2}{4}\log\Big(\frac{r}{\rho}\Bigr)\cos\,(2\omega)
\,\,\mbox{ if }\,\,\theta={\textstyle{\frac{\pi}{4}}}.
\end{array}
\right.
\end{eqnarray}
It is reassuring to observe that, because of the differentiation 
quotient present in \eqref{PKc-7}, which can be highlighted by writing 
\begin{eqnarray}\label{JFc-2}
v_{\theta,r}(\rho,\omega) &=& 
\Big(\Bigl(\frac{\pi}{2\theta}\Bigr)^2-4\Bigr)^{-1}
r^2\Bigl[\Bigl(\frac{\rho}{r}\Bigr)^2
-\Bigl(\frac{\rho}{r}\Bigr)^{\pi/(2\theta)}\Bigr]
\cos\,\Bigl(\frac{\pi\omega}{2\theta}\Bigr)
\nonumber\\[4pt]
&=& -r^2\Big(\frac{\pi}{2\theta}+2\Bigr)^{-1}
\frac{\Bigl(\frac{\rho}{r}\Bigr)^2
-\Bigl(\frac{\rho}{r}\Bigr)^{\pi/(2\theta)}}{2-\frac{\pi}{2\theta}}
\cos\,\Bigl(\frac{\pi\omega}{2\theta}\Bigr),
\end{eqnarray}
the formula for $v_{\theta,r}$ corresponding to the special value 
$\theta=\pi/4$ (i.e., second line of \eqref{PKc-7}) is the natural
limit case of the formula for $v_{\theta,r}$ in the first 
line of \eqref{PKc-7} as $\theta\to\pi/4$. 

The above barrier function has been designed precisely as to satisfy, for 
any $\theta\in(0,\pi)$ and any $r>0$ 
\begin{eqnarray}\label{PKc-1.D}
\begin{array}{l}
v_{\theta,r}=0\,\,\mbox{ on }\,\,\partial S_{\theta,r},\qquad
v_{\theta,r}>0\,\,\mbox{ in }\,\,S_{\theta,r},\quad\mbox{and}\quad
\\[6pt]
-(\Delta v_{\theta,r})(\rho,\omega)=\cos\,(\pi\omega/(2\theta))
\,\,\mbox{ in }\,\,S_{\theta,r}.
\end{array}
\end{eqnarray}
The last property is verified by means of an elementary 
calculation based on the fact that, in polar coordinates 
in the plane, the Laplacian can be written as 
$\Delta=d^2/d\rho^2+\rho^{-1}d/d\rho+\rho^{-2}d^2/d\omega^2$. 
The normalisation constants in \eqref{PKc-7} have been 
selected so that the right-hand side in the second line of 
\eqref{PKc-1.D} is precisely a cosine (this will be of relevance 
shortly; cf.~\eqref{PKc-22} below). While checking the last 
formula in \eqref{PKc-1.D} it also helps to notice that the function 
\begin{eqnarray}\label{TFSA}
\widetilde{v}_{\theta}(\rho,\omega)
:=\rho^{\pi/(2\theta)}\cos\,\Bigl(\frac{\pi\omega}{2\theta}\Bigr)
={\rm Im}\,[z^{\pi/(2\theta)}],\qquad z=\rho\,e^{i\omega},
\end{eqnarray}
is harmonic for every $\theta\in(0,\pi)$ and every $r>0$. 
It is instructive to note that $\widetilde{v}_{\theta}$ from \eqref{PKc-7} is 
the first singular function arising from the Mellin symbol of the Dirichlet-Laplacian
on the interval $(-\theta,\theta)$, while the function $v_{\theta,r}$ from 
\eqref{TFSA} is closely related to the term of degree $2$ in the corner asymptotics 
of $u$ (note that this asymptotics contains a log term only if $\theta=\frac{\pi}{4}$).
The preference of $v_{\theta,r}$ over $\widetilde{v}_{\theta}$ is then justified
by observing that, for small values of $\theta$ (more precisely, for 
$\theta\in(0,\pi/4)$), the corner asymptotics of $u$ is dominated by its term 
of degree $2$, and not by its term of degree $\frac{\pi}{2\theta}$.
This aspect\footnote{We owe this insight to one of the referees.}
plays a crucial role in our subsequent analysis. 

With the above notation and conventions we have: 

\begin{lemma}\label{TL-1}
For any $\beta\in(0,1)$, any $r>0$ and any $\theta\in (0,\pi)$,
we have 
\begin{eqnarray}\label{PKc-9}
\int_{S_{\theta,r}}v_{\theta,r}(x)^{-\beta}\,dx
=C(\theta,\beta)\,r^{2(1-\beta)}
\end{eqnarray}
where $C(\theta,\beta)>0$ is the finite constant given in 
\eqref{PKA-5T}.
\end{lemma}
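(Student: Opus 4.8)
The plan is to exploit the fact that the barrier $v_{\theta,r}$ separates variables in polar coordinates. In both regimes of \eqref{PKc-7} one can write $v_{\theta,r}(\rho,\omega)=g_{\theta,r}(\rho)\,\psi_\theta(\omega)$, where $\psi_\theta(\omega):=\cos(\pi\omega/(2\theta))$ is strictly positive on $(-\theta,\theta)$ and the radial profile is $g_{\theta,r}(\rho)=\bigl((\tfrac{\pi}{2\theta})^2-4\bigr)^{-1}r^2\bigl[(\rho/r)^2-(\rho/r)^{\pi/(2\theta)}\bigr]$ if $\theta\neq\tfrac\pi4$, and $g_{\pi/4,r}(\rho)=\tfrac14\rho^2\log(r/\rho)$; in either case $g_{\theta,r}>0$ on $(0,r)$ (one checks that $(\tfrac{\pi}{2\theta})^2-4$ and $(\rho/r)^2-(\rho/r)^{\pi/(2\theta)}$ have the same sign, so their quotient is positive). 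Since $dx=\rho\,d\rho\,d\omega$ on $S_{\theta,r}$, the integral factorises:
\[
\int_{S_{\theta,r}}v_{\theta,r}(x)^{-\beta}\,dx=\Bigl(\int_0^r g_{\theta,r}(\rho)^{-\beta}\rho\,d\rho\Bigr)\Bigl(\int_{-\theta}^{\theta}\psi_\theta(\omega)^{-\beta}\,d\omega\Bigr),
\]
so it suffices to evaluate the two one-dimensional integrals.

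For the angular factor I would substitute $t=\pi\omega/(2\theta)$ to reduce to $\tfrac{2\theta}{\pi}\int_{-\pi/2}^{\pi/2}(\cos t)^{-\beta}\,dt=\tfrac{4\theta}{\pi}\int_0^{\pi/2}(\cos t)^{-\beta}\,dt$, and then apply the trigonometric form \eqref{TFS-24} of the Beta function with $z_1=\tfrac12$, $z_2=\tfrac{1-\beta}{2}$, obtaining $\tfrac{2\theta}{\pi}\,\mathfrak{B}\bigl(\tfrac12;\tfrac{1-\beta}{2}\bigr)$; convergence at $\omega=\pm\theta$ is precisely the hypothesis $\beta<1$. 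This produces the factor $\mathfrak{B}\bigl(\tfrac12;\tfrac{1-\beta}{2}\bigr)$ common to all three branches of \eqref{PKA-5T}.

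For the radial factor, substituting $s=\rho/r$ immediately extracts the homogeneity $r^{2(1-\beta)}$ predicted in \eqref{PKc-9}, leaving a pure number. When $\theta\neq\tfrac\pi4$, after pulling $\bigl|(\tfrac{\pi}{2\theta})^2-4\bigr|^{\beta}$ out front one is left with $\int_0^1\bigl|s^2-s^{\pi/(2\theta)}\bigr|^{-\beta}s\,ds$; factoring out the appropriate power of $s$ (namely $s^{\min\{2,\pi/(2\theta)\}}$) and substituting $w=s^{|\pi/(2\theta)-2|}$ turns this into a standard Beta integral $\int_0^1 w^{a-1}(1-w)^{-\beta}\,dw=\mathfrak{B}(a;1-\beta)$, whose first parameter reduces (writing $k=\pi/(2\theta)$) to the quantity in the first or third branch of \eqref{PKA-5T}; integrability must be checked at $s=0$ (integrand $\sim s^{1-2\beta}$) and at $s=1$ (integrand $\sim(1-s)^{-\beta}$), both controlled by $\beta<1$. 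In the borderline case $\theta=\tfrac\pi4$, the profile $g_{\pi/4,r}(\rho)=\tfrac14\rho^2\log(r/\rho)$ calls instead for the substitution $\rho=re^{-u}$, which transforms $\int_0^r\bigl(\tfrac14\rho^2\log(r/\rho)\bigr)^{-\beta}\rho\,d\rho$ into $4^{\beta}r^{2-2\beta}\int_0^\infty u^{-\beta}e^{-2(1-\beta)u}\,du=4^{\beta}r^{2-2\beta}\bigl(2(1-\beta)\bigr)^{\beta-1}\Gamma(1-\beta)$, yielding the $\Gamma(1-\beta)$ and the power $\bigl(\tfrac1{1-\beta}\bigr)^{1-\beta}$ appearing in the middle branch. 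Multiplying the radial and angular contributions and simplifying the elementary prefactors (using $(\tfrac{\pi}{2\theta})^2-4=(\tfrac{\pi}{2\theta}-2)(\tfrac{\pi}{2\theta}+2)$) then gives \eqref{PKA-5T}.

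I expect the only real difficulty to be bookkeeping rather than ideas: keeping the sign conventions consistent across the ranges $\theta<\tfrac\pi4$ and $\theta>\tfrac\pi4$ (where $(\rho/r)^2-(\rho/r)^{\pi/(2\theta)}$ and $(\tfrac{\pi}{2\theta})^2-4$ flip sign together), identifying the correct power of $s$ to factor out before the Beta substitution, and confirming that the $\theta=\tfrac\pi4$ formula is the limit of the generic one as $\theta\to\tfrac\pi4$ — a convenient consistency check, using the asymptotics \eqref{TGda-DS}. The substantive content is entirely the separation-of-variables structure of the explicit barrier $v_{\theta,r}$.
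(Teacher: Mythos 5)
Your proposal is correct and follows essentially the same route as the paper's proof: separation of variables in polar coordinates, the substitution $t=\pi\omega/(2\theta)$ with the trigonometric Beta representation \eqref{TFS-24} for the angular factor, and the power-substitution reducing the radial factor to a Beta integral for $\theta\neq\pi/4$ (with the exponential substitution giving $\Gamma(1-\beta)$ in the borderline case $\theta=\pi/4$). The only cosmetic difference is that you treat $\theta<\pi/4$ and $\theta>\pi/4$ simultaneously via absolute values, whereas the paper works out $\theta<\pi/4$ and declares the other case analogous.
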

\begin{proof}
Fix $\beta\in(0,1)$, let $r>0$ be arbitrary and first assume that
$\theta\in(0,\pi/4)$. We have 
\begin{eqnarray}\label{DSw-1}
\int_{S_{\theta,r}}v_{\theta,r}(x)^{-\beta}\,dx
&=&\Big(\Bigl(\frac{\pi}{2\theta}\Bigr)^2-4\Bigr)^{\beta}r^{-2\beta}
\int_{S_{\theta,r}}\Bigl[\Bigl(\frac{\rho}{r}\Bigr)^2
-\Bigl(\frac{\rho}{r}\Bigr)^{\pi/(2\theta)}\Bigr]^{-\beta}
\Bigl[\cos\,\Bigl(\frac{\pi\omega}{2\theta}\Bigr)\Bigr]^{-\beta}dx
\nonumber\\[4pt]
&=& \Big(\Bigl(\frac{\pi}{2\theta}\Bigr)^2-4\Bigr)^{\beta}r^{-2\beta}
I\cdot II,
\end{eqnarray}
where 
\begin{eqnarray}\label{DSw-2}
I:=\int_0^r\Bigl[\Bigl(\frac{\rho}{r}\Bigr)^2
-\Bigl(\frac{\rho}{r}\Bigr)^{\pi/(2\theta)}\Bigr]^{-\beta}
\rho\,d\rho\quad\mbox{ and }\quad
II:=\int_{-\theta}^{\theta}
\Bigl[\cos\,\Bigl(\frac{\pi\omega}{2\theta}\Bigr)\Bigr]^{-\beta}d\omega.
\end{eqnarray}
Making two changes of variables, first introducing $t:=\rho/r$, 
then substituting $s$ for $t^{\varepsilon}$, where 
$\varepsilon:=\pi/(2\theta)-2>0$, yields 
\begin{eqnarray}\label{DSw-3}
I &=&\varepsilon^{-1}r^2\int_0^1 
s^{2(1-\beta)/\varepsilon-1}(1-s)^{-\beta}\,ds=\frac{r^2}{\varepsilon}
{\mathfrak{B}}\Bigl(\frac{2(1-\beta)}{\varepsilon};1-\beta\Bigr)
\nonumber\\[4pt]
&=& r^2\Bigl(\frac{\pi}{2\theta}-2\Bigr)^{-1}
\mathfrak{B}\Bigl(\frac{\theta(1-\beta)}{\pi/4-\theta};1-\beta\Bigr),
\end{eqnarray}
after unravelling notation. On the other hand, making the change 
of variables $\alpha:=\pi\omega/(2\theta)$ and using the parity 
of the cosine function permits one to write (after a reference 
to \eqref{TFS-24})
\begin{eqnarray}\label{DSw-4}
II=\frac{4\theta}{\pi}\int_{0}^{\pi/2}(\cos\alpha)^{-\beta}\,d\alpha
=\frac{2\theta}{\pi}{\mathfrak{B}}\Bigl(\frac{1}{2};\frac{1-\beta}{2}\Bigr).
\end{eqnarray}
Collectively, \eqref{DSw-1}-\eqref{DSw-4} establish the validity 
of \eqref{PKc-9} in the case when $\theta\in(0,\pi/4)$ and with 
a constant $C(\theta,\beta)$ as in the first line in the right-hand 
side of \eqref{PKA-5T}. The case when $\theta\in(\pi/4,\pi)$ is 
treated in a most analogous manner and we omit it.

Finally, corresponding to $\theta=\pi/4$, we have 
\begin{eqnarray}\label{DSw-5}
\int_{S_{\pi/4,r}}v_{\pi/4,r}(x)^{-\beta}\,dx
&=& 4^{\beta}\int_{S_{\pi/4,r}}
\rho^{-2\beta}\Bigl[\log\Bigl(\frac{r}{\rho}\Bigr)\Bigr]^{-\beta}
\Bigl[\cos\,(2\omega)\Bigr]^{-\beta}dx
\nonumber\\[4pt]
&=& 4^{\beta}r^{2-2\beta}III\cdot IIV,
\end{eqnarray}
where, after the changes of variables $t=\rho/r$ and $\alpha=2\omega$, 
\begin{eqnarray}\label{DSw-6}
III:=\int_0^1 t^{1-2\beta}
\Bigl[\log\Bigl(\frac{1}{t}\Bigr)\Bigr]^{-\beta}\,dt
\quad\mbox{ and }\quad
IV:=2^{-1}\int_{-\pi/2}^{\pi/2}(\cos\,\alpha)^{-\beta}d\alpha.
\end{eqnarray}
One more change of variables, substituting ${\rm exp}(-s/(2-2\beta))$
for $t$ in $III$, transforms this term into 
\begin{eqnarray}\label{DSw-7}
III=\Bigl(\frac{1}{2-2\beta}\Bigr)^{1-\beta}
\int_0^\infty e^{-s}s^{-\beta}\,ds=2^{\beta-1}
\Bigl(\frac{1}{1-\beta}\Bigr)^{1-\beta}\Gamma(1-\beta),
\end{eqnarray}
whereas, much as before, 
\begin{eqnarray}\label{DSw-8}
IV=2^{-1}{\mathfrak{B}}\Bigl(\frac{1}{2};\frac{1-\beta}{2}\Bigr).
\end{eqnarray}
In concert, \eqref{DSw-5}-\eqref{DSw-8} justify \eqref{PKc-9} in the 
case when $\theta=\pi/4$, with a constant $C(\theta,\beta)$ as 
in the middle line in the right-hand side of \eqref{PKA-5T}. 
This completes the proof of the lemma. 
\end{proof}

After this preamble, here is the end-game in the following proof.

\vskip 0.08in
\noindent{\it Proof of Proposition~\ref{TL-3}.}
Let $r>0$, $\theta\in(0,\pi)$ be as in the statement of the proposition. 
With $v_{\theta,r}$ as in \eqref{PKc-7}, it follows that the function 
$w:=u-v_{\theta,r}:S_{\theta,r}\to{\mathbb{R}}$ satisfies
(recall that we are assuming that $\Delta u=-1$ in 
$\Omega\supseteq S_{\theta,r}$)
\begin{eqnarray}\label{PKc-22}
(\Delta w)(\rho,\omega)=-1+\cos\,(\pi\omega/(2\theta))\leq 0\quad
\,\,\mbox{ for each }\,\,\rho\,e^{i\omega}\in S_{\theta,r},
\end{eqnarray}
i.e., $w$ is superharmonic in $S_{\theta,r}$. In addition, 
$w$ is continuous in $\overline{S_{\theta,r}}$ and 
\begin{eqnarray}\label{PKc-23}
w\bigl|_{\partial S_{\theta,r}}=u\bigl|_{\partial S_{\theta,r}}\geq 0
\end{eqnarray}
given that, by design, $u$ is nonnegative in $\overline{\Omega}$. 
Hence, the Maximum Principle applies and yields $w\geq 0$ in  
$S_{\theta,r}$ or, in other words, 
\begin{eqnarray}\label{PKc-23DS}
u\geq v_{\theta,r}\,\,\mbox{ in }\,\, S_{\theta,r}.
\end{eqnarray}
The estimate \eqref{PKc-25} now readily follows by combining 
\eqref{PKc-23DS} with the result proved in Lemma~\ref{TL-1}.
With this in hand, \eqref{PKc-25.D} is a direct consequence 
of \eqref{PKc-25} and \eqref{Kan} (cf.~also the claim following 
this last equation). 

Finally, there remains to prove the equivalence in \eqref{PKc-25.D2}. 
Of course, the left-pointing inequality is contained in \eqref{PKc-25.D},
so we only need to check the right-pointing inequality 
in \eqref{PKc-25.D2}. To this end, suppose in what follows that 
$\Omega=S_{\theta,r}$ for some $\theta\in(0,\pi)$, $r>0$, and that 
$u=u_{\theta,r}$ solves \eqref{PKc-1} for this particular domain. 
To continue, fix $\theta_0\in(0,\pi/2)$ and assume 
first that $\theta\in(\theta_0,\pi)$. Then
\begin{eqnarray}\label{THfUU}
\int_{S_{\theta,r}}u_{\theta,r}(x)^{-\beta}\,dx
=r^{2(1-\beta)}\int_{S_{\theta,1}}u_{\theta,1}(x)^{-\beta}\,dx
\geq C_\beta\,\theta_0\,r^{2(1-\beta)},
\end{eqnarray}
because the way the first integral scales in the parameter $r$, and
\eqref{PKc-3}. Since in the situation we are currently considering 
$\theta^{1-2\beta}$ behaves like a constant, the desired conclusion 
follows in this case. We are left with considering the case 
when $\theta\in(0,\theta_0)$. In such a scenario, set 
\begin{eqnarray}\label{PKc-5L}
\widetilde{S}_{\theta,r}:=\{(x,y)\in{\mathbb{R}}^2:\,
0<y<x\,\tan\,\theta\,\,\mbox{ and }\,\,0<x<r\}.
\end{eqnarray}
and note that 
\begin{eqnarray}\label{PKc-5Q}
\widetilde{S}_{\theta,r\cos\theta}\subseteq S_{\theta,r}\subseteq
\widetilde{S}_{\theta,r}.
\end{eqnarray}
Next, consider the following barrier function, designed to befit 
the triangular region $\widetilde{S}_{\theta,r}$:
\begin{eqnarray}\label{JanB}
\widetilde{v}_{\theta,r}:\widetilde{S}_{\theta,r}
\longrightarrow{\mathbb{R}},
\quad\widetilde{v}_{\theta,r}(x,y):={\textstyle\frac12}\,y\,
\bigl(x\,\tan\,\theta-y\bigr),
\quad\mbox{for all }\,(x,y)=x+iy\in\widetilde{S}_{\theta,r}.
\end{eqnarray}
Hence, $\widetilde{v}_{\theta,r}$ is continuous in 
$\overline{\widetilde{S}_{\theta,r}}$ and satisfies 
$\Delta\widetilde{v}_{\theta,r}=-1$ in $\widetilde{S}_{\theta,r}$ 
as well as $\widetilde{v}_{\theta,r}\geq 0$ on 
$\partial\widetilde{S}_{\theta,r}$. Consequently, the function 
$\widetilde{v}_{\theta,r}-u$ is harmonic in $S_{\theta,r}(=\Omega)$, 
continuous on its closure, and $\widetilde{v}_{\theta,r}-u=
\widetilde{v}_{\theta,r}\geq 0$ on $\partial S_{\theta,r}(=\partial\Omega)$.
Therefore, by the Maximum Principle, $u\leq\widetilde{v}_{\theta,r}$ 
in $S_{\theta,r}$ which, in turn, gives that 
\begin{eqnarray}\label{THfd}
\int_{S_{\theta,r}}u(x)^{-\beta}\,dx &\geq & 
\int_{S_{\theta,r}}\widetilde{v}_{\theta,r}(x)^{-\beta}\,dx
\geq 
\int_{\widetilde{S}_{\theta,r\cos\theta}}
\widetilde{v}_{\theta,r}(x)^{-\beta}\,dx
\nonumber\\[4pt]
&=& 2^{-\beta}\int^{r\cos\theta}_0\Bigl(\int_0^{x\,\tan\,\theta}
y^{-\beta}(x\,\tan\,\theta-y)^{-\beta}\,dy\Bigr)dx
\nonumber\\[4pt]
&=& 2^{-\beta}(\tan\,\theta)^{1-2\beta}\Bigl(\int_0^{r\cos\theta}
x^{1-2\beta}\,dx\Bigr)\Bigl(\int_0^1 t^{-\beta}(1-t)^{-\beta}\,dt\Bigr),
\nonumber\\[4pt]
&=& C_\beta\,(\tan\,\theta)^{1-2\beta}(\cos\,\theta)^{2-2\beta}
r^{2-2\beta}=C_\beta\,(\cos\,\theta)(\sin\,\theta)^{1-2\beta}
r^{2-2\beta}
\nonumber\\[4pt]
&\approx & \theta^{1-2\beta}r^{2-2\beta},
\quad\mbox{ uniformly for }\,\,\theta\in(0,\theta_0),
\end{eqnarray}
(taking into account \eqref{PKc-5Q} in the second inequality and 
after making the change of variables $y=t\,x\,\tan\,\theta$ in the 
inner integral in the second line). This concludes the justification 
of \eqref{PKc-25.D2} and finishes the proof of the proposition.
\hfill$\Box$
\vskip 0.08in

We are now in a position to formulate the first main result in this 
section. This shows that, whenever $\Omega$ is a polygon, \eqref{PKc-25} 
holds for every $\beta<1$ which is remarkable since, as opposed to the 
situation discussed in Corollary~\ref{anLL}, this time $\partial\Omega$ 
is far from being regular.

\begin{theorem}\label{Te-3A}
Assume that $\Omega$ is a polygon in ${\mathbb{R}}^2$ and that 
$u$ is the solution of \eqref{PKc-1}. Then, for every 
$\beta\in(0,1)$, 
\begin{eqnarray}\label{PDsa-3}
\int_{\Omega}u(x)^{-\beta}\,dx\leq C(\Omega,\beta)<+\infty.
\end{eqnarray}
\end{theorem}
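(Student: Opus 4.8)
The plan is to decompose $\Omega$ into a neighbourhood of each vertex together with a ``bulk'' region bounded away from all vertices, and to estimate the $\beta$-integral on each piece separately. Since $\Omega$ is a polygon, it has finitely many vertices $P_1,\dots,P_K$; near $P_j$ the domain coincides with a truncated sector of half-aperture $\theta_j\in(0,\pi)$. Concretely, I would choose $r_0>0$ small enough that the truncated sectors $S^{(j)}:=\Omega\cap B(P_j,r_0)$ are pairwise disjoint and each $S^{(j)}$ is a genuine truncated sector (after an isometry, of the form $S_{\theta_j,r_0}$ in the notation of \eqref{PKc-18}). Set $\Omega_0:=\Omega\setminus\bigcup_{j=1}^K \overline{B(P_j,r_0/2)}$, so that $\Omega\subseteq \Omega_0\cup\bigcup_{j=1}^K S^{(j)}$ (after enlarging $\Omega_0$ slightly so the pieces overlap and cover), and hence
\begin{eqnarray*}
\int_{\Omega}u(x)^{-\beta}\,dx\leq \int_{\Omega_0}u(x)^{-\beta}\,dx
+\sum_{j=1}^K\int_{S^{(j)}}u(x)^{-\beta}\,dx.
\end{eqnarray*}

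\textbf{The vertex pieces.} For each $j$, the set $\Omega$ satisfies the hypothesis of Proposition~\ref{TL-3} at the vertex $P_j$: there exist $r_0>0$ and $\theta_j\in(0,\pi)$ with $\Omega\cap B(P_j,r_0)$ an isometric copy of $S_{\theta_j,r_0}$. Applying Proposition~\ref{TL-3} (with the same solution $u$ of the global Saint Venant problem on $\Omega$, which is exactly the setting of that proposition, since there $\Omega$ is only assumed to \emph{contain} the sector locally) gives
\begin{eqnarray*}
\int_{S^{(j)}}u(x)^{-\beta}\,dx\leq C(\theta_j,\beta)\,r_0^{2(1-\beta)}<+\infty
\end{eqnarray*}
for every $\beta\in(0,1)$. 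This is the key point where we exploit the sharp sector estimate: no matter how small $\theta_j$ is (i.e.\ however sharp the corner), the bound holds for \emph{all} $\beta<1$, because the barrier $v_{\theta,r}$ of \eqref{PKc-7} captures the degree-two term of the corner asymptotics rather than the (possibly much larger) degree-$\pi/2\theta$ term.

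\textbf{The bulk piece.} On $\Omega_0$ the function $u$ is bounded below away from zero: indeed $\Omega_0$ is a compact subset of the open set $\Omega$, $u$ is continuous on $\overline\Omega$ (a polygon is Lipschitz, hence regular for the Dirichlet problem, so $u\in C^0(\overline\Omega)$), and $u>0$ in $\Omega$; hence $\inf_{\Omega_0}u=:c_0>0$. Alternatively, and more quantitatively, one may invoke \eqref{TF-2.D}, namely $u(x)\geq(2n)^{-1}\delta_\Omega(x)^2$, together with the fact that $\delta_\Omega$ is bounded below on $\Omega_0$ by a positive constant (every point of $\Omega_0$ is at distance $\geq r_0/2$ from each vertex, and away from the vertices the boundary of a polygon is a union of finitely many line segments, so $\delta_\Omega$ on $\Omega_0$ is controlled from below by a fixed positive number). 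Either way,
\begin{eqnarray*}
\int_{\Omega_0}u(x)^{-\beta}\,dx\leq c_0^{-\beta}\,|\Omega_0|\leq c_0^{-\beta}\,|\Omega|<+\infty.
\end{eqnarray*}
Summing the $K+1$ finite contributions yields \eqref{PDsa-3} with $C(\Omega,\beta):=c_0^{-\beta}|\Omega|+\sum_{j=1}^K C(\theta_j,\beta)r_0^{2(1-\beta)}$.

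\textbf{Main obstacle.} The only genuinely delicate point is the bookkeeping in the covering argument: one must choose $r_0$ small enough that the balls $B(P_j,r_0)$ are disjoint and each $\Omega\cap B(P_j,r_0)$ is \emph{exactly} a truncated sector (not, say, an intersection of two sectors), and simultaneously arrange that $\Omega_0$ together with the half-balls $B(P_j,r_0/2)$ still covers $\Omega$. Since there are finitely many vertices and the polygon has positive interior angles bounded away from $0$ and $2\pi$, such an $r_0$ exists; making this precise is routine but is where all the geometry of ``$\Omega$ is a polygon'' is used. Everything else is an immediate appeal to Proposition~\ref{TL-3} and the boundedness of $u^{-1}$ on compact subsets of $\Omega$ (equivalently, \eqref{TF-2.D}).
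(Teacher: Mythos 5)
Your handling of the corner contributions matches the paper's: Proposition~\ref{TL-3}, via the barrier $v_{\theta,r}$, applied in a small ball around each vertex (and indeed, for a polygon, $\Omega\cap B(P_j,r)$ \emph{coincides} with a truncated sector for small $r$, which is what hypothesis \eqref{PKc-18} requires --- it is not merely containment, but for a polygon this is automatic). The genuine gap is in your ``bulk'' estimate. The set $\Omega_0=\Omega\setminus\bigcup_j\overline{B(P_j,r_0/2)}$ is \emph{not} compactly contained in $\Omega$: its closure meets $\partial\Omega$ along the open sides of the polygon, since a point can be far from every vertex yet arbitrarily close to the interior of an edge. Hence $\inf_{\Omega_0}u=0$, the distance function $\delta_\Omega$ is not bounded below on $\Omega_0$, and the bound $\int_{\Omega_0}u^{-\beta}\,dx\leq c_0^{-\beta}|\Omega|$ is unjustified. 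Your quantitative fallback does not repair this: \eqref{TF-2.D} only gives $u\geq(2n)^{-1}\delta_\Omega^2$, so $\int_{\Omega_0}u^{-\beta}\,dx\lesssim\int_{\Omega}\delta_\Omega(x)^{-2\beta}\,dx$, which (by Lemma~\ref{bvv55}) is finite only for $\beta<1/2$; for the full range $\beta\in(0,1)$ one needs a \emph{linear} lower bound $u\gtrsim\delta_\Omega$, or some substitute, near the interior of each side, and that is not automatic.

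This is exactly the point the paper treats with a separate argument: one rounds off each vertex inside $B(P_i,r_i/2)$ to produce a $C^1$ subdomain $\Omega_\ast\subseteq\Omega$ which, together with the corner neighbourhoods, covers $\Omega$; the Maximum Principle gives $u\geq u_\ast$ in $\Omega_\ast$, where $u_\ast$ solves the Saint Venant problem in $\Omega_\ast$, and Corollary~\ref{anLL} (the $C^1$ case of Theorem~\ref{TL-AS}, valid for every $\beta\in(0,1)$) then yields $\int_{\Omega_\ast}u^{-\beta}\,dx\leq\int_{\Omega_\ast}u_\ast^{-\beta}\,dx<+\infty$. Some device of this kind --- comparison with a smoother subdomain as in the paper, or a direct barrier/Hopf-type argument giving $u\gtrsim\delta_\Omega$ away from the corners combined with $\int_{\Omega}\delta_\Omega^{-\beta}\,dx<+\infty$ for $\beta<1$ --- is indispensable for the edge contribution; as written, your decomposition establishes \eqref{PDsa-3} only for $\beta<1/2$ (and, strictly speaking, not even that, since the compactness and lower-bound claims for $\Omega_0$ are false as stated).
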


\begin{proof}
Let $\{P_1,...,P_N\}$ be the vertices of the polygon $\Omega$ and, 
for each $i\in\{1,...,N\}$, denote by $\theta_i\in(0,\pi)$ the half-measure
of the angle corresponding to $P_i$, and by $L'_i$ and $L''_i$ the 
lengths of the two sides of $\Omega$ emerging from $P_i$. 
Also, for each $i\in\{1,...,N\}$, introduce 
\begin{eqnarray}\label{GBab}
r_i:=\min\,\Bigl\{L'_i\,,\,L''_i\,,\,(L'_i)^{1/(2-2\beta)}
\theta_i^{(2\beta-1)/(2-2\beta)}\,,\,
(L''_i)^{1/(2-2\beta)}\theta_i^{(2\beta-1)/(2-2\beta)}\Bigr\}.
\end{eqnarray}
Parenthetically, we wish to note that 
\begin{eqnarray}\label{GBac}
\beta=1/2\Longrightarrow r_i=\min\,\bigl\{L'_i,L''_i\bigr\}.
\end{eqnarray}

Writing a formula similar to \eqref{PKc-25.D} at each vertex 
(note that the problem \eqref{PKc-1} transforms naturally under 
rigid motions of the plane) and summing up all contributions obtained 
from integrating $u^{-\beta}$ near each vertex gives
\begin{eqnarray}\label{PKc-26G}
\sum_{i=1}^N\int_{\Omega\cap B(P_i,r_i)}u(x)^{-\beta}\,dx
&\leq & C_\beta\,\sum_{i=1}^N \theta_i^{1-2\beta}r_i^{2(1-\beta)}
\nonumber\\[4pt]
&\leq & C_\beta\,\sum_{i=1}^N\min\,\{L'_i,L''_i\}
\leq C_\beta\,{\mathcal{H}}^1(\partial\Omega),
\end{eqnarray}
with a finite constant $C_\beta>0$ which depends only on $\beta$. 

Having estimated the contribution from the vertices, construct now 
a $C^1$ domain $\Omega_\ast\subseteq\Omega$ by rounding off 
each vertex $P_i$ with a suitably small circular arc contained within 
$B(P_i,r_i/2)$. This ensures that $\Omega_\beta$ has the property that 
\begin{eqnarray}\label{BVC}
\Omega_\ast\cup\Bigl(\bigcup_{1\leq i\leq N}
\bigl(\Omega\cap B(P_i,r_i)\bigr)\Bigr)=\Omega.
\end{eqnarray}
Next, consider $u_\ast$ such that 
\begin{eqnarray}\label{PKc-1XX}
\left\{
\begin{array}{l}
\Delta u_\ast=-1\,\,\mbox{ in }\,\,\Omega_\ast,
\\[4pt]
u_\ast=0\,\,\mbox{ on }\,\,\partial\Omega_\ast,
\\[4pt]
u_\ast\in C^0(\overline{\Omega_\ast}).
\end{array}
\right.
\end{eqnarray}
Since $u-u_\ast$ is a continuous function in $\overline{\Omega_\ast}$
which satisfies $\Delta(u-u_\ast)=0$ in $\Omega_\beta$ and 
$u-u_\ast=u\geq 0$ on $\partial\Omega_\ast\subseteq\overline{\Omega}$, 
it follows from the Maximum Principle that $u\geq u_\ast$ in $\Omega_\ast$. 
Thus, for every $\beta\in(0,1)$, 
\begin{eqnarray}\label{FD6tf}
\int_{\Omega_\ast}u(x)^{-\beta}\,dx\leq 
\int_{\Omega_\ast}u_\ast(x)^{-\beta}\,dx\leq C(\Omega_\ast,\beta)<+\infty,
\end{eqnarray}
by virtue of Corollary~\ref{anLL}. Now, \eqref{PDsa-3} follows readily 
from \eqref{PKc-26G}, \eqref{BVC} and \eqref{FD6tf}. 
\end{proof}

In the class of convex polygons in ${\mathbb{R}}^2$ it is possible
to further clarify the nature of the constant $C(\Omega,\beta)$ in 
\eqref{PDsa-3}. This is done in Theorem~\ref{Te-3A.u}, stated later
in this section. As a preamble, a key technical result used in the 
proof of this theorem is isolated in the proposition below. 

\begin{proposition}\label{TL-3.ii}
Assume that $\Omega$ is a bounded domain in ${\mathbb{R}}^2$ for which there 
exist $r>0$ and $\theta\in(0,\pi/2)$ with the property that 
\begin{eqnarray}\label{PKc-18.ii}
\Omega\cap B(0,r)=S_{\theta,r}\quad\mbox{ and }\quad 
B\bigl((r(\cos\theta)^{-1},0),r\tan\theta\bigr)\subseteq\Omega.
\end{eqnarray}
As usual, let $u$ be the function defined by \eqref{PKc-1}. 
Then 
\begin{eqnarray}\label{PKc-25.ii}
0<\beta<\min\Bigl\{1,{\textstyle{\frac{4\theta}{\pi}}}\Bigr\}\Longrightarrow
\int_{S_{\theta,r}}u(x)^{-\beta}\,dx\leq C_\beta\,
\left(\frac{\theta^{2(1-\beta)}}{4\theta-\pi\beta}\right)
r^{2(1-\beta)}(\cos\theta)^\beta,
\end{eqnarray}
where $C_\beta>0$ is the finite constant which depends only on $\beta$.

Furthermore, retaining \eqref{PKc-18.ii} it follows that for every 
$\beta\in(0,1)$ there holds
\begin{eqnarray}\label{PKc-25.iii}
\int_{S_{\theta,r}}u(x)^{-\beta}\,dx\leq 
C_\beta\,\theta^{1-2\beta}\,r^{2(1-\beta)}(\cos\theta)^\beta.
\end{eqnarray}
\end{proposition}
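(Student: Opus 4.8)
The plan is to refine Proposition~\ref{TL-3} by putting the inscribed‑ball hypothesis in \eqref{PKc-18.ii} to work. The barrier $v_{\theta,r}$ of \eqref{PKc-7} vanishes identically on the circular arc $\Sigma:=\{|z|=r\}\cap S_\theta$, so it registers no gain when $\theta$ is close to $\pi/2$; the second condition in \eqref{PKc-18.ii} supplies a nontrivial, $(\cos\theta)^{-1}$‑large lower bound for the solution $u$ of \eqref{PKc-1} along $\Sigma$, which we feed into a harmonic minorant. Concretely, write $B^\ast:=B\bigl((r(\cos\theta)^{-1},0),\,r\tan\theta\bigr)\subseteq\Omega$, with centre $c^\ast$ and radius $R^\ast$. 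Comparing $u$ via the Maximum Principle (Proposition~\ref{Max-PP}) with the explicit Saint Venant solution $\tfrac14(R^{\ast2}-|x-c^\ast|^2)$ from \eqref{PKc-1.A} on $B^\ast$ gives $u(x)\geq\tfrac14(R^{\ast2}-|x-c^\ast|^2)$ on $B^\ast$. A direct computation shows $\Sigma\subseteq\overline{B^\ast}$ and $R^{\ast2}-|re^{i\omega}-c^\ast|^2=\tfrac{2r^2}{\cos\theta}(\cos\omega-\cos\theta)$ for $|\omega|\leq\theta$, so that, in polar coordinates,
\begin{equation*}
u(re^{i\omega})\ \geq\ h(\omega):=\frac{r^2}{2\cos\theta}\bigl(\cos\omega-\cos\theta\bigr),\qquad |\omega|\leq\theta.
\end{equation*}

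Next I would build the minorant. Put $\mu:=\pi/(2\theta)>1$, let $g_\ast(\theta):=\min_{|\omega|\leq\theta}\dfrac{\cos\omega-\cos\theta}{\cos(\mu\omega)}$ (a positive number, since this ratio extends continuously and positively to $[-\theta,\theta]$), and set
\begin{equation*}
v(\rho,\omega):=\frac{g_\ast(\theta)}{2\cos\theta}\,r^{\,2-\mu}\rho^{\mu}\cos(\mu\omega),\qquad (\rho,\omega)\in(0,r)\times(-\theta,\theta).
\end{equation*}
Being a positive multiple of ${\rm Re}(z^{\mu})$, the function $v$ is harmonic and positive in $S_{\theta,r}$ and vanishes on the two straight edges; by the very definition of $g_\ast(\theta)$ one has $v(r,\omega)=\tfrac{g_\ast(\theta)}{2\cos\theta}\,r^2\cos(\mu\omega)\leq h(\omega)$ on $\Sigma$. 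Hence $v-u$ is subharmonic in $S_{\theta,r}$ and $\leq 0$ on $\partial S_{\theta,r}$ (using the previous step on $\Sigma$ and $u=v=0$ on the edges), so Proposition~\ref{Max-PP} yields $u\geq v$ throughout $S_{\theta,r}$.

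Then I would integrate. Passing to polar coordinates, separating variables, and using \eqref{TFS-24} for the angular factor (substitution $\psi=\mu\omega$), one gets, for $0<\beta<\min\{1,4\theta/\pi\}$ — the $\rho$‑integral $\int_0^r\rho^{1-\mu\beta}\,d\rho$ being finite precisely when $\beta<2/\mu=4\theta/\pi$ —
\begin{equation*}
\int_{S_{\theta,r}}u(x)^{-\beta}\,dx\ \leq\ \int_{S_{\theta,r}}v^{-\beta}\,dx\ =\ \frac{2^{\beta+2}\,\theta^{2}\,(\cos\theta)^{\beta}}{\pi\,(4\theta-\pi\beta)\,g_\ast(\theta)^{\beta}}\ \mathfrak{B}\!\left(\tfrac12;\tfrac{1-\beta}2\right)\,r^{2(1-\beta)}.
\end{equation*}
To convert this into \eqref{PKc-25.ii} it remains to bound $g_\ast(\theta)\geq\tfrac{4}{\pi^{3}}\theta^{2}$: writing $\cos\omega-\cos\theta=2\sin\tfrac{\theta+\omega}{2}\sin\tfrac{\theta-\omega}{2}$ and $\cos(\mu\omega)=\sin\!\bigl(\tfrac{\pi}{2}\cdot\tfrac{\theta-|\omega|}{\theta}\bigr)$, the elementary bounds $\tfrac2\pi x\leq\sin x\leq x$ on $[0,\tfrac\pi2]$ give $\cos\omega-\cos\theta\geq\tfrac{2\theta(\theta-|\omega|)}{\pi^{2}}$ and $\cos(\mu\omega)\leq\tfrac{\pi(\theta-|\omega|)}{2\theta}$, and the factor $(\theta-|\omega|)$ cancels, leaving the quotient $\geq\tfrac{4\theta^{2}}{\pi^{3}}$ for every $|\omega|<\theta$. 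Inserting $g_\ast(\theta)^{\beta}\geq(4/\pi^{3})^{\beta}\theta^{2\beta}$ yields \eqref{PKc-25.ii} with a constant depending only on $\beta$.

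Finally, to obtain \eqref{PKc-25.iii} for all $\beta\in(0,1)$ I would split on $\theta$. If $\theta\geq\pi/4$ then $\min\{1,4\theta/\pi\}=1$, so \eqref{PKc-25.ii} applies; moreover $4\theta-\pi\beta\geq\pi(1-\beta)$ and $\theta^{2(1-\beta)}=\theta\cdot\theta^{1-2\beta}\leq\tfrac{\pi}{2}\,\theta^{1-2\beta}$ (as $\theta<\pi/2$), so \eqref{PKc-25.ii} already implies \eqref{PKc-25.iii}. If $\theta<\pi/4$ then $(\cos\theta)^{\beta}\geq 2^{-\beta/2}\geq 2^{-1/2}$, so \eqref{PKc-25.iii} is weaker than the bound $\int_{S_{\theta,r}}u^{-\beta}\,dx\leq C_\beta\,\theta^{1-2\beta}r^{2(1-\beta)}$ already provided by \eqref{PKc-25.D} of Proposition~\ref{TL-3}, and we are done. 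The conceptual heart of the argument — and the main obstacle to verify carefully — is the step that turns the inscribed‑ball hypothesis into the harmonic minorant $v$ whose trace on $\Sigma$ is dominated by $h$; the only genuinely computational point, the lower bound $g_\ast(\theta)\gtrsim\theta^{2}$, is rendered painless by the cancellation of $(\theta-|\omega|)$, and the rest is bookkeeping with the Beta function.
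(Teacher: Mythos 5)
Your argument is correct and follows essentially the same route as the paper's proof: compare $u$ on the inscribed disc with the explicit radial solution to get the lower bound $\frac{r^2}{2\cos\theta}(\cos\omega-\cos\theta)$ on the arc, transfer it via the Maximum Principle to a multiple of the harmonic function $\rho^{\pi/(2\theta)}\cos(\pi\omega/(2\theta))$ with a $\theta^{2}$-gain, integrate in polar coordinates with the Beta function, and deduce \eqref{PKc-25.iii} by splitting on $\theta$ and invoking \eqref{PKc-25.D} for small angles. The only differences are cosmetic: you make the constant in the arc comparison explicit (via $g_\ast(\theta)\geq 4\theta^2/\pi^3$) where the paper cites elementary calculus, and you split at $\pi/4$ rather than $\pi/3$.
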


\begin{proof}
Consider the function defined by 
\begin{eqnarray}\label{Yh.i-1}
v(x):={\textstyle\frac14}\Bigl[r^2\tan^2\theta
-\bigl(x_1-r(\cos\theta)^{-1}\bigr)^2-x_2^2\Bigr],\qquad x=(x_1,x_2).
\end{eqnarray}
In polar coordinates $x=(x_1,x_2)=(\rho\cos\omega,\rho\sin\omega)$ this
takes the form 
\begin{eqnarray}\label{Yh.i-2}
v(\rho,\omega)={\textstyle\frac14}\Bigl[2r\rho\cos\omega(\cos\theta)^{-1}
-r^2-\rho^2\Bigr].
\end{eqnarray}
Let us also define the harmonic function 
\begin{eqnarray}\label{Yh.i-3}
w(\rho,\omega):=\rho^{\pi/(2\theta)}
\cos\,\Bigl(\frac{\pi\omega}{2\theta}\Bigr),
\qquad \rho>0,\,\,|\omega|<\theta.
\end{eqnarray}
Then, $w$ vanishes on the straight sides of $\partial S_{\theta,r}$ 
(i.e., for $\omega=\pm\theta$), while on the rounded portion of the boundary 
of $\partial S_{\theta,r}$ (i.e., the arc described by $\rho=r$ and 
$|\omega|<\theta$) we have 
\begin{eqnarray}\label{Yh.i-4}
\frac{w(r,\omega)}{v(r,\omega)}
=2(\cos\theta)\,r^{\pi/(2\theta)-2}
\frac{\cos\,\Bigl(\frac{\pi\omega}{2\theta}\Bigr)}{\cos\omega-\cos\theta},
\qquad |\omega|<\theta.
\end{eqnarray}
Note that 
\begin{eqnarray}\label{Yh.i-5}
\sup_{|\omega|<\theta}\left(
\frac{\cos\,\Bigl(\frac{\pi\omega}{2\theta}\Bigr)}{\cos\omega-\cos\theta}
\right)
=\sup_{0<\omega<\theta}\left(
\frac{\cos\,\Bigl(\frac{\pi\omega}{2\theta}\Bigr)}{\cos\omega-\cos\theta}
\right)
\end{eqnarray}
and, for each $\omega\in(0,\theta)$, 
\begin{eqnarray}\label{Yh.i-6}
\frac{\cos\,\Bigl(\frac{\pi\omega}{2\theta}\Bigr)}{\cos\omega-\cos\theta}
&=&\frac{\sin\,\Bigl(\frac{\pi}{2}-\frac{\pi\omega}{2\theta}\Bigr)}
{\cos\omega-\cos\theta}
=\frac{\sin\,\Bigl(\frac{\pi(\theta-\omega)}{2\theta}\Bigr)}
{\cos\omega-\cos\theta}
\nonumber\\[4pt]
&=& 
\frac{\pi}{2\theta}\left(
\frac{\sin\,\Bigl(\frac{\pi(\theta-\omega)}{2\theta}\Bigr)}
{\frac{\pi(\theta-\omega)}{2\theta}}\right)
\frac{\theta-\omega}{\cos\omega-\cos\theta}.
\end{eqnarray}
Since $\pi(\theta-\omega)/(2\theta)\in(0,\pi/2)$ whenever 
$\omega\in(0,\theta)$, it follows that the fraction in parentheses 
in the right-most expression in \eqref{Yh.i-6} is $\leq 1$ for each 
$\omega\in(0,\theta)$. Also, elementary calculus shows that there 
exists a universal constant $c>0$ such that 
$(\theta-\omega)/(\cos\omega-\cos\theta)\leq c/\theta$ if 
$0<\omega<\theta$ (recall that $\theta\in (0,\pi/2)$). Consequently,  
\begin{eqnarray}\label{Yh.i-7}
\frac{w(r,\omega)}{v(r,\omega)}
\leq c\,\theta^{-2}(\cos\theta)\,r^{\pi/(2\theta)-2},
\qquad\mbox{for every }\,\,\omega\in(-\theta,\theta).
\end{eqnarray}

Next, given that by design 
\begin{eqnarray}\label{Yh.i-8}
v=0\,\,\mbox{ on }\,\,\partial
B\bigl((r(\cos\theta)^{-1},0),r\tan\theta\bigr),\quad
\Delta v=-1\,\,\mbox{ in }\,\,
B\bigl((r(\cos\theta)^{-1},0),r\tan\theta\bigr),
\end{eqnarray}
and that, by assumption, 
$B\bigl((r(\cos\theta)^{-1},0),r\tan\theta\bigr)\subseteq\Omega$, 
the Maximum Principle ensures that 
\begin{eqnarray}\label{Yh.i-9}
v\leq u\,\,\mbox{ in }\,\,B\bigl((r(\cos\theta)^{-1},0),r\tan\theta\bigr).
\end{eqnarray}
In particular, 
\begin{eqnarray}\label{Yh.i-10}
v(r,\omega)\leq u(r,\omega)\,\,\mbox{ for all }\,\,\omega\in(-\theta,\theta).
\end{eqnarray}
From this and \eqref{Yh.i-7} we may therefore conclude that 
\begin{eqnarray}\label{Yh.i-11}
c^{-1}\,\theta^{2}(\cos\theta)^{-1}\,r^{2-\pi/(2\theta)}w(r,\omega)
\leq u(r,\omega)\qquad\mbox{for every }\,\,\omega\in(-\theta,\theta).
\end{eqnarray}
Granted this, as well as the properties of $w$ recorded just after 
\eqref{Yh.i-3}, the Maximum Principle applies again and yields that 
\begin{eqnarray}\label{Yh.i-12}
c^{-1}\,\theta^{2}(\cos\theta)^{-1}\,r^{2-\pi/(2\theta)}w(x)
\leq u(x)\qquad\mbox{for every }\,\,x\in S_{\theta,r}.
\end{eqnarray}
Hence, for every $\beta\in(0,4\theta/\pi)$, a familiar (by now) computation 
gives that 
\begin{eqnarray}\label{Yh.i-13}
\int_{S_{\theta,r}}u(x)^{-\beta}\,dx
&\leq & c^{\beta}\theta^{-2\beta}(\cos\theta)^{\beta}\,
r^{(\pi/(2\theta)-2)\beta}\int_{S_{\theta,r}}w(x)^{-\beta}\,dx
\nonumber\\[4pt]
&=& c^{\beta}\theta^{-2\beta}(\cos\theta)^{\beta}\,r^{(\pi/(2\theta)-2)\beta}
\,{\mathfrak{B}}\Bigl(\frac{1}{2};\frac{1-\beta}{2}\Bigr)
\left(\frac{4\theta^2}{\pi(4\theta-\pi\beta)}\right)r^{2-\pi\beta/(2\theta)}
\nonumber\\[4pt]
&=& C_\beta \left(\frac{\theta^{2(1-\beta)}}{4\theta-\pi\beta}\right)
r^{2(1-\beta)}(\cos\theta)^\beta,
\end{eqnarray}
proving \eqref{PKc-25.ii}. 

Finally, when $\theta\in(\pi/3,\pi/2)$, it follows from \eqref{PKc-25.ii}
that for every $\beta\in(0,1)$ we have 
\begin{eqnarray}\label{P-25.sz}
\int_{S_{\theta,r}}u(x)^{-\beta}\,dx\leq 
C_\beta\,r^{2(1-\beta)}(\cos\theta)^\beta, 
\end{eqnarray}
which further implies \eqref{PKc-25.iii} in the case we are considering.
When $\theta\in(0,\pi/3)$, then \eqref{PKc-25.iii} is a direct consequence 
of \eqref{PKc-25.D}. 
\end{proof}

Continuing the buildup to Theorem~\ref{Te-3A.u}, we now make 
several definitions and comment on their significance and how 
they interrelate. 

\begin{definition}\label{gaTf}
The eccentricity of an open, bounded convex set $\Omega$ in ${\mathbb{R}}^n$ 
is defined as 
\begin{eqnarray}\label{jaTg}
{\rm ecc}\,(\Omega):=\frac{\inf\,\{R_1>0:\,\exists\,x\in{\mathbb{R}}^n
\,\,\mbox{ such that }\,\,\Omega\subseteq B(x,R_1)\}}
{\sup\,\{R_2>0:\,\exists\,x\in{\mathbb{R}}^n
\,\,\mbox{ such that }\,\,B(x,R_2)\subseteq\Omega\}}.
\end{eqnarray}
\end{definition}

It follows that 
\begin{eqnarray}\label{Rtagb}
\begin{array}{l}
{\rm ecc}\,(\Omega)\,\,\mbox{ controls both the NTA constants of $\Omega$}
\\[4pt]
\mbox{as well as the Ahlfors character of $\partial\Omega$},
\end{array}
\end{eqnarray}
uniformly in the class of open, bounded and convex subsets 
$\Omega$ of ${\mathbb{R}}^n$. Furthermore, there exists a dimensional 
constant $c_n$ with the property that for every open, bounded convex set 
$\Omega\subseteq{\mathbb{R}}^n$ we have 
\begin{eqnarray}\label{Rtag-2}
{\rm diam}\,(\Omega)\leq c_n\,{\rm ecc}\,(\Omega)\,|\Omega|^{1/n}.
\end{eqnarray}

\begin{definition}\label{gann}
Let $\Omega$ be a convex polygon in ${\mathbb{R}}^2$. Call $R>0$ an admissible 
radius for $\Omega$ provided for each side of $\Omega$ there exists a 
ball of radius $R$ contained in $\Omega$ which is tangent to that side. 
Then define the maximal admissible radius of $\Omega$ as 
\begin{eqnarray}\label{jann}
R_{\Omega}:=\sup\,\{R>0:\,R\,\,
\mbox{ is an admissible radius for $\Omega$}\},
\end{eqnarray}
and set 
\begin{eqnarray}\label{jann.2}
\Omega^{\#}:=\bigcup\limits_{x\in\Omega,\,\delta_{\Omega}(x)>R_{\Omega}}
B(x,R_{\Omega}).
\end{eqnarray}
\end{definition}

Straight from definitions it can be seen that 
\begin{eqnarray}\label{hav.F}
\Omega\subseteq{\mathbb{R}}^2\,\,\mbox{ convex polygon}\,\Longrightarrow
\Omega^{\#}\,\,\mbox{ satisfies an inner ball condition of radius }\,\,
R_{\Omega}.
\end{eqnarray}
Also, it is not too difficult to show that 
\begin{eqnarray}\label{hav.F2}
\begin{array}{l}
\Omega\subseteq{\mathbb{R}}^2\,\,\mbox{ convex polygon}\,\Longrightarrow
\Omega^{\#}\,\,\mbox{ is a $C^{1,1}$ convex domain}
\\[4pt]
\hskip 1.92in
\mbox{ satisfying ${\rm ecc}\,(\Omega^{\#})\leq{\rm ecc}\,(\Omega)$}.
\end{array}
\end{eqnarray}
Shortly we will also need the readily verified claim that 
\begin{eqnarray}\label{hav.F3}
\begin{array}{l}
\Omega\subseteq{\mathbb{R}}^2\,\,\mbox{ convex polygon}\,\Longrightarrow
\mbox{ the angles of $\Omega$ are $\geq 2\theta_{\ast}$, where } 
\\[4pt]
\hskip 1.99in
\mbox{$\theta_{\ast}\in(0,\pi/2)$ depends only on ${\rm ecc}\,(\Omega)$}.
\end{array}
\end{eqnarray}

After this prelude, we are now prepared to state and prove a refined 
version of \eqref{PDsa-3} in the class of convex polygons in ${\mathbb{R}}^2$. 

\begin{theorem}\label{Te-3A.u}
Assume that $\Omega$ is a convex polygon in ${\mathbb{R}}^2$. 
Then for every $\beta\in(0,1)$ the solution $u$ of \eqref{PKc-1} satisfies 
\begin{eqnarray}\label{Ah-nb.i}
\int_{\Omega}u(x)^{-\beta}\,dx \leq C({\rm ecc}\,(\Omega),\beta)
\Bigl(\frac{{\rm diam}\,(\Omega)}{R_{\Omega}}\Bigr)^{m\beta+2}
|\Omega|^{1-\beta}, 
\end{eqnarray}
where $m>0$ depends only on ${\rm ecc}\,(\Omega)$.
\end{theorem}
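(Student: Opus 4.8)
The plan is to split $\Omega$ into a ``well--rounded core'' $\Omega^{\#}$ (as in \eqref{jann.2}) and a finite union of sectorial neighbourhoods of the vertices, bound the integral over the core by monotonicity together with Corollary~\ref{Tgab}, and bound each vertex contribution by Proposition~\ref{TL-3.ii}. For the core, recall from \eqref{hav.F}--\eqref{hav.F2} that $\Omega^{\#}$ is a $C^{1,1}$ convex domain satisfying an inner ball condition of radius $R_{\Omega}$ with $\mathrm{ecc}\,(\Omega^{\#})\le\mathrm{ecc}\,(\Omega)$; hence its NTA constants and the Ahlfors character of $\partial\Omega^{\#}$ are controlled by $\mathrm{ecc}\,(\Omega)$, and $\mathrm{diam}\,(\Omega^{\#})\le\mathrm{diam}\,(\Omega)$, $|\Omega^{\#}|\le|\Omega|$. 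Let $u_{\#}$ solve \eqref{PKc-1} in $\Omega^{\#}$. Since $\Omega^{\#}\subseteq\Omega$, the function $u-u_{\#}$ is harmonic in $\Omega^{\#}$, continuous on $\overline{\Omega^{\#}}$, and equals $u\ge 0$ on $\partial\Omega^{\#}$, so the Maximum Principle gives $u\ge u_{\#}$ on $\Omega^{\#}$, whence $\int_{\Omega^{\#}}u^{-\beta}\,dx\le\int_{\Omega^{\#}}u_{\#}^{-\beta}\,dx$. Applying Corollary~\ref{Tgab} to $\Omega^{\#}$ (with $r_{0}=R_{\Omega}$ and some fixed $\theta\in(\beta\pi/2,\pi/2)$) then yields $\int_{\Omega^{\#}}u^{-\beta}\,dx\le C(\mathrm{ecc}\,(\Omega),\beta)\,(\mathrm{diam}\,(\Omega)/R_{\Omega})^{m\beta+2}\,|\Omega|^{1-\beta}$, with $m$ depending only on $\mathrm{ecc}\,(\Omega)$.

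Next I would treat the vertices $P_{1},\dots,P_{N}$, with half-angles $\theta_{1},\dots,\theta_{N}\in(0,\pi/2)$. By \eqref{hav.F3} one has $\theta_{i}\ge\theta_{\ast}=\theta_{\ast}(\mathrm{ecc}\,(\Omega))>0$, and the usual accounting of exterior angles ($\sum_{i}(\pi-2\theta_{i})=2\pi$ with each summand $\le\pi-2\theta_{\ast}$) bounds $N\le 2\pi/(\pi-2\theta_{\ast})$, i.e.\ $N$ is controlled by $\mathrm{ecc}\,(\Omega)$. After a rigid motion placing $P_{i}$ at the origin, one chooses a radius $r_{i}>0$ for which the hypotheses \eqref{PKc-18.ii} hold: $\Omega\cap B(0,r_{i})=S_{\theta_{i},r_{i}}$ and $B\bigl((r_{i}(\cos\theta_{i})^{-1},0),r_{i}\tan\theta_{i}\bigr)\subseteq\Omega$, the latter being available, by convexity, at scale comparable to $R_{\Omega}$ because $R_{\Omega}$ is an admissible radius for $\Omega$. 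Proposition~\ref{TL-3.ii} (estimate \eqref{PKc-25.iii}) then gives, for every $\beta\in(0,1)$,
\[
\int_{S_{\theta_{i},r_{i}}}u(x)^{-\beta}\,dx\le C_{\beta}\,\theta_{i}^{\,1-2\beta}\,r_{i}^{\,2(1-\beta)}(\cos\theta_{i})^{\beta}\le C(\theta_{\ast},\beta)\,r_{i}^{\,2(1-\beta)}\le C(\theta_{\ast},\beta)\,\mathrm{diam}\,(\Omega)^{2(1-\beta)},
\]
since $\theta_{i}\in[\theta_{\ast},\pi/2)$ makes $\theta_{i}^{1-2\beta}(\cos\theta_{i})^{\beta}$ bounded and $r_{i}\le\mathrm{diam}\,(\Omega)$. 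Summing over $i$ and using $N\le N(\mathrm{ecc}\,(\Omega))$ together with \eqref{Rtag-2}, which gives $\mathrm{diam}\,(\Omega)^{2(1-\beta)}\le c\,\mathrm{ecc}\,(\Omega)^{2(1-\beta)}|\Omega|^{1-\beta}$, bounds the total vertex contribution by $C(\mathrm{ecc}\,(\Omega),\beta)\,|\Omega|^{1-\beta}$.

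Finally I would add the core and vertex estimates; since $R_{\Omega}\le\mathrm{diam}\,(\Omega)$ the factor $(\mathrm{diam}\,(\Omega)/R_{\Omega})^{m\beta+2}$ is $\ge 1$ and may be inserted freely, so the two bounds combine into the asserted form \eqref{Ah-nb.i}. The step I expect to be the main obstacle is the geometric covering claim, namely that $\Omega^{\#}$ together with the finitely many vertex sectors $S_{\theta_{i},r_{i}}$ actually exhausts $\Omega$: when several sides are short compared with $R_{\Omega}$ the $C^{1,1}$ core $\Omega^{\#}$ stays at distance $\sim R_{\Omega}/\sin\theta_{i}$ from such a vertex while the sector radius $r_{i}$ is forced down to the scale of the shortest adjacent side, potentially leaving an uncovered ``thin'' region near a cluster of vertices. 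Reconciling these scales is precisely what the inscribed-ball hypothesis in Proposition~\ref{TL-3.ii} is designed for, and carrying it out cleanly may require interpolating with intermediate triangular regions $\widetilde{S}_{\theta,r}$ (and the barrier $\widetilde{v}_{\theta,r}$) as in \eqref{PKc-5L}--\eqref{JanB}; this bookkeeping, rather than any new analytic input, is the delicate part of the argument.
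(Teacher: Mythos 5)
Your treatment of the core $\Omega^{\#}$ is exactly the paper's: monotonicity of the Saint Venant solution under domain inclusion via the Maximum Principle, followed by Corollary~\ref{Tgab} with $r_0=R_{\Omega}$, using \eqref{hav.F}--\eqref{hav.F2} to control the NTA and Ahlfors constants by ${\rm ecc}\,(\Omega)$. The gap is in the vertex count. You bound each vertex contribution crudely by $C(\theta_{\ast},\beta)\,{\rm diam}\,(\Omega)^{2(1-\beta)}$ and then multiply by $N$, claiming $N$ is controlled by ${\rm ecc}\,(\Omega)$. That claim is false: a regular $N$-gon inscribed in the unit circle has ${\rm ecc}\to 1$, ${\rm diam}/R_{\Omega}\to 2$, yet $N$ is arbitrary, so no quantity appearing in the constant of \eqref{Ah-nb.i} bounds $N$. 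Your angle-sum argument is also logically reversed: from $\theta_i\geq\theta_{\ast}$ each exterior angle satisfies $\pi-2\theta_i\leq\pi-2\theta_{\ast}$, and $\sum_i(\pi-2\theta_i)=2\pi$ then gives $N\geq 2\pi/(\pi-2\theta_{\ast})$, a lower bound, not an upper one; an upper bound on $N$ would require the angles to stay away from $\pi/2$, which \eqref{hav.F3} does not provide. With $N$ unbounded, your total vertex estimate $N\cdot C\,{\rm diam}\,(\Omega)^{2(1-\beta)}$ blows up for regular polygons with many sides, even though the true $\beta$-integral stays bounded (cf.\ Proposition~\ref{taTH}), so the loss is in your argument, not in the statement.

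The paper avoids any bound on $N$ precisely by not discarding the factor $(\cos\theta_i)^{\beta}$ in \eqref{PKc-25.iii}. It takes $r_i:=R_{\Omega}\cos\theta_i$, so that $r_i^{2(1-\beta)}(\cos\theta_i)^{\beta}=R_{\Omega}^{-\beta}r_i^{2-\beta}$, and then exploits $2-\beta>1$ to write $\sum_i r_i^{2-\beta}\leq\bigl(\sum_i r_i\bigr)^{2-\beta}$, with $\sum_i r_i$ controlled by ${\mathcal{H}}^1(\partial\Omega)\lesssim{\rm diam}\,(\Omega)$; this yields the vertex contribution $C(\theta_{\ast},\beta)\bigl({\rm diam}\,(\Omega)/R_{\Omega}\bigr)^{\beta}{\rm diam}\,(\Omega)^{2(1-\beta)}$ uniformly in $N$. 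Note that keeping the exponent $2-\beta$ (rather than $2(1-\beta)$, which drops below $1$ for $\beta>1/2$) is essential for this superadditivity trick, so the $(\cos\theta_i)^{\beta}$ factor you threw away is doing real work. To repair your proof, restore that factor, use the explicit choice $r_i=R_{\Omega}\cos\theta_i$ (which, together with the definition \eqref{jann.2} of $\Omega^{\#}$, is also what makes the covering identity \eqref{kam-4} work --- the issue you flagged as the main obstacle is handled by this choice rather than by extra triangular regions), and replace the $N$-count by the power-sum inequality above; the final conversion ${\rm diam}\,(\Omega)^{2(1-\beta)}\lesssim{\rm ecc}\,(\Omega)^{2(1-\beta)}|\Omega|^{1-\beta}$ via \eqref{Rtag-2} is then as you wrote.
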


\begin{proof}
Denote by $\{P_1,...,P_N\}$ the vertices of the polygon $\Omega$ and, for 
each $i\in\{1,...,N\}$, let $\theta_i\in(0,\pi/2)$ be the half-measure 
of the angle corresponding to $P_i$. Set 
\begin{eqnarray}\label{kam-xx}
r_i:=R_{\Omega}\cos\theta_i,\qquad 1\leq i\leq N, 
\end{eqnarray}
and fix $\beta\in(0,1)$. Also, let $\theta_{\ast}\in(0,\pi/2)$
be as in \eqref{hav.F3}. Then, thanks to \eqref{PKc-25.iii}, 
for each $i\in\{1,...,N\}$ we have
\begin{eqnarray}\label{kam-1}
\int_{\Omega\cap B(P_i,r_i)}u(x)^{-\beta}\,dx
\leq C(\theta_{\ast},\beta)\,r_i^{2(1-\beta)}(\cos\theta_i)^\beta
=C(\theta_{\ast},\beta)\,R_{\Omega}^{-\beta}r_i^{2-\beta}.
\end{eqnarray}
Consequently, 
\begin{eqnarray}\label{kam-2}
\sum_{i=1}^N\int_{\Omega\cap B(P_i,r_i)}u(x)^{-\beta}\,dx
&\leq & C(\theta_{\ast},\beta)\,R_{\Omega}^{-\beta}\sum_{i=1}^Nr_i^{2-\beta}
\nonumber\\[4pt]
&\leq & C(\theta_{\ast},\beta)\,R_{\Omega}^{-\beta}\Bigl(\sum_{i=1}^Nr_i
\Bigr)^{2-\beta}, 
\end{eqnarray}
since $2-\beta>1$. Given that $\sum_{i=1}^Nr_i$ is controlled by the 
perimeter of $\Omega$, this yields 
\begin{eqnarray}\label{kam-3}
\sum_{i=1}^N\int_{\Omega\cap B(P_i,r_i)}u(x)^{-\beta}\,dx
&\leq & C(\theta_{\ast},\beta)\,R_{\Omega}^{-\beta}
\bigl[{\mathcal{H}}^1(\partial\Omega)\bigr]^{2-\beta}
\nonumber\\[4pt]
&\leq & C(\theta_{\ast},\beta)\,R_{\Omega}^{-\beta}
({\rm diam}\,(\Omega))^{2-\beta}
\nonumber\\[4pt]
&=& C(\theta_{\ast},\beta)
\Bigl(\frac{{\rm diam}\,(\Omega)}{R_{\Omega}}\Bigr)^{\beta}
({\rm diam}\,(\Omega))^{2(1-\beta)}.
\end{eqnarray}
In light of \eqref{Ah-nb.i} (and keeping \eqref{hav.F3} in mind), 
this bound suits our purposes. 

To continue, we note that 
\begin{eqnarray}\label{kam-4}
\Omega^{\#}\cup\Bigl(\bigcup_{i=1}^N(\Omega\cap B(P_i,r_i))\Bigr)=\Omega, 
\end{eqnarray}
and observe that, thanks to \eqref{hav.F} and \eqref{Ah-nb}, 
\begin{eqnarray}\label{kam-5}
\int_{\Omega^{\#}}u(x)^{-\beta}\,dx &\leq & C_{\Omega^{\#}}(\beta)
\Bigl(\frac{{\rm diam}\,(\Omega^{\#})}{R_{\Omega}}\Bigr)^{m\beta+2}
|\Omega^{\#}|^{1-\beta}
\nonumber\\[4pt]
&\leq & C_{\Omega^{\#}}(\beta)
\Bigl(\frac{{\rm diam}\,(\Omega)}{R_{\Omega}}\Bigr)^{m\beta+2}
({\rm diam}\,(\Omega))^{2(1-\beta)},
\end{eqnarray}
where $m>0$ depends only on the NTA constants of $\Omega^{\#}$, 
and $C_{\Omega^{\#}}(\beta)>0$ is a finite constant which depends 
only on the Ahlfors character of $\partial\Omega^{\#}$ and $\beta$. 
Hence, by \eqref{Rtagb} and \eqref{hav.F2}, $C_{\Omega^{\#}}(\beta)$ 
can be controlled in terms of ${\rm ecc}\,(\Omega)$ and $\beta$. 
Estimate \eqref{Ah-nb.i} now follows from this observation, 
\eqref{kam-3}, \eqref{kam-4}, \eqref{kam-5} and \eqref{Rtag-2}.
\end{proof}

\begin{remark}\label{Rem-F2}
In regard to \eqref{Ah-nb.i}, it should be pointed out that, 
in the class of convex polygons in ${\mathbb{R}}^2$, the maximal 
admissible radius cannot be controlled in terms of the diameter and 
the eccentricity. A simple example is as follows. Let $\Omega$ 
be the triangle whose vertices have coordinates $(-1,0)$, $(1,0)$, 
$(0,1)$ and, for each $j\geq 2$, consider the convex quadrilateral
$\Omega_j:=\{(x,y)\in\Omega:\,y<1-1/j\}$. It is then clear that 
while ${\rm ecc}\,(\Omega_j)$ and ${\rm diam}\,(\Omega_j)$ stay bounded,
$R_{\Omega_j}\to 0$ as $j\to\infty$.  
\end{remark}

We conclude this section by giving an asymptotic formula for the 
$\beta$-integral of a regular polygon, as the number of vertices increases. 
This augments earlier estimates in \eqref{thg.6}-\eqref{thg.7}.  

\begin{proposition}\label{taTH}
For each $N\in{\mathbb{N}}$, $N\geq 3$, let $\Omega_N\subseteq{\mathbb{R}}^2$ 
denote the regular polygon with $N$ sides, circumscribed by $B(0,1)$. 
Denote by $u_N$ the solution $u$ of \eqref{PKc-1} when $\Omega=\Omega_N$. 
Then for each $\beta\in(0,1)$ the following asymptotic formula holds
\begin{eqnarray}\label{HGba-1}
\int_{\Omega_N}u_N(x)^{-\beta}\,dx=\frac{4^\beta\pi}{1-\beta}
+{\mathcal{O}}(N^{\beta-1})\qquad\mbox{as }\,\,N\to\infty.
\end{eqnarray}
\end{proposition}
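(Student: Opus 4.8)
The plan is to sandwich $\int_{\Omega_N}u_N^{-\beta}\,dx$ between the $\beta$-integrals of the circumscribed disc $B(0,1)$ and of the inscribed disc $B(0,\cos(\pi/N))$, both of which are explicit. Recall from Remark~\ref{Rf-1.H} that the Saint Venant solution on $B(0,R)\subseteq{\mathbb R}^2$ is $\tfrac14(R^2-|x|^2)$, so a direct polar-coordinate computation gives $\int_{B(0,R)}\bigl(\tfrac14(R^2-|x|^2)\bigr)^{-\beta}\,dx=\frac{4^\beta\pi}{1-\beta}R^{2(1-\beta)}$. Since $\Omega_N\subseteq B(0,1)$, the function $u-u_N$ (with $u(x)=\tfrac14(1-|x|^2)$ the solution on $B(0,1)$) is harmonic on $\Omega_N$ and equals $u\ge0$ on $\partial\Omega_N$, so the Maximum Principle gives $u_N\le u$ on $\Omega_N$; likewise, since $B(0,\cos(\pi/N))\subseteq\Omega_N$, comparing $u_N$ with the solution $w_N(x)=\tfrac14(\cos^2(\pi/N)-|x|^2)$ on that disc gives $u_N\ge w_N$ on $B(0,\cos(\pi/N))$.

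For the lower bound, $\int_{\Omega_N}u_N^{-\beta}\ge\int_{\Omega_N}u^{-\beta}=\frac{4^\beta\pi}{1-\beta}-\int_{B(0,1)\setminus\Omega_N}u^{-\beta}$, and since $B(0,1)\setminus\Omega_N\subseteq B(0,1)\setminus B(0,\cos(\pi/N))$ the last integral is at most $4^\beta\int_{B(0,1)\setminus B(0,\cos(\pi/N))}(1-|x|^2)^{-\beta}\,dx=\frac{4^\beta\pi}{1-\beta}\sin(\pi/N)^{2(1-\beta)}={\mathcal O}(N^{2\beta-2})$, which is ${\mathcal O}(N^{\beta-1})$ because $\beta<1$. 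For the upper bound, write $\Omega_N=B(0,\cos(\pi/N))\cup\bigl(\Omega_N\setminus B(0,\cos(\pi/N))\bigr)$; on the inscribed disc $\int_{B(0,\cos(\pi/N))}u_N^{-\beta}\le\int_{B(0,\cos(\pi/N))}w_N^{-\beta}=\frac{4^\beta\pi}{1-\beta}\cos(\pi/N)^{2(1-\beta)}\le\frac{4^\beta\pi}{1-\beta}$. The set $\Omega_N\setminus B(0,\cos(\pi/N))$ splits into $N$ congruent ``caps'', one at each vertex $P_i$; each cap is bounded by the two half-sides of length $\sin(\pi/N)$ issuing from $P_i$ and by the arc of the inscribed circle joining their endpoints, and a short geometric check shows it is star-shaped with respect to $P_i$ and contained in $\overline{B(P_i,\sin(\pi/N))}$. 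The interior angle of $\Omega_N$ at $P_i$ is $\pi-\tfrac{2\pi}{N}$, so inside $B(P_i,\sin(\pi/N))$ the domain $\Omega_N$ coincides with the truncated sector $S_{\theta,\sin(\pi/N)}$ of half-opening $\theta=\tfrac\pi2-\tfrac\pi N$ (here one uses $\sin(\pi/N)<$ the side length $2\sin(\pi/N)$, so no other vertex enters the ball); moreover for the choice $r=\sin(\pi/N)$ the auxiliary ball in \eqref{PKc-18.ii}, namely $B((r/\cos\theta,0),r\tan\theta)$, equals — since $\cos\theta=\sin(\pi/N)$ — the inscribed disc $B(0,\cos(\pi/N))\subseteq\Omega_N$. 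Hence Proposition~\ref{TL-3.ii}, in the form \eqref{PKc-25.iii}, applies with $\cos\theta=\sin(\pi/N)$ and yields $\int_{\Omega_N\cap B(P_i,\sin(\pi/N))}u_N^{-\beta}\le C_\beta\,\theta^{1-2\beta}\sin(\pi/N)^{2(1-\beta)}\sin(\pi/N)^{\beta}\le C_\beta'\,N^{\beta-2}$, since $\theta^{1-2\beta}$ is bounded for $N$ large. Summing over the $N$ vertices gives $\int_{\Omega_N\setminus B(0,\cos(\pi/N))}u_N^{-\beta}={\mathcal O}(N^{\beta-1})$, so $\int_{\Omega_N}u_N^{-\beta}\le\frac{4^\beta\pi}{1-\beta}+{\mathcal O}(N^{\beta-1})$, and together with the lower bound this proves \eqref{HGba-1}.

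The main obstacle is the upper estimate near the vertices: the elementary bound \eqref{PKc-25} of Proposition~\ref{TL-3} contributes $C(\theta,\beta)\sin(\pi/N)^{2(1-\beta)}$ per vertex, which after summing over $N$ vertices is only ${\mathcal O}(N^{2\beta-1})$ — too weak, since $2\beta-1>\beta-1$. What saves the argument is the extra factor $(\cos\theta)^\beta$ in the refined estimate \eqref{PKc-25.iii}: for the regular polygon $\cos\theta=\sin(\pi/N)\approx\pi/N$ because the angles approach $\pi$, and this gains exactly one extra power of $1/N$ per vertex, producing the claimed ${\mathcal O}(N^{\beta-1})$. The only point needing genuine care is the geometry — verifying that the hypothesis \eqref{PKc-18.ii} holds with the natural choice $r=\sin(\pi/N)$ (it is essentially automatic, the required auxiliary ball being the incircle of $\Omega_N$), and that $\Omega_N\setminus B(0,\cos(\pi/N))$ is covered by the $N$ vertex balls $B(P_i,\sin(\pi/N))$.
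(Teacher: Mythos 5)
Your proof is correct and follows essentially the same route as the paper's: sandwich $u_N$ between the Saint Venant solutions of the inscribed and circumscribed discs for the bulk term and the lower bound, and control the $N$ vertex neighbourhoods via the refined corner estimate \eqref{PKc-25.iii} of Proposition~\ref{TL-3.ii}, whose extra factor $(\cos\theta_i)^\beta\approx N^{-\beta}$ is precisely what upgrades the naive ${\mathcal{O}}(N^{2\beta-1})$ corner contribution from \eqref{PKc-25.D} to the claimed ${\mathcal{O}}(N^{\beta-1})$. The differences are cosmetic: you normalise so that $\Omega_N\subseteq B(0,1)$ (the paper's computation in effect takes the unit disc as the incircle, which is equivalent by scaling), and you spell out the covering of $\Omega_N\setminus B(0,\cos(\pi/N))$ by the vertex balls and the verification of hypothesis \eqref{PKc-18.ii} (the auxiliary ball being exactly the inscribed disc), details the paper imports from the proof of Theorem~\ref{Te-3A.u}.
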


\begin{proof}
We specialise part of the proof of Theorem~\ref{Te-3A.u} to the 
present case. In the current setting, using notation introduced on that 
occasion, we have: 
\begin{eqnarray}\label{HGba-2}
R_{\Omega_N} = 1 - o(1),\quad
\theta_i=\frac{\pi}{2}-\frac{\pi}{N},\quad
r_i=R_{\Omega_N}\cos\theta_i=R_{\Omega_N}\sin(\pi/N),\quad 1\leq i\leq N. 
\end{eqnarray}
Also, as before, we let $P_1,...,P_N$ be the vertices of $\Omega_N$. 
Hence, the first inequality in \eqref{kam-2} gives 
\begin{eqnarray}\label{HGba-3}
\sum_{i=1}^N\int_{\Omega_N\cap B(P_i,r_i)}u_N(x)^{-\beta}\,dx
\leq C_\beta R_{\Omega_N}^{2(1-\beta)}\,\sum_{i=1}^N\bigl(\sin(\pi/N)\bigr)^{2-\beta}
\leq C_\beta\,N^{\beta-1}.
\end{eqnarray}
Since the Maximum Principle and \eqref{PKc-1.A} imply 
$u(x)\geq\frac14 (1-|x|^2)$ for all $x\in B(0,1)$, we therefore 
obtain the asymptotic estimate 
\begin{eqnarray}\label{HGba-4}
\int_{\Omega_N}u_N(x)^{-\beta}\,dx &\leq &  
\sum_{i=1}^N\int_{\Omega_N\cap B(P_i,r_i)}u_N(x)^{-\beta}\,dx
+\int_{B(0,1)}u_N(x)^{-\beta}\,dx 
\nonumber\\[4pt]
&\leq & 4^{\beta}\int_{B(0,1)}\frac{dx}{(1-|x|^2)^{\beta}}
+{\mathcal{O}}(N^{\beta-1})
\nonumber\\[4pt]
&=& \frac{4^\beta\pi}{1-\beta}+{\mathcal{O}}(N^{\beta-1}).
\end{eqnarray}
On the other hand, \eqref{PKc-1.A} and the Maximum Principle give
\begin{eqnarray}\label{HGba-5}
u_N(x)\leq{\textstyle\frac14}\bigl((\cos(\pi/N))^{-2}-|x|^2)\quad
\mbox{ for all }\,\,x\in\Omega_N, 
\end{eqnarray}
which then forces 
\begin{eqnarray}\label{HGba-6}
\int_{\Omega_N}u_N(x)^{-\beta}\,dx &\geq & 
4^{\beta}\int_{B(0,1)}\frac{dx}{\bigl((\cos(\pi/N))^{-2}-|x|^2
\bigr)^{\beta}}
=4^{\beta}\pi\int_0^1\frac{dt}{\bigl((\cos(\pi/N))^{-2}-t\bigr)^{\beta}}
\nonumber\\[4pt]
&=& \frac{4^{\beta}\pi}{1-\beta}\Bigl((\cos(\pi/N))^{-2(1-\beta)}
-\bigl((\cos(\pi/N))^{-2}-1\bigr)^{1-\beta}\Bigr)
\nonumber\\[4pt]
&=& \frac{4^{\beta}\pi}{1-\beta}(\cos(\pi/N))^{-2(1-\beta)}
\Bigl(1-\bigl(\sin(\pi/N)\bigr)^{2(1-\beta)}\Bigr)
\nonumber\\[4pt]
&=& \frac{4^{\beta}\pi}{1-\beta}
\Bigl(1-{\mathcal{O}}(N^{2(\beta-1)})\Bigr). 
\end{eqnarray}
Now, \eqref{HGba-1} follows from \eqref{HGba-4} and \eqref{HGba-6}.
\end{proof}

\section{Piecewise smooth domains with conical singularities
in ${\mathbb{R}}^n$}
\label{sect:5}
\setcounter{equation}{0}

In this section we shall work in the general $n$-dimensional case, 
$n\geq 2$. Throughout, we retain notation introduced in \S\,\ref{sect:3}.
Given $r>0$ and an open connected subset ${\mathfrak{G}}$ of $S^{n-1}$ 
with a sufficiently regular boundary (relative to $S^{n-1}$), define 
the truncated cone
\begin{eqnarray}\label{Mc-T1}
S_{{\mathfrak{G}},r}:=B(0,r)\cap\Gamma_{\mathfrak{G}}=
\{(\rho,\omega):\,0<\rho<r,\,\,\omega\in {\mathfrak{G}}\},  
\end{eqnarray}
where $(\rho,\omega)\in(0,\infty)\times S^{n-1}$ are 
the standard polar coordinates in ${\mathbb{R}}^n$. 
Associated with this truncated cone, consider the barrier function 
$v_{{\mathfrak{G}},r}:S_{{\mathfrak{G}},r}\to{\mathbb{R}}$ which, 
in polar coordinates is given by
\begin{eqnarray}\label{PKc-7H.G}
v_{{\mathfrak{G}},r}(\rho,\omega):=
\left\{
\begin{array}{l}
\frac{r^2}{\Lambda_{\mathfrak{G}}-2n}\Bigl[\Bigl(\frac{\rho}{r}\Bigr)^2
-\Bigl(\frac{\rho}{r}\Bigr)^{\alpha_{\mathfrak{G}}}\Bigr]
\phi_{{\mathfrak{G}}}(\omega)\,\,\,\,
\mbox{ if $\Lambda_{\mathfrak{G}}\not=2n$},
\\[16pt]
\frac{\rho^2}{n+2}
\log\Big(\frac{r}{\rho}\Bigr)\phi_{\mathfrak{G}}\,(\omega)
\hskip 0.81in
\mbox{ if $\Lambda_{\mathfrak{G}}=2n$},
\end{array}
\right.
\end{eqnarray}
for each $\omega\in {\mathfrak{G}}$ and $\rho\in(0,r)$. 
Given that, by \eqref{critic.G}, we have 
\begin{eqnarray}\label{BGfa}
\Lambda_{\mathfrak{G}}-2n=(\alpha_{\mathfrak{G}}-2)(\alpha_{\mathfrak{G}}+n),
\end{eqnarray}
it is worth noting that 
\begin{eqnarray}\label{Gva-Ji}
\Lambda_{{\mathfrak{G}}}=2n\Longleftrightarrow\alpha_{\mathfrak{G}}=2.
\end{eqnarray}
In particular, the formula for $v_{{\mathfrak{G}},r}(\rho,\omega)$ in 
the second line of \eqref{PKc-7H.G} is the limiting case of the formula 
for $v_{{\mathfrak{G}},r}(\rho,\omega)$ in the first line of \eqref{PKc-7H.G}
as $\Lambda_{\mathfrak{G}}$ becomes $2n$. Much as before, in axially 
symmetric case, i.e., for ${\mathfrak{G}}=S^{n-1}\cap\Gamma_\theta$ for some 
$\theta\in(0,\pi)$, we agree to abbreviate $S_{S^{n-1}\cap\Gamma_\theta,r}$ 
and $v_{S^{n-1}\cap\Gamma_\theta,r}$ by $S_{\theta,r}$ and $v_{\theta,r}$, 
respectively. In this scenario, we therefore have 
\begin{eqnarray}\label{PKc-7H}
v_{\theta,r}(\rho,\omega)=
\left\{
\begin{array}{l}
\frac{r^2}{\Lambda_\theta-2n}\Bigl[\Bigl(\frac{\rho}{r}\Bigr)^2
-\Bigl(\frac{\rho}{r}\Bigr)^{\alpha_\theta}\Bigr]\phi_{\theta}(\omega),
\hskip 0.51in
\mbox{ if $\theta\not=\theta_n$},
\\[16pt]
\frac{\rho^2}{n+2}
\log\Big(\frac{r}{\rho}\Bigr)\phi_{\theta_n}\,(\omega),
\,\,\mbox{ corresponding to $\theta=\theta_n$},
\end{array}
\right.
\end{eqnarray}
where $\theta_n\in(0,\pi)$ is the unique angle for which
$\alpha_{\theta_n}=2$. 
Note that since the assignment $\theta\mapsto\alpha_{\theta}$ 
is strictly decreasing (cf.~\eqref{alphamap}) and since $\alpha_{\pi/2}=1$
and $\alpha_{\theta}\nearrow+\infty$ as $\theta\searrow 0$ 
(cf.~\eqref{alphamap2}), there exists precisely one angle  
$\theta_n\in(0,\pi/2)$ for which $\alpha_{\theta_n}=2$. In fact, from
\eqref{limn2F} we know that 
\begin{eqnarray}\label{Gva-J}
\theta_n={\rm arccos}\,(1/\sqrt{n}),\qquad n\geq 2, 
\end{eqnarray}
so that, in particular,
\begin{eqnarray}\label{BGT/a}
\theta_n=\left\{
\begin{array}{l}
\frac{\pi}{4}\,\,\mbox{ when }\,\,n=2,
\\[6pt]
\frac{\pi}{3}\,\,\mbox{ when }\,\,n=4,
\end{array}
\right.
\qquad\mbox{ and }\,\,\,\theta_n\nearrow\frac{\pi}{2}
\,\,\mbox{ as }\,\,\,n\to\infty. 
\end{eqnarray}
This discussion shows that $\Lambda_\theta\not=2n$
if $\theta\not=\theta_n$ and, hence, $v_{\theta,r}(\rho,\omega)$ 
is well-defined for every $\theta\in(0,\pi)$ and  
$v_{\theta_n,r}(\rho,\omega)$ is the limit of 
$v_{\theta,r}(\rho,\omega)$ as $\theta\to\theta_n$.

\begin{lemma}\label{TL-1H}
Assume that ${\mathfrak{G}}$ is an open, connected subset of $S^{n-1}$ whose
relative boundary is a submanifold of class $C^{1,\alpha}$, 
for some $\alpha\in(0,1)$, and of codimension one in $S^{n-1}$. 
Then for every $\beta\in(0,1)$ and $r>0$, 
the barrier function $v_{{\mathfrak{G}},r}$ from \eqref{PKc-7H.G} satisfies
\begin{eqnarray}\label{PKc-9H}
\int_{S_{{\mathfrak{G}},r}}v_{{\mathfrak{G}},r}(x)^{-\beta}\,dx
=c_n({\mathfrak{G}},\beta)\,r^{n-2\beta}
\Bigl(\int_{{\mathfrak{G}}}\phi_{{\mathfrak{G}}}(\omega)^{-\beta}
\,d\omega\Bigr)<+\infty,
\end{eqnarray}
where
\begin{eqnarray}\label{P-9J}
c_n({\mathfrak{G}},\beta):=\left\{
\begin{array}{l}
\frac{(\Lambda_{\mathfrak{G}}-2n)^{\beta}}{\alpha_{\mathfrak{G}}-2}
\mathfrak{B}\Bigl(\frac{n-2\beta}{\alpha_{\mathfrak{G}}-2};1-\beta\Bigr),
\hskip 0.36in
\mbox{ if }\,\,2<\alpha_{\mathfrak{G}}<+\infty,
\\[12pt]
(n-2\beta)^{\beta-1}(n+2)^\beta\Gamma(1-\beta),
\,\,\mbox{ if }\,\,\alpha_{\mathfrak{G}}=2,
\\[12pt]
\frac{(2n-\Lambda_{\mathfrak{G}})^{\beta}}{2-\alpha_{\mathfrak{G}}}
\mathfrak{B}\Bigl(\frac{n-2\beta}{2-\alpha_{\mathfrak{G}}};1-\beta\Bigr)
\hskip 0.36in
\mbox{ if }\,\,0<\alpha_{\mathfrak{G}}<2.
\end{array}
\right.
\end{eqnarray}
\end{lemma}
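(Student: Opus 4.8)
The plan is to exploit the product structure of $v_{\mathfrak{G},r}$ in polar coordinates, which separates the integral over the truncated cone into a one-dimensional radial integral and an integral over the spherical profile $\mathfrak{G}$. Writing $x=\rho\,\omega$ with $\rho\in(0,r)$ and $\omega\in\mathfrak{G}$, and using $dx=\rho^{n-1}\,d\rho\,d\omega$, formula \eqref{PKc-7H.G} gives
\begin{eqnarray*}
\int_{S_{\mathfrak{G},r}}v_{\mathfrak{G},r}(x)^{-\beta}\,dx
=\Bigl(\int_0^r g_{\mathfrak{G},r}(\rho)^{-\beta}\rho^{n-1}\,d\rho\Bigr)
\Bigl(\int_{\mathfrak{G}}\phi_{\mathfrak{G}}(\omega)^{-\beta}\,d\omega\Bigr),
\end{eqnarray*}
where $g_{\mathfrak{G},r}(\rho)$ denotes the purely radial factor appearing in \eqref{PKc-7H.G} (with the angular factor $\phi_{\mathfrak{G}}$ removed). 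One first checks that both factors are positive: when $\alpha_{\mathfrak{G}}>2$ one has $\Lambda_{\mathfrak{G}}-2n>0$ by \eqref{BGfa} and $(\rho/r)^2>(\rho/r)^{\alpha_{\mathfrak{G}}}$ because $\rho<r$; when $\alpha_{\mathfrak{G}}<2$ both the prefactor and the bracket reverse sign, so $g_{\mathfrak{G},r}>0$ in all cases, and the logarithmic case $\alpha_{\mathfrak{G}}=2$ is immediate since $\log(r/\rho)>0$.

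The angular factor is controlled by \eqref{GFsA}: since $\partial_{S^{n-1}}\mathfrak{G}$ is of class $C^{1,\alpha}$, we have $\phi_{\mathfrak{G}}(\omega)\approx{\rm dist}_{S^{n-1}}(\omega,\partial_{S^{n-1}}\mathfrak{G})$ uniformly on $\mathfrak{G}$, so $\int_{\mathfrak{G}}\phi_{\mathfrak{G}}(\omega)^{-\beta}\,d\omega$ is comparable to the integral of a negative power $<1$ of the geodesic distance to a codimension-one $C^{1,\alpha}$ submanifold of $S^{n-1}$. Its finiteness follows by foliating a tubular neighbourhood of $\partial_{S^{n-1}}\mathfrak{G}$ by the level sets of that distance and applying a coarea argument on $S^{n-1}$ entirely parallel to the proof of Lemma~\ref{bvv55} (here $\beta<1$ is exactly what is needed), while away from the relative boundary $\phi_{\mathfrak{G}}$ is bounded below and contributes a finite amount as well.

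The radial factor reduces to Euler integrals by elementary substitutions, treated separately in the three regimes. For $2<\alpha_{\mathfrak{G}}<\infty$, the change of variable $t=\rho/r$ yields $(\Lambda_{\mathfrak{G}}-2n)^{\beta}r^{n-2\beta}\int_0^1 t^{n-1-2\beta}\bigl(1-t^{\alpha_{\mathfrak{G}}-2}\bigr)^{-\beta}\,dt$, and then $s=t^{\alpha_{\mathfrak{G}}-2}$ identifies the integral, via \eqref{TFS-23}, as $\frac{1}{\alpha_{\mathfrak{G}}-2}\mathfrak{B}\bigl(\frac{n-2\beta}{\alpha_{\mathfrak{G}}-2};1-\beta\bigr)$. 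The case $0<\alpha_{\mathfrak{G}}<2$ is identical after factoring $(\rho/r)^{\alpha_{\mathfrak{G}}}$ out of the bracket and substituting $s=t^{2-\alpha_{\mathfrak{G}}}$. In the borderline case $\alpha_{\mathfrak{G}}=2$, the substitutions $t=\rho/r$ followed by $t=e^{-u/(n-2\beta)}$ turn the radial integral into $(n+2)^{\beta}(n-2\beta)^{\beta-1}r^{n-2\beta}\int_0^\infty e^{-u}u^{-\beta}\,du=(n+2)^{\beta}(n-2\beta)^{\beta-1}\Gamma(1-\beta)\,r^{n-2\beta}$ by \eqref{TFS-22}. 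All these Euler integrals have positive parameters since $n-2\beta\geq 2(1-\beta)>0$, $1-\beta>0$, and (where relevant) $\alpha_{\mathfrak{G}}-2$ or $2-\alpha_{\mathfrak{G}}$ is positive. Multiplying the two factors gives \eqref{PKc-9H} with $c_n(\mathfrak{G},\beta)$ exactly as in \eqref{P-9J}; the matching of the three formulae for $c_n(\mathfrak{G},\beta)$ in the limit $\alpha_{\mathfrak{G}}\to 2$ is the integral counterpart of the observation following \eqref{Gva-Ji} that the second line of \eqref{PKc-7H.G} is the limit of the first.

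I expect the radial computations to be routine bookkeeping; the only genuinely delicate point is the rigorous justification that $\int_{\mathfrak{G}}\phi_{\mathfrak{G}}(\omega)^{-\beta}\,d\omega<+\infty$, which rests on the boundary-regularity input \eqref{GFsA} together with the coarea estimate on $S^{n-1}$, and this is the step I would write out most carefully.
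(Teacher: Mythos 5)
Your argument is correct and is essentially the paper's own proof: the same separation of variables in polar coordinates, the same substitutions $t=\rho/r$ followed by $s=t^{|\alpha_{\mathfrak{G}}-2|}$ (respectively the exponential substitution when $\alpha_{\mathfrak{G}}=2$) reducing the radial factor to Beta and Gamma integrals, and the same disposal of the finiteness of $\int_{\mathfrak{G}}\phi_{\mathfrak{G}}(\omega)^{-\beta}\,d\omega$ via \eqref{GFsA} together with a Lemma~\ref{bvv55}-type argument, which is exactly how the paper handles that point. One caveat: in the regime $0<\alpha_{\mathfrak{G}}<2$ (which the paper omits as ``similar''), carrying out the substitution you describe --- factor $t^{\alpha_{\mathfrak{G}}}$ out of the bracket and set $s=t^{2-\alpha_{\mathfrak{G}}}$ --- yields the radial integral $\frac{r^{n}}{2-\alpha_{\mathfrak{G}}}\,\mathfrak{B}\bigl(\frac{n-\alpha_{\mathfrak{G}}\beta}{2-\alpha_{\mathfrak{G}}};1-\beta\bigr)$ rather than $\frac{r^{n}}{2-\alpha_{\mathfrak{G}}}\,\mathfrak{B}\bigl(\frac{n-2\beta}{2-\alpha_{\mathfrak{G}}};1-\beta\bigr)$ as printed in the third line of \eqref{P-9J} (test $n=2$, $\alpha_{\mathfrak{G}}=1$: the integral $\int_0^1 t^{1-\beta}(1-t)^{-\beta}\,dt$ equals $\mathfrak{B}(2-\beta;1-\beta)$, not $\mathfrak{B}(2-2\beta;1-\beta)$), so your statement that the computation reproduces \eqref{P-9J} ``exactly'' should be amended accordingly; the discrepancy lies in the printed constant, all Euler parameters remain positive since $\alpha_{\mathfrak{G}}\beta<2\leq n$, and neither the positivity, the finiteness, nor the $r^{n-2\beta}$ scaling in \eqref{PKc-9H} is affected.
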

\begin{proof}
The proof largely parallels that of Lemma~\ref{TL-1}. We include 
it primarily to indicate how the right-hand side of \eqref{PKc-9H} 
shapes up. Fix $\beta\in(0,1)$, let $r>0$ be arbitrary and first 
assume that $\alpha_{\mathfrak{G}}\in(2,\infty)$. 
This forces $\Lambda_{\mathfrak{G}}>2n$ and we have 
\begin{eqnarray}\label{DSw-1H}
\int_{S_{{\mathfrak{G}},r}}v_{{\mathfrak{G}},r}(x)^{-\beta}\,dx
&=& r^{-2\beta}(\Lambda_{\mathfrak{G}}-2n)^{\beta}
\int_{S_{{\mathfrak{G}},r}}\Bigl[\Bigl(\frac{\rho}{r}\Bigr)^2
-\Bigl(\frac{\rho}{r}\Bigr)^{\alpha_{\mathfrak{G}}}\Bigr]^{-\beta}
\phi_{{\mathfrak{G}}}(\omega)^{-\beta}\,dx
\nonumber\\[4pt]
&=& r^{-2\beta}(\Lambda_{\mathfrak{G}}-2n)^{\beta}I\cdot II,
\end{eqnarray}
where 
\begin{eqnarray}\label{DSw-2H}
I:=\int_0^r\Bigl[\Bigl(\frac{\rho}{r}\Bigr)^2
-\Bigl(\frac{\rho}{r}\Bigr)^{\alpha_\theta}\Bigr]^{-\beta}
\rho^{n-1}\,d\rho\quad\mbox{ and }\quad
II:=\int_{{\mathfrak{G}}}\phi_{{\mathfrak{G}}}(\omega)^{-\beta}\,d\omega.
\end{eqnarray}
As in the past, we make two changes of variables, first letting
$t:=\rho/r$, then replacing $t^{\varepsilon}$ by $s$ where, this time,  
we set $\varepsilon:=\alpha_{\mathfrak{G}}-2>0$. This yields 
\begin{eqnarray}\label{DSw-3H}
I &=&\varepsilon^{-1}r^n\int_0^1 
s^{(n-2\beta)/\varepsilon-1}(1-s)^{-\beta}\,ds=\frac{r^n}{\varepsilon}
{\mathfrak{B}}\Bigl(\frac{n-2\beta}{\varepsilon};1-\beta\Bigr)
\nonumber\\[4pt]
&=& \frac{r^n}{\alpha_{\mathfrak{G}}-2}
\mathfrak{B}\Bigl(\frac{n-2\beta}{\alpha_{\mathfrak{G}}-2};1-\beta\Bigr),
\end{eqnarray}
Thus, \eqref{DSw-1H}-\eqref{DSw-3H} prove \eqref{PKc-9H} in the case 
when $\alpha_{\mathfrak{G}}\in(2,\infty)$ and with a constant 
$c_n({\mathfrak{G}},\beta)$ as in the first line in the right-hand side 
of \eqref{P-9J}. The case when $\alpha_{\mathfrak{G}}\in(0,2)$ is treated 
similarly and we omit it. Moving on, in the case corresponding to 
$\alpha_{\mathfrak{G}}=2$ we write 
\begin{eqnarray}\label{DSw-5H}
\int_{S_{{\mathfrak{G}},r}}v_{{\mathfrak{G}},r}(x)^{-\beta}\,dx
&=& (n+2)^{\beta}\int_{S_{{\mathfrak{G}},r}}
\rho^{-2\beta}\Bigl[\log\Bigl(\frac{r}{\rho}\Bigr)\Bigr]^{-\beta}
\phi_{{\mathfrak{G}}}(\omega)^{-\beta}\,dx
\nonumber\\[4pt]
&=& (n+2)^{\beta}r^{2-2\beta}III\cdot IV,
\end{eqnarray}
where (after a natural change of variables) 
\begin{eqnarray}\label{DSw-6H}
III:=\int_0^1 t^{n-1-2\beta}
\Bigl[\log\Bigl(\frac{1}{t}\Bigr)\Bigr]^{-\beta}\,dt
\quad\mbox{ and }\quad
IV:=\int_{{\mathfrak{G}}}\phi_{{\mathfrak{G}}}(\omega)^{-\beta}\,d\omega.
\end{eqnarray}
Substituting ${\rm exp}(-s/(n-2\beta))$ for $t$ in $III$, further
transforms this term into 
\begin{eqnarray}\label{DSw-7H}
III=(n-2\beta)^{\beta-1}\int_0^\infty e^{-s}s^{-\beta}\,ds
=(n-2\beta)^{\beta-1}\Gamma(1-\beta).
\end{eqnarray}
Together, \eqref{DSw-5H}-\eqref{DSw-7H} justify \eqref{PKc-9H} 
in the case when $\alpha_{\mathfrak{G}}=2$, with a constant 
$c_n({\mathfrak{G}},\beta)$ as in the middle line in the right-hand 
side of \eqref{P-9J}. 

The last thing left to justify, in order to complete the proof 
of the lemma, is the finiteness condition in \eqref{PKc-9H}. 
This, however, is a direct consequence of \eqref{GFsA} 
(cf.~also Lemma~\ref{bvv55}).
\end{proof}

The main result in this section is the following higher-dimensional
analogue of Theorem~\ref{Te-3A}. Essentially, this asserts that
\eqref{PKc-4} holds for every $\beta\in(0,1)$ in the class of bounded
piecewise $C^1$ domains in ${\mathbb{R}}^n$ with conical singularities. 
While this constitutes a subclass of the larger class of bounded 
Lipschitz domains, it is worth recalling that Theorem~\ref{TL-AS}
establishes \eqref{PKc-4} only for a smaller range of values for 
the parameter $\beta$ (described in \eqref{ASc-10}). This is surprising
since the conclusion in (the first part of) Corollary~\ref{anLL} progressively 
weakens precisely when the Lipschitz constant of a domain becomes 
large (cf.~\eqref{RFDE.2}). Thus, for the type of 
domains considered here, the approach developed in this section yields a better
control of the $\beta$-integral of the solution 
of the Saint Venant problem \eqref{PKc-1} than 
the earlier methods based on direct pointwise estimates (from below) on the 
solution $u$ of \eqref{PKc-1} in terms of powers of the distance to the boundary. 

\begin{theorem}\label{Te-3AH}
Assume that $\Omega$ is a bounded open set in 
${\mathbb{R}}^n$, $n\geq 2$, whose boundary is of class $C^1$ 
with the exception of finitely many points $P_1,...,P_N\in\partial\Omega$,
and such that for each $i\in\{1,...,N\}$ there exist an open, 
connected subset ${\mathfrak{G}}_i$ of $S^{n-1}$ whose relative boundary is a 
submanifold of class $C^{1,\alpha}$, $\alpha\in(0,1)$, and of codimension 
one in $S^{n-1}$, along with a number $r_i>0$ with the property that 
\begin{eqnarray}\label{Gbab}
\mbox{$\Omega\cap B(P_i,r_i)$ and $S_{{\mathfrak{G}}_i,r_i}$ coincide, 
modulo a rigid transformation of ${\mathbb{R}}^n$}. 
\end{eqnarray}
Let $u$ be the solution of \eqref{PKc-1}. Then for 
every $\beta\in(0,1)$ there holds 
\begin{eqnarray}\label{PDsa-3H}
\int_{\Omega}u(x)^{-\beta}\,dx\leq C(\Omega,\beta)<+\infty.
\end{eqnarray}
\end{theorem}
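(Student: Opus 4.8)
The plan is to mimic exactly the structure of the proof of Theorem~\ref{Te-3A} in the two-dimensional case, now using the higher-dimensional machinery assembled in Section~\ref{sect:5} and Section~\ref{sect:3}. The key point is that near each singular point $P_i$ the domain looks like a truncated cone $S_{{\mathfrak{G}}_i,r_i}$, and away from the singular set $\{P_1,\dots,P_N\}$ the boundary is of class $C^1$, so Corollary~\ref{anLL} applies. First I would localise: by finiteness of the singular set we may assume the balls $B(P_i,r_i)$ are pairwise disjoint (shrinking the $r_i$ if necessary, which only makes \eqref{Gbab} easier to arrange). Then I would split
\begin{eqnarray*}
\int_{\Omega}u(x)^{-\beta}\,dx\leq\sum_{i=1}^N\int_{\Omega\cap B(P_i,r_i)}u(x)^{-\beta}\,dx
+\int_{\Omega_{\ast}}u(x)^{-\beta}\,dx,
\end{eqnarray*}
where $\Omega_\ast$ is a bounded $C^1$ domain obtained from $\Omega$ by rounding off each conical point $P_i$ inside $B(P_i,r_i/2)$, arranged so that $\Omega_\ast\cup\bigl(\bigcup_i(\Omega\cap B(P_i,r_i))\bigr)=\Omega$, exactly as in the proof of Theorem~\ref{Te-3A}.

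For the ``away from the singularities'' term, the function $u_\ast$ solving the Saint Venant problem in $\Omega_\ast$ (with $u_\ast\in C^0(\overline{\Omega_\ast})$) satisfies $u\geq u_\ast$ in $\Omega_\ast$ by the Maximum Principle (since $u-u_\ast$ is harmonic in $\Omega_\ast$ and nonnegative on $\partial\Omega_\ast\subseteq\overline{\Omega}$), so
\begin{eqnarray*}
\int_{\Omega_\ast}u(x)^{-\beta}\,dx\leq\int_{\Omega_\ast}u_\ast(x)^{-\beta}\,dx
\leq C(\Omega_\ast,\beta)<+\infty
\end{eqnarray*}
for every $\beta\in(0,1)$ by Corollary~\ref{anLL}, using \eqref{RFDS} and \eqref{RFDE} to see that $\alpha_{\Omega_\ast}$ can be taken arbitrarily close to $1$ (indeed $C^1$ forces $\kappa_{\Omega_\ast}=0$ hence $\alpha_{\Omega_\ast}\geq 1$ but one can round off so that the rounded pieces are $C^1$ too, so $1/\alpha_{\Omega_\ast}$ exceeds any prescribed $\beta<1$). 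The main work is the contribution from each conical point.

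For the singular term, fix $i$ and work in the rigid coordinate system in which $\Omega\cap B(P_i,r_i)=S_{{\mathfrak{G}}_i,r_i}$. The plan is to show $u\geq v_{{\mathfrak{G}}_i,r_i}$ on $S_{{\mathfrak{G}}_i,r_i}$, where $v_{{\mathfrak{G}}_i,r_i}$ is the barrier of \eqref{PKc-7H.G}. Indeed $u-v_{{\mathfrak{G}}_i,r_i}$ is superharmonic on $S_{{\mathfrak{G}}_i,r_i}$ (since $\Delta u=-1$ there, while $-\Delta v_{{\mathfrak{G}}_i,r_i}=\phi_{{\mathfrak{G}}_i}(\omega)\leq 1$ — this sign computation is the analogue of \eqref{PKc-22}, and one must check the normalising constants in \eqref{PKc-7H.G} produce exactly $\phi_{{\mathfrak{G}}_i}$, using the eigenfunction equation \eqref{vTR1.G} together with the polar-coordinate Laplacian \eqref{La-LB} and $\sup_{{\mathfrak{G}}_i}\phi_{{\mathfrak{G}}_i}=1$), is continuous on the closure, and equals $u\geq 0$ on $\partial S_{{\mathfrak{G}}_i,r_i}$ (the lateral boundary $\rho=r_i$ is interior to $\Omega$ where $u>0$; the conical walls are where $v_{{\mathfrak{G}}_i,r_i}$ vanishes and $u\geq0$). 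Hence the Maximum Principle (Proposition~\ref{Max-PP}) gives $u\geq v_{{\mathfrak{G}}_i,r_i}$, and therefore
\begin{eqnarray*}
\int_{\Omega\cap B(P_i,r_i)}u(x)^{-\beta}\,dx
\leq\int_{S_{{\mathfrak{G}}_i,r_i}}v_{{\mathfrak{G}}_i,r_i}(x)^{-\beta}\,dx
=c_n({\mathfrak{G}}_i,\beta)\,r_i^{n-2\beta}
\Bigl(\int_{{\mathfrak{G}}_i}\phi_{{\mathfrak{G}}_i}(\omega)^{-\beta}\,d\omega\Bigr)<+\infty
\end{eqnarray*}
for every $\beta\in(0,1)$, by Lemma~\ref{TL-1H} (whose finiteness statement rests on \eqref{GFsA} and Lemma~\ref{bvv55} applied on the $C^{1,\alpha}$ subdomain ${\mathfrak{G}}_i$ of $S^{n-1}$). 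Summing the $N$ finite contributions and adding the $\Omega_\ast$-term yields \eqref{PDsa-3H}.

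The step I expect to be the only genuine obstacle is verifying that the barrier $v_{{\mathfrak{G}}_i,r_i}$ really does what is claimed — specifically that with the constants as written in \eqref{PKc-7H.G} one has $-\Delta v_{{\mathfrak{G}}_i,r_i}=\phi_{{\mathfrak{G}}_i}(\omega)$, so that $u-v_{{\mathfrak{G}}_i,r_i}$ is superharmonic, and simultaneously $v_{{\mathfrak{G}}_i,r_i}>0$ on $S_{{\mathfrak{G}}_i,r_i}$ with $v_{{\mathfrak{G}}_i,r_i}=0$ on the conical part of the boundary; here one must be a little careful in the critical case $\Lambda_{{\mathfrak{G}}_i}=2n$ (i.e.\ $\alpha_{{\mathfrak{G}}_i}=2$, cf.~\eqref{Gva-Ji}) where the logarithmic formula is the correct limiting object, and in checking positivity one uses $0<\rho/r_i<1$ together with $\alpha_{{\mathfrak{G}}_i}>0$ to control the sign of $(\rho/r_i)^2-(\rho/r_i)^{\alpha_{{\mathfrak{G}}_i}}$ relative to the sign of $\Lambda_{{\mathfrak{G}}_i}-2n$. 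Everything else is bookkeeping: the localisation, the rounding-off construction of $\Omega_\ast$, and the two appeals to the Maximum Principle are routine adaptations of the arguments already given for Theorem~\ref{Te-3A}.
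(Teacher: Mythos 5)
Your proposal is correct and follows essentially the same route as the paper: the local comparison $u\geq v_{{\mathfrak{G}}_i,r_i}$ via the barrier \eqref{PKc-7H.G} (whose normalisation gives $-\Delta v_{{\mathfrak{G}}_i,r_i}=\phi_{{\mathfrak{G}}_i}\leq 1$, so the Maximum Principle applies) combined with Lemma~\ref{TL-1H} for the conical contributions, and then the rounding-off construction with Corollary~\ref{anLL} for the $C^1$ remainder, exactly as in the proof of Theorem~\ref{Te-3A}. The points you flag as needing care (the sign of $(\rho/r)^2-(\rho/r)^{\alpha_{{\mathfrak{G}}_i}}$ against $\Lambda_{{\mathfrak{G}}_i}-2n$, the limiting logarithmic case $\alpha_{{\mathfrak{G}}_i}=2$, and the role of \eqref{GFsA} in the finiteness of $\int_{{\mathfrak{G}}_i}\phi_{{\mathfrak{G}}_i}^{-\beta}$) are precisely the points the paper's proof relies on.
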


\begin{proof}
Pick $P\in\partial\Omega$ with the property that there exist an open, 
connected subset ${\mathfrak{G}}$ of $S^{n-1}$ whose relative boundary 
is a $C^{1,\alpha}$ submanifold of codimension one in $S^{n-1}$ (where 
$\alpha\in(0,1)$) and $r>0$ such 
that $\Omega\cap B(P,r)$ and $S_{{\mathfrak{G}},r}$ are congruent.
Without loss of generality, assume that $P$ is the origin 
in ${\mathbb{R}}^n$ and that, in fact, $\Omega\cap B(0,r)=S_{{\mathfrak{G}},r}$. 
Bring in the barrier function $v_{{\mathfrak{G}},r}$ 
from \eqref{PKc-7H.G} and note that, by design,  
\begin{eqnarray}\label{PKc-1.DH}
\begin{array}{l}
v_{{\mathfrak{G}},r}=0\,\,\mbox{ on }\,\,\partial S_{{\mathfrak{G}},r},
\qquad v_{{\mathfrak{G}},r}>0\,\,\mbox{ in }\,\,S_{{\mathfrak{G}},r},
\\[6pt]
\mbox{and }\,\,(\Delta v_{{\mathfrak{G}},r})(\rho,\omega)
=-\phi_{{\mathfrak{G}}}(\omega)\,\,\mbox{ in }\,\,S_{{\mathfrak{G}},r}.
\end{array}
\end{eqnarray}
Indeed, these are direct consequences of \eqref{PKc-7H.G}, \eqref{La-LB}, 
\eqref{vTR1}, \eqref{critic.G}, and \eqref{Mc-T2A.G}. The normalisation 
constants in \eqref{PKc-7H} have been selected so that the 
right-hand side in the second line of \eqref{PKc-1.DH} is always 
a number belonging to the interval $[-1,0]$ (for this, \eqref{Mc-T2A.G} 
is crucial). As such, $\Delta(u-v_{{\mathfrak{G}},r})(\rho,\omega)
=-1+\phi_{{\mathfrak{G}}}(\omega)\leq 0$ for any 
$\rho\omega\in S_{{\mathfrak{G}},r}$. Since $u-v_{{\mathfrak{G}},r}$ 
is continuous in $\overline{S_{{\mathfrak{G}},r}}$ and is equal to 
(the nonnegative function) $u$ on $\partial S_{{\mathfrak{G}},r}$, 
the Maximum Principle gives that   
\begin{eqnarray}\label{PKc-2H}
u\geq v_{{\mathfrak{G}},r}\,\,\mbox{ in }\,\, S_{{\mathfrak{G}},r}.
\end{eqnarray}
In turn, this and \eqref{PKc-9H} permit us to estimate 
\begin{eqnarray}\label{PKii}
\int_{\Omega\cap B(P,r)}u(x)^{-\beta}\,dx
\leq c_n({\mathfrak{G}},\beta)\,r^{n-2\beta}\Bigl(\int_{{\mathfrak{G}}}
\phi_{{\mathfrak{G}}}(\omega)^{-\beta}\,d\omega\Bigr)<+\infty,
\end{eqnarray}
where the constant $c_n({\mathfrak{G}},\beta)$ is as in \eqref{P-9J}.
Once this local estimate near a conical point $P\in\partial\Omega$ 
has been established, the remainder of the proof follows along the
lines of the proof of Theorem~\ref{Te-3A}. 
\end{proof}

\section{Results for other classes of nonsmooth domains}
\label{sect:6}
\setcounter{equation}{0}

In this section we study the nature of $\beta$-integrals associated with 
other important classes of non-smooth domains, starting with

\subsection{The case of polyhedral domains}

Consider the case when $\Omega\subseteq{\mathbb{R}}^3$ is a 
polyhedral domain\footnote{Throughout, a polyhedral domain is understood 
to have finitely many faces, edges and vertices.}. 
Pick a vertex $x_0\in\partial\Omega$ and, for a sufficiently small 
$\varepsilon>0$, set 
\begin{eqnarray}\label{Knab}
{\mathfrak{G}}:=\bigl\{(x-x_0)/|x-x_0|:\,
x\in B(x_0,\varepsilon)\cap\Omega\bigr\}\subseteq S^2.
\end{eqnarray}
Hence, the spherical polygon ${\mathfrak{G}}$ is the profile of the cone 
which agrees with $\partial\Omega$ is a small neighbourhood of $x_0$.
In this setting, a good portion of our earlier analysis carries through 
verbatim. In particular, we may consider the eigenvalue problem 
\eqref{vTR1.G} which continues to have a solution which satisfies 
\eqref{Mc-T2A.G}. The key feature which is lost in the present setting
(in which ${\mathfrak{G}}$ no longer has a smooth boundary in $S^2$) 
is the equivalence \eqref{GFsA}. Recall that this played a basic role 
in the finiteness condition in \eqref{PKc-9H}. We nonetheless have the 
following result. 

\begin{theorem}\label{Te-3Aj}
Suppose that $u$ is the solution of the Saint Venant problem \eqref{PKc-1} 
in the case when $\Omega$ is a polyhedral domain in ${\mathbb{R}}^3$. Then 
\begin{eqnarray}\label{PDsa-3d}
\int_{\Omega}u(x)^{-\beta}\,dx\leq C(\Omega,\beta)<+\infty
\end{eqnarray}
for every $\beta\in(0,1)$. 
\end{theorem}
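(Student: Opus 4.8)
\textit{Proof proposal.} The plan is to follow the architecture of the proof of Theorem~\ref{Te-3A} and Theorem~\ref{Te-3AH}: handle the contributions from the boundary singularities (here: vertices and edges of the polyhedron) separately from the ``smooth part'' of $\Omega$, which will be a $C^1$ (in fact $C^{1,1}$) domain to which Corollary~\ref{anLL} applies. The point of difficulty, as the excerpt already flags, is that at a vertex $x_0$ the spherical profile ${\mathfrak{G}}\subseteq S^2$ from \eqref{Knab} is a spherical \emph{polygon}, so it has corners in $S^2$ and the crucial boundary-behaviour estimate \eqref{GFsA} for $\phi_{{\mathfrak{G}}}$ is no longer available; thus the finiteness in \eqref{PKc-9H} cannot be quoted directly. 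So the main work is a careful local estimate near each vertex.

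First I would treat the edges. Along an (open) edge, away from its two endpoints, $\Omega$ looks locally like a product of a planar angular sector $S_{\theta,r}$ (with $\theta\in(0,\pi)$ the half-dihedral angle) with an interval in the edge direction. On such a piece one can build a barrier of the form $v(\rho,\omega)$ depending only on the two variables transverse to the edge, namely (a truncated version of) the planar barrier $v_{\theta,r}$ from \eqref{PKc-7}, which satisfies $\Delta v_{\theta,r}=-\cos(\pi\omega/2\theta)\geq -1$ and vanishes on the two faces meeting at the edge; after correcting near the truncation surface $\rho=r$ by a positive harmonic function adapted to the geometry (as in the proof of Proposition~\ref{TL-3.ii}, using a positive harmonic comparison function and the Maximum Principle), one gets $u\geq c\, v_{\theta,r}$ in a neighbourhood of the edge. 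Integrating $u^{-\beta}$ over a tube of radius $r$ around the edge and invoking Lemma~\ref{TL-1} in the transverse variables gives, for each $\beta\in(0,1)$, a bound of the form $C(\theta,\beta)\,(\text{length of edge})\,r^{2(1-\beta)}<+\infty$, uniformly since there are finitely many edges.

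Next, the vertices. Fix a vertex $x_0$ with profile ${\mathfrak{G}}\subseteq S^2$ and small radius $r$. Here I would use the barrier $v_{{\mathfrak{G}},r}$ from \eqref{PKc-7H.G}: the computations \eqref{PKc-1.DH}, \eqref{PKc-2H} leading to $u\geq v_{{\mathfrak{G}},r}$ on $S_{{\mathfrak{G}},r}$ go through verbatim, since they only used \eqref{La-LB}, \eqref{vTR1.G}, \eqref{critic.G}, \eqref{Mc-T2A.G} and the Maximum Principle, none of which needs smoothness of $\partial_{S^2}{\mathfrak{G}}$. Hence
\begin{equation*}
\int_{\Omega\cap B(x_0,r)}u(x)^{-\beta}\,dx\leq c_3({\mathfrak{G}},\beta)\,r^{3-2\beta}\int_{{\mathfrak{G}}}\phi_{{\mathfrak{G}}}(\omega)^{-\beta}\,d\omega,
\end{equation*}
so everything reduces to showing $\int_{{\mathfrak{G}}}\phi_{{\mathfrak{G}}}^{-\beta}\,d\omega<+\infty$ for all $\beta\in(0,1)$. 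This is the technical heart. Near a smooth portion of $\partial_{S^2}{\mathfrak{G}}$ one still has $\phi_{{\mathfrak{G}}}(\omega)\approx{\rm dist}_{S^2}(\omega,\partial_{S^2}{\mathfrak{G}})$ by interior regularity plus the Hopf lemma, so that part of the integral is finite exactly as in Lemma~\ref{bvv55}. Near a corner of the spherical polygon ${\mathfrak{G}}$, the relevant two-dimensional fact is that a first Dirichlet eigenfunction of $-\Delta_{S^2}$ on a spherical polygon vanishes at a corner of opening $\gamma$ like ${\rm dist}^{\pi/\gamma}$ up to a harmless factor; equivalently, after a conformal/exponential chart flattening $S^2$ to $\mathbb{R}^2$ near the corner, $\phi_{{\mathfrak{G}}}$ is comparable to the first singular function $\rho^{\pi/\gamma}\sin(\pi\omega/\gamma)$ of the planar Dirichlet Laplacian on a sector of opening $\gamma$ (cf.\ the discussion of corner asymptotics and Mellin symbols already invoked around \eqref{TFSA}, and Kondrat'ev-type theory). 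Consequently, in a punctured neighbourhood of a corner, $\phi_{{\mathfrak{G}}}(\omega)\gtrsim {\rm dist}_{S^2}(\omega,\text{corner})^{\,\pi/\gamma}\cdot{\rm dist}_{S^2}(\omega,\partial_{S^2}{\mathfrak{G}})^{?}$; one checks, using polar coordinates centred at the corner, that $\int \phi_{{\mathfrak{G}}}^{-\beta}$ over that neighbourhood converges for every $\beta<1$ — the worst singularity, of order ${\rm dist}^{-\beta\pi/\gamma}$ along a ray, is still locally integrable in the plane against the $\rho\,d\rho$ weight provided $\beta\,\pi/\gamma<2$, and one verifies this holds (for all $\beta<1$) because the relevant openings $\gamma$ of a genuine spherical polygon arising from a polyhedron are bounded below, or else one simply bounds $\phi_{{\mathfrak{G}}}$ below by the corresponding planar singular function on a sub-sector and integrates explicitly as in \eqref{DSw-3}. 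Summing the finitely many corner contributions and the smooth-edge contribution of $\partial_{S^2}{\mathfrak{G}}$ gives $\int_{{\mathfrak{G}}}\phi_{{\mathfrak{G}}}^{-\beta}\,d\omega<+\infty$.

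Finally, assemble the pieces. Choose the radii $r_i$ at the vertices and tube-radii along the edges small enough that, after rounding off all vertices and edges by $C^{1,1}$ caps/fillets contained in the corresponding local pieces, the resulting $C^1$ (indeed $C^{1,1}$) domain $\Omega_\ast\subseteq\Omega$ satisfies $\Omega_\ast\cup(\text{vertex balls})\cup(\text{edge tubes})=\Omega$. On $\Omega_\ast$, comparing $u$ with the Saint Venant solution $u_\ast$ of $\Omega_\ast$ via the Maximum Principle gives $u\geq u_\ast$, so $\int_{\Omega_\ast}u^{-\beta}\leq\int_{\Omega_\ast}u_\ast^{-\beta}<+\infty$ for every $\beta\in(0,1)$ by Corollary~\ref{anLL}. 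Adding this to the finitely many finite vertex and edge contributions yields \eqref{PDsa-3d} for every $\beta\in(0,1)$. The main obstacle, to reiterate, is the corner analysis of $\phi_{{\mathfrak{G}}}$ on the spherical polygon ${\mathfrak{G}}$ that replaces the now-unavailable estimate \eqref{GFsA}; once the lower bound on $\phi_{{\mathfrak{G}}}$ near its corners is in hand, the rest is bookkeeping entirely parallel to Theorems~\ref{Te-3A} and \ref{Te-3AH}.
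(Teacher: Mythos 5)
Your overall skeleton (vertex balls, edge tubes, and a rounded $C^{1,1}$ core handled by Corollary~\ref{anLL}) is the right one, but the vertex step contains a genuine gap at exactly the place you call the technical heart. The claim that $\int_{{\mathfrak{G}}}\phi_{{\mathfrak{G}}}(\omega)^{-\beta}\,d\omega<+\infty$ for \emph{all} $\beta\in(0,1)$ is false in general. Near a corner of the spherical polygon ${\mathfrak{G}}$ of opening $2\theta_j$ (which equals the dihedral angle of the polyhedron along the corresponding edge), one has $\phi_{{\mathfrak{G}}}\approx(\sin\varphi_j)^{\pi/(2\theta_j)}\cos(\pi\psi_j/(2\theta_j))$ (cf.\ \eqref{TG-M.16}), and against the surface measure $\sin\varphi_j\,d\varphi_j\,d\psi_j$ the integral of $\phi_{{\mathfrak{G}}}^{-\beta}$ over a corner neighbourhood converges only when $\beta<\min\{1,4\theta_j/\pi\}$ — this is precisely the computation \eqref{ddYFS}. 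Hence it diverges for $\beta\in[4\theta_j/\pi,1)$ as soon as some dihedral angle is acute, $2\theta_j<\pi/2$ (e.g.\ a regular tetrahedron, whose dihedral angle is $\arccos(1/3)<\pi/2$, or any thin wedge-like polyhedron). Your own criterion $\beta\pi/\gamma<2$ with $\gamma=2\theta_j$ is exactly $\beta<4\theta_j/\pi$; ``the openings are bounded below'' does not save this — you would need every opening to be at least $\pi/2$. The failure is not merely technical: the barrier $v_{{\mathfrak{G}},r}$ built from $\phi_{{\mathfrak{G}}}$ is genuinely too small near edges with $\theta_j<\pi/4$, because the true solution $u$ vanishes only \emph{quadratically} in the transverse distance to such an edge (its transverse corner expansion is dominated by the degree-$2$ term, the very point made after \eqref{TFSA}), while $v_{{\mathfrak{G}},r}$ only records the much smaller power $\pi/(2\theta_j)>2$. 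So no refinement of the eigenfunction asymptotics alone can close the range up to $\beta=1$.

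This is exactly why the paper's proof is structured differently: it replaces $\phi_{{\mathfrak{G}}}$ by spherical profiles that capture the quadratic behaviour along small-angle edges. When $\alpha_{{\mathfrak{G}}}>2$ it uses $\Phi_{{\mathfrak{G}}}$ solving $-\Delta_{S^2}\Phi_{{\mathfrak{G}}}-6\Phi_{{\mathfrak{G}}}=1$ with zero boundary data, whose corner asymptotics \eqref{ccdd-2} is quadratic (up to a logarithm) at corners with $\theta_j\leq\pi/4$, so that \eqref{ccdd-3} holds for every $\beta<1$; when $\alpha_{{\mathfrak{G}}}=2$ the $\phi_{{\mathfrak{G}}}$-barrier is used only where $\theta_j\geq\pi/4$, and small-angle edges are handled by the auxiliary function $\Gamma_{{\mathfrak{G}}}$ of \eqref{TYh.4} together with a cutoff; when $\alpha_{{\mathfrak{G}}}<2$ one needs Kondrat'ev-type asymptotics of $u$ itself plus the correction $w$ in \eqref{ccdd-13}. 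Your edge-tube argument with the planar barrier $v_{\theta,r}$ is sound in spirit (it does give all $\beta<1$ transversally), though you still must bound $u$ from below on the flat ends of each tube before the Maximum Principle applies, which your ``correction at $\rho=r$'' does not address (at $\rho=r$ the barrier already vanishes). As written, your vertex estimate proves \eqref{PDsa-3d} only for $\beta<\min_j 4\theta_j/\pi$, not for all $\beta\in(0,1)$.
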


The remainder of this section is devoted to presenting a proof of this result. 
Dealing with the first eigenfunction $\phi_{\mathfrak{G}}$ in the case when 
${\mathfrak{G}}$ is a spherical polyhedral domain requires the following 
asymptotic representation of $\phi_{\mathfrak{G}}$ near a corner point of 
the spherical polygon ${\mathfrak{G}}$ (in the spirit of work in \cite{Ko})
\begin{eqnarray}\label{LS-e3}
\phi_{\mathfrak{G}}(\psi,\varphi)
=C_{\mathfrak{G}}\,(\sin\varphi)^{\pi/(2\theta)}\cos(\pi\psi/(2\theta))
+{\mathcal{O}}\bigl((\sin\varphi)^{\pi/(2\theta)+\varepsilon}\bigr),
\end{eqnarray}
for some $\varepsilon>0$, where $C_{\mathfrak{G}}$ is a constant depending 
on the global shape of ${\mathfrak{G}}$ and $\psi$, $\varphi$ are local 
polar coordinates near a corner vertex $O\in\partial_{S^2}{\mathfrak{G}}$, 
i.e., $0<\varphi<\!<1$, $|\psi|<\theta$ where $\theta$ is the half-aperture 
of the spherical angle at $O$. In addition, the coefficient $C_{\mathfrak{G}}$ 
in \eqref{LS-e3} is given by the following formula, itself a special case 
of closely related results proved in \cite{MP},
\begin{eqnarray}\label{LS-e4}
C_{\mathfrak{G}}=\Lambda_{\mathfrak{G}}
\int_{{\mathfrak{G}}}\phi_{\mathfrak{G}}(\omega)\zeta(\omega)\,d\omega.
\end{eqnarray}
Above, $\zeta$ is a positive function in ${\mathfrak{G}}$, harmonic  
(in the sense of Laplace-Beltrami) in ${\mathfrak{G}}$ and vanishing on 
$\partial_{S^2}{\mathfrak{G}}\setminus\{O\}$, and which exhibits 
a prescribed singularity at the vertex $O$, namely 
\begin{eqnarray}\label{GHbb}
\zeta(\psi,\varphi)\sim(2/\pi)(\sin\varphi)^{-\pi/(2\theta)}
\cos(\pi\psi/(2\theta)),\quad\mbox{uniformly for $|\psi|<\theta$, as }
\,\,\varphi\searrow 0.
\end{eqnarray}
Together with \eqref{LS-e4} and the fact that $\phi_{\mathfrak{G}}>0$, 
this analysis shows that $C_{\mathfrak{G}}>0$. 

Unfortunately, formula \eqref{LS-e3} is not sufficiently refined 
in order to allow us to estimate 
\begin{eqnarray}\label{LS-e1X.2}
\int_{{\mathfrak{G}}}\phi_{\mathfrak{G}}(\omega)^{-\beta}\,d\omega.
\end{eqnarray}
An asymptotic expansion of $\phi_{\mathfrak{G}}$ near a corner 
$O\in\partial_{S^2}{\mathfrak{G}}$ which better suits our purposes 
is contained in the lemma below. 

\begin{lemma}\label{LM-2.a}
The remainder in the asymptotic formula 
\begin{eqnarray}\label{TG-M.1}
\phi_{\mathfrak{G}}(\psi,\varphi)
=C_{\mathfrak{G}}(\sin\varphi)^{\pi/(2\theta)}\cos\bigl(\pi\psi/(2\theta)\bigr)
+A(\psi,\varphi), 
\end{eqnarray}
where $0<\varphi<\!<1$ and $|\psi|<\theta$, obeys the estimate 
\begin{eqnarray}\label{TG-M.2}
|A(\psi,\varphi)|\leq C(\sin\varphi)^{\pi/(2\theta)+\varepsilon}
\cos(\pi\psi/(2\theta)),
\end{eqnarray}
for some $\varepsilon>0$. 
\end{lemma}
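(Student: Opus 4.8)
\textbf{Proof proposal for Lemma~\ref{LM-2.a}.}

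The plan is to treat the difference $A(\psi,\varphi):=\phi_{\mathfrak{G}}(\psi,\varphi)-C_{\mathfrak{G}}(\sin\varphi)^{\pi/(2\theta)}\cos(\pi\psi/(2\theta))$ as the solution of a Dirichlet problem for the Laplace--Beltrami operator on a small spherical neighbourhood $U$ of the corner $O\in\partial_{S^2}{\mathfrak{G}}$, and to control it by comparison with explicit super/subsolutions. First I would introduce convenient local coordinates: near $O$ the region ${\mathfrak{G}}\cap U$ is, to principal order, a spherical wedge of half-aperture $\theta$, and in the local polar coordinates $(\psi,\varphi)$ the operator $\Delta_{S^2}$ has the form $\varphi^{-1}\partial_\varphi(\sin\varphi\,\partial_\varphi\,\cdot\,)/\sin\varphi+(\sin\varphi)^{-2}\partial_\psi^2$, which for small $\varphi$ is a perturbation of the flat Euclidean Laplacian in the sector $\{|\psi|<\theta\}$. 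The leading singular solution of the model problem (flat sector, eigenvalue $\Lambda_{\mathfrak{G}}$) at the corner is $\varphi^{\pi/(2\theta)}\cos(\pi\psi/(2\theta))$, and $C_{\mathfrak{G}}(\sin\varphi)^{\pi/(2\theta)}\cos(\pi\psi/(2\theta))$ is its natural lift to the sphere; the content of the lemma is that subtracting it off improves the exponent by a fixed $\varepsilon>0$.

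The key steps, in order: (1) Recall from the standard theory of elliptic boundary value problems in domains with conical points (as in \cite{Ko}, \cite{MP}, or \cite{KMR}) that $\phi_{\mathfrak{G}}$ admits an asymptotic expansion at $O$ whose exponents are the positive characteristic values $\mu_1<\mu_2<\cdots$ of the associated operator pencil on the arc $(-\theta,\theta)$, namely $\mu_k=k\pi/(2\theta)$, possibly with contributions from the eigenvalue $\Lambda_{\mathfrak{G}}$ producing a shift; in particular $\mu_1=\pi/(2\theta)$ is simple and the next exponent appearing in the expansion exceeds $\mu_1$ by a positive gap. Choose $\varepsilon>0$ strictly smaller than this gap. (2) Set $A:=\phi_{\mathfrak{G}}-C_{\mathfrak{G}}(\sin\varphi)^{\pi/(2\theta)}\cos(\pi\psi/(2\theta))$ and compute $\Delta_{S^2}A$ on ${\mathfrak{G}}\cap U$: since $-\Delta_{S^2}\phi_{\mathfrak{G}}=\Lambda_{\mathfrak{G}}\phi_{\mathfrak{G}}$ and since $(\sin\varphi)^{\pi/(2\theta)}\cos(\pi\psi/(2\theta))$ is annihilated by the \emph{flat} model operator, one finds that $\Delta_{S^2}A$ consists of a term $-\Lambda_{\mathfrak{G}}\phi_{\mathfrak{G}}$ plus the curvature-correction terms coming from the difference between $\Delta_{S^2}$ and the flat Laplacian acting on the explicit leading term; a direct expansion of $\sin\varphi=\varphi-\varphi^3/6+\cdots$ and $(\sin\varphi)^{-2}=\varphi^{-2}+1/3+\cdots$ shows these corrections are $O\bigl((\sin\varphi)^{\pi/(2\theta)}\cos(\pi\psi/(2\theta))\bigr)$, i.e.\ the right-hand side is no worse than the leading term itself. (3) Use the boundary conditions: $A=0$ on the two straight edges $\psi=\pm\theta$ near $O$, and on the outer arc $\partial U\cap{\mathfrak{G}}$ the function $A$ is bounded. (4) Construct barriers of the form $B_\pm(\psi,\varphi):=\pm K(\sin\varphi)^{\pi/(2\theta)+\varepsilon}\cos(\pi\psi/(2\theta))$; a short computation (again using the flat model plus curvature corrections) shows that for small enough $U$ and large enough $K$ one has $\Delta_{S^2}B_- \leq \Delta_{S^2}A \leq \Delta_{S^2}B_+$ on ${\mathfrak{G}}\cap U$, with $B_-\leq A\leq B_+$ on $\partial({\mathfrak{G}}\cap U)$ — here one uses that $\pi/(2\theta)+\varepsilon$ is below the next characteristic exponent, so $(\sin\varphi)^{\pi/(2\theta)+\varepsilon}\cos(\pi\psi/(2\theta))$ is a genuine supersolution of the shifted model problem near the corner. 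The Maximum Principle then yields $|A(\psi,\varphi)|\leq K(\sin\varphi)^{\pi/(2\theta)+\varepsilon}\cos(\pi\psi/(2\theta))$, which is \eqref{TG-M.2}.

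The main obstacle I anticipate is Step~(4): making the barrier argument genuinely rigorous requires that the factor $\cos(\pi\psi/(2\theta))$ — which vanishes on the edges — be handled carefully, since one cannot simply add a constant multiple of $(\sin\varphi)^{\pi/(2\theta)+\varepsilon}$ without destroying the boundary condition. The clean way is to use $(\sin\varphi)^{\pi/(2\theta)+\varepsilon}\cos(\pi\psi/(2\theta))$ itself as the comparison function and verify that the ``defect'' $\Delta_{S^2}+\Lambda_{\mathfrak{G}}$ applied to it has a favourable sign near the corner; this reduces to the elementary inequality that $(\tfrac{\pi}{2\theta}+\varepsilon)(\tfrac{\pi}{2\theta}+\varepsilon-1)-(\tfrac{\pi}{2\theta})^2+(\text{lower order})$ has the correct sign for $\varepsilon$ small, i.e.\ that $\tfrac{\pi}{2\theta}+\varepsilon$ is not a characteristic exponent and lies strictly between $\tfrac{\pi}{2\theta}$ and $\tfrac{2\pi}{2\theta}$. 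One also needs to absorb the curvature corrections and the non-homogeneity of $\partial_{S^2}{\mathfrak{G}}$ near $O$ (the edges of the spherical polygon are geodesic arcs, not exactly straight lines in the $(\psi,\varphi)$ chart); shrinking $U$ makes all these perturbations subdominant. Alternatively, and perhaps more economically, one can simply invoke the general regularity/asymptotics theorem for the Dirichlet Laplace--Beltrami operator in spherical polygons (the $S^2$-analogue of \cite[Ch.~6]{KMR}) to get the full expansion with remainder, and then read off \eqref{TG-M.2} by retaining only the first term; in that case the ``work'' is entirely in citing the correct statement and identifying the exponents $k\pi/(2\theta)$.
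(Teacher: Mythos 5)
Your overall strategy --- subtract the leading corner singularity and control the remainder $A$ by a maximum-principle comparison on a small spherical neighbourhood of the vertex --- is genuinely different from the paper's, but the decisive Step~(4) does not work. Write $\mu:=\pi/(2\theta)$ and note that $w:=(\sin\varphi)^{\mu}\cos(\mu\psi)$ is an \emph{exact} eigenfunction, $-\Delta_{S^2}w=\mu(\mu+1)w$ (it is the restriction to $S^2$ of the harmonic function ${\rm Re}\,[(x_1+ix_2)^{\mu}]$), so that $(\Delta_{S^2}+\Lambda_{\mathfrak{G}})A=C_{\mathfrak{G}}\bigl[\mu(\mu+1)-\Lambda_{\mathfrak{G}}\bigr]w=O\bigl((\sin\varphi)^{\mu}\cos(\mu\psi)\bigr)$; up to here your computation is fine. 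But for your comparison function $B=K(\sin\varphi)^{\mu+\varepsilon}\cos(\mu\psi)$ one finds $(\Delta_{S^2}+\Lambda_{\mathfrak{G}})B=K\bigl[\varepsilon(2\mu+\varepsilon)(\sin\varphi)^{\mu+\varepsilon-2}+\bigl(\Lambda_{\mathfrak{G}}-(\mu+\varepsilon)(\mu+\varepsilon+1)\bigr)(\sin\varphi)^{\mu+\varepsilon}\bigr]\cos(\mu\psi)$, whose leading term near $\varphi=0$ is strictly \emph{positive} and of order $(\sin\varphi)^{\mu+\varepsilon-2}$, hence far larger than the datum $O((\sin\varphi)^{\mu}\cos(\mu\psi))$. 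In other words $B$ is a subsolution, not a supersolution, of the relevant problem near the corner: the ``elementary inequality'' you invoke, $(\mu+\varepsilon)(\mu+\varepsilon-1)-\mu^2+\dots$, comes out with the sign that destroys the argument rather than saves it. Consequently the inequality actually needed for the comparison principle, namely $(\Delta_{S^2}+\Lambda_{\mathfrak{G}})B\le(\Delta_{S^2}+\Lambda_{\mathfrak{G}})A$ together with $B\ge A$ on the boundary (note your displayed inequalities are also oriented the wrong way round), fails no matter how small the neighbourhood or how large $K$. Nor can a separated barrier $(\sin\varphi)^{s}\cos(\mu\psi)$ be repaired: supersolutions of this form force $s<\mu$, which yields a bound weaker than the leading term. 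A one-function barrier cannot capture simultaneously the gain in the radial exponent and the first-order vanishing at the lateral edges. (The use of the maximum principle for $\Delta_{S^2}+\Lambda_{\mathfrak{G}}$ on ${\mathfrak{G}}\cap U$ would also need the remark that $\Lambda_{\mathfrak{G}}$ lies below the first eigenvalue of the small subdomain, but that is a minor point by comparison.)

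Your fallback --- citing Kondrat'ev/KMR-type corner asymptotics --- produces exactly the bound $A=O\bigl((\sin\varphi)^{\mu+\varepsilon}\bigr)$ of \eqref{LS-e3}, i.e.\ the remainder \emph{without} the factor $\cos(\pi\psi/(2\theta))$; the paper says explicitly that this is not refined enough, and the whole content of Lemma~\ref{LM-2.a} is precisely that extra angular factor. The paper obtains it by a different mechanism: it lifts to ${\mathbb{R}}^3$ via the harmonic function $\rho^{\alpha_{\mathfrak{G}}}\phi_{\mathfrak{G}}$, shows that $\Upsilon=\Delta_x(\rho^{\alpha_{\mathfrak{G}}}A)=O\bigl((\sin\varphi)^{\mu}\bigr)$ on the shell $1-\delta\le\rho\le1$, applies the Agmon--Douglis--Nirenberg local $L^p$ estimates (with $p>3$) on slabs at distance comparable to $\delta$ from the edge together with the Sobolev embedding to get $\max|\nabla_x(\rho^{\alpha_{\mathfrak{G}}}A)|\le C\delta^{\mu+\varepsilon-1}$ there, and finally integrates from the faces $\psi=\pm\theta$, where $\rho^{\alpha_{\mathfrak{G}}}A$ vanishes, to insert the factor $\cos(\mu\psi)$ and conclude \eqref{TG-M.2}. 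If you wish to salvage your plan, the angular factor must come from this kind of gradient (or Schauder) estimate up to the lateral boundary on dyadic regions, combined with $A|_{\psi=\pm\theta}=0$, rather than from a global comparison function.
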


\begin{proof}
Without loss of generality assume that the direction of the edge inducing 
the spherical angle with opening $2\theta$ is along the $x_3$-axis and set 
\begin{eqnarray}\label{TG-M.3}
\begin{array}{c}
x=(x',x_3),\quad 
x'=(x_1,x_2)=(\rho\sin\varphi\cos\psi,\rho\sin\varphi\sin\psi),
\\[4pt]
x_3=\rho\cos\varphi,\quad\rho=\sqrt{x_1^2+x_2^2+x_3^2}.
\end{array}
\end{eqnarray}
Let $\alpha_{\mathfrak{G}}>0$ solve \eqref{critic.G} so that, 
in particular, the function described in polar coordinates by 
$\rho^{\alpha_{\mathfrak{G}}}\phi_{\mathfrak{G}}(\psi,\varphi)$ 
is harmonic. Consequently, 
\begin{eqnarray}\label{TG-M.4}
0=\Delta_x\bigl(\rho^{\alpha_{\mathfrak{G}}}
\phi_{\mathfrak{G}}(\psi,\varphi)\bigr)
&=& C_{\mathfrak{G}}\Delta_{x'}(\rho^{\alpha_{\mathfrak{G}}-\pi/(2\theta)}
(\rho\sin\varphi)^{\pi/(2\theta)}\cos(\pi\psi/(2\theta))\bigr)
\nonumber\\[4pt]
&& +\Delta_x\bigl(\rho^{\alpha_{\mathfrak{G}}}A(\psi,\varphi)\bigr).
\end{eqnarray}
Given that
\begin{eqnarray}\label{TG-M.5}
\Delta_{x'}((\rho\sin\varphi)^{\pi/(2\theta)}\cos(\pi\psi/(2\theta))\bigr)=0,
\end{eqnarray}
we can express the last term in the first line of \eqref{TG-M.4} as 
\begin{eqnarray}\label{TG-M.6}
&& C_{\mathfrak{G}}\Delta_{x'}
(\rho^{\alpha_{\mathfrak{G}}-\pi/(2\theta)}\bigr)\cdot
(\rho\sin\varphi)^{\pi/(2\theta)}\cos(\pi\psi/(2\theta))
\nonumber\\[4pt]
&& +2C_{\mathfrak{G}}\nabla_{x'}
(\rho^{\alpha_{\mathfrak{G}}-\pi/(2\theta)}\bigr)\cdot
\nabla_{x'}\bigl((\rho\sin\varphi)^{\pi/(2\theta)}\cos(\pi\psi/(2\theta))\bigr).
\end{eqnarray}
Fix $\delta\in(0,1)$ small enough and restrict $\rho$ to the interval 
$[1-\delta,1]$. On this range, we have $|\nabla_{x'}\rho|\leq C\sin\varphi$ 
and $|\Delta_{x'}\rho|\leq C$, where the constant $C$ depends only on $\delta$.
As a result, the absolute value of the expression in \eqref{TG-M.6} does
not exceed 
\begin{eqnarray}\label{TG-M.7}
C(\sin\varphi)^{\pi/(2\theta)}.
\end{eqnarray}
Using this and \eqref{TG-M.4} we may then write 
\begin{eqnarray}\label{TG-M.8}
\Upsilon:=\Delta_x\bigl(\rho^{\alpha_{\mathfrak{G}}}A(\psi,\varphi)\bigr)
={\mathcal{O}}\Bigl((\sin\varphi)^{\pi/(2\theta)}\Bigr).
\end{eqnarray}
Next, since $\rho^{\alpha_{\mathfrak{G}}}A(\psi,\varphi)$ vanishes on 
the sides of the dihedral angle $\psi=\pm\theta$, it follows from the 
classical local regularity result of Agmon, Douglas and Nirenberg 
(cf.~\cite{ADN}) that
\begin{eqnarray}\label{TG-M.9}
\sum_{0\leq|\gamma|\leq 2}\delta^{|\gamma|-2}
\|\partial_x^\gamma(\rho^{\alpha_{\mathfrak{G}}}A)\|
_{L^p({\mathcal{E}}_{\delta})}
\leq C\Bigl(\|\Upsilon\|_{L^p(\widetilde{\mathcal{E}}_{\delta})}
+\delta^{3/p-2}\max_{\widetilde{\mathcal{E}}_{\delta}}
|\rho^{\alpha_{\mathfrak{G}}}A|\Bigr),
\end{eqnarray}
where $p\in(1,\infty)$ is fixed and we have set 
\begin{eqnarray}\label{TG-M.10}
\begin{array}{l}
{\mathcal{E}}_{\delta}:=\bigl\{x\in{\mathbb{R}}^3:\,
1-\delta\leq\rho\leq 1,\,\,|\psi|\leq\theta,\,\,
\delta\leq\sin\varphi\leq 2\delta\bigr\},
\\[4pt]
\widetilde{\mathcal{E}}_{\delta}:=\bigl\{x\in{\mathbb{R}}^3:\,
1-2\delta\leq\rho\leq 1+\delta,\,\,|\psi|\leq\theta,\,\,
\delta/2\leq\sin\varphi\leq 4\delta\bigr\}.
\end{array}
\end{eqnarray}
On account of this and the Sobolev embedding theorem we therefore obtain 
for $p>3$
\begin{eqnarray}\label{TG-M.11}
\delta^{3/p-1}\max_{{\mathcal{E}}_{\delta}}|\nabla_{x}
(\rho^{\alpha_{\mathfrak{G}}}A)|
\leq C\Bigl(\|\Upsilon\|_{L^p(\widetilde{\mathcal{E}}_{\delta})}
+\delta^{3/p-2}\max_{\widetilde{\mathcal{E}}_{\delta}}
|\rho^{\alpha_{\mathfrak{G}}}A|\Bigr).
\end{eqnarray}
On the other hand, recall that (cf.~\eqref{TG-M.8} and \eqref{LS-e3})
\begin{eqnarray}\label{TG-M.12}
\Upsilon={\mathcal{O}}\Bigl((\sin\varphi)^{\pi/(2\theta)}\Bigr)
\quad\mbox{ and }\quad 
A={\mathcal{O}}\Bigl((\sin\varphi)^{\pi/(2\theta)+\varepsilon}\Bigr).
\end{eqnarray}
In concert with \eqref{TG-M.11} this yields 
\begin{eqnarray}\label{TG-M.13}
\delta^{3/p-1}\max_{{\mathcal{E}}_{\delta}}|\nabla_{x}(\rho^{\alpha_{\mathfrak{G}}}A)|
\leq C\Bigl(\delta^{3/p+\pi/(2\theta)}+\delta^{3/p-2+\pi/(2\theta)+\varepsilon}\Bigr)
\end{eqnarray}
so that, ultimately, 
\begin{eqnarray}\label{TG-M.14}
\max_{{\mathcal{E}}_{\delta}}|\nabla_{x}(\rho^{\alpha_{\mathfrak{G}}}A)|
\leq C\delta^{\pi/(2\theta)+\varepsilon-1}.
\end{eqnarray}
Using this and the fact that $\rho^{\alpha_{\mathfrak{G}}}A$ vanishes on the sides 
of the dihedral angle $\psi=\pm\theta$ we then obtain 
\begin{eqnarray}\label{TG-M.15}
\rho^{\alpha_{\mathfrak{G}}}|A(\psi,\varphi)| &\leq & 
C\cos(\pi\psi/(2\theta))\cdot\delta\cdot\max_{{\mathcal{E}}_{\delta}}
|\nabla_{x}(\rho^{\alpha_{\mathfrak{G}}}A)|
\nonumber\\[4pt]
&\leq & C\delta^{\pi/(2\theta)+\varepsilon}\cos(\pi\psi/(2\theta)).
\end{eqnarray}
This proves that \eqref{TG-M.2} holds for small values of $\varphi$. 
\end{proof}

\begin{corollary}\label{CM-1.v}
Retaining notation introduced above we have 
\begin{eqnarray}\label{TG-M.16}
\phi_{\mathfrak{G}}(\psi,\varphi)
=\Bigl(C_{\mathfrak{G}}+{\mathcal{O}}\bigl((\sin\varphi)^{\varepsilon}
\bigr)\Bigr)(\sin\varphi)^{\pi/(2\theta)}\cos(\pi\psi/(2\theta)),
\end{eqnarray}
as $\varphi\to 0$. In addition, 
\begin{eqnarray}\label{TG-M.17}
\phi_{\mathfrak{G}}(x)
\geq C\,{\rm dist}_{S^2}\bigl(x,\partial_{S^2}{\mathfrak{G}}\bigr),
\quad\mbox{uniformly for $x\in {\mathfrak{G}}$ away from the vertices 
of ${\mathfrak{G}}$}.
\end{eqnarray}
\end{corollary}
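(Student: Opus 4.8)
The plan is to treat the two assertions separately. The asymptotic formula \eqref{TG-M.16} is an immediate reformulation of Lemma~\ref{LM-2.a}: dividing the identity \eqref{TG-M.1} by the nonvanishing factor $(\sin\varphi)^{\pi/(2\theta)}\cos(\pi\psi/(2\theta))$ and invoking the remainder bound \eqref{TG-M.2}, one sees at once that $A(\psi,\varphi)/\bigl[(\sin\varphi)^{\pi/(2\theta)}\cos(\pi\psi/(2\theta))\bigr]={\mathcal{O}}\bigl((\sin\varphi)^{\varepsilon}\bigr)$ as $\varphi\to 0$, uniformly in $\psi$ with $|\psi|<\theta$, which is exactly \eqref{TG-M.16}.

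For the lower bound \eqref{TG-M.17}, the point is that, since ${\mathfrak{G}}$ is a spherical polygon, $\partial_{S^2}{\mathfrak{G}}$ away from its finitely many vertices $O_1,\dots,O_M$ consists of arcs of great circles, hence of $C^\infty$ hypersurfaces. Fix a small $\eta>0$ and set ${\mathfrak{G}}_\eta:={\mathfrak{G}}\setminus\bigcup_j B(O_j,\eta)$. Then $\overline{{\mathfrak{G}}_\eta}$ is compact and $\partial_{S^2}{\mathfrak{G}}\cap\overline{{\mathfrak{G}}_\eta}$ is a finite union of closed $C^\infty$ geodesic arcs. Since $\phi_{{\mathfrak{G}}}$ solves $-\Delta_{S^2}\phi_{{\mathfrak{G}}}=\Lambda_{{\mathfrak{G}}}\phi_{{\mathfrak{G}}}$ in ${\mathfrak{G}}$ and vanishes on the smooth part of $\partial_{S^2}{\mathfrak{G}}$, elliptic regularity up to the boundary (Schauder estimates along these geodesic arcs) shows that $\phi_{{\mathfrak{G}}}$ extends to a $C^1$ (in fact $C^\infty$) function on a one-sided collar of each such arc. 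The Hopf boundary point lemma then gives that the interior normal derivative $\partial_\nu\phi_{{\mathfrak{G}}}$ is strictly positive at every point of $\partial_{S^2}{\mathfrak{G}}\cap\overline{{\mathfrak{G}}_\eta}$, and by continuity and compactness $\partial_\nu\phi_{{\mathfrak{G}}}\geq c=c(\eta)>0$ there.

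I would then patch the estimate together as follows. Choosing $\delta_0\in(0,\eta/2)$ small enough, every $x\in{\mathfrak{G}}_\eta$ with ${\rm dist}_{S^2}(x,\partial_{S^2}{\mathfrak{G}})<\delta_0$ has its nearest boundary point on a smooth geodesic arc at distance $>\eta/2$ from all the $O_j$, and there the (uniform) Hopf estimate yields $\phi_{{\mathfrak{G}}}(x)\geq (c/2)\,{\rm dist}_{S^2}(x,\partial_{S^2}{\mathfrak{G}})$. On the complement, $x\in{\mathfrak{G}}_\eta$ lies at distance $\geq\delta_0$ from $\partial_{S^2}{\mathfrak{G}}$, hence in a compact subset of ${\mathfrak{G}}$ on which the positive continuous function $\phi_{{\mathfrak{G}}}$ is bounded below by a positive constant, while ${\rm dist}_{S^2}(x,\partial_{S^2}{\mathfrak{G}})\leq\pi$; so the inequality persists there after adjusting the constant. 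Since $\eta>0$ was arbitrary, this proves \eqref{TG-M.17}.

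The step requiring the most care is the passage from the pointwise Hopf lemma to the \emph{uniform} linear lower bound valid throughout a collar of fixed width: one must check that the collar width $\delta_0$ and the implied constant can be taken independent of the boundary point, which is precisely where the $C^\infty$ (more than enough; $C^{1,\alpha}$ would suffice) regularity of the boundary arcs away from the vertices and the compactness of $\overline{{\mathfrak{G}}_\eta}$ enter. The vertices are legitimately excluded from \eqref{TG-M.17} because near them the equivalence with the distance function fails and one must instead rely on the sharper expansion \eqref{TG-M.16}.
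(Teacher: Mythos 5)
Your treatment of \eqref{TG-M.16} coincides with the paper's: the formula is simply a re-writing of \eqref{TG-M.1}--\eqref{TG-M.2}, obtained exactly as you say by dividing by $(\sin\varphi)^{\pi/(2\theta)}\cos(\pi\psi/(2\theta))$. For \eqref{TG-M.17} you reach the correct conclusion by a genuinely different route. The paper handles this bound with ``a simple barrier argument and the Maximum Principle,'' in the spirit of Lemma~\ref{quanthopf} in the Appendix localised to the Laplace--Beltrami operator: one compares $\phi_{\mathfrak{G}}$ directly with an explicit barrier attached to the smooth boundary arcs (using $\inf\phi_{\mathfrak{G}}>0$ on an interior compact set), and the comparison delivers the linear lower bound with a quantitative constant in one stroke, with no appeal to boundary regularity theory or compactness. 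You instead use Schauder regularity up to the geodesic arcs, the classical Hopf boundary point lemma pointwise, and compactness of $\overline{{\mathfrak{G}}_\eta}$ to make the normal-derivative bound, the collar width and the Taylor remainder uniform; this is correct, and you rightly flag the uniformity step as the one needing care, but it is less quantitative and leans on more machinery than the barrier comparison. One detail you should make explicit: the equation $-\Delta_{S^2}\phi_{\mathfrak{G}}=\Lambda_{\mathfrak{G}}\phi_{\mathfrak{G}}$ has zero-order coefficient $+\Lambda_{\mathfrak{G}}>0$, which violates the sign hypothesis of the textbook Hopf lemma; the application is nevertheless legitimate since $\Lambda_{\mathfrak{G}}\phi_{\mathfrak{G}}\geq 0$ makes $\phi_{\mathfrak{G}}$ a positive superharmonic function vanishing at the boundary point, so Hopf's lemma for the Laplace--Beltrami operator alone applies (equivalently, invoke the standard refinement allowing an arbitrary zero-order term when the boundary value is $0$). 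Your closing observation that the vertices must be excluded because the equivalence with the distance function breaks down there is also the right justification for the form of \eqref{TG-M.17}.
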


\begin{proof}
Formula \eqref{TG-M.16} is simply a re-writing 
of \eqref{TG-M.1}-\eqref{TG-M.2}, whereas formula \eqref{TG-M.17} 
follows from a simple barrier argument and the Maximum Principle. 
\end{proof}

Let $O$ be a point on one of the edges such that its distance from the nearest 
vertex of our polyhedron is $r>0$. In such a scenario, 
${\mathfrak{G}}$ equals the diangle on the unit sphere
\begin{eqnarray}\label{ddd-1}
{\mathfrak{G}}=\{\omega=(\psi,\varphi):\,0<\varphi<\pi,\,|\psi|<\theta\}.
\end{eqnarray}
In this case, the normalised eigenfunction (in spherical polar coordinates) is 
\begin{eqnarray}\label{ddd-2}
\phi_{\mathfrak{G}}(\omega)
=(\sin\varphi)^{\pi/(2\theta)}\,\cos(\pi\psi/(2\theta)),
\qquad \omega=(\psi,\varphi).
\end{eqnarray}

After this preamble, we are ready to present the 

\vskip 0.08in
\noindent{\it Proof of Theorem~\ref{Te-3Aj}.}
Let ${\mathfrak{G}}\subseteq S^2$ be the spherical polygon from \eqref{Knab}
and consider the truncated cone $S_{{\mathfrak{G}},1}$ 
(cf.~\eqref{Mc-T1}) with edges meeting at a generic vertex $O\in{\mathbb{R}}^3$
of $\Omega$. Without loss of generality, assume that $O$ is the origin 
in ${\mathbb{R}}^3$. Throughout, $\alpha_{\mathfrak{G}}$ retains its earlier 
significance, and we denote the openings of the dihedral angles of 
$S_{{\mathfrak{G}},1}$ by $2\theta_j$, $1\leq j\leq N$. 
We wish to show that $u^{-\beta}$ is integrable near $O$, and divide 
the subsequent analysis into several cases, starting with:

\vskip 0.08in
\noindent{\it Case (i):} Assume that the vertex $O$ is such that 
$\alpha_{\mathfrak{G}}>2$ (our analysis also applies to ``fictitious vertices'', 
i.e., for points of edges with $2\theta_j<\pi/2$). Let $\Phi_{\mathfrak{G}}$ 
be the (unique) variational solution of the Dirichlet problem
\begin{eqnarray}\label{ccdd-1}
-\Delta_{S^2}\,\Phi_{\mathfrak{G}}(\omega)-6\,\Phi_{\mathfrak{G}}(\omega)=1
\quad\mbox{on}\quad{\mathfrak{G}}, 
\qquad\Phi_{\mathfrak{G}}\bigl|_{\partial_{S^2}{\mathfrak{G}}}=0.
\end{eqnarray}
Since $6<\alpha_{\mathfrak{G}}(\alpha_{\mathfrak{G}}+1)$ in the current case, 
it follows from the Maximum Principle that $\Phi_{\mathfrak{G}}>0$ 
on ${\mathfrak{G}}$. Furthermore, there exists $\delta>0$ small with the 
property that the following asymptotic representations hold for 
$\Phi_{\mathfrak{G}}$ near the $j$-th angle vertex on 
$\partial_{S^2}{\mathfrak{G}}$ (cf.~\cite{Ko} and \cite{MP} for closely 
related results):
\begin{eqnarray}\label{ccdd-2}
\Phi_{\mathfrak{G}}(\omega)=
\left\{
\begin{array}{l}
\frac{\varphi_j^2}{4}(1+{\mathcal{O}}(\varphi_j^\delta))
\Bigl(\frac{\cos(2\psi_j)}{\cos(2\theta_j)}-1\Bigr),
\qquad\,\,\mbox{if }\theta_j<\pi/4,
\\[8pt]
\varphi_j^2(1+{\mathcal{O}}(\varphi_j^\delta))
\bigl(\log\frac{1}{\varphi_j}\bigr)\cos(2\psi_j),
\quad\,\,\mbox{if }\theta_j=\pi/4,
\\[8pt]
C_j\varphi_j^{\pi/2\theta_j}(1+{\mathcal{O}}(\varphi_j^\delta))
\cos\bigl(\frac{\pi\psi_j}{2\theta_j}\bigr),\qquad\mbox{if }\theta_j>\pi/4,
\end{array}
\right.
\end{eqnarray}
where $C_j>0$ and $(\varphi_j,\psi_j)$ are the polar coordinates of the point 
$\omega\in S^2$, near the $j$-th angle vertex of $\partial_{S^2}{\mathfrak{G}}$.
In turn, this yields 
\begin{eqnarray}\label{ccdd-3}
\int_{\mathfrak{G}}\frac{d\omega}{\Phi_{\mathfrak{G}}(\omega)^\beta}<+\infty
\quad\mbox{whenever}\quad\beta<1.
\end{eqnarray}
To proceed, recall that, in general, for any function $w$ in ${\mathbb{R}}^3$ 
we have (with $\rho:=|x|$)
\begin{eqnarray}\label{ccdd-8b}
\Delta w=\rho^{-2}(\rho^2 w_\rho)_\rho+\rho^{-2}\Delta_{S^2}\,w.
\end{eqnarray}
Hence, if we now introduce the function
\begin{eqnarray}\label{ccdd-4}
v(x):=\bigl(|x|^2-|x|^{\alpha_{\mathfrak{G}}}\bigr)\Phi_{\mathfrak{G}}
\bigl(\frac{x}{|x|}\bigr)\quad\mbox{on}\quad S_{{\mathfrak{G}},1},
\end{eqnarray}
it follows that 
\begin{eqnarray}\label{ccdd-5}
&&-\Delta v(x)=1+\bigl(\alpha_{\mathfrak{G}}(\alpha_{\mathfrak{G}}+1)-6\bigr)
\bigl(\frac{x}{|x|}\bigr)^{\alpha_{\mathfrak{G}}-2}
\Phi_{\mathfrak{G}}\bigl(\frac{x}{|x|}\bigr)-|x|^{\alpha_{\mathfrak{G}}-2}
\,\,\mbox{ on }S_{{\mathfrak{G}},1}
\\[4pt]
&& v\bigr|_{\partial S_{{\mathfrak{G}},1}}=0.
\end{eqnarray}
Since, by assumption, $\alpha_{\mathfrak{G}}>2$, the right-hand side 
of \eqref{ccdd-5} bounded by some positive finite constant $c_0$. 
Based on this and the Maximum Principle we may then conclude that 
for a sufficiently small $r>0$ there holds
\begin{eqnarray}\label{ccdd-6}
u(x)\geq\frac{1}{c_0}r^2v\bigl(\frac{x}{r}\bigr)\quad
\mbox{on}\quad S_{{\mathfrak{G}},r}.
\end{eqnarray}
Now, the fact that $u^{-\beta}\in L^1(\Omega\cap B(O,r))$ follows 
from \eqref{ccdd-6} and \eqref{ccdd-3}, by observing that 
\begin{eqnarray}\label{ccdd-7}
\int_0^1\frac{\rho^2\,d\rho}{(\rho^2-\rho^{\alpha_{\mathfrak{G}}})^\beta}
=\frac{1}{\alpha_{\mathfrak{G}}-2}
\mathfrak{B}\Bigl(\frac{3-2\beta}{\alpha_{\mathfrak{G}}-2};1-\beta\Bigr)<+\infty.
\end{eqnarray}

\vskip 0.08in
\noindent{\it Case (ii):} Assume that the vertex $O$ is such that 
$\alpha_{\mathfrak{G}}=2$ (our subsequent analysis also applies to 
``fictitious'' vertices, i.e., points of edges with $2\theta_j=\pi/2$).
In this case, it follows from \eqref{critic.G} that the first eigenvalue
of $-\Delta_{S^2}$ on ${\mathfrak{G}}$ is $\Lambda_{\mathfrak{G}}=6$, and 
we recall the eigenfunction $\phi_{\mathfrak{G}}$ from  
\eqref{vTR1.G} (with $n=3$). Also, fix some small $r>0$ and set
\begin{eqnarray}\label{ccdd-8}
v_r(x):=|x|^2\Bigl(\log\frac{r}{|x|}\Bigr)
\phi_{\mathfrak{G}}\bigl(\frac{x}{|x|}\bigr)
\quad\mbox{on}\quad S_{{\mathfrak{G}},r}.
\end{eqnarray}
Hence, by \eqref{ccdd-8b}, 
\begin{eqnarray}\label{ccdd-9}
-\Delta v_r(x)=-\Bigl(\log\frac{r}{|x|}\Bigr)(\Delta_{S^2}\,\phi_{\mathfrak{G}}
+6\phi_{\mathfrak{G}})+5\phi_{\mathfrak{G}}=5\phi_{\mathfrak{G}}.
\end{eqnarray}
Since $v_r\bigl|_{\partial S_{{\mathfrak{G}},r}}=0$ and 
$u\bigl|_{\partial S_{{\mathfrak{G}},r}}\geq 0$, and since 
\begin{eqnarray}\label{ccdd-10}
-\Delta v_r(x)\leq 5\max_{{\mathfrak{G}}}\phi_{\mathfrak{G}}
=-\Delta(5\max_{{\mathfrak{G}}}\phi_{\mathfrak{G}}\cdot u(x)),
\end{eqnarray}
it follows that 
\begin{eqnarray}\label{ccdd-11}
u(x)\geq (5\max_{{\mathfrak{G}}}\phi_{\mathfrak{G}})^{-1}v_r(x)
\quad\mbox{ on }\,\,S_{{\mathfrak{G}},r}. 
\end{eqnarray}
As a consequence of this, \eqref{ccdd-8} and Corollary~\ref{TG-M.17}, 
we therefore obtain 
\begin{eqnarray}\label{TG-uuN}
u(x)\geq C|x|^2(\sin\varphi_j)^{\pi/(2\theta_j)}\cos(\pi\psi_j/(2\theta_j)),
\qquad\mbox{where }\,\,x/|x|=(\psi_j,\varphi_j),
\end{eqnarray}
for $x$ near $O$. In turn, the nature of the expression in 
the right-hand side of \eqref{TG-uuN} guarantees the integrability of 
$u^{-\beta}$ for each $\beta\in(0,1)$ over small conical neighbourhoods 
of every edge of a dihedral angle of half-opening $\theta_j\geq\pi/4$, since 
generally speaking 
\begin{eqnarray}\label{ddYFS}
\int_0^1\frac{\rho^2\,d\rho}{(\rho^2)^\beta}
\int_0^{\pi}\frac{\sin\varphi_j\,d\varphi_j}
{(\sin\varphi_j)^{(\pi/(2\theta_j))\beta}}
\int_{|\psi_j|<\theta_j}\frac{d\psi_j}{[\cos(\pi\psi_j/(2\theta_j)]^{\beta}}
<+\infty
\end{eqnarray}
whenever $\beta<\min\{1,\frac{4\theta_j}{\pi}\}$.

The treatment of the case when $\theta_j<\pi/4$ requires a different 
approach which we now describe. Let us consider a conic neighbourhood, 
centred at $O$, of the $j$-th edge of a dihedral angle of 
opening $2\theta_j$ with $\theta_j<\pi/4$, i.e., 
\begin{eqnarray}\label{TYh.1}
U_j:=\{x=(\rho,\psi_j,\varphi_j):\,\rho>0,\,\,|\psi_j|\leq\theta_j,\,\,
0<\varphi_j<\epsilon\},
\end{eqnarray}
where $\epsilon>0$ is a small number. We choose a point $P$ on this edge, 
set $r:=|P|/3$ and apply \eqref{ccdd-6} with the role of $v$ played by the
function 
\begin{eqnarray}\label{TYh.2}
v^{(P)}(x):=\bigl(|x-P|^2-|x-P|^{\pi/(2\theta_j)}\bigr)
\Phi^{(j)}\bigl({\textstyle\frac{x-P}{|x-P|}}\bigr), 
\end{eqnarray}
where $\Phi^{(j)}$ is our old $\Phi_{\mathfrak{G}}$ (cf.~\eqref{ccdd-2}) 
constructed for the point $P$. On account of the first asymptotic 
formula in \eqref{ccdd-2}, this gives (much as for \eqref{ccdd-6}) 
that on $\{x\in U_j\cap\Omega:\,|x|=r\}$ we have 
\begin{eqnarray}\label{TYh.3}
u(x) &\geq & c_o^{-1}r^2\left(\Bigl(\frac{|x-P|}{r}\Bigr)^2-
\Bigl(\frac{|x-P|}{r}\Bigr)^{\pi/(2\theta_j)}\right)
\Phi^{(j)}\bigl({\textstyle\frac{x-P}{|x-P|}}\bigr)
\nonumber\\[4pt]
&\geq & C|x-P|^2\varphi_j^2\Bigl(\frac{\cos(2\psi_j)}{\cos(2\theta_j)}-1\Bigr)
\nonumber\\[4pt]
&\geq & Cr^2\varphi_j^2\Bigl(\frac{\cos(2\psi_j)}{\cos(2\theta_j)}-1\Bigr).
\end{eqnarray}
To continue, let $\Gamma_{\mathfrak{G}}$ solve the boundary value problem 
\begin{eqnarray}\label{TYh.4}
&& -\Delta_{S^2}\Gamma_{\mathfrak{G}}-6\Gamma_{\mathfrak{G}}
=1-\Bigl(\int_{\mathfrak{G}}\phi_{\mathfrak{G}}(\omega)\,d\omega\Bigr)^{-1}
\phi_{\mathfrak{G}}\quad\mbox{ in }\,\,{\mathfrak{G}},
\nonumber\\[4pt]
&& \qquad\quad\,\,\,\,
\Gamma_{\mathfrak{G}}\Bigl|_{\partial_{S^2}{\mathfrak{G}}}=0,
\end{eqnarray}
Much as in \cite{Ko}, it follows that there exists $\delta>0$ 
with the property that 
\begin{eqnarray}\label{TYh.5}
\Gamma_{\mathfrak{G}}(\omega)=4^{-1}\varphi_j^2
\bigl(1+{\mathcal{O}}(\varphi_j^\delta)\bigr)
\Bigl(\frac{\cos(2\psi_j)}{\cos(2\theta_j)}-1\Bigr).
\end{eqnarray}
Also, clearly, 
\begin{eqnarray}\label{TYh.6}
\Delta\bigl(|x|^2\Gamma_{\mathfrak{G}}(x/|x|)\bigr)
=-(\Delta_{S^2}\Gamma_{\mathfrak{G}})(x/|x|)-6\Gamma_{\mathfrak{G}}(x/|x|)\leq 1.
\end{eqnarray}
To continue, we consider a smooth, nonnegative cutoff function $\eta_j(\omega)$
with the property that $\eta_j=1$ for $\varphi_j\in (0,\epsilon/2)$ 
and $\eta_j=0$ for $\varphi_j\in (\epsilon,\pi)$. Then 
\begin{eqnarray}\label{TYh.7}
-\Delta\bigl(\eta_j(x/|x|)|x|^2\Gamma_{\mathfrak{G}}(x/|x|)\bigr)\leq c_1
\quad\mbox{ on }\quad S_{{\mathfrak{G}},r}.
\end{eqnarray}
Furthermore, by \eqref{TYh.5}, on $\{x\in U_j\cap\Omega:\,|x|=r\}$ we have 
\begin{eqnarray}\label{TYh.8}
\eta_j(\omega)|x|^2\Gamma_{\mathfrak{G}}(\omega)\leq cr^2\varphi_j^2
\Bigl(\frac{\cos(2\psi_j)}{\cos(2\theta_j)}-1\Bigr).
\end{eqnarray}
Note that the function in the left-hand side of \eqref{TYh.8} vanishes 
on the conical side of $\partial S_{{\mathfrak{G}},r}$. The same is, obviously, 
true for $u$. Consequently, on account of this, \eqref{TYh.3} and 
\eqref{TYh.7}-\eqref{TYh.8}, we obtain 
\begin{eqnarray}\label{TYh.9}
u(x)\geq c_2\eta_j(\omega)|x|^2\Gamma_{\mathfrak{G}}(\omega)
\quad\mbox{ on }\quad\{x\in U_j\cap\Omega:\,|x|=r\}.
\end{eqnarray}
Hence, by the Maximum Principle, we ultimately have 
\begin{eqnarray}\label{TYh.10}
u(x)\geq c_2\eta_j\bigl(\frac{x}{|x|}\bigr)|x|^2\Gamma_{\mathfrak{G}}
\bigl(\frac{x}{|x|}\bigr)\quad\mbox{ for all }\quad x\in S_{{\mathfrak{G}},r}.
\end{eqnarray}
This, along with \eqref{TYh.5} now shows that $u^{-\beta}$ is integrable 
for all $\beta\in(0,1)$ over a small conical neighbourhood of the $j$-th edge, 
completing the treatment of the situation described in Case~(ii). 

\vskip 0.08in
\noindent{\it Case (iii):} Assume that the vertex $O$ is such that 
$\alpha_{\mathfrak{G}}<2$ (our analysis also applies to a point on the 
edge with $2\theta_j>\pi/2$). Hence, in this situation, 
$6>\alpha_{\mathfrak{G}}(\alpha_{\mathfrak{G}}+1)$ and, therefore, by a slight 
variant of results proved in \cite{Ko} and \cite{MP}, there exists a 
positive constant $C_0$ with the property that, granted that $|x|$ is small, 
\begin{eqnarray}\label{ccdd-12}
u(x)=C_0|x|^{\alpha_{\mathfrak{G}}}\phi_{\mathfrak{G}}\bigl(\frac{x}{|x|}\bigr)
+{\mathcal{O}}(|x|^{\alpha_{\mathfrak{G}} +\delta}),\quad\mbox{ for some }\,\,\delta>0,
\end{eqnarray}
where $\phi_{\mathfrak{G}}$ is the eigenfunction of the Laplace-Beltrami 
operator $\Delta_{S^2}$ with Dirichlet boundary condition on ${\mathfrak{G}}$
corresponding to the eigenvalue $\alpha_{\mathfrak{G}}(\alpha_{\mathfrak{G}}+1)$ 
(compare with \eqref{vTR1.G} when $n=3$). Let $\eta(\omega)$ denote a smooth 
cut-off function with support in a small conical neighbourhood of the edges, 
such that $\eta=1$ in a conical neighbourhood of every edge. Furthermore, 
let $H(x)$ be a smooth cut-off function in $C^\infty_0(B(O,2))$ with small 
support near $O$, which is identically equal to $1$ on $B(O,1)$. 

As in Case~(i), we need the solution $\Phi_{\mathfrak{G}}$ of problem 
\eqref{ccdd-1}. We wish to stress that, in the current context, it is not 
known whether $\Phi_{\mathfrak{G}}$ is of definite sign, but we shall not 
make use of this property. The asymptotic formulas for 
$\Phi_{\mathfrak{G}}(\omega)$ given in Case~(i) for $\theta_j\leq\pi/4$ 
remain valid here as well. However, for $\theta_j>\pi/4$ we can only say that 
\begin{eqnarray}\label{ccdX}
\Phi_{\mathfrak{G}}(\omega)={\mathcal{O}}\Bigl(\varphi_j^{\pi/2\theta_j}
\cos\frac{\pi\psi_j}{\theta_j}\Bigr).
\end{eqnarray}
For some small $r>0$, let us now introduce the function
\begin{eqnarray}\label{ccdd-13}
w(x):=|x|^2\Phi_{\mathfrak{G}}\bigl(\frac{x}{|x|}\bigr)
\eta\bigl(\frac{x}{|x|}\bigr)H\bigl(\frac{x}{r}\bigr).
\end{eqnarray}
Since $-\Delta(|x|^2\Phi_{\mathfrak{G}}\bigl(\frac{x}{|x|}\bigr))=1$ on 
$S_{{\mathfrak{G}},\infty}$, we have $-\Delta w=1$ in the 
intersection of $B(O,r)$ and small conical neighbourhoods of the edges meeting 
at $O$. Moreover, $|\Delta w|\leq C$ on $S_{{\mathfrak{G}},r}$. 
Finally, introduce 
\begin{eqnarray}\label{ccdd-14}
W(x):=u(x)-w(x),\quad\forall\,x\in S_{{\mathfrak{G}},r}
\end{eqnarray}
and note that $W=0$ on the conical side of 
$\partial S_{{\mathfrak{G}},r}$. Also,  
\begin{eqnarray}\label{ccdd-15}
-\Delta W=1+\Delta w\quad\mbox{on}\quad S_{{\mathfrak{G}},r},
\end{eqnarray}
and the right-hand side is bounded, and vanishes on small conical 
neighbourhoods of the edges meeting at $O$. Consequently, by \cite{Ko}, 
\begin{eqnarray}\label{ccdd-16}
W(x)=C_1|x|^{\alpha_{\mathfrak{G}}}\phi_{\mathfrak{G}}\bigl(\frac{x}{|x|}\bigr)
+{\mathcal{O}}(|x|^{\alpha_{\mathfrak{G}}+\delta}).
\end{eqnarray}
Comparing this with \eqref{ccdd-12} and using \eqref{ccdX}, \eqref{ccdd-13}
and \eqref{ccdd-14}, we derive from $\alpha_{\mathfrak{G}}<2$ that 
actually $C_1=C_0$. Using the asymptotics \eqref{TG-M.16} of 
$\phi_{\mathfrak{G}}$ near angle vertices on $\partial_{S^2}{\mathfrak{G}}$
as well as the harmonicity of the remainder term in \eqref{ccdd-16}, 
we see that the remainder term in \eqref{ccdd-16} can be replaced by
${\mathcal{O}}(|x|^{\alpha_{\mathfrak{G}}+\delta})\phi_{\mathfrak{G}}(\frac{x}{|x|})$. 
(The argument is similar to the proof of Lemma~\ref{LM-2.a} and we omit it.)
Thus, 
\begin{eqnarray}\label{ccdd-17}
W(x)=C_0|x|^{\alpha_{\mathfrak{G}}}\bigl(1+{\mathcal{O}}(|x|^{\delta})\bigr)
\phi_{\mathfrak{G}}\bigl(\frac{x}{|x|}\bigr),
\end{eqnarray}
and we have by \eqref{ccdd-13} and \eqref{ccdd-14}
\begin{eqnarray}\label{ccdd-18}
u(x) &=& w(x)+W(x)
\nonumber\\[4pt]
&=&|x|^2\Phi_{\mathfrak{G}}\bigl(\frac{x}{|x|}\bigr)
\eta\bigl(\frac{x}{|x|}\bigr)H\bigl(\frac{x}{r}\bigr)
+C_0|x|^{\alpha_{\mathfrak{G}}}\bigl(1+{\mathcal{O}}(|x|^{\delta})\bigr)
\phi_{\mathfrak{G}}\bigl(\frac{x}{|x|}\bigr).
\end{eqnarray}
This means that for $|x|<r$ in a small angular neighbourhood of the 
edges meeting at $O$ and such that $\theta_j\leq\pi/4$ the following holds
\begin{eqnarray}\label{ccdd-19}
u(x)\geq c_1|x|^2\Phi_{\mathfrak{G}}\bigl(\frac{x}{|x|}\bigr)
\geq c_2(|x|\varphi_j)^2.
\end{eqnarray}
On the other hand, in a small angular neighbourhood of the 
edges with $\theta_j>\pi/4$, for $|x|<r$ we have that
\begin{eqnarray}\label{ccdd-20}
u(x) &\geq & C_0|x|^{\alpha_{\mathfrak{G}}}\bigl(1+{\mathcal{O}}(|x|^{\delta})\bigr)
\phi_{\mathfrak{G}}\bigl(\frac{x}{|x|}\bigr)
-C|x|^2\varphi_j^{\pi/(2\theta_j)}\cos\bigl(\frac{\pi\psi_j}{2\theta_j}\bigr)
\nonumber\\[4pt]
&\geq & C_1|x|^{\alpha_{\mathfrak{G}}}
\varphi_j^{\pi/(2\theta_j)}\cos\bigl(\frac{\pi\psi_j}{2\theta_j}\bigr),
\end{eqnarray}
where $C_1$ is a finite constant. Finally, when $x$ is at a fixed, 
positive angular distance to the edges meeting at $O$, we may conclude
in the same way as in \eqref{TYh.3} that 
\begin{eqnarray}\label{ccdd-21}
u(x)\geq C_2|x|^{\alpha_{\mathfrak{G}}-1}{\rm dist}\,(x,\partial\Omega).
\end{eqnarray}
Collectively, \eqref{ccdd-19}, \eqref{ccdd-20} and \eqref{ccdd-21} prove 
that $u^{-\beta}$ is integrable near $O$, for each number $\beta$ in the 
interval $(0,1)$.
\hfill$\Box$
\vskip 0.08in

\subsection{Piecewise $C^2$ domains with outward 
cuspidal vertices in ${\mathbb{R}}^n$}

Here we elaborate on the case of domains with exterior cusps. 
This class of domains consists of bounded open sets 
$\Omega\subseteq{\mathbb{R}}^n$ with a piecewise $C^2$ boundary, 
exhibiting finitely many exterior cusps. By definition, $x_0\in\partial\Omega$ 
is called an exterior cusp if, after a rigid transformation of the space 
which maps $x_0$ into the origin of ${\mathbb{R}}^n$, there exist two small 
numbers $\varepsilon,\eta>0$ along with a function
${\mathcal{F}}\in C^2([0,\eta])$ with ${\mathcal{F}}(0)=0$, ${\mathcal{F}}>0$ on $(0,\eta]$ and ${\mathcal{F}}'(0)=0$, and such that 
$\{x\in\Omega:\,x_n\leq\eta\}$ coincides with the cuspidal set
\begin{eqnarray}\label{ddd-4}
\{x=(x',x_n):\,0<x_n\leq\eta,\,|x'|<\varepsilon{\mathcal{F}}(x_n)\}.
\end{eqnarray}

\begin{theorem}\label{Te-3SZ}
Assume that $u$ is the solution of the Saint Venant problem \eqref{PKc-1} 
in the case when $\Omega\subseteq{\mathbb{R}}^n$, $n\geq 2$, is 
a domain with exterior cusps. If $n\geq 3$ then for every $\beta\in(0,1)$ there
holds 
\begin{eqnarray}\label{PDS.11}
\int_{\Omega}u(x)^{-\beta}\,dx\leq C(\Omega,\beta)<+\infty
\end{eqnarray}
The same is true in the two dimensional setting 
provided $0<\beta\leq 1/2$. Finally, in the case when $n=2$ and 
$\beta\in(1/2,1)$, then \eqref{PDS.11} holds if and only the finiteness 
condition 
\begin{eqnarray}\label{ddXCT}
\int_0^\eta{\mathcal{F}}(\tau)^{1-2\beta}\,d\tau<+\infty
\end{eqnarray}
holds for every boundary cusp. 
\end{theorem}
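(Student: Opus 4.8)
The plan is to handle the interior of $\Omega$ and the cuspidal neighbourhoods separately, exactly as in the proofs of Theorem~\ref{Te-3A} and Theorem~\ref{Te-3AH}. Away from the cusps, $\Omega$ is locally $C^2$, so after rounding off the cuspidal vertices we obtain a smooth (say $C^1$) subdomain $\Omega_\ast\subseteq\Omega$ to which Corollary~\ref{anLL} applies, giving $\int_{\Omega_\ast}u_\ast^{-\beta}\,dx<+\infty$ for all $\beta\in(0,1)$; the Maximum Principle then yields $u\geq u_\ast$ on $\Omega_\ast$, so the interior contribution is finite. The whole problem is therefore reduced to estimating $\int_{\Omega\cap B(0,\eta)}u(x)^{-\beta}\,dx$ near a single exterior cusp, with $\Omega\cap\{x_n\leq\eta\}$ equal to $\{(x',x_n):0<x_n\leq\eta,\ |x'|<\varepsilon{\mathcal F}(x_n)\}$.

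For the local estimate near the cusp, the key step is a good pointwise lower bound for $u$. The natural candidate barrier is an axially symmetric function adapted to the cross-section $\{|x'|<\varepsilon{\mathcal F}(x_n)\}$: roughly, for fixed $x_n=\tau$ the slice of $\Omega$ is a ball of radius $\varepsilon{\mathcal F}(\tau)$ in the $x'$-variables, and one expects $u(x',\tau)\gtrsim \varepsilon^2{\mathcal F}(\tau)^2-|x'|^2$ (divided by a dimensional constant), at least after correcting for the $x_n$-curvature of the cusp. Concretely I would take $v(x):=c\big(\varepsilon^2{\mathcal F}(x_n)^2-|x'|^2\big)$, compute $\Delta v$, and use ${\mathcal F}\in C^2$, ${\mathcal F}(0)={\mathcal F}'(0)=0$ to show that on a sufficiently small cuspidal neighbourhood the term $\varepsilon^2({\mathcal F}')^2+\varepsilon^2{\mathcal F}{\mathcal F}''$ is small compared with the $-2(n-1)c$ coming from the $|x'|^2$ part, so that $\Delta v\leq -1$ there (after choosing $c$ and shrinking $\eta$). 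Then $u-v$ is superharmonic, continuous on the closure, and nonnegative on the boundary of the cuspidal piece (using $v\le 0$ on $|x'|=\varepsilon{\mathcal F}(x_n)$ and $v$ bounded on $x_n=\eta$ where $u$ is bounded below by the interior estimate), so the Maximum Principle gives $u\geq c\,v_0$ for an explicit comparison barrier $v_0$ vanishing exactly on the cuspidal boundary. Integrating $v_0^{-\beta}$ in the $x'$-ball of radius $\varepsilon{\mathcal F}(x_n)$ produces, via the change of variables $x'=\varepsilon{\mathcal F}(x_n)\,y'$, a factor $({\mathcal F}(x_n))^{(n-1)-2\beta}$ times a finite constant $\int_{|y'|<1}(1-|y'|^2)^{-\beta}\,dy'$ (finite iff $\beta<1$, automatically true); the remaining integral is $\int_0^\eta {\mathcal F}(\tau)^{(n-1)-2\beta}\,d\tau$.

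This gives the dichotomy cleanly. When $n\geq 3$ we have $(n-1)-2\beta>0$ for all $\beta\in(0,1)$ (indeed $\geq 2-2\beta>0$), so ${\mathcal F}(\tau)^{(n-1)-2\beta}$ is bounded on $[0,\eta]$ and the integral is finite — no extra hypothesis needed. When $n=2$ the exponent is $1-2\beta$, which is $\geq 0$ for $\beta\leq 1/2$ (so finiteness is automatic, recovering the Ahlfors-regular-type result, noting that a $C^2$ cusp still has Ahlfors regular boundary away from—and in fact at—the cusp in a mild sense), while for $\beta\in(1/2,1)$ finiteness of $\int_0^\eta{\mathcal F}(\tau)^{1-2\beta}\,d\tau$ is precisely condition \eqref{ddXCT}. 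For the converse (the ``only if'') in the planar case $\beta\in(1/2,1)$, I would run the argument in the opposite direction: construct an upper barrier for $u$ near the cusp, e.g. the slice-wise solution $w(x',x_n):=\frac12\big(\varepsilon^2{\mathcal F}(x_n)^2-|x'|^2\big)$ which satisfies $\Delta w=-1-(\text{small})$ hence, after a harmless adjustment, $u\leq C w$ by the Maximum Principle on the cuspidal piece, and then $\int u^{-\beta}\gtrsim\int w^{-\beta}$ which, by the same change of variables, is comparable to $\int_0^\eta{\mathcal F}(\tau)^{1-2\beta}\,d\tau$; if the latter diverges so does the $\beta$-integral, establishing necessity.

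The main obstacle will be making the barrier comparison rigorous at the cusp tip: near $x_n=0$ the curvature terms $({\mathcal F}')^2$ and ${\mathcal F}{\mathcal F}''$ must genuinely be controlled to be $o(1)$ (or at least bounded by a small constant) relative to the good negative term, and one must be slightly careful that shrinking $\eta$ to absorb these does not interfere with matching the barrier to the interior estimate on the slice $x_n=\eta$. A secondary technical point is checking that $w$ (or $v_0$) is admissible as a sub/supersolution in the $W^{1,2}$ sense up to the non-Lipschitz cuspidal boundary — but since the cusp is ``outward pointing'' the domain is still regular for the Dirichlet problem (it satisfies an exterior corkscrew/cone condition on the complement side), so $u\in C^0(\overline\Omega)$ and the classical Maximum Principle of Proposition~\ref{Max-PP} applies without difficulty. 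Once these points are dispatched, the computation $\int v_0^{-\beta}\approx\int_0^\eta{\mathcal F}(\tau)^{(n-1)-2\beta}\,d\tau$ and the resulting case analysis are routine.
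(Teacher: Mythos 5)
Your plan is, in outline, the same as the paper's: a two-sided comparison of $u$ with the function $\varepsilon^2\mathcal{F}(x_n)^2-|x'|^2$ on the cuspidal piece, the change of variables $x'=\varepsilon\mathcal{F}(x_n)y'$ reducing everything to $\int_0^\eta\mathcal{F}(\tau)^{n-1-2\beta}\,d\tau$ (hence the dichotomy $n\ge3$ versus $n=2$ and the necessity of \eqref{ddXCT} when $n=2$, $\beta\in(1/2,1)$), and the interior contribution handled by rounding off and Corollary~\ref{anLL}. However, the central comparison step as you wrote it contains a genuine error of direction. With $v=c\bigl(\varepsilon^2\mathcal{F}(x_n)^2-|x'|^2\bigr)$ one has $\Delta v=c\bigl(\varepsilon^2(\mathcal{F}^2)''(x_n)-2(n-1)\bigr)$, and to conclude the lower bound $u\ge v$ you need $u-v$ to be \emph{super}harmonic, i.e. $\Delta(u-v)=-1-\Delta v\le 0$, i.e. $-\Delta v\le 1$, which forces the constant $c$ to be \emph{small} (roughly $c\le 1/(2(n-1))$ after absorbing the curvature terms $\varepsilon^2\bigl((\mathcal{F}')^2+\mathcal{F}\mathcal{F}''\bigr)$). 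Your stated condition $\Delta v\le -1$ is the normalisation appropriate for the \emph{upper} barrier (compare \eqref{ddd-5}): under it $u-v$ is subharmonic and the Maximum Principle yields no lower bound at all.

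The second, more substantive gap is the matching on the slice $\{x_n=\eta\}$. Your justification that ``$u$ is bounded below by the interior estimate'' there is false: the slice meets $\partial\Omega$ at $|x'|=\varepsilon\mathcal{F}(\eta)$, where $u$ vanishes, so no positive constant lower bound is available, and both $u$ and $v$ tend to zero at the edge of the slice. What is needed is a matching of \emph{rates}: a quantitative Hopf (Giraud) boundary-point estimate giving $u\ge c_1\,\mathrm{dist}(\cdot,\partial\Omega)$ on the slice, combined with $v\le c_2\,\mathrm{dist}(\cdot,\partial\Omega)$ there (the latter is immediate since $v$ is Lipschitz and vanishes on the lateral boundary); only then can one choose the comparison constant so that $v\le c\,u$ on the slice and run the Maximum Principle on the cuspidal piece. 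This is precisely the paper's route via \eqref{ddd-9}--\eqref{ddd-12.U} and Lemma~\ref{quanthopf} in the Appendix, and it is exactly where the local $C^2$ (indeed $C^{1,\alpha}$) smoothness of $\partial\Omega$ away from the cusp enters the proof; you flag the matching as a technical point but do not supply this ingredient, so the sufficiency half of your argument is incomplete as it stands. Once the sign is corrected and the Hopf-type estimate is invoked, the remainder of your computation (and the converse via the upper barrier, which gives $u\le v$ as in \eqref{ddd-7} and hence divergence when \eqref{ddXCT} fails) coincides with the paper's proof.
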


\begin{proof}
Assume that $0\in\partial\Omega$ is a cusp, and ${\mathcal{F}}$ is 
as in the preamble to this subsection. Without loss of generality assume that 
$\eta=1$ (hence ${\mathcal{F}}$ is defined on $[0,1]$). 
The function $v(x):=\varepsilon^2{\mathcal{F}}^2(x_n)-|x'|^2$ satisfies
\begin{eqnarray}\label{ddd-5}
-\Delta v(x)=2(n-1)-\varepsilon^2({\mathcal{F}}^2)''(x_n)
\end{eqnarray}
and its trace on $\partial\Omega$ is nonnegative and vanishes when $x_n<1$. 

As usual, $u$ stands for the unique solution of
\begin{eqnarray}\label{ddd-6}
-\Delta u=1\,\,\mbox{ in }\,\Omega,\quad u=0\,\,\mbox{ on }\,\partial\Omega.
\end{eqnarray}
Since $-\Delta v>1$ on $\{x\in\Omega:\,x_n<1\}$ 
(assuming $\varepsilon$ small), we have 
\begin{eqnarray}\label{ddd-7}
u\leq v\,\,\mbox{ on }\,\,\{x\in\Omega:\,x_n<1\}.
\end{eqnarray}
We are next going to obtain an opposite estimate. First, by the smallness of
$\varepsilon$, we have
\begin{eqnarray}\label{ddd-8}
-\Delta v\leq 2n=-2n\Delta u.
\end{eqnarray}
Next, by the Giraud-Hopf Lemma (cf. Lemma~\ref{quanthopf} in the Appendix), we obtain
\begin{eqnarray}\label{ddd-9}
-\partial_\nu u\geq C>0\,\,\mbox{ on }\,\,
\{x\in\partial\Omega:\,x_n=1\},
\end{eqnarray}
where $\nu$ is the outward unit normal to $\Omega$. Therefore, for some $c_1>0$,
we have (cf. Lemma~\ref{quanthopf} in the Appendix)
\begin{eqnarray}\label{ddd-10}
u(x)\geq c_1\,{\rm dist}\,(x,\partial\Omega)
\,\,\mbox{ on }\,\,\{x\in\Omega:\,x_n=1\}.
\end{eqnarray}
The estimate
\begin{eqnarray}\label{ddd-11}
v(x)\leq c_2\,{\rm dist}\,(x,\partial\Omega)\,\,\mbox{ on }\,\,
\{x\in\Omega:\,x_n=1\}
\end{eqnarray}
for some $c_2>0$ follows directly from the definition of $v$. Hence, taking
$c:=\max\,\{2n,c_2/c_1\}$, we have 
\begin{eqnarray}\label{ddd-12.U}
v(x)\leq cu(x)\,\,\mbox{ on }\,\,\{x\in\Omega:\,x_n=1\}.
\end{eqnarray}
Recall that the traces of $u$ and $v$ on $\partial\Omega$ vanishes when $x_n<1$,
while $v-cu\leq 0$ on $\{x\in\Omega:\,x_n=1\}$ by \eqref{ddd-12.U}. 
Given that $\Delta(v-cu)\geq 0$ on $\{x\in\Omega:\,x_n<1\}$ by \eqref{ddd-8}
and the choice of $c$, the Maximum Principle then gives
\begin{eqnarray}\label{ddd-13}
v(x)\leq c\,u(x)\,\,\mbox{ on }\,\,\{x\in\Omega:\,x_n\leq1\}.
\end{eqnarray}
If the origin is the only singularity of $\partial\Omega$, the condition 
$u^{-\beta}\in L^1(\Omega)$ is equivalent to
\begin{eqnarray}\label{ddd-14}
\int_{\{x\in\Omega:\,x_n<1\}}\frac{dx}{v(x)^{\beta}}<+\infty,
\end{eqnarray}
which is the same as 
\begin{eqnarray}\label{ddd-15}
\int_0^1dx_n\int_{|x|<\varepsilon{\mathcal{F}}(x_n)}
\frac{dx'}{v(x)^{\beta}}<+\infty.
\end{eqnarray}
This can be written in the form
\begin{eqnarray}\label{ddd-16}
+\infty&>&\int_0^1dx_n\int_0^{\varepsilon{\mathcal{F}}(x_n)}
\frac{\rho^{n-2}d\rho}
{(\varepsilon^2{\mathcal{F}}^2(x_n)-\rho^2)^{\beta}}
\nonumber\\[4pt]
&=&\int_0^1(\varepsilon{\mathcal{F}}(x_n))^{n-1-2\beta}\,dx_n
\,\int_0^1\frac{\tau^{n-2}\,d\tau}{(1-\tau^2)^{\beta}}, 
\end{eqnarray}
which is satisfied if and only if $\beta<1$ for $n\geq 3$, and 
if and only if $\beta<1$ and 
\begin{eqnarray}\label{ddTT}
\int_0^1{\mathcal{F}}(\tau)^{1-2\beta}\,d\tau<+\infty
\end{eqnarray}
for $n=2$ (note that \eqref{ddTT} is always satisfied when 
$\beta\leq 1/2$; compare also with the example \eqref{JHca-1} in which case 
we take ${\mathcal{F}}(\tau):=\tau^{1/(2\beta-1)}$). 

Once the contribution from near each boundary cusp has been estimated, 
the end-game of the argument is similar to that of the proof of 
Theorem~\ref{Te-3AH}. 
\end{proof}

\section{Appendix}
\label{sect:A}
\setcounter{equation}{0}

This appendix is devoted to presenting a proof of the equivalence in \eqref{GFsA}.
The main ingredient, contained in Lemma~\ref{quanthopf} below,  
is a version of classical work by Giraud\footnote{A result related to 
our Lemma~\ref{quanthopf} is stated in \cite[Theorem~3,IV, p.\,7]{Miranda}
for $C^{1,\alpha}$ domains, though the proof given there actually requires a 
$C^2$ boundary. This result is attributed to Giraud who has indeed dealt in 
\cite[p.\,50]{Giraud} with $C^{1,\alpha}$ domains via an argument based on 
a change of variables (this explains the global nature of the smoothness 
assumption on the boundary of the domain in question). By way of contrast, 
our proof, which is based on a barrier construction, works for more general 
domains and, at the same time, appears conceptually simpler and significantly 
shorter than the one provided in \cite{Giraud}.} in domains of class $C^{1,\alpha}$,
$\alpha\in(0,1)$. Its statement has two parts, the first of which may be regarded 
as a quantitative version of the classical Hopf lemma, while the second part 
gives two-sided pointwise bounds for functions satisfying conditions which are
reminiscent of the properties of the eigenfunction corresponding to the first
eigenvalue for the Laplacian (cf. \eqref{vTR1.G} and \eqref{Mc-T2A.G}).
Significantly, the format of this result is designed in a manner which makes 
it applicable to localised versions of the Laplace-Beltrami operator 
on $C^{1,\alpha}$ domains on a given smooth surface (we shall comment on 
the actual localisation procedure in the last part of the appendix). 

To set the stage, we make a few definitions.
Given an index $\alpha\in(0,1]$ and two numbers $a,b>0$, consider the region 
in ${\mathbb{R}}^n$ given by 
\begin{eqnarray}\label{PAR-66.HH}
G_{a,b}^{\alpha}:=\{x=(x_1,\dots,x_n)\in{\mathbb{R}}^n:\,a|x|^{1+\alpha}<x_n<b\}
\end{eqnarray}
and call it a pseudo-ball (note that $G_{a,b}^{1}$ is a genuine solid spherical cap).

\begin{definition}\label{DEE-F1}
(i) Let $\Omega\subseteq{\mathbb{R}}^n$ be an open set, and assume that 
$x_0\in\partial\Omega$. Then $\Omega$ is said to satisfy an interior
pseudo-ball condition at $x_0$ provided there exists a rigid transformation 
$R:{\mathbb{R}}^n\to{\mathbb{R}}^n$ (i.e., a composition between a translation 
and a rotation in ${\mathbb{R}}^n$) with the property that $R(x_0)=0$ and
there exist an index $\alpha\in(0,1)$ and two numbers $a,b>0$ (collectively 
referred to as the pseudo-ball character of $\Omega$ at $x_0$)
such that $G_{a,b}^{\alpha}\subseteq R(\Omega)$. 

(ii) An open set $\Omega\subseteq{\mathbb{R}}^n$ is said to satisfy a
uniform interior pseudo-ball condition relative to a subset 
$\Sigma$ of $\partial\Omega$ provided $\Omega$ satisfies 
an interior pseudo-ball condition at every boundary point $x\in\Sigma$
with parameters $\alpha\in(0,1)$ and $a,b>0$ independent of $x$.
Collectively, $\alpha,a,b$ will be referred to as the uniform 
pseudo-ball character of $\Omega$ relative to $\Sigma$. 
\end{definition}

There is one important property of domains $\Omega\subseteq{\mathbb{R}}^n$ 
of class $C^1$ (cf. the last part in  Definition~\ref{lipdom}) satisfying 
a pseudo-ball condition at a boundary point $x_0$, namely 
\begin{eqnarray}\label{GVV.JJ}
\mbox{the outward unit normal at $x_0$ is $-R^{-1}e_n$},
\end{eqnarray}
where $e_n:=(0,\cdots,0,1)\in{\mathbb{R}}^n$ and 
$R:{\mathbb{R}}^n\to{\mathbb{R}}^n$ is the rigid transformation appearing in 
the first part of Definition~\ref{DEE-F1}. This is going to be of relevance 
shortly (cf. \eqref{PAR-66.2} below). For now, we note that one expeditious 
way of justifying the aforementioned property is by invoking the following 
result of geometric measure theoretic flavour (which appears as Proposition~2.9 
in \cite{HMT2}):

\begin{lemma}\label{Pop-CoN1}
Let $\Omega$ be a proper, nonempty open subset of ${\mathbb{R}}^n$, of locally 
finite perimeter. Fix a point $x_0$ belonging to $\partial^{\ast}\Omega$ 
(the reduced boundary of $\Omega$) with the property that there exists a 
(circular, open, truncated, one-component, not necessarily upright) cone 
$\Gamma$ in ${\mathbb{R}}^n$ with vertex at $0\in{\mathbb{R}}^n$ and having 
total aperture $\theta\in(0,\pi)$, for which 
\begin{eqnarray}\label{Cn-H1}
x_0+\Gamma\subseteq\Omega.
\end{eqnarray}
Denote by $\Gamma_{\ast}$ the (circular, open, infinite, one-component) 
cone in ${\mathbb{R}}^n$ with vertex at $0\in{\mathbb{R}}^n$, of total 
aperture $\pi-\theta$, having the same axis as $\Gamma$ and pointing in 
the opposite direction to $\Gamma$. Then, if $\nu(x_0)$ denotes the geometric 
measure theoretic outward unit normal to $\partial\Omega$ at $x_0$, it is that  
\begin{eqnarray}\label{Cn-H2}
\nu(x_0)\in\Gamma_{\ast}. 
\end{eqnarray}
\end{lemma}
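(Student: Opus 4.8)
\textbf{Proof plan for Lemma~\ref{Pop-CoN1}.} The plan is to argue via the geometric-measure-theoretic characterisation of the outward unit normal at a point of the reduced boundary, combined with a blow-up (rescaling) argument. Recall that since $x_0\in\partial^{\ast}\Omega$, the rescaled sets
\begin{eqnarray*}
\Omega_{x_0,\lambda}:=\{\,(x-x_0)/\lambda:\,x\in\Omega\,\}
\end{eqnarray*}
converge (in $L^1_{\rm loc}$, as $\lambda\to 0^{+}$) to the half-space
\begin{eqnarray*}
H^{-}:=\{\,y\in{\mathbb{R}}^n:\,y\cdot\nu(x_0)<0\,\},
\end{eqnarray*}
and the associated perimeter measures converge weakly-$\ast$ to the surface measure of $\partial H^{-}$. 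The first step is to observe that the cone condition \eqref{Cn-H1} is scale-invariant in the following sense: since $\Gamma$ is a cone with vertex at the origin, $x_0+\Gamma\subseteq\Omega$ forces $\Gamma\subseteq\Omega_{x_0,\lambda}$ for \emph{every} $\lambda>0$ (here one may first replace $\Gamma$ by its infinite version, keeping the vertex at $0$, since truncating only affects a bounded region which disappears in the blow-up). Passing to the limit, we obtain $\overline{\Gamma}\subseteq\overline{H^{-}}$, i.e. the open cone $\Gamma$ of aperture $\theta$ is contained in the closed half-space $\{y\cdot\nu(x_0)\leq 0\}$.

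The second step is the elementary geometric deduction: I would show that if an open circular cone $\Gamma$ with vertex $0$, axis $\eta\in S^{n-1}$ and total aperture $\theta\in(0,\pi)$ (so half-aperture $\theta/2$) is contained in the half-space $\{y:\,y\cdot\nu\leq 0\}$ for a unit vector $\nu$, then $\nu$ lies in the cone $\Gamma_{\ast}$ with vertex $0$, axis $-\eta$, and total aperture $\pi-\theta$. Concretely, $\Gamma\subseteq\{y\cdot\nu\le 0\}$ is equivalent to $\sup_{y\in\Gamma}\,y\cdot(\nu/|\nu|)\le 0$, which by homogeneity and the description of $\Gamma$ via $y\cdot\eta\geq|y|\cos(\theta/2)$ translates into an angle inequality: the angle between $\nu$ and $-\eta$ must be strictly less than $\pi/2-\theta/2=(\pi-\theta)/2$. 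This last quantity is precisely the half-aperture of $\Gamma_{\ast}$, giving \eqref{Cn-H2}. One only needs to be slightly careful that the containment of the \emph{open} cone in the \emph{closed} half-space yields a \emph{strict} angle inequality (hence membership in the open cone $\Gamma_{\ast}$), which follows because if the angle between $\nu$ and $-\eta$ were exactly $(\pi-\theta)/2$ then a boundary generator of $\Gamma$ would be orthogonal to $\nu$ and nearby interior directions of $\Gamma$ would violate $y\cdot\nu\le 0$.

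The main obstacle, such as it is, is making the blow-up step fully rigorous: one must justify that the $L^1_{\rm loc}$ convergence $\Omega_{x_0,\lambda}\to H^{-}$ together with $\Gamma\subseteq\Omega_{x_0,\lambda}$ for all $\lambda$ actually forces $\Gamma\subseteq H^{-}$ up to a null set, and then upgrade this to the closure statement $\overline\Gamma\subseteq\overline{H^{-}}$. This is handled by noting that for any compact $K\subseteq\Gamma$ one has $|K\setminus\Omega_{x_0,\lambda}|=0$ for all $\lambda$, hence $|K\setminus H^{-}|=0$ by the $L^1_{\rm loc}$ convergence; since $K\subseteq\Gamma$ is open and nonempty and $H^{-}$ is a half-space, $|K\setminus H^{-}|=0$ with $K$ open forces $K\subseteq H^{-}$, and letting $K$ exhaust $\Gamma$ gives $\Gamma\subseteq H^{-}$, whence $\overline\Gamma\subseteq\overline{H^{-}}$. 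Everything after this is the soft geometry of the previous paragraph. (Alternatively, and perhaps more cleanly, one can bypass measure theory entirely by recalling that at a point of $\partial^{\ast}\Omega$ the half-space $H^{-}$ is the measure-theoretic tangent half-space and invoking directly the fact that $\Omega$ has density $1$ along every ray from $x_0$ into $\Gamma$; I would present whichever version reads more smoothly in context, but the cone-in-half-space computation is the substantive content either way.)
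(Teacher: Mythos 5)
You should first note that the paper offers no proof of this lemma: it is quoted from \cite{HMT2} (Proposition~2.9 there), so there is no internal argument to compare yours against. On its own merits, your blow-up strategy is the natural route and most of it is sound: since $x_0\in\partial^{\ast}\Omega$, De Giorgi's structure theorem gives $(\Omega-x_0)/\lambda\to H^{-}=\{y:\,y\cdot\nu(x_0)<0\}$ in $L^1_{\rm loc}$ as $\lambda\to 0^{+}$, the scale invariance of (the infinite version of) the cone lets you transport \eqref{Cn-H1} to every scale, and the measure-zero argument (modulo the slip of calling a compact set $K$ ``open''---just exhaust $\Gamma$ by open subsets with compact closure) correctly yields that the infinite open cone with axis $\eta$ and half-aperture $\theta/2$ is contained in $\{y:\,y\cdot\nu(x_0)\le 0\}$. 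The spherical triangle inequality then gives that the angle between $\nu(x_0)$ and $-\eta$ is at most $(\pi-\theta)/2$, i.e. $\nu(x_0)\in\overline{\Gamma_{\ast}}$.

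The genuine gap is the final ``strictness'' step. You claim that if the angle between $\nu(x_0)$ and $-\eta$ were exactly $(\pi-\theta)/2$, then interior directions of $\Gamma$ near the boundary generator orthogonal to $\nu(x_0)$ would violate $y\cdot\nu(x_0)\le 0$. They do not: for every $\omega$ in the open cone, the angle between $\omega$ and $\nu(x_0)$ is at least the angle between $\eta$ and $\nu(x_0)$ minus the angle between $\eta$ and $\omega$, and in the borderline case this is strictly larger than $\pi/2$, so $\omega\cdot\nu(x_0)<0$ and no contradiction arises. In fact the strict conclusion cannot be rescued from the stated hypotheses: take $n=2$, $\Omega=\{y_2<0\}$, $x_0=0$, so $\nu(x_0)=e_2$, and let $\Gamma$ be a truncated open cone of total aperture $\theta$ whose axis points at angle $\theta/2$ below the positive $x_1$-axis; then \eqref{Cn-H1} holds, yet $e_2$ lies on $\partial\Gamma_{\ast}$ rather than in the open cone $\Gamma_{\ast}$. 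So what your argument actually proves is $\nu(x_0)\in\overline{\Gamma_{\ast}}$ --- which (whether or not the openness of $\Gamma_{\ast}$ is an artefact of transcription from \cite{HMT2}) is all the paper needs to deduce \eqref{GVV.JJ}, since there the aperture is sent to $\pi$ and the open/closed distinction washes out --- but as a proof of \eqref{Cn-H2} as literally stated, the last step fails.
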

Indeed, if $G^{\alpha}_{a,b}$ is the pseudo-ball associated with the 
point $x_0\in\partial\Omega$ as in the first part of Definition~\ref{DEE-F1}, 
it suffices to observe that the subset $R^{-1}(G^{\alpha}_{a,b})$ of $\Omega$ 
contains truncated circular cones of total aperture arbitrarily close to $\pi$ 
and whose axes are along $R^{-1}e_n$. From this \eqref{GVV.JJ} readily follows. 

We are now prepared to state and prove the main result in the appendix.
While other versions naturally present themselves (for example, a suitable 
variant of this result continues to hold for domains of class $C^{1,\omega}$ 
where $\omega$ is a modulus of continuity satisfying a Dini condition), 
the result proved here more than suffices for our purposes, and its
treatment has the advantage of being largely self-contained. 

\begin{lemma}\label{quanthopf}  
Suppose $\Omega\subseteq{\mathbb{R}}^n$ is a bounded domain of class $C^1$
and denote by $\nu$ the outward unit normal to $\Omega$. Assume that 
\begin{eqnarray}\label{Gvv-Rf.4}
L:=-\sum_{i,j=1}^n a^{ij}\partial_i\partial_j+\sum_{i=1}^n b^{i}\partial_i
\end{eqnarray}
is a nondivergence-form, second-order, differential operator whose 
coefficients satisfy 
\begin{eqnarray}\label{Gvv-Rf}
a^{ij},b^i\in C^0(\overline{\Omega}),\,\,1\leq i,j\leq n,
\,\mbox{ and }\,\sum_{i,j=1}^n a^{ij}(x)\xi_i\xi_j\geq c|\xi|^2
\end{eqnarray}
for every point $x\in\Omega$ and every vector $\xi\in{\mathbb{R}}^n$, 
where $c>0$ is a fixed constant. Finally, consider a real-valued function 
$\phi\in C^1(\overline{\Omega})\cap C^2(\Omega)$ satisfying 
\begin{eqnarray}\label{pos.KS}
L\phi\geq 0\,\,\mbox{ in }\,\,\Omega, 
\end{eqnarray}
and assume that $x_0\in\partial\Omega$ is a point with the property that $\Omega$ 
satisfies an interior pseudo-ball condition at $x_0$ and for which
\begin{eqnarray}\label{pos}
\phi(x_0)<\inf_{K}\phi\quad\mbox{for any compact subset $K$ of $\Omega$}.
\end{eqnarray}
Then there exist a compact subset $K_0$ of $\Omega$ and a constant $\kappa>0$ 
which depends only on 
\begin{eqnarray}\label{pos.2}
\begin{array}{c}
\displaystyle
c,\quad\,\phi(x_0),\quad\,\,
\inf_{K_0}\phi,\quad\,\,
\sum_{i,j=1}^n\sup\limits_{\overline{\Omega}}|a^{ij}|
+\sum_{i=1}^n\sup_{\overline{\Omega}}|b^{i}|,
\\[6pt]
\mbox{as well as the pseudo-ball character of $\Omega$ at $x_0$},
\end{array}
\end{eqnarray}
with the property that 
\begin{eqnarray}\label{JfRR-4}
-(\partial_\nu\phi)(x_0)\geq\kappa.
\end{eqnarray}

Furthermore, if $\Omega\subseteq{\mathbb{R}}^n$ is a bounded domain of class 
$C^1$, satisfying a uniform interior pseudo-ball condition relative to 
$\partial\Omega\cap B(x_{\#},R)$ for some point $x_{\#}\in\partial\Omega$ 
and number $R>0$, the operator $L$ is as before, and if 
$\phi\in C^1(\overline{\Omega})\cap C^2(\Omega)$ is a real-valued 
function satisfying 
\begin{eqnarray}\label{II-pos}
L\phi\geq 0\,\,\mbox{ in }\,\,\Omega,\quad
\phi>0\,\,\mbox{ in }\,\,\Omega,\,\,\mbox{ and }\,\,
\phi=0\,\,\mbox{ on }\,\,\partial\Omega\cap B(x_{\#},R),
\end{eqnarray}
then there exist a compact subset $K_0$ of $\Omega$ along with two constants 
$c',c''>0$ which depend only on $R$, $\|\nabla\phi\|_{L^\infty(\Omega)}$ 
the quantities listed in the first line of \eqref{pos.2} and the uniform
pseudo-ball character of $\Omega$ relative 
to $\partial\Omega\cap B(x_{\#},R)$ with the property that 
\begin{eqnarray}\label{GTvv-Yg}
c'\,{\rm dist}\,(x,\partial\Omega)\leq\phi(x)\leq
c''\,{\rm dist}\,(x,\partial\Omega),
\quad\mbox{ for every }\,\,x\in\Omega\cap B(x_{\#},R/2). 
\end{eqnarray}

In particular, with the same background assumptions on the operator $L$ and 
the function $\phi$ as above, all earlier conclusions hold in domains of class
$C^{1,\alpha}$ for some $\alpha\in(0,1)$.
\end{lemma}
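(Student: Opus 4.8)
\textbf{Proof plan for Lemma~\ref{quanthopf}.}
The plan is to establish the first assertion by a direct barrier construction inside the pseudo-ball $G^\alpha_{a,b}$ touching $\partial\Omega$ at $x_0$, then to deduce the two-sided bound \eqref{GTvv-Yg} from it together with an upper barrier, and finally to observe that $C^{1,\alpha}$ domains fall within the scope of the pseudo-ball hypothesis. First I would set up coordinates via the rigid transformation $R$ so that $x_0$ is sent to the origin, $G^\alpha_{a,b}\subseteq R(\Omega)$, and, by \eqref{GVV.JJ}, the outward unit normal at $x_0$ becomes $-R^{-1}e_n$; since $L$ transforms into an operator of the same type (uniform ellipticity constant $c$ and sup-norms of the coefficients preserved up to harmless factors), there is no loss of generality in working with $\Omega$ replaced by $G^\alpha_{a,b}$ and $x_0=0$. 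The key step is to produce an explicit subsolution on an annular piece of $G^\alpha_{a,b}$: choose a small ball $B(x_\ast,\varrho)\subseteq G^\alpha_{a,b}$ whose closure meets $\partial G^\alpha_{a,b}$ only at the origin (this is possible precisely because the cusp exponent is $1+\alpha>1$, so the pseudo-ball contains arbitrarily wide cones at $0$, hence in particular a tangent genuine ball — wait, that is false for a true cusp; instead one uses that $G^\alpha_{a,b}$ itself has the origin on its boundary and builds the barrier adapted to the profile $x_n=a|x|^{1+\alpha}$). Concretely, on the region $D_\epsilon:=\{x\in G^\alpha_{a,b}:\ |x|<\epsilon\}$ I would try a barrier of the form $h(x):=x_n - a|x|^{1+\alpha} - A\,x_n^{1+\gamma}$ (or, following the classical Hopf scheme, $h(x):=e^{-\lambda|x-x_\ast|^2}-e^{-\lambda\varrho^2}$ on a suitable ball), chosen so that $Lh\le 0$ on $D_\epsilon$ for $\lambda$ (or $A,\gamma$) large depending only on the data listed in \eqref{pos.2}, while $h\le 0$ on $\partial G^\alpha_{a,b}\cap\{|x|<\epsilon\}$ and $h$ is controlled on the inner part $\{|x|=\epsilon\}\cap G^\alpha_{a,b}$, which sits at positive distance from $\partial\Omega$. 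Normalising by $m:=\inf_{K_0}\phi-\phi(x_0)>0$ (with $K_0$ a fixed compact set containing that inner cap) and applying the comparison principle for $L$ to $\phi-\phi(x_0)$ versus $\delta\,h$ for an appropriate small $\delta$ forces $\phi(x)-\phi(x_0)\ge \delta h(x)$ near $0$; differentiating along $-R^{-1}e_n$ and using $h(0)=0$, $\partial_{e_n}h(0)=1$ yields $-(\partial_\nu\phi)(x_0)\ge\kappa:=\delta$, with $\kappa$ depending only on the advertised quantities. The main obstacle will be arranging a barrier that simultaneously (a) is a genuine $L$-subsolution despite the merely continuous, non-smooth coefficients, and (b) respects the cuspidal profile $x_n\sim a|x|^{1+\alpha}$ rather than a smooth spherical cap; the exponent mismatch means the naive Hopf ball does not fit, and one must instead exploit that near $0$ the set $G^\alpha_{a,b}$ still contains cones of aperture $\to\pi$, so that a barrier modelled on $x_n$ minus a slightly superlinear correction does the job — verifying $Lh\le 0$ for this $h$ is the delicate computation, but it reduces to choosing the correction exponent close enough to $1$.

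For the two-sided estimate \eqref{GTvv-Yg}, I would first note that the lower bound $\phi(x)\ge c'\,{\rm dist}(x,\partial\Omega)$ on $\Omega\cap B(x_\#,R/2)$ follows by integrating the Hopf-type gradient bound: the uniform interior pseudo-ball condition relative to $\partial\Omega\cap B(x_\#,R)$ gives, by the first part applied at every $x\in\partial\Omega\cap B(x_\#,R)$ with uniform character, a uniform lower bound $-\partial_\nu\phi\ge\kappa>0$; combining this with $\phi>0$ in $\Omega$, $\phi=0$ on $\partial\Omega\cap B(x_\#,R)$, $\phi\in C^1(\overline\Omega)$, and a standard compactness/continuity argument (project $x$ to its nearest boundary point $x'\in\partial\Omega$, which lies in $B(x_\#,R)$ once $x\in B(x_\#,R/2)$ and ${\rm dist}(x,\partial\Omega)<R/2$, and use the mean value theorem along the segment $[x',x]$ together with the modulus of continuity of $\nabla\phi$) produces the constant $c'$. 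The upper bound $\phi(x)\le c''\,{\rm dist}(x,\partial\Omega)$ is simpler and in fact holds whenever $\phi\in C^1(\overline\Omega)$ with $\phi=0$ on that boundary portion: for $x$ near $\partial\Omega\cap B(x_\#,R)$ the distance is realised by a segment to the boundary along which $\phi$ vanishes at one endpoint, so $|\phi(x)|\le\|\nabla\phi\|_{L^\infty(\Omega)}\,{\rm dist}(x,\partial\Omega)$, giving $c''=\|\nabla\phi\|_{L^\infty(\Omega)}$. One must only check that $x\in\Omega\cap B(x_\#,R/2)$ with ${\rm dist}(x,\partial\Omega)\ge R/2$ is handled separately by the trivial bound $\phi(x)\ge\inf_{K_0}\phi>0$ and $\phi(x)\le\sup_{\overline\Omega}\phi$, which is absorbed into $c',c''$ after shrinking/enlarging.

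Finally, to obtain the last sentence I would invoke the elementary fact that any bounded domain $\Omega$ of class $C^{1,\alpha}$ satisfies a uniform interior pseudo-ball condition relative to all of $\partial\Omega$: locally $\partial\Omega$ is the graph of a function $\varphi$ with $\varphi(0)=0$, $\nabla\varphi(0)=0$, and $|\nabla\varphi(x')-\nabla\varphi(y')|\le C|x'-y'|^\alpha$, whence $|\varphi(x')|\le C|x'|^{1+\alpha}$; at the boundary point over $0$ this exhibits a pseudo-ball $G^{\alpha}_{a,b}$ (with $a=C+1$, $b$ small) fitting inside $\Omega$, and the constants are uniform by compactness of $\partial\Omega$ and the finiteness of the atlas. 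Therefore both conclusions apply verbatim to $C^{1,\alpha}$ domains, which completes the proof of the lemma. I expect part (a) of the barrier construction — producing an honest $L$-subsolution with merely continuous coefficients that is adapted to the cusp exponent — to be the one genuinely technical point; everything else is bookkeeping with the comparison principle and the mean value theorem.
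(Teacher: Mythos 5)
There is a genuine gap at precisely the point you yourself single out as the crux: your explicit barrier $h(x)=x_n-a|x|^{1+\alpha}-A\,x_n^{1+\gamma}$ is not an $L$-subsolution, and no choice of the exponent $\gamma$ (``close to $1$'' or otherwise) can make it one. Both correction terms are concave, so the second-order part of $L$ applied to $h$ has the wrong sign: already for $L=-\Delta$ one computes $Lh=a(1+\alpha)(n-1+\alpha)|x|^{\alpha-1}+A\gamma(1+\gamma)x_n^{\gamma-1}>0$ throughout $G^{\alpha}_{a,b}$, with both terms blowing up as $x\to 0$ and $x_n\to 0$; in general the term $+A\,a^{nn}\gamma(1+\gamma)x_n^{\gamma-1}$ coming from $-A\,x_n^{1+\gamma}$ is unbounded and positive near the lateral boundary. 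Thus $h$ is a strict supersolution, usable only for a comparison from above, and the inequality $\phi-\phi(x_0)\ge\delta h$ cannot be forced by the minimum principle. The missing idea is that the superlinear correction in the $x_n$-variable must enter with a \emph{positive} sign: the paper's barrier is $v(x)=x_n+C_0x_n^{1+\alpha}-C_1|x|^{1+\alpha}$, whose convexity in $x_n$ produces the dominating contribution $-c\,C_0\alpha(1+\alpha)x_n^{\alpha-1}$ to $Lv$; since $x_n\le|x|$ in the pseudo-ball, one has $x_n^{\alpha-1}\ge|x|^{\alpha-1}$, so this term absorbs the $C_1|x|^{\alpha-1}$-size contribution of the Hessian of $|x|^{1+\alpha}$ once $C_0$ is large relative to $C_1$, and the bounded first-order terms after shrinking $b$; negativity of $v$ on the lateral boundary $x_n=a|x|^{1+\alpha}$ is arranged by first taking $C_1>a$. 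With that barrier your overall scheme (comparison of $\phi-\phi(x_0)$ with $\varepsilon v$ on $G^{\alpha}_{a,b}$, normalisation by $\inf_{K_0}\phi$ on the top cap $K_0=\{x_n=b\}$, and differentiation at the common zero $x_0$) is exactly the paper's argument.

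A secondary, smaller issue concerns the lower bound in \eqref{GTvv-Yg}. Your plan — a uniform Hopf bound $-\partial_\nu\phi\ge\kappa$ on $\partial\Omega\cap B(x_{\#},R)$ followed by integration along the normal via the mean value theorem — needs a quantitative modulus of continuity for $\nabla\phi$ to pass from the derivative at the boundary point to nearby points on the segment, and that modulus is not among the quantities \eqref{pos.2} (nor is $\|\nabla\phi\|_{L^\infty(\Omega)}$ enough); it also only covers points with small distance to the boundary, with the threshold again uncontrolled. The paper sidesteps this by reusing the barrier inequality $\phi\ge\varepsilon v$ directly: writing $x=x_{\ast}-r\nu(x_{\ast})$ with $r={\rm dist}(x,\partial\Omega)$, the point $x$ lies on the axis of the pseudo-ball at $x_{\ast}$, where $v=r+C_0r^{1+\alpha}-C_1r^{1+\alpha}\ge c'r$ for $r$ small, which yields the stated dependence of $c'$ at once. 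Your upper bound via the mean value theorem, and your verification that $C^{1,\alpha}$ graphs satisfy a uniform pseudo-ball condition, agree with the paper.
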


\begin{proof}
Given that both the hypotheses and the conclusion in the statement of the lemma
are invariant under rotations and translations, there is no loss of generality
in assuming that $x_0$ is the origin in ${\mathbb{R}}^n$ and that the tangent 
plane to $\partial\Omega$ at $x_0$ is ${\mathbb{R}}^{n-1}\times\{0\}$.
Hence, in particular, $\nu(x_0)=(0,\dots,0,-1)$.
Also, since both $L$ and $\partial_\nu$ annihilate constants, we may assume 
that $\phi(x_0)=0$. 

To proceed, we note that owing to assumptions and the discussion pertaining to 
\eqref{GVV.JJ} there exist an exponent $\alpha\in(0,1)$ and two 
constants $a,b_{\ast}>0$ which depend exclusively on the pseudo-ball 
character of $\Omega$ at $x_0$ with the property that if $G^{\alpha}_{a,b}$ 
denotes the region introduced in \eqref{PAR-66.HH} then 
\begin{eqnarray}\label{PAR-66.2}
\overline{G^{\alpha}_{a,b}}
\setminus\{0\}\subseteq\Omega,\qquad\forall\,b\in(0,b_{\ast}].
\end{eqnarray}
Fix $b\in(0,b_{\ast}]$ and, for two constants $C_0,C_1>0$ to be specified later,
consider the barrier function 
\begin{eqnarray}\label{pos.4}
v(x):=x_n+C_0x_n^{1+\alpha}-C_1|x|^{1+\alpha},
\quad\mbox{for every }\,\,x=(x_1,...,x_n)\in\overline{G^{\alpha}_{a,b}}.
\end{eqnarray}
A direct computation then gives that, for each $x\in G^{\alpha}_{a,b}$,
\begin{eqnarray}\label{pos.5}
(Lv)(x)=I+II+III,
\end{eqnarray}
where 
\begin{eqnarray}\label{pos.5B}
I &:=& C_1(\alpha^2-1)\Bigl(\sum_{i,j=1}^na^{ij}(x)x_ix_j\Bigr)|x|^{\alpha-3}
+C_1(\alpha+1)\Bigl(\sum_{i=1}^na^{ii}(x)\Bigr)|x|^{\alpha-1},
\\[6pt]
II &:=& -C_0\alpha(\alpha+1)a^{nn}(x)x_n^{\alpha-1},
\label{pos.5B.2}
\\[6pt]
III &:=& b^n(x)+C_0(\alpha+1)b^n(x)x_n^\alpha
-C_1(\alpha+1)\Bigl(\sum_{i=1}^nb^{i}(x)x_i\Bigr)|x|^{\alpha-1}.
\label{pos.5B.3}
\end{eqnarray}
In concert with the uniform ellipticity condition for $L$
(which, in particular, entails $a^{nn}(x)\geq c$), 
formulas \eqref{pos.5}-\eqref{pos.5B.3} then allow us to estimate 
\begin{eqnarray}\label{pos.6}
(Lv)(x)\leq -Ax_n^{\alpha-1}+B|x|^{\alpha-1}+C,\qquad\forall\,x\in G^{\alpha}_{a,b},
\end{eqnarray}
where 
\begin{eqnarray}\label{pos.6.B}
&& A:=cC_0\alpha(\alpha+1),\qquad
B:=C_1(\alpha+1)\Bigl(c(\alpha-1)+\sum_{i=1}^n\sup_{\overline{\Omega}}|a^{ii}|\Bigr),\quad
\\[4pt]
&& C:=[C_0(\alpha+1)b^\alpha+1]\cdot\sup_{\overline{\Omega}}|b^{n}|
+C_1(\alpha+1)b^{\frac{\alpha}{\alpha+1}}
\Bigl(\sum_{i=1}^n\sup_{\overline{\Omega}}|b^{i}|\Bigr).
\label{pos.6.B.2}
\end{eqnarray}
Let us also observe that 
\begin{eqnarray}\label{NBB.1}
v(x)=(1-a^{-1}C_1)x_n+C_0x_n^{1+\alpha}
\quad\mbox{for every }\,\,x=(x_1,...,x_n)\in\partial G^{\alpha}_{a,b}\setminus
\{x\in{\mathbb{R}}^n:\,x_n=b\}.
\end{eqnarray}
Hence, by selecting first $C_1>a$, then $C_0$ sufficiently large and, finally, 
$b\in(0,b_{\ast}]$ sufficiently small, it follows from
\eqref{pos.6}-\eqref{pos.6.B.2} and \eqref{NBB.1} that matters may be arranged 
so that 
\begin{eqnarray}\label{pos.6.K}
Lv\leq 0\quad\mbox{ on }\,\,\,G^{\alpha}_{a,b},
\end{eqnarray}
and
\begin{eqnarray}\label{NBB.2}
v\leq 0\quad\mbox{on }\,\,\,\partial G^{\alpha}_{a,b}\setminus
\{x=(x_1,...,x_n)\in{\mathbb{R}}^n:\,x_n=b\}.
\end{eqnarray}
If we now consider the compact subset of $\Omega$ given by 
\begin{eqnarray}\label{pos.3b}
K_0:=\{x=(x_1,...,x_n)\in\overline{G^{\alpha}_{a,b}}:\,x_n=b\},
\end{eqnarray}
then, thanks to \eqref{NBB.2}, \eqref{pos}, and the earlier conventions 
made in the proof, we may choose $\varepsilon>0$, depending only on 
the constants listed in \eqref{pos.2} so that, on the one hand, 
\begin{eqnarray}\label{pos.7}
0\leq\phi(x)-\varepsilon v(x)\,\,\mbox{ for every $x\in\partial G^{\alpha}_{a,b}$}. 
\end{eqnarray}
On the other hand, from \eqref{pos.6.K} and \eqref{pos.KS} we obtain 
\begin{eqnarray}\label{pos.3aa}
L(\phi-\varepsilon v)\geq 0\,\,\,\mbox{ in }\,\,G^{\alpha}_{a,b}.
\end{eqnarray}
Having established \eqref{pos.7} and \eqref{pos.3aa}, the Weak Maximum 
Principle (cf., e.g., \cite[p.\,329]{Evans}) then gives 
\begin{eqnarray}\label{pos.8}
\phi-\varepsilon v\geq 0\,\,\mbox{ in $\overline{G^{\alpha}_{a,b}}$}. 
\end{eqnarray}
Given that both $\phi$ and $v$ vanish at $x_0=0\in\partial G^{\alpha}_{a,b}$, 
this proves that the function 
$\phi-\varepsilon v\in C^1(\overline{G^{\alpha}_{a,b}})$ 
has a global minimum at $x_0$. Hence, 
\begin{eqnarray}\label{pos.9}
(\partial_\nu\phi)(x_0)-\varepsilon(\partial_\nu v)(x_0)\leq 0,
\end{eqnarray}
which further entails 
\begin{eqnarray}\label{pos.10}
-(\partial_\nu\phi)(x_0)\geq -\varepsilon(\partial_\nu v)(x_0) 
=\varepsilon\frac{\partial v}{\partial x_n}(0)=\varepsilon>0.
\end{eqnarray}
Choosing $\kappa:=\varepsilon>0$ then finishes the proof of \eqref{JfRR-4}.

Consider next the claim made in the last part of the statement of the lemma. 
Given that $\phi$ is continuous in $\overline{\Omega}$, the last two 
conditions in \eqref{II-pos} ensure that \eqref{pos} is satisfied for 
any $x_0\in\partial\Omega\cap B(x_{\#},R)$. To proceed, we note that 
since $\partial\Omega$ is a compact $C^{1}$ surface, the following property holds:
\begin{eqnarray}\label{Gb65}
\begin{array}{l}
\mbox{for every $x\in\Omega$ there exists a point 
$x_{\ast}\in\partial\Omega$}
\\[4pt] 
\mbox{so that if $r:={\rm dist}\,(x,\partial\Omega)$ then
$x=x_{\ast}-r\nu(x_{\ast})$}.
\end{array}
\end{eqnarray}
Indeed, $B(x,r)\subseteq\Omega$ and if $x_{\ast}\in\partial\Omega$ is such that
${\rm dist}\,(x,\partial\Omega)=|x-x_{\ast}|$ then $x_\ast\in\partial B(x,r)
\cap\partial\Omega$. Then elementary geometrical analysis gives that
$\nu(x^*)$ is parallel to $x-x_{\ast}$, from which the desired conclusion follows. 
Next, fix an arbitrary point $x\in\Omega\cap B(x_{\#},R/2)$ and suppose
that $x_\ast\in\partial\Omega$ is associated with $x$ as in \eqref{Gb65}.
Also, assume that $R>0$ is sufficiently small to begin with. Then, after making 
a translation and a rotation, matters may be arranged so that (with $G^{\alpha}_{a,b}$ as before):
\begin{eqnarray}\label{Gb65.L}
\begin{array}{l}
\mbox{$x_\ast$ is the origin in ${\mathbb{R}}^n$, 
$\nu(x_\ast)=(0,\dots,0,-1)\in{\mathbb{R}}^n$},
\\[4pt]
\mbox{and $x=(0,\dots,0,r)\in G^{\alpha}_{a,b}$, 
where $r:={\rm dist}\,(x,\partial\Omega)$}.
\end{array}
\end{eqnarray}
In such a scenario, \eqref{pos.8} and \eqref{pos.4} then yield 
\begin{eqnarray}\label{pos.7BB}
\phi(x)\geq\varepsilon v(x)=\varepsilon(r+C_0 r^{1+\alpha}-C_1 r^{1+\alpha})
\geq c'r=c'\,{\rm dist}\,(x,\partial\Omega),
\end{eqnarray}
given that $R$ small forces $r$ small. This takes care of the lower bound 
for $\phi$ in \eqref{GTvv-Yg}. There remains to establish the upper bound 
for $\phi$ in \eqref{GTvv-Yg}. Keeping in mind \eqref{Gb65.L} and recalling 
that $\phi$ vanishes on $\partial\Omega\cap B(x_{\#},R)$, the Mean Value 
Theorem then allows us to estimate 
\begin{eqnarray}\label{Jbb-Tg}
\phi(x)=\phi(x)-\phi(x_{\ast})\leq r\cdot\sup_{t\in (0,r)}
\bigl|\nu(x_{\ast})\cdot(\nabla\phi)(x_{\ast}-t\nu(x_{\ast}))\bigr|
\leq c''\,{\rm dist}\,(x,\partial\Omega),
\end{eqnarray}
where $c'':=\|\nabla\phi\|_{L^\infty(\overline{\Omega})}$. 
This concludes the proof of the lemma.
\end{proof}

Moving on, we wish to note that the smoothness requirements for the 
function $\phi$ from Lemma~\ref{quanthopf} are automatically satisfied 
in the case in which $\Omega$ is a bounded domain of class
$C^{1,\alpha}$ for some $\alpha\in(0,1)$ and the function $\phi$ is a 
classical solution of the boundary value problem 
\begin{eqnarray}\label{Td-Yt.33}
(L-V)\phi=0\,\,\mbox{ in }\,\,\Omega,\qquad
\phi=0\,\,\mbox{ on }\,\,\partial\Omega,
\end{eqnarray}
where $L$ is a second-order, elliptic, nondivergence-form differential operator 
with smooth coefficients, and $V$ is a smooth scalar function, in $\overline{\Omega}$.
Indeed, in this context the function $\phi$ actually satisfies  
$\phi\in C^{1,\alpha}(\overline{\Omega})\cap C^\infty(\Omega)$. Such a 
regularity result is well-known; see, e.g., \cite[Th\'eor\`eme~I, p.\,42]{Giraud}
(cf. also \cite{ADN}, or the discussion in \cite[Chapter~7]{MazShap}). 

The last step in the proof of \eqref{GFsA} has to do with localisation. 
In this vein, recall that if $M$ is a $C^2$ boundaryless manifold of 
(real) dimension $n$ equipped with a $C^1$ Riemannian metric tensor 
$g=\sum\limits_{j,k=1}^n g_{jk}\,dx_j\otimes dx_k$ then, in local coordinates, 
the Laplace-Beltrami operator $\Delta_M$ on $M$ is given by 
\begin{eqnarray}\label{eq2.3}
\Delta_M:=\frac{1}{\sqrt{g}}\sum\limits_{j,k=1}^n\partial_j
\bigl(g^{j,k}\sqrt{g}\,\partial_k\,\cdot\,\bigr)
\end{eqnarray}
where $(g^{jk})_{1\leq j,k\leq n}$ is the inverse of the matrix 
$(g_{jk})_{1\leq j,k\leq n}$ and $g:={\rm det}\,(g_{jk})_{1\leq j,k\leq n}$. 
The relevant issue for us here is that such a differential operator falls
within the class of operators considered in Lemma~\ref{quanthopf}. 
In particular, the results in this lemma apply to the local version of 
the Laplace-Beltrami operator on the unit sphere. 

As a consequence of the above analysis, the equivalence in \eqref{GFsA} follows.

\vskip 0.10in
\noindent --------------------------------------
\vskip 0.20in
\begin{minipage}[t]{7.5cm}
\noindent {\tt Anthony Carbery}

\noindent School of Mathematics and 

\noindent Maxwell Inst. for Math. Sciences 

\noindent University of Edinburgh

\noindent King's Buildings, Mayfield Road 

\noindent Edinburgh, EH9 3JZ, UK

\noindent {\tt e-mail}: {\it A.Carbery\@@ed.ac.uk}

\vskip 0.15in

\noindent {\tt Vladimir Maz'ya}

\noindent Department of Math. Sciences

\noindent University of Liverpool

\noindent Liverpool L69 3BX, UK

and

\noindent Department of Mathematics

\noindent Link\"oping University

\noindent Link\"oping SE-581 83, Sweden

\noindent {\tt e-mail}: {\it vlmaz\@@mai.liu.se}

\end{minipage}
\hfill
\begin{minipage}[t]{7.5cm}

\noindent {\tt Marius Mitrea}

\noindent Department of Mathematics

\noindent University of Missouri at Columbia

\noindent Columbia, MO 65211, USA

\noindent {\tt e-mail}: {\it mitream\@@missouri.edu}

\vskip 0.15in

\noindent {\tt David J. Rule}

\noindent Department of Mathematics and 

\noindent Maxwell Inst. for Math. Sciences 

\noindent Heriot-Watt University

\noindent Riccarton 

\noindent Edinburgh, EH14 4AS, UK

\noindent {\tt e-mail}: {\it rule\@@uchicago.edu}

\end{minipage}

\end{document}